\def\E{\mathbb{E}}
\def\R{\mathbb{R}}
\def\Z{\mathbb{Z}}
\newtheorem{thm}{Theorem}[section]
\newtheorem{lem}[thm]{Lemma}
\newtheorem{assum}{Assumption}
\newtheorem{cond}{Condition}
\newtheorem{test}{Test}
\newtheorem{remark}{Remark}
\numberwithin{thm}{section}
\numberwithin{table}{section}
\numberwithin{equation}{section}
\DeclareMathOperator{\tr}{tr}
\newcommand{\defeq}{:=}
\def\noprint#1{}
\newcommand{\papertitle}{Derivative-Free Optimization via Adaptive Sampling Strategies}
\newcommand{\paperauthora}{Raghu Bollapragada}
\newcommand{\paperauthoraaffiliation}{Operations Research and Industrial Engineering, The University of Texas at Austin, Austin, TX 78712}
\newcommand{\paperauthorb}{Cem Karamanli}
\newcommand{\paperauthorc}{Stefan M. Wild}
\newcommand{\paperauthorcaffiliation}{Applied Mathematics and Computational Research Division, Lawrence Berkeley National Laboratory, Berkeley, CA 94720}
\begin{document}
\title{\papertitle}

\author{\paperauthora\footnotemark[1] \footnotemark[3]
   \and \paperauthorb\footnotemark[1] 
   \and \paperauthorc\footnotemark[2]\ }

\maketitle

\renewcommand{\thefootnote}{\fnsymbol{footnote}}
\footnotetext[1]{\paperauthoraaffiliation. (\url{\paperauthoraemail,\paperauthorbemail})}
\footnotetext[2]{\paperauthorcaffiliation. (\url{\paperauthorcemail})}
\footnotetext[3]{Corresponding author.}
\renewcommand{\thefootnote}{\arabic{footnote}}

\begin{abstract}
In this paper, we present a novel derivative-free optimization framework for solving unconstrained stochastic optimization problems. Many problems in fields ranging from simulation optimization \cite{blanchet2019convergence,pasupathy2018sampling} to reinforcement learning \cite{bertsekas2019reinforcement,fazel2018global} involve settings where only stochastic function values are obtained via an oracle with no available gradient information, necessitating the usage of derivative-free optimization methodologies. Our approach includes estimating gradients using stochastic function evaluations and integrating adaptive sampling techniques to control the accuracy in these stochastic approximations. We consider various gradient estimation techniques including standard finite difference \cite{blum1954multidimensional,kiefer1952stochastic}, Gaussian smoothing \cite{nesterov2017random}, sphere smoothing \cite{flaxman2005online}, randomized coordinate finite difference \cite{wright2015coordinate}, and randomized subspace finite difference methods \cite{berahas2021theoretical,kozak2021stochastic}. We provide theoretical convergence guarantees for our framework and analyze the worst-case iteration and sample complexities associated with each gradient estimation method. Finally, we demonstrate the empirical performance of the methods on logistic regression and nonlinear least squares problems.

\end{abstract}

\section{Introduction}
\label{intro}

We propose iterative algorithms for solving unconstrained stochastic optimization problems where explicit estimates of the gradient of the objective function are unavailable. Instead, we only have access to an oracle or a black-box that evaluates stochastic function values. Such optimization problems arise in various applications, including simulation optimization \cite{blanchet2019convergence,kim2015guide,pasupathy2018sampling,shashaani2018astro} and reinforcement learning \cite{bertsekas2019reinforcement,choromanski2018optimizing,fazel2018global,mania2018simple}. The optimization problem under consideration is of the form 
\begin{equation} \label{eq:stochasticProblem}
	\min_{x \in \mathbb{R}^d} F(x) = \E_{\zeta}\left[f(x, \zeta)\right],
\end{equation}
where $F : \R^{d} \to \R$ is a continuously differentiable function,  $\zeta$ is a random variable with associated probability space $(\Xi,\Fcal,P)$, $f : \R^{d} \times \Xi \to \R$, and $\E_{\zeta}[\cdot]$ denotes the expectation taken with respect to $P$. Throughout the paper, we assume that we can only evaluate stochastic function values $f(x,\zeta)$ for a given $x$ and a realization of $\zeta$. Several classes of derivative-free optimization methods have been proposed to solve \eqref{eq:stochasticProblem} (see \cite{audet2017derivative,larson2019derivative} for a survey of these methods). In this paper, we build a unified framework in which we estimate gradients using stochastic function values, and employ standard gradient-based optimization methods using these estimators. 

Various gradient estimation techniques have been proposed for deterministic derivative-free optimization. A classical and well-known technique is standard finite difference methods \cite{blum1954multidimensional,kiefer1952stochastic}, which estimate directional derivatives along each coordinate axis. Although these approaches are simple and robust, they lack flexibility in choosing the number of directional derivatives and become computationally expensive when the problem dimension $d$ is large. An alternative approach is the usage of randomized coordinate finite difference methods \cite{wright2015coordinate}, which compute directional derivative estimations along a subset of coordinate axes. A generalization of this framework is the use of randomized subspace finite difference methods \cite{berahas2021theoretical,kozak2021stochastic}, which compute directional derivative estimates along a set of orthonormal directions in a subspace $S \subseteq \mathbb{R}^d$. All these methods share the common feature of utilizing orthogonal directions as surrogates for estimating gradients. However, there is another class of estimation methods that employ random directions which are not orthogonal (e.g., Gaussian smoothing \cite{nesterov2017random} and sphere smoothing \cite{berahas2021theoretical}), commonly employed in reinforcement learning and control applications \cite{salimans2017evolution,mania2018simple,fazel2018global,choromanski2018structured,mahes2019es}. While each approach has its own advantages and disadvantages, this work focuses on comparing their relative performance within stochastic settings.

Gradient estimation techniques have proven effective in deterministic settings, but their application in stochastic settings presents significant challenges, often resulting in slow convergence due to high variance in gradient estimation. Adaptive sampling strategies overcome this challenge by controlling the accuracy of the gradient estimations via adjusting the number of realizations (sample sizes) used in stochastic approximations of the objective function.  These strategies stem from the varying accuracy requirements of gradient approximations at different optimization iterations. For instance, inaccurate gradient estimators suffice when the iterates are far from the solution, while accurate estimators are necessary as the iterates approach the solution. A key aspect of these methods is determining the criteria for selecting sample sizes based on problem settings. These methods employ statistical tests to control sample sizes that automatically adapt to problem settings \cite{byrd2012sample,cartis2018global,hashemi2014adaptive,bollapragada2018adaptive,bollapragada2018progressive}, rather than relying on predetermined sampling rates. 
Although such statistical tests typically employ approximations in practice \cite{AUDIBERT20091876,bastin2006adaptive,homem2003variable,jin2021high}, their empirical performance still surpasses that of predetermined sampling rates \cite{byrd2012sample,BOLLAPRAGADA2023239,berahas2022adaptive}. Moreover, employing the latter can be inefficient as these rates need to be tuned for different problem settings.
Therefore, we adopt such tests instead of predetermined sampling rates in derivative-free settings.

Additionally, adaptive sampling approaches offer advantages over fixed-sample stochastic approximation methods, such as improved ability to exploit parallelism and producing more stable iterates with decreasing variance as the sample size increases. In this work, we develop and analyze a unified framework that integrates adaptive sampling strategies tailored for derivative-free optimization employing gradient estimation techniques.

\subsection{Literature Review}
We provide a concise summary of related methods on the topic of gradient estimation techniques and adaptive sampling strategies\footnote{We note that this is not an exhaustive review of all the methods employed for solving \eqref{eq:stochasticProblem}.}.

\paragraph{Gradient Estimation Techniques.} 
Kiefer and Wolfowitz \cite{kiefer1952stochastic} introduced one of the earliest gradient estimation methods utilizing stochastic function evaluations. Blum \cite{blum1954multidimensional} extended the idea to multivariate settings. In standard finite difference methods, the number of function evaluations depends on the problem dimension $d$. Spall \cite{spall1992multivariate} introduced the SPSA approach, requiring only two function evaluations per iteration and removing this dependency. Nesterov and Spokoiny \cite{nesterov2017random} proposed the Gaussian smoothing method, utilizing multivariate normal vectors for gradient estimation. Similarly, techniques such as sampling random vectors from the surface of a unit sphere \cite{fazel2018global,flaxman2005online,berahas2021theoretical} and using random orthogonal directions \cite{choromanski2018optimizing,choromanski2018structured} have been explored. %
Berahas et al. \cite{berahas2021theoretical} conducted a comparative analysis of various gradient estimation techniques for optimizing noisy functions with bounded noise.  Pasupathy et al. \cite{pasupathy2018sampling} provided convergence rates for the Kiefer-Wolfowitz method \cite{kiefer1952stochastic} considering different sampling rates for stochastic functions. %
The common random number (CRN) framework is commonly utilized to decrease the variance in the gradient estimation \cite{dai2016convergence,kleinman1999simulation,l1998budget}.

Ghadimi and Lan \cite{ghadimi2013stochastic} introduced a novel stochastic approximation method for smooth derivative-free settings and established optimal worst-case function evaluation complexities for both convex and nonconvex cases to achieve an $\epsilon-$accurate solution. Duchi et al. \cite{duchi2015optimal} provided an optimal complexity bound of $\mathcal{O}(d \epsilon^{-2})$ for both smooth and nonsmooth convex problems. For strongly convex and smooth settings, Gasnikov et al. \cite{gasnikov2017stochastic} established the best function evaluation complexity as $\mathcal{O}(d \epsilon^{-1})$ \cite{larson2019derivative}. Scheinberg \cite{scheinberg2022finite} compared classical (deterministic) finite difference methods with their randomized finite difference (RFD) counterparts for noisy objective functions with bounded noise, demonstrating that there is insufficient theoretical or empirical evidence to support randomized variants due to the inherent variance in these estimations. Hu and Fu \cite{hu2024convergence} analyzed the performance of gradient estimation methods on stochastic functions with bounded derivatives and variance, suggesting that randomized estimation methods might be efficient in settings with large variance. In this work, we compare gradient estimation methods under weaker assumptions and incorporate adaptive sampling strategies to control the variance in gradient estimation.

Direct-search methods \cite{nelder1965simplex,spendley1962sequential,chen2016optimization,chen2018stochastic} and model-based methods \cite{deng2006adaptation,chang2013stochastic,blanchet2019convergence,shashaani2018astro,marazzi2002wedge} are alternatives to gradient estimation methods in derivative-free optimization. 

\begin{remark}
We note that a recent work by Marrinan et al. \cite{marrinan2023zeroth} became available on arXiv during the preparation of this manuscript. While there are some similarities related to gradient estimation approaches, our work differs significantly in several aspects from both theoretical and implementation standpoints:
(i) We adopt a weaker assumption on variance (Assumption~\ref{assum:boundedvarinstochgrad}) compared to their uniformly bounded variance assumption on the function values.
(ii) We develop a unified framework that employs different gradient estimation methods, instead of relying solely on the smoothing approach, which can lead to suboptimal complexity results in terms of dependence on $d$ (see Table~\ref{tbl:b1andb2}).
(iii) We consider independent sampling when choosing the samples for stochastic functions and directional derivatives, which incorporates flexibility in controlling the variance, in contrast with the use of their dependent samples. %
(iv) Additionally, we introduce adaptive conditions to control sample sizes in smooth settings, in contrast with their reliance on problem-dependent predetermined sampling rates in nonsmooth settings.

\end{remark}

\paragraph{Adaptive Sampling.} Utilization of dynamic sample sizes in stochastic optimization has been extensively investigated in many studies
\cite{friedlander2012hybrid,byrd2012sample,bollapragada2018adaptive,bottou2018optimization,paquette2020stochastic,cartis2018global,pasupathy2018sampling,shashaani2018astro,bollapragada2023adaptive,BOLLAPRAGADA2023239}. A significant portion of the literature in adaptive sampling focused on determining the sample sizes of the stochastic gradients in unconstrained optimization. Some fundamental works proposed and analyzed the theoretical sampling rates \cite{friedlander2012hybrid,pasupathy2018sampling}.
Byrd et al. \cite{byrd2012sample} analyzed the utilization of the so-called \textit{norm test}. In their work, the authors considered the expected risk minimization problem, and provided complexity bounds for the number of gradient evaluations required to get $\epsilon$-accurate
solutions. Cartis and Scheinberg \cite{cartis2018global} proposed using the norm test as a probabilistic condition in their algorithmic framework. Bollapragada et al. \cite{bollapragada2018adaptive} proposed a novel \textit{inner product test} which allows for a greater flexibility in choosing the sample sizes and works well in practice.

Recently, researchers incorporated adaptive sampling mechanisms into derivative-free optimization \cite{bollapragada2023adaptive,shashaani2018astro}. Shashaani et al. \cite{shashaani2018astro} employed adaptive sampling techniques within the model-based trust-region methods. Bollapragada and Wild \cite{bollapragada2023adaptive} considered the usage of standard finite difference methods to estimate the gradients, and utilized these estimates in quasi-Newton update rule. However, their analysis is not readily applicable to previously mentioned gradient estimation techniques. %
An important challenge in extending their analysis is the consideration of the additional variance emerging due to the choice of number of directional derivative estimations, and the properties of these estimations. 

Adaptive sampling techniques have also been adopted in constrained settings and have been used within stochastic projected gradient \cite{xie2020constrained,beiser2020adaptive}, augmented Lagrangian \cite{BOLLAPRAGADA2023239}, and sequential quadratic programming (SQP) \cite{berahas2022adaptive} methods.

\subsection{Contributions}

The main contributions of our work are as follows:

\begin{enumerate}
\item We develop a unified derivative-free optimization framework that integrates gradient estimation techniques with adaptive sampling strategies to solve \eqref{eq:stochasticProblem}. Our framework utilizes common random numbers (CRN) to estimate gradients using stochastic function evaluations, which reduces variance in gradient estimation (see \cite[Chapter 8]{Ross2006simulation}). Moreover, we introduce novel adaptive sampling conditions to control the accuracy in stochastic approximations, derived based on the well-known \textit{norm condition} \cite{byrd2012sample,carter1991norm}, making them suitable for derivative-free settings.

\item We establish linear convergence to a neighborhood of the optimal solution, in expectation (Theorem~\ref{thm:linearconvresults}) and with high probability (Theorem~\ref{thm:highprob}), for our framework when applied to strongly convex functions. We characterize the rate of convergence and the size of the neighborhood and show that they do not depend on the stochasticity in the problem (see Table~\ref{tbl:convres}), unlike standard stochastic optimization algorithms (see \cite[Theorem 4.6]{bottou2018optimization}). Furthermore, we demonstrate the theoretical benefits of increasing the number of directional derivatives utilized in randomized gradient estimation methods, both in improving the rate constants and the size of the neighborhood.

\item We provide the worst-case iteration complexity as $\mathcal{O}(\log(1/\epsilon))$ (Theorem~\ref{thm:itercomp}) and sample complexity (total stochastic function evaluations) as $\mathcal{O}(\frac{1}{\epsilon}\log(1/\epsilon))$ (Theorem~\ref{thm:totalworkcomp}) for our framework to achieve an $\epsilon$-accurate solution for strongly convex functions. We compare these complexities for different gradient estimation techniques and show that all randomized gradient estimation techniques are typically $d$ times worse, in terms of iteration complexity, than standard finite-difference methods when the number of directional derivatives utilized is quite small (see Table~\ref{tbl:tuned nu}). However, in terms of sample complexity, standard finite difference, randomized coordinate finite difference, and randomized subspace finite difference techniques achieve optimal complexity in terms of dependence on $d$ \cite{gasnikov2017stochastic,larson2019derivative}, while smoothing methods still tend to be $d$ times worse in comparison. Overall, all these methods achieve optimal worst-case iteration and near optimal worst-case sample complexities in terms of the dependence on $\epsilon$ \cite{bottou2018optimization,gasnikov2017stochastic,larson2019derivative}.

\item Finally, we demonstrate the efficiency and robustness of our framework through extensive numerical experiments on logistic regression and nonlinear least squares problems. We show that the practical adaptive sampling tests efficiently control the sample sizes. Moreover, we compare the empirical performances corresponding to different gradient estimation methods. Our empirical results indicate the presence of a \emph{threshold} value for the number of directional derivatives $N$, where larger values favor randomized coordinate finite difference methods over smoothing methods, and vice versa for smaller values of $N$. This relationship is particularly pronounced in problems with sufficiently large dimensions. 
\end{enumerate}

\subsection{Notation}
We denote the set of nonnegative integers by $\Z_{+} \defeq \{0,1,2,\dots\}$, and the set of positive integers by $\Z_{++} \defeq \{1,2,\dots\}$. The set of real numbers (scalars) is denoted by $\R$, the set of $d$-dimensional vectors is denoted by $\R^d$, and the set of $m$-by-$d$ matrices is denoted by $\R^{m\times d}$. Throughout this work, $\|\cdot\|$ denotes the $\ell_2$ vector norm or matrix norm. The transpose of a matrix $A \in \R^{m \times d}$ is denoted by $A^T \in \R^{d\times m}$. We denote the multivariate normal distribution by $\mathcal{N}$ and multivariate uniform distribution as $\mathcal{U}$. 
The variables of the optimization problem are denoted by $x \in \R^d$, and a minimizer of the objective $F$ as $x^*$.

\subsection{Organization of the Paper}
The paper is organized as follows. In Section~\ref{sec:methods}, we introduce the definitions, key assumptions, gradient estimation methods, adaptive sampling techniques, and the generic algorithmic framework. Theoretical convergence and complexity results for the proposed algorithmic framework are provided in Section~\ref{sec:theory}. In Section~\ref{sec:numericalexperiments}, we demonstrate the empirical performance of the proposed methods on logistic regression and nonlinear least squares problems. Finally, in Section~\ref{sec:conclusion}, we provide concluding remarks and discuss avenues for future research.

\section{Algorithmic Framework}
\label{sec:methods}

In this section, we present our algorithmic framework to solve problems of the form \eqref{eq:stochasticProblem}. We first discuss different gradient estimation approaches used to approximate the gradient of the objective function, $\nabla F(x)$, utilizing only stochastic function values $f(x,\zeta)$. Then, we describe the adaptive sampling approach to control the accuracy in these gradient estimations. Finally, we present our practical algorithmic framework based on the theoretical adaptive sampling approach.

Given samples $S_k = \{\zeta_1, \ldots, \zeta_{|S_k|}\}$ at any iteration $k$, we define a subsampled function by
\begin{equation}
\label{eq:subfuncdef}
F_{S_k}(x) \defeq \frac{1}{|S_k|} \sum_{\zeta_i \in S_k} f(x, \zeta_i)\qquad \forall x\in \R^d.
\end{equation}
We make the following fundamental assumption about the sample sets $\{S_k\}_k$.
\begin{assum}\label{assum:sampling}
	At every iteration $k$, the sample set $S_k$ consists of independent and identically distributed (i.i.d.)\ samples of $\zeta$. That is, for all $x \in \R^d$ and $k\in \Z_+$, 
    \begin{equation*}
	\E_{\zeta_i}[f(x,\zeta_i)] = F(x), \qquad \forall \zeta_i \in S_k.
    \end{equation*}
\end{assum}
We utilize the subsampled function $F_{S_k}(x)$ to estimate the gradient of the objective function, $\nabla F(x_k)$, at any iteration $k$. This estimation involves estimating directional derivatives along different sampled vectors
$T_k = \{u_1, \ldots, u_{|T_k|}\}$ at any iteration $k$. That is, given a set of vectors $T_k$, %
we %
formulate a general gradient estimator by utilizing the common random numbers (CRN) evaluations of $f$ in the following form:
\begin{equation}\label{eq:gen_grad_est}
g_{S_k, T_k}(x_k) \defeq \gamma_k \sum_{u_j \in T_k} \left( \frac{F_{S_k}(x_k + \nu u_j) - F_{S_k}(x_k)}{\nu} \right)u_j,
\end{equation}
where $\nu > 0$ is the sampling radius, and $\gamma_k > 0$ is a scaling coefficient. We make the following assumption about the choice of the sets of vectors $\{T_k\}_k$. 
\begin{assum}\label{assum:independ}
  At every iteration $k$, the vector set $T_k$ comprises sampled vectors that are chosen independently of the sample set $S_k$ consisting of i.i.d.\ samples of $\zeta$.
\end{assum}

Various gradient estimation methods, encompassing both deterministic and stochastic approaches \cite{blum1954multidimensional,kiefer1952stochastic,flaxman2005online,nesterov2017random,wright2015coordinate,berahas2021theoretical,kozak2021stochastic}, can be expressed using the form outlined in \eqref{eq:gen_grad_est}. These methods differ in their selection of set %
$T_k$ and the scaling coefficient $\gamma_k$. While our primary focus in this paper is on forward function differences, it is worth noting that our proposed algorithmic framework is also adaptable to central function difference approaches.

Using \eqref{eq:gen_grad_est}, the iterate update rule %
of a subsampled forward function difference based gradient estimation method is given as, 

\begin{equation} \label{eq:iter}
	x_{k+1} = x_k - \alpha_k g_{S_k,T_k}(x_k),
\end{equation}
where $\alpha_k$ is the step size parameter. 

We note that the generated $x_{k+1}$ is a random variable for $k\in \Z_+$; however, when conditioned on $x_k$, the only remaining source of randomness is from the sets $T_k$ and $S_k$. For ease of exposition, we denote the conditional expectations for the filtrations $\mathcal{F}_{k} = \sigma (x_0,\{T_1,S_1\},\{T_2,S_2\},\cdots,\{T_{k-1},S_{k-1}\})$ and $\mathcal{F}_{k + \tfrac{1}{2}} = \sigma (x_0,\{T_1,S_1\},\{T_2,S_2\},\cdots,\{T_{k-1},S_{k-1}\},T_k)$ as follows:
\begin{align*}
    \E_{T_k}[\cdot] &\defeq \E[\cdot| \mathcal{F}_{k}] \\
    \E_{S_k}[\cdot] &\defeq \E\left[\cdot\Big| \mathcal{F}_{k+\tfrac{1}{2}}\right]
\end{align*}

where $|T_k|$ is chosen to be fixed throughout the algorithm. Furthermore, to unify both deterministic and stochastic gradient estimation methods and avoid any confusion in the notation on dependence on $T_k$ for deterministic methods, we denote
\begin{align*}
\E_{k}[\cdot] &\defeq \E[\cdot| \mathcal{F}_{k}]. 
\end{align*}
We also define the following expected quantities that are used throughout the paper. 
\begin{align}
g_{T_k}(x_k) & \defeq \E_{S_k}[g_{S_k,T_k}(x_k)], \label{eq:defofgetk} \\ 
g(x_k) & \defeq \E_{T_k} [g_{T_k}(x_k)]. \label{eq:defofge}
\end{align}
\subsection{Gradient Estimation Methods} \label{sec:gradestmethods}
We now define and compare various gradient estimation methods including %
the standard finite difference method (FD), Gaussian smoothing method (GS), sphere smoothing method (SS), randomized coordinate finite difference method (RC) and randomized subspace finite difference method (RS).  The complete definitions of these methods are obtained by combining the equation for the general gradient estimator \eqref{eq:gen_grad_est} with the information in Table \ref{tbl:gammakandui}. %

\begin{table}[h]
\centering
\def\arraystretch{1.5}
\caption{
Definitions of $\gamma_k$ values and $u_j$ vectors for different gradient estimation methods. Here $e_j \in \R^d$ represents the $j$th canonical vector, $\mathcal{N}(0,I)$ denotes the standard multivariate normal distribution, and  $ \mathcal{U}(\mathcal{S}(0,1))$ denotes the multivariate uniform distribution on the surface of the unit sphere.
}
\begin{tabular}{|c|c|c|c|}
\hline
Method & $ \gamma_k $ & $u_j$ & Reference \\ \hline
FD & $1$ & $u_j = e_j$ & \cite{blum1954multidimensional,kiefer1952stochastic}\\
GS & $\tfrac{1}{|T_k|}$ & $u_j \sim \mathcal{N}(0,I)$ & \cite{nesterov2017random} \\
SS & $\tfrac{d}{|T_k|}$ & $u_j \sim \mathcal{U}(\mathcal{S}(0,1))$ & \cite{flaxman2005online} \\
RC & $\tfrac{d}{|T_k|}$ & $u_j = e_j$ & \cite{wright2015coordinate} \\
RS & $\tfrac{d}{|T_k|}$ & $u_j$ are orthonormal & \cite{berahas2021theoretical,kozak2021stochastic}\\
\hline
\end{tabular}
\label{tbl:gammakandui}
\end{table}

Now we elaborate on the properties of each method.

\subsubsection{Standard Finite Difference Method} \label{sec:FD}
The standard finite difference gradient estimator is defined as in \eqref{eq:gen_grad_est} and in Table \ref{tbl:gammakandui} \cite{blum1954multidimensional,kiefer1952stochastic}.
This method utilizes the estimates of the directional derivatives along each coordinate axis to form the gradient approximation. %
Specifically, the set $T_k$ consists of the canonical vectors $e_j \in \R^d$, thereby making it deterministic.  %
We introduce the notation $\nabla^{FD} F_{S_k}(x_k)$ to denote the gradient estimate $g_{S_k,T_k}(x_k)$, signifying that $T_k$ is deterministic, and the randomness in the gradient estimate $g_{S_k,T_k}(x_k)$ originates solely from $S_k$.
That is,
\begin{equation}\label{eq: subfdgrad}
    \nabla^{FD} F_{S_k}(x_k) \defeq \sum_{j = 1}^d \left( \frac{F_{S_k}(x_k + \nu e_j) - F_{S_k}(x_k)}{\nu} \right)  e_j.
\end{equation}

Considering the expectation of \eqref{eq:gen_grad_est}, we obtain
\begin{equation}\label{eq:expofFD}
\nabla^{FD} F(x_k) \defeq \E_k [\nabla^{FD} F_{S_k}(x_k)] = \sum_{j = 1}^d \left( \frac{F(x_k + \nu e_j) - F(x_k)}{\nu} \right)  e_j,
\end{equation}
where the equality is due to the linearity of expectation and Assumption~\ref{assum:sampling}. 
Note that the computation of $\nabla^{FD} F_{S_k}(x_k)$ requires $d+1$ number of evaluations of $F_{S_k}(x)$.  %
Therefore, although this method can lead to an accurate gradient estimate, it requires large number of function evaluations when $d$ is large. The remaining methods in this section address this limitation  by incorporating flexibility in selecting the number of function evaluations. %

\subsubsection{Gaussian Smoothing Method}\label{sec:GS}
The Gaussian smoothing method employs a variable number of function evaluations in the gradient estimation by computing the approximate gradients of the smoothed function
\cite{berahas2021theoretical,nesterov2017random}. %
Gaussian smoothing of a function $F(x)$ is defined as
\begin{equation*}
F_{\nu}^{GS}(x) \defeq  \E_{u \sim \mathcal{N}(0,I)}[F(x + \nu u)] = \int_{\R^{d}} F(x + \nu u) \pi(u|0,I) du,
\end{equation*}
where $\pi(u|0,I)$ is the probability density function of $\mathcal{N}(0,I)$. The smoothed function $F_{\nu}^{GS}(x)$ is differentiable, and the gradient is given as
\begin{equation}\label{eq:defofnablaFnuGS}
\nabla F_{\nu}^{GS}(x) =  \frac{1}{\nu} \E_{u \sim \mathcal{N}(0,I)}[F(x + \nu u)u].
\end{equation}

Computing this gradient is challenging, %
as it necessitates the exact evaluation of the expectation. Instead, the gradient can be effectively estimated using a sample average approximation obtained by sampling $N$ independent and identically distributed (i.i.d.) vectors, denoted as $u_j \sim \mathcal{N}(0,1)$ for all $j = 1, \ldots, N$. That is,

\begin{equation*}
\hat g(x) \defeq  \frac{1}{N} \sum_{j = 1}^{N} \frac{F(x + \nu u_j)u_j}{\nu}.
\end{equation*}
Note that $\hat g(x)$ can become arbitrarily large when $\nu$ is chosen to be small. Therefore the gradient estimator is often modified to be
\begin{equation*}
\hat g(x) \defeq  \frac{1}{N} \sum_{j = 1}^{N} \frac{(F(x + \nu u_j) - F(x))u_j}{\nu},
\end{equation*}
where the expected value of the added term is zero, i.e., $\E_{u \sim \mathcal{N}(0,I)}[F(x)u] =0$. Hence, the modified estimate  remains an unbiased estimator of $\nabla F_{\nu}^{GS}(x)$.   
Now, consider $T_k$ as the set of i.i.d. vectors $u_j \sim \mathcal{N}(0,1)$ at iteration $k$. The Gaussian smoothing gradient estimator is then defined as in \eqref{eq:gen_grad_est} and Table \ref{tbl:gammakandui}. Considering the expectation of \eqref{eq:gen_grad_est}, we obtain
\begin{align}\label{eq:expofGS}
\E_k[g_{S_k,T_k}(x_k)] & = \E_{T_k} \left [ \E_{S_k} \left [ \frac{1}{|T_k|} \sum_{u_j \in T_k} \left( \frac{F_{S_k}(x_k + \nu u_j) - F_{S_k}(x_k)}{\nu} \right) u_j \right ] \right ] \nonumber \\
& = \E_{T_k} \left [ \frac{1}{|T_k|} \sum_{u_j \in T_k} \left( \frac{F(x_k + \nu u_j) - F(x_k)}{\nu} \right) u_j \right ] = \nabla F_{\nu}^{GS}(x_k),
\end{align}
where the second equality is due to the linearity of expectation and Assumptions~\ref{assum:sampling} and~\ref{assum:independ}, and the third equality is due to the sample average approximation of \eqref{eq:defofnablaFnuGS} and due to the fact that $\E_{u \sim \mathcal{U}(\mathcal{N}(0,1))}[F(x)u] = 0$.
\subsubsection{Sphere Smoothing Method}\label{sec:SS}
An alternative approach to the Gaussian smoothing method is the sphere smoothing method, wherein the vectors $u$ are sampled from the surface of a unit sphere \cite{berahas2021theoretical,flaxman2005online}. In contrast to the Gaussian smoothing method, this technique guarantees that the norms of the vectors are bounded, i.e., $\|u\| = 1$.
The sphere smoothing of a function $F(x)$ is defined as follows:  %
\begin{equation*}
F_{\nu}^{SS}(x) \defeq  \E_{u \sim \mathcal{U}(\mathcal{B}(0,1))}[F(x + \nu u)] = \int_{\mathcal{B}(0,1)} F(x + \nu u) \frac{1}{V_n(1)} du,
\end{equation*}
where $\mathcal{U}(\mathcal{B}(0,1))$ is the multivariate uniform distribution on a unit ball, and $V_n(1)$ represents the volume of a unit sphere. The smoothed function $F_{\nu}^{SS}(x)$ is differentiable, and the gradient is given as
\begin{equation}\label{eq:defofnablaFnuSS}
\nabla F_{\nu}^{SS}(x) =  \frac{d}{\nu} \E_{u \sim \mathcal{U}(\mathcal{S}(0,1))}[F(x + \nu u)u],
\end{equation} 
where $\mathcal{U}(\mathcal{S}(0,1)$ is the multivariate uniform distribution on the surface of the unit sphere.

Using a similar derivation of the sample average approximator for Gaussian smoothing method, sphere smoothing gradient estimator is defined as in \eqref{eq:gen_grad_est} and Table \ref{tbl:gammakandui}.

Considering the expectation of \eqref{eq:gen_grad_est}, we obtain
\begin{align}\label{eq:expofSS}
\E_k[g_{S_k,T_k}(x_k)] & = \E_{T_k} \left [ \E_{S_k} \left [ \frac{d}{|T_k|} \sum_{u_j \in T_k} \left( \frac{F_{S_k}(x_k + \nu u_j) - F_{S_k}(x_k)}{\nu} \right) u_j \right ] \right ] \nonumber \\
& = \E_{T_k} \left [ \frac{d}{|T_k|} \sum_{u_j \in T_k} \left( \frac{F(x_k + \nu u_j) - F(x_k)}{\nu} \right) u_j \right ] = \nabla F_{\nu}^{SS}(x_k),
\end{align}
where the second equality is due to the linearity of expectation and Assumptions~\ref{assum:sampling} and~\ref{assum:independ}, and the third equality is due to the sample average approximation of \eqref{eq:defofnablaFnuSS} and due to the fact that $\E_{u \sim \mathcal{U}(\mathcal{S}(0,1))}[F(x)u] = 0$.
\subsubsection{Randomized Coordinate Finite Difference Method}\label{sec:RC}
The randomized coordinate finite difference method \cite{wright2015coordinate} offers the flexibility to choose the number of canonical vectors in the finite difference method while obviating the necessity for using smoothing methods. The idea is simple yet effective: instead of computing directional derivatives along all canonical directions, only a subset of them is considered. %
Specifically, the set of sampled vectors $T_k$ is chosen without replacement from $\{e_1,\dots,e_d\}$ with $|T_k| \leq d$, and the randomized coordinate finite difference gradient estimator is defined as in \eqref{eq:gen_grad_est} and in Table \ref{tbl:gammakandui}.  

For simplicity, we also define a compact form of notation for the randomized coordinate finite difference gradient estimator as
\begin{align}\label{eq:RCFFDcompactform}
    g_{S_k,T_k}(x_k) = \frac{d}{|T_k|}[\nabla^{FD} F_{S_k}(x_k)]_{T_k},
\end{align}
where
\[   
[\nabla^{FD} F_{S_k}(x_k)]_{T_k} \defeq 
     \begin{cases}
        \nabla^{FD} F_{S_k}(x_k)^T e_j &\quad\text{if} \quad e_j \in T_k, \\
        0 &\quad\text{if} \quad e_j \notin T_k, \\
     \end{cases}
\]
and $\nabla^{FD} F_{S_k}(x_k)$ is defined in \eqref{eq: subfdgrad}.
Consider the expectation of \eqref{eq:gen_grad_est},
\begin{align}\label{eq:expofRC}
\E_k[g_{S_k,T_k}(x_k)] & = \E_{T_k} \left [ \E_{S_k} \left [ \frac{d}{|T_k|} \sum_{e_j \in T_k} \left( \frac{F_{S_k}(x_k + \nu e_j) - F_{S_k}(x_k)}{\nu} \right) e_j \right ] \right ] \nonumber \\
& = \E_{T_k} \left [ \frac{d}{|T_k|} \sum_{e_j \in T_k} \left( \frac{F(x_k + \nu e_j) - F(x_k)}{\nu} \right) e_j \right ] = \nabla^{FD} F(x_k),
\end{align}
where the second equality is due to the linearity of expectation and Assumptions~\ref{assum:sampling} and~\ref{assum:independ}, and the third equality is due to the distribution of the $e_j$ vectors, which follows a discrete uniform distribution without replacement, and due to \eqref{eq:expofFD}. %
\subsubsection{Randomized Subspace Finite Difference Method}\label{sec:RS}
The randomized coordinate finite difference method can be generalized to any random subspace using the randomized subspace finite difference method \cite{berahas2021theoretical,kozak2021stochastic}. In this approach, instead of considering canonical directions, any set of orthonormal directions can be used to estimate the gradient. %
Specifically, the set of sampled vectors $T_k$ is chosen without replacement from the set of random orthonormal vectors $\Tilde T_k \defeq \{u^{(k)}_1,\dots,u^{(k)}_d\}$ with $|T_k| \leq d$. 

The randomized subspace finite difference gradient estimator is then defined as in \eqref{eq:gen_grad_est} and in Table \ref{tbl:gammakandui}.

Without loss of generality, we define $T_k:= \{u^{(k)}_1,u^{(k)}_2,\dots,u^{(k)}_{|T_k|}\}$ and  introduce the following quantities to denote the gradient estimator in a compact form, %
\[
U_{T_k} \defeq \begin{bmatrix}
| & | & \cdots & |\\
u^{(k)}_1 & u^{(k)}_2 & \cdots & u^{(k)}_{|T_k|}\\
| & | & \cdots & | \\
\end{bmatrix} \quad \text{and} \quad
b_{S_k,T_k} \defeq \begin{bmatrix}
\frac{F_{S_k}(x_k + \nu u^{(k)}_1) - F_{S_k}(x_k)}{\nu} \\
\frac{F_{S_k}(x_k + \nu u^{(k)}_2) - F_{S_k}(x_k)}{\nu}\\
\cdots \\
\frac{F_{S_k}(x_k + \nu u^{(k)}_{|T_k|}) - F_{S_k}(x_k)}{\nu}\\
\end{bmatrix}. \]
Using these quantities, the gradient estimator \eqref{eq:gen_grad_est} can %
be expressed as 
\begin{align}\label{eq:RScompactform}
    g_{S_k,T_k}(x_k) = \frac{d}{|T_k|} U_{T_k} b_{S_k,T_k}.
\end{align}
We also define $b_{T_k} \defeq \E_{S_k}[b_{S_k,T_k}]$, and consider the expectation of \eqref{eq:RScompactform},
\begin{align}\label{eq:expofRS}
\E_k[g_{S_k,T_k}(x_k)] & = \frac{d}{|T_k|} 
 \E_{T_k} [U_{T_k} b_{T_k}] = \E_{\Tilde T_k} [U_{\Tilde T_k} b_{\Tilde T_k}],
\end{align}
where $\E_{\Tilde T_k}[\cdot]$ is the expectation with respect to the random orthonormal basis\footnote{We do not evaluate the expectation $\mathbb{E}_{\tilde{T}_k}[\cdot]$ in our analysis in Section~\ref{sec:theory}, as we provide deterministic bounds that hold for any choice of $\tilde{T}_k$. Furthermore, note that choosing a random orthonormal basis and then randomly sampling directions without replacement from the basis in one of the many approaches to model randomized subspace method. }. 

\subsection{Adaptive Sampling}\label{sec:as}
In section \ref{sec:gradestmethods}, we discussed different gradient estimation techniques. These estimators employ subsampled function approximations of the deterministic functions. Therefore, it is crucial to control the accuracy in these approximations by choosing the sample size $|S_k|$ appropriately at each iteration $k$ to ensure the quality of the steps taken by the iterate update form in \eqref{eq:iter}. 

Adaptive sampling strategies have garnered significant attention recently for controlling the accuracy in iterate updates \cite{byrd2012sample,BOLLAPRAGADA2023239,bollapragada2018adaptive,bollapragada2018progressive,bollapragada2023adaptive,cartis2018global,hashemi2014adaptive,xie2020constrained}. They have found applications in various optimization settings, including gradient-based unconstrained \cite{byrd2012sample,bollapragada2018adaptive,bollapragada2018progressive,cartis2018global,hashemi2014adaptive} and constrained scenarios \cite{BOLLAPRAGADA2023239,xie2020constrained}. In this work, we extend these strategies to derivative-free settings, where gradients are estimated using stochastic function values, %
to determine the sample sizes employed in stochastic approximations. %
We make the following assumption about the smoothness of the objective function.

\begin{assum}\label{assum:LipschitzF}
	The objective function $F:\R^d \rightarrow \R$ is a continuously differentiable function. Additionally, it has Lipschitz continuous gradients with Lipschitz constant $L_{\nabla F} < \infty$. That is,
    \begin{equation*}
	\| \nabla F(x) - \nabla F(y) \| \leq L_{\nabla F} \| x - y \| \quad \forall x,y \in \R^d.
    \end{equation*}
\end{assum}
In addition, we assume the smoothness of the stochastic function for bounding the variance in gradient estimation and in complexity analysis (as discussed in Section~\ref{sec:companalysis}), which is given as follows:
\begin{assum}\label{assum:Lipschitzstochf}
	For every $\zeta$, the stochastic function $f(\cdot,\zeta):\R^d \rightarrow \R$ is a continuously differentiable function. Additionally, it has Lipschitz continuous gradients with Lipschitz constant $L_{\nabla f} < \infty$. That is,
    \begin{equation*}
	\| \nabla f(x,\zeta) - \nabla f(y,\zeta) \| \leq L_{\nabla f} \| x - y \| \quad \forall x,y \in \R^d.
    \end{equation*}
\end{assum}
\begin{remark}
    Note that Assumption \ref{assum:Lipschitzstochf} implies Assumption \ref{assum:LipschitzF} with $L_{\nabla F} \leq L_{\nabla f}$. Moreover, this assumption ensures that the  
    subsampled function $F_{S_k}(x)$ has %
    Lipschitz continuous gradients with Lipschitz constant $L_{\nabla f}$.
\end{remark}
A well-known adaptive sampling condition to control the accuracy in the gradient approximation is the \textit{norm condition} \cite{byrd2012sample,carter1991norm}. A gradient estimator $g_k$ is said to satisfy the norm condition if
\begin{equation}\label{eq:detnormcond}
    \|g_k - \nabla F(x_k)\|^2 \leq \theta^2 \|\nabla F(x_k)\|^2
\end{equation}
holds for a given $\theta > 0$. %
In the stochastic settings, we can %
ensure satisfying this condition in expectation. Thus, a natural extension of this norm condition to our framework would be as follows:
\begin{equation}\label{eq:idealnormcond1}
    \E_{k}[\|g_{S_k,T_k} (x_k) - \nabla F(x_k)\|^2] \leq \theta^2 \|\nabla F(x_k)\|^2 , \quad \theta > 0.
\end{equation}
However, this condition is not readily applicable 
to our settings, as illustrated by the following partition of the error term into three terms.   
\begin{equation*}
    g_{S_k,T_k} (x_k) - \nabla F(x_k) = \underbrace{g_{S_k,T_k} (x_k) - g_{T_k} (x_k)}_{\text{function sampling error}} + \underbrace{g_{T_k} (x_k) - g (x_k)}_{\text{vector sampling error}} + \underbrace{g(x_k) - \nabla F(x_k).}_{ \text{bias}}
\end{equation*}
We can only control the \textit{function sampling error} via adaptive sampling mechanisms as we fix the sample size of the number of vectors (i.e. $|T_k|$) at all iterations in our framework, and the \textit{bias} term is inherent in the derivative-free settings. Therefore \eqref{eq:idealnormcond1} cannot be satisfied at every iterate without any control on $|T_k|$
and the bias term. To overcome this limitation, we focus solely on the \textit{function sampling error} and modify the condition as 
\begin{equation}\label{eq:idealnormcond2}
    \E_{S_k}[\| g_{S_k,T_k}(x_k) - g_{T_k}(x_k)\|^2] \leq \theta^2  \|g_{T_k}(x_k)\|^2 , \quad \theta > 0.
\end{equation}
This condition can be further simplified by bounding the left-hand side of \eqref{eq:idealnormcond2} using the variance of individual gradients associated with $\zeta_i$ in $S_k$. That is, 
\begin{equation}\label{eq:upperboundnormcond}
    \E_{S_k}[\| g_{S_k,T_k}(x_k) - g_{T_k}(x_k)\|^2] \leq \frac{\E_{\zeta_i}[\| g_{\zeta_i, T_k}(x_k) - g_{T_k}(x_k)\|^2]}{|S_k|}.
\end{equation}
This upper bound is only valid when $\E_{\zeta_i}[\| g_{\zeta_i, T_k}(x_k) - g_{T_k}(x_k)\|^2] < \infty$. Therefore, we make the following standard assumption in the stochastic optimization literature \cite{bottou2018optimization} to ensure that this term is bounded. %
\begin{assum}\label{assum:boundedvarinstochgrad}
The variance in the stochastic gradient $\nabla f (x, \zeta_i)$ is bounded. That is, there exist some constants $\beta_1,\beta_2 \geq 0$ such that
\begin{align*}
    \E_{\zeta_i}[\| \nabla f (x, \zeta_i) - \nabla F (x)\|^2] \leq \beta_1 \|\nabla F (x)\|^2 + \beta_2, \quad \forall x \in \R^d.
\end{align*} 
\end{assum}

\begin{remark}
Suppose Assumptions \ref{assum:sampling}, \ref{assum:independ}, \ref{assum:Lipschitzstochf}, and \ref{assum:boundedvarinstochgrad} hold. %
Then, we have $\E_{\zeta_i}[\| g_{\zeta_i, T_k}(x_k) - g_{T_k}(x_k)\|^2] < \infty$ for each of the gradient estimation methods considered in Section~\ref{sec:gradestmethods}; the proof is provided in \ref{sec:proofofboundedvar}.
\end{remark}
Adopting the upper bound in \eqref{eq:upperboundnormcond} results in the following theoretical condition.
\begin{cond}\label{cond:theoreticalnormcond} (Theoretical Norm Condition 1)
\begin{align}\label{eq:theoreticalnormcond}
    \frac{\E_{\zeta_i}[\| g_{\zeta_i, T_k}(x_k) - g_{T_k}(x_k)\|^2]}{|S_k|} \leq \theta^2  \|g_{T_k}(x_k)\|^2 , \quad \theta > 0.
\end{align}
\end{cond}
Although Condition \ref{cond:theoreticalnormcond} seems to be a natural extension of the well-known norm condition \cite{byrd2012sample}, some unique challenges arise in our setting. Note that $\|g_{T_k}(x_k)\|^2$ term is stochastic due to $T_k$ (except for the FD method), hence it may become arbitrarily small and drive the sample size $|S_k|$ to be too large even when the current iterate is far away from the optimal solution. Since this phenomenon can lead to an inefficient choice of $|S_k|$, we propose alternative variants of Condition \ref{cond:theoreticalnormcond} as follows:
\begin{cond}\label{cond:theoreticalnormcond2} (Theoretical Norm Condition 2)
\begin{align}\label{eq:theoreticalnormcond2}
    \frac{\E_{\zeta_i}[\| g_{\zeta_i, T_k}(x_k) - g_{T_k}(x_k)\|^2]}{|S_k|} \leq \theta^2 \E_{T_k}[\|g_{T_k}(x_k)\|^2] , \quad \theta > 0.
\end{align}
\end{cond}
Note that the term $\E_{\zeta_i}[\| g_{\zeta_i, T_k}(x_k) - g_{T_k}(x_k)\|^2]$ in \eqref{eq:theoreticalnormcond2} is also stochastic due to $T_k$, hence we can design the following condition in which we consider the expectation of this term.
\begin{cond}\label{cond:theoreticalnormcond3} (Theoretical Norm Condition 3)
\begin{align}\label{eq:theoreticalnormcond3}
    \frac{\E_{T_k}[\E_{\zeta_i}[\| g_{\zeta_i, T_k}(x_k) - g_{T_k}(x_k)\|^2]]}{|S_k|} \leq \theta^2 \E_{T_k}[\|g_{T_k}(x_k)\|^2] , \quad \theta > 0.
\end{align}
\end{cond}
\begin{remark}
    Taking expectations helps mitigate potential issues arising from the randomness of the samples of $g_{T_k}(x_k)$. Therefore, Condition \ref{cond:theoreticalnormcond2} is less prone to the randomness in $T_k$ compared to Condition \ref{cond:theoreticalnormcond}, hence it addresses our concern. Furthermore, Condition \ref{cond:theoreticalnormcond3} minimizes the impact of randomness, making it the most robust version among the three. On the other hand, calculating the expectation $\E_{T_k}[\cdot]$ in practice is inefficient. While there are merits and demerits to all these conditions, they all satisfy the following Lemma \ref{lem:theoreticalnormcond}, which is employed in developing the theoretical results.  %
\end{remark}

\begin{lem}\label{lem:theoreticalnormcond}
If either of Condition \ref{cond:theoreticalnormcond}, Condition \ref{cond:theoreticalnormcond2} or Condition \ref{cond:theoreticalnormcond3} is satisfied for some $\theta > 0$, then we have
\begin{align*}
    \E_{T_k} [ \E_{S_k}[\|g_{S_k,T_k}(x_k) - g_{T_k}(x_k) \|^2]] + \E_{T_k} [ \|g_{T_k}(x_k) \|^2]
    \leq (1 + \theta^2) \E_{T_k} [\|g_{T_k}(x_k)\|^2].
\end{align*}
\end{lem}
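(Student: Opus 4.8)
The plan is to show that each of the three conditions implies the single inequality
\begin{equation*}
\E_{S_k}[\|g_{S_k,T_k}(x_k) - g_{T_k}(x_k)\|^2] \leq \theta^2 \|g_{T_k}(x_k)\|^2 \quad \text{or its } \E_{T_k}\text{-averaged analogue,}
\end{equation*}
and then to take $\E_{T_k}[\cdot]$ of both sides and add $\E_{T_k}[\|g_{T_k}(x_k)\|^2]$. The common workhorse is the variance bound \eqref{eq:upperboundnormcond}, namely $\E_{S_k}[\|g_{S_k,T_k}(x_k) - g_{T_k}(x_k)\|^2] \leq \E_{\zeta_i}[\|g_{\zeta_i,T_k}(x_k) - g_{T_k}(x_k)\|^2]/|S_k|$, which holds because $S_k$ consists of i.i.d.\ samples (Assumption~\ref{assum:sampling}) and $g_{T_k}(x_k) = \E_{S_k}[g_{S_k,T_k}(x_k)]$ by definition \eqref{eq:defofgetk}; this is just the standard ``variance of a sample mean'' identity applied conditionally on $\mathcal{F}_{k+\frac12}$ (i.e., with $T_k$ fixed).

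First I would handle Condition~\ref{cond:theoreticalnormcond}. Chaining \eqref{eq:upperboundnormcond} with \eqref{eq:theoreticalnormcond} gives $\E_{S_k}[\|g_{S_k,T_k}(x_k) - g_{T_k}(x_k)\|^2] \leq \theta^2 \|g_{T_k}(x_k)\|^2$ pointwise in $T_k$; applying $\E_{T_k}[\cdot]$ (which is legitimate by the tower property since $\mathcal{F}_k \subseteq \mathcal{F}_{k+\frac12}$) yields $\E_{T_k}[\E_{S_k}[\|g_{S_k,T_k}(x_k) - g_{T_k}(x_k)\|^2]] \leq \theta^2 \E_{T_k}[\|g_{T_k}(x_k)\|^2]$, and adding $\E_{T_k}[\|g_{T_k}(x_k)\|^2]$ to both sides gives the claim. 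For Condition~\ref{cond:theoreticalnormcond3}, I would instead first apply $\E_{T_k}[\cdot]$ to \eqref{eq:upperboundnormcond} to get $\E_{T_k}[\E_{S_k}[\|g_{S_k,T_k}(x_k) - g_{T_k}(x_k)\|^2]] \leq \E_{T_k}[\E_{\zeta_i}[\|g_{\zeta_i,T_k}(x_k) - g_{T_k}(x_k)\|^2]]/|S_k|$, and then invoke \eqref{eq:theoreticalnormcond3} directly on the right-hand side; the rest is identical.

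The slightly more delicate case is Condition~\ref{cond:theoreticalnormcond2}, since its left-hand side $\E_{\zeta_i}[\|g_{\zeta_i,T_k}(x_k) - g_{T_k}(x_k)\|^2]/|S_k|$ is still $T_k$-dependent while its right-hand side $\theta^2 \E_{T_k}[\|g_{T_k}(x_k)\|^2]$ is already deterministic. Here I would combine \eqref{eq:upperboundnormcond} with \eqref{eq:theoreticalnormcond2} to obtain $\E_{S_k}[\|g_{S_k,T_k}(x_k) - g_{T_k}(x_k)\|^2] \leq \theta^2 \E_{T_k}[\|g_{T_k}(x_k)\|^2]$ pointwise in $T_k$, then take $\E_{T_k}[\cdot]$; since the right-hand side is constant with respect to $T_k$, this gives $\E_{T_k}[\E_{S_k}[\|g_{S_k,T_k}(x_k) - g_{T_k}(x_k)\|^2]] \leq \theta^2 \E_{T_k}[\|g_{T_k}(x_k)\|^2]$, and again adding $\E_{T_k}[\|g_{T_k}(x_k)\|^2]$ finishes it. So in all three cases the conclusion follows once \eqref{eq:upperboundnormcond} is in hand.

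I expect the main (modest) obstacle to be bookkeeping about which $\sigma$-algebra each expectation is conditioned on—making sure that $\E_{S_k}[\cdot] = \E[\cdot \mid \mathcal{F}_{k+\frac12}]$ has $T_k$ measurable, so that $g_{T_k}(x_k)$ behaves as a constant inside $\E_{S_k}$ and the sample-mean variance identity \eqref{eq:upperboundnormcond} is valid; and that $\E_{T_k}[\cdot] = \E[\cdot \mid \mathcal{F}_k]$ composes correctly via iterated expectations, $\E_{T_k}[\E_{S_k}[\cdot]] = \E[\cdot \mid \mathcal{F}_k]$. Everything else is a one-line substitution. For completeness I would also note at the outset that the quantities appearing are finite, citing the remark preceding the lemma (proved in \ref{sec:proofofboundedvar}), so that all the expectations and the bound \eqref{eq:upperboundnormcond} are well defined.
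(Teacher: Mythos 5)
Your proposal is correct and follows essentially the same route as the paper's proof: combine the sample-mean variance bound \eqref{eq:upperboundnormcond} with whichever condition holds, take $\E_{T_k}[\cdot]$, and add $\E_{T_k}[\|g_{T_k}(x_k)\|^2]$ to both sides. The paper states this in one line; your case-by-case elaboration (including the measurability bookkeeping) is a faithful expansion of the same argument.
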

\begin{proof}
Considering \eqref{eq:upperboundnormcond} along with either of Condition \ref{cond:theoreticalnormcond}, Condition \ref{cond:theoreticalnormcond2} or Condition \ref{cond:theoreticalnormcond3} and taking the expectation $\E_{T_k} [\cdot]$ on both sides completes the proof. %
\end{proof}

\subsection{The Complete Algorithm}\label{sec:alg}
We now provide a complete description of our practical algorithmic framework. Evaluating the theoretical Conditions \ref{cond:theoreticalnormcond}, \ref{cond:theoreticalnormcond2}, or \ref{cond:theoreticalnormcond3} provided in Section~\ref{sec:as} require computing the population variances and expectations of norm of the gradient estimators. Although these quantities are not readily available in practice, any bounds on these quantities are sufficient to verify the conditions. Inspired by earlier works in adaptive sampling approaches \cite{byrd2012sample,bollapragada2018adaptive,bollapragada2018progressive}, we propose approximating the population quantities with sample quantities. These approximations become more accurate as the algorithm progresses due to the utilization of larger sample sizes.  Furthermore the error between the population and sample quantities can be bounded using, for example, empirical Bernstein inequalities \cite{AUDIBERT20091876}, with additional assumptions on the random variables. Hence, we propose the following practical test to control the sample sizes at each iteration:

\begin{test}\label{test:practicalnormtest} (Practical Norm Test)
\begin{align}\label{eq:practicalnormtest}
    \frac{Var_{\zeta_i \in S_k^v}\left( g_{\zeta_i, T_k}(x_k)\right) }{|S_k|} \leq \theta^2  \|{g_{S_k,T_k}(x_k)}\|^2, \quad \theta>0,
\end{align}
where $S_k^v \subseteq S_k$ and $Var_{\zeta_i \in S_k^v}\left( g_{\zeta_i, T_k}(x_k)\right) = \frac{1}{|S_k^v| - 1} \sum_{\zeta_i \in S_k^v} \| g_{\zeta_i, T_k}(x_k) - g_{S_k,T_k}(x_k) \|^2 $.
\end{test}
Therefore, in the algorithm when $|S_k|$ does not satisfy Test \ref{test:practicalnormtest}, we let
\begin{equation}\label{eq:defofSk}
    |\hat S_k| \defeq \left\lceil\frac{Var_{\zeta_i \in S_k^v}\left( g_{\zeta_i, T_k}(x_k)\right) }{\theta^2  \|{g_{S_k,T_k}(x_k)}\|^2} \right\rceil.
\end{equation}
To further minimize computational efforts, we adopt a strategy where, instead of sampling an entirely new set $\hat S_k$ of samples, we only append additional $|\hat S_k| - |S_k|$ samples to the current set $S_k$. The pseudocode of the resulting method is outlined in Algorithm \ref{alg:pseudo_code}.  This algorithm adopts a fixed step size and a fixed number of vectors to be sampled at every iteration. Hence, we consider $\alpha$, $\nu$, and $|T_k|$ as tunable hyperparameters.
\begin{algorithm}%
\caption{Derivative-Free Optimization via Adaptive Sampling Strategies} \label{alg:pseudo_code}
\noindent \textbf{Input:} Initial iterate $x_0$, initial sample size $|S_0|$, sampling radius $\nu$, sequence of the size of the set of vectors $\{|T_k|\}$, step size sequence $\{\alpha_k\}$, and a gradient estimation method as described in Table \ref{tbl:gammakandui}. \\
\textbf{Initialization:} Set $k \leftarrow 0$ \\
\textbf{Repeat} until convergence:
    \begin{algorithmic}[1]	
        \STATE Construct the set $T_k$ with vectors $u_j$ as described in Table \ref{tbl:gammakandui}
        \STATE Choose a set $S_k$ with $|S_k|$ number of i.i.d. realizations of $\zeta_i$
        \STATE Compute $g_{S_k,T_k}(x_k)$ using \eqref{eq:gen_grad_est} and Table \ref{tbl:gammakandui}
        \IF{Test \ref{test:practicalnormtest} is NOT satisfied}
        \STATE Increase the sample size to $|\hat S_k|$ such that Test \ref{test:practicalnormtest} is satisfied
        \STATE Add $|\hat S_k| - |S_k|$ number of samples to $S_k$
        \STATE Compute $g_{S_k,T_k}(x_k)$ using \eqref{eq:gen_grad_est} and Table \ref{tbl:gammakandui}
        \ENDIF
        \STATE Compute next iterate $x_{k+1} = x_k - \alpha_k g_{S_k,T_k}(x_k)$ 
        \STATE Set $k \leftarrow k + 1$
    \end{algorithmic}
\end{algorithm}

\section{Theoretical Results}\label{sec:theory}
In this section, we present theoretical convergence and complexity results for iterates generated by the iterate update form given in \eqref{eq:iter}, employing different gradient estimation methods introduced in Section \ref{sec:gradestmethods} and the adaptive sampling approach discussed in Section \ref{sec:as}. %
We first state a technical lemma that is utilized in our analysis.
\begin{lem} (Descent Lemma) \cite{bertsekas2003convex}. \label{lem:descent}  If Assumption \ref{assum:LipschitzF} holds, then
    \begin{equation*}
	F(y) \leq F(x) + (y - x)^T \nabla F(x) + \frac{L_{\nabla F}}{2} \| y - x \|^2 \quad \forall x,y \in \R^d.
    \end{equation*}
\end{lem}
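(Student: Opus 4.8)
The plan is to prove the Descent Lemma by integrating the directional derivative of $F$ along the line segment joining $x$ and $y$. First I would introduce the scalar function $\phi(t) \defeq F(x + t(y-x))$ for $t \in [0,1]$; since $F$ is continuously differentiable by Assumption~\ref{assum:LipschitzF}, $\phi$ is continuously differentiable with $\phi'(t) = \nabla F(x + t(y-x))^T(y-x)$, and the fundamental theorem of calculus gives
\begin{equation*}
F(y) - F(x) = \phi(1) - \phi(0) = \int_0^1 \nabla F(x + t(y-x))^T (y-x)\, dt.
\end{equation*}

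Next I would subtract the linear term $\nabla F(x)^T(y-x) = \int_0^1 \nabla F(x)^T(y-x)\,dt$ from both sides, yielding
\begin{equation*}
F(y) - F(x) - \nabla F(x)^T(y-x) = \int_0^1 \bigl(\nabla F(x + t(y-x)) - \nabla F(x)\bigr)^T (y-x)\, dt.
\end{equation*}
Then I would bound the integrand from above: by the Cauchy--Schwarz inequality it is at most $\|\nabla F(x + t(y-x)) - \nabla F(x)\|\,\|y-x\|$, and by the Lipschitz continuity of $\nabla F$ (Assumption~\ref{assum:LipschitzF}) this is at most $L_{\nabla F}\,\|t(y-x)\|\,\|y-x\| = L_{\nabla F}\, t\, \|y-x\|^2$. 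Integrating this bound over $t \in [0,1]$ gives $\int_0^1 L_{\nabla F}\, t\, \|y-x\|^2\, dt = \tfrac{L_{\nabla F}}{2}\|y-x\|^2$, which combined with the displayed identity yields the claimed inequality after rearranging.

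Since this is a classical result, I do not anticipate any real obstacle; the only point requiring minor care is the justification of the fundamental theorem of calculus for $\phi$, which is immediate from the continuous differentiability assumed in Assumption~\ref{assum:LipschitzF}, together with the standard fact that composition of a differentiable function with an affine map is differentiable with the stated chain-rule derivative. Everything else is a routine estimate. Given that the statement is already attributed to \cite{bertsekas2003convex}, it would also be acceptable to simply cite that reference rather than reproduce the argument, but the short self-contained proof above is preferable for completeness.
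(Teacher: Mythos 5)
Your proof is correct and is the standard integral-remainder argument for the descent lemma; the paper itself offers no proof, only the citation to \cite{bertsekas2003convex}, and your argument is exactly the classical one found there. Nothing further is needed.
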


\subsection{Convergence Analysis}
We establish linear convergence to a neighborhood of the solution for strongly convex functions. The following fundamental lemma is used in this convergence analysis.  %
\begin{lem}%
\label{lem:derivationfromdescentlemma}
    For any $x_0$, let $\{x_k: k\in \Z_{++}\}$ iterates be generated by \eqref{eq:iter} with $|S_{k}|$ chosen by either of Condition \ref{cond:theoreticalnormcond}, Condition \ref{cond:theoreticalnormcond2} or Condition \ref{cond:theoreticalnormcond3} for a given constant $\theta > 0$, and suppose that Assumptions \ref{assum:sampling},  \ref{assum:independ}, \ref{assum:Lipschitzstochf} and \ref{assum:boundedvarinstochgrad} hold.
    Then, for any $k\in \Z_{+}$ where $\alpha_k > 0$, we have
    \begin{align*}
     \E_{k}[F(x_{k+1})]
    & \leq  F(x_k) + \frac{\alpha_k}{2} \|\delta_k\|^2 - \frac{\alpha_k}{2} \|\nabla F(x_k)\|^2 \\
    &\quad + L_{\nabla F} \alpha_k^2 (1 + \theta^2)\left(\|\delta_k\|^2 + \|\nabla F (x_k)\|^2 + \frac{1}{2}(\E_{T_k}[\|g_{T_k}(x_{k}) - g(x_k)\|^2]) \right),
    \end{align*}
    where $\delta_k \defeq g(x_k) - \nabla F(x_k) $ and $g_{T_k}(x_k)$, $g(x_k)$ are defined in \eqref{eq:defofgetk} and \eqref{eq:defofge} respectively.
\end{lem}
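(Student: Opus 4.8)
The plan is to start from the Descent Lemma (Lemma~\ref{lem:descent}) applied to $y = x_{k+1} = x_k - \alpha_k g_{S_k,T_k}(x_k)$ and $x = x_k$, which gives
\begin{equation*}
F(x_{k+1}) \leq F(x_k) - \alpha_k \nabla F(x_k)^T g_{S_k,T_k}(x_k) + \frac{L_{\nabla F}\alpha_k^2}{2}\|g_{S_k,T_k}(x_k)\|^2.
\end{equation*}
I would then take the conditional expectation $\E_k[\cdot] = \E_{T_k}[\E_{S_k}[\cdot]]$ of both sides, using the tower property together with Assumptions~\ref{assum:sampling} and~\ref{assum:independ}. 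For the linear term, $\E_k[g_{S_k,T_k}(x_k)] = g(x_k) = \nabla F(x_k) + \delta_k$ by definitions \eqref{eq:defofgetk}--\eqref{eq:defofge}, so $-\alpha_k \nabla F(x_k)^T\E_k[g_{S_k,T_k}(x_k)] = -\alpha_k\|\nabla F(x_k)\|^2 - \alpha_k \nabla F(x_k)^T\delta_k$, and I would bound the cross term via $|\nabla F(x_k)^T\delta_k| \leq \tfrac12\|\nabla F(x_k)\|^2 + \tfrac12\|\delta_k\|^2$ (Young's inequality), which produces the $-\tfrac{\alpha_k}{2}\|\nabla F(x_k)\|^2 + \tfrac{\alpha_k}{2}\|\delta_k\|^2$ contribution in the claimed bound.

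The main work is bounding the quadratic term $\E_k[\|g_{S_k,T_k}(x_k)\|^2]$. I would decompose it along the same three-term split used before Condition~\ref{cond:theoreticalnormcond}, namely write $g_{S_k,T_k}(x_k) = (g_{S_k,T_k}(x_k) - g_{T_k}(x_k)) + (g_{T_k}(x_k) - g(x_k)) + g(x_k)$. Taking $\E_{S_k}[\cdot]$ first kills the cross terms involving the function-sampling error $g_{S_k,T_k}(x_k) - g_{T_k}(x_k)$ (since $\E_{S_k}[g_{S_k,T_k}(x_k)] = g_{T_k}(x_k)$ by \eqref{eq:defofgetk}), and then taking $\E_{T_k}[\cdot]$ kills the cross terms involving the vector-sampling error $g_{T_k}(x_k) - g(x_k)$ (since $\E_{T_k}[g_{T_k}(x_k)] = g(x_k)$ by \eqref{eq:defofge}). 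This yields
\begin{equation*}
\E_k[\|g_{S_k,T_k}(x_k)\|^2] = \E_{T_k}[\E_{S_k}[\|g_{S_k,T_k}(x_k) - g_{T_k}(x_k)\|^2]] + \E_{T_k}[\|g_{T_k}(x_k) - g(x_k)\|^2] + \|g(x_k)\|^2.
\end{equation*}
To the first two terms on the right I would apply Lemma~\ref{lem:theoreticalnormcond}, which (after moving the $\E_{T_k}[\|g_{T_k}(x_k)-g(x_k)\|^2]$ piece aside, or by rewriting $\E_{T_k}[\|g_{T_k}(x_k)\|^2] = \E_{T_k}[\|g_{T_k}(x_k)-g(x_k)\|^2] + \|g(x_k)\|^2$) bounds the function-sampling error plus $\E_{T_k}[\|g_{T_k}(x_k)\|^2]$ by $(1+\theta^2)\E_{T_k}[\|g_{T_k}(x_k)\|^2]$; combining gives $\E_k[\|g_{S_k,T_k}(x_k)\|^2] \leq (1+\theta^2)\big(\|g(x_k)\|^2 + \E_{T_k}[\|g_{T_k}(x_k)-g(x_k)\|^2]\big) + \tfrac12\cdot(\text{extra})$ — here I must be careful to track the factors of $\tfrac12$ so as to land exactly on the stated bound with the coefficient $\tfrac12$ on $\E_{T_k}[\|g_{T_k}(x_k)-g(x_k)\|^2]$. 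Finally I would use $\|g(x_k)\|^2 = \|\nabla F(x_k) + \delta_k\|^2 \leq 2\|\nabla F(x_k)\|^2 + 2\|\delta_k\|^2$ to re-express everything in terms of $\|\delta_k\|^2$, $\|\nabla F(x_k)\|^2$, and $\E_{T_k}[\|g_{T_k}(x_k)-g(x_k)\|^2]$, then multiply by $\tfrac{L_{\nabla F}\alpha_k^2}{2}$ and collect terms.

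The main obstacle is bookkeeping rather than any deep idea: I need to verify that Lemma~\ref{lem:theoreticalnormcond} applies under all three sampling conditions (it does, by its statement), that the cross-term cancellations are legitimate given Assumption~\ref{assum:independ} (the independence of $T_k$ and $S_k$ is what lets $\E_{S_k}[\cdot]$ act on the function-sampling error alone), and — most delicately — that the constants from the two applications of Young's inequality, from the $\|g(x_k)\|^2 \leq 2\|\nabla F(x_k)\|^2 + 2\|\delta_k\|^2$ step, and from Lemma~\ref{lem:theoreticalnormcond} combine to give exactly the coefficients $L_{\nabla F}\alpha_k^2(1+\theta^2)$ multiplying $\|\delta_k\|^2 + \|\nabla F(x_k)\|^2 + \tfrac12\E_{T_k}[\|g_{T_k}(x_k)-g(x_k)\|^2]$ in the final inequality. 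A subtlety to watch is whether the vector-sampling-error term $\E_{T_k}[\|g_{T_k}(x_k) - g(x_k)\|^2]$ should be absorbed into the $(1+\theta^2)$ factor or kept with coefficient $\tfrac12$; the stated bound suggests the latter, so I would keep $\|g(x_k)\|^2$ and $\E_{T_k}[\|g_{T_k}(x_k)-g(x_k)\|^2]$ separate when invoking Lemma~\ref{lem:theoreticalnormcond} only on the function-sampling part.
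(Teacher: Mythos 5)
Your proposal is correct and follows essentially the same route as the paper: descent lemma, Young's inequality on the cross term $\nabla F(x_k)^T\delta_k$, the orthogonal decomposition of $\E_k[\|g_{S_k,T_k}(x_k)\|^2]$ into function-sampling error, vector-sampling error, and $\|g(x_k)\|^2$, Lemma~\ref{lem:theoreticalnormcond}, and finally $\|g(x_k)\|^2 \leq 2\|\delta_k\|^2 + 2\|\nabla F(x_k)\|^2$. The subtlety you flag at the end resolves itself: applying the sampling condition only to the function-sampling part gives $D_k \leq (1+\theta^2)\bigl(\|g(x_k)\|^2 + \E_{T_k}[\|g_{T_k}(x_k)-g(x_k)\|^2]\bigr)$ exactly as the paper obtains, and the stated $\tfrac12$ on the variance term then arises simply because that term, unlike the other two, does not pick up a factor of $2$ to cancel the $\tfrac{L_{\nabla F}\alpha_k^2}{2}$ prefactor.
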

\begin{proof}
Due to Assumptions \ref{assum:sampling}, \ref{assum:independ}, \ref{assum:LipschitzF}, Lemma \ref{lem:descent}, \eqref{eq:iter} and \eqref{eq:defofge}, we have
\begin{align*}
\E_k[F(x_{k+1})] \leq  F(x_k) - \alpha_k g(x_k)^T\nabla F(x_k) + \frac{L_{\nabla F} \alpha_k^2}{2}\E_k\left[\left\|g_{S_k,T_k}(x_k)\right\|^2 \right]. 
\end{align*}
Let
  $  D_k \defeq \E_k\left[\left\|g_{S_k,T_k}(x_k)\right\|^2 \right]$ and we get
    \begin{align}
      \E_k[F(x_{k+1})] &\leq  F(x_k) - \alpha_k(\delta_k + \nabla F(x_k))^T\nabla F(x_k) + \frac{L_{\nabla F} \alpha_k^2}{2}D_k \nonumber \\
       &=  F(x_k) - \alpha_k\delta_k^T\nabla F(x_k) - \alpha_k\|\nabla F(x_k)\|^2 +  \frac{L_{\nabla F} \alpha_k^2}{2}D_k \nonumber \\
      &\leq  F(x_k) + \frac{\alpha_k}{2}\|\delta_k\|^2 -  \frac{\alpha_k}{2}\|\nabla F(x_k)\|^2 +  \frac{L_{\nabla F} \alpha_k^2}{2}D_k,  \label{eq:derivationfromdescentlemmaGSandSSstep1} 
    \end{align}
    where the first inequality is due to the definition of $\delta_k$ and the last inequality is due to $a^Tb \geq \frac{\|a\|^2 + \|b\|^2}{2}$ for any $a,b \in \R^d$. Now consider,
    \begin{align}
       D_k &= \E_k\left[\left\|g_{S_k,T_k}(x_k)\right\|^2 \right] \nonumber \\
       &= \E_{T_k}\left[\E_{S_k}[\| g_{S_k,T_k}(x_k) - g_{ T_k}(x_k)\|^2] + \| g_{T_k}(x_k) \|^2\right] \nonumber \\
       &\leq  (1 + \theta^2) \E_{T_k}[\|g_{T_k}(x_k)\|^2] \label{eq:Dkstep1} \\
       &= (1 + \theta^2)\left(\E_{T_k}\left[\|g_{T_k}(x_k) - g(x_k)\|^2\right] + \|g(x_k)\|^2\right) \nonumber \\
       &\leq  (1 + \theta^2) \left( \E_{T_k}\left[\|g_{T_k}(x_k) - g(x_k)\|^2\right] + 2\|g(x_k) - \nabla F (x_k)\|^2 + 2\|\nabla F (x_k)\|^2 \right) \nonumber \\
       & = (1 + \theta^2) \left( \E_{T_k}\left[\|g_{T_k}(x_k) - g(x_k)\|^2\right] + 2\|\delta_k\|^2 + 2\|\nabla F (x_k)\|^2 \right), \label{eq:DkupperboundGSandSS}
    \end{align}
    where the second and third equalities are due to $\E[\|\zeta_i - \E[\zeta_i]\|^2] = \E[\|\zeta_i\|^2] - (\E[\zeta_i])^2$ for any random variable $\zeta_i$, the first inequality is due to Lemma~\ref{lem:theoreticalnormcond}, and the second inequality is due to $(a + b)^2 \leq 2a^2 + 2b^2$, for any $a,b \in \R$. Substituting \eqref{eq:DkupperboundGSandSS} in \eqref{eq:derivationfromdescentlemmaGSandSSstep1} completes the proof. %
\end{proof}
Lemma \ref{lem:derivationfromdescentlemma} establishes the effect of the bias of various gradient estimation methods ($\delta_k$) and their associated variance ($\E_{T_k}[\| g_{T_k}(x_k) - g(x_k) \|^2]$) on the expected function decrease at each iteration. Analyzing these quantities allows us to prove the following lemma, which demonstrates the per-iteration decrease in the expected function value for each gradient estimation method.

\begin{lem} \label{lem:generalconv}%
For any $x_0$, let $\{x_k: k\in \Z_{++}\}$ be iterates generated by \eqref{eq:iter} with $|S_{k}|$ chosen by either Condition \ref{cond:theoreticalnormcond}, Condition \ref{cond:theoreticalnormcond2} or Condition \ref{cond:theoreticalnormcond3} for a given constant $\theta > 0$. Suppose that Assumptions \ref{assum:sampling}, \ref{assum:independ}, \ref{assum:Lipschitzstochf} and \ref{assum:boundedvarinstochgrad}  hold. Then, for any $k\in \Z_{+}$ if $\alpha_k$ satisfies
\begin{equation}\label{eq:alphaupper}
    0 < \alpha_k \leq \bar \alpha_k, 
\end{equation}
then we have 
\begin{equation*}
\E_k[F(x_{k+1})] \leq F(x_k) - \frac{\alpha_k}{4} \|\nabla F(x_k)\|^2 + \alpha_k \chi_k \quad \forall k\in \Z_{+},
\end{equation*}
where $\bar \alpha_k, \chi_k > 0$ are constants given in Table~\ref{tbl:alphachi}. 
\end{lem}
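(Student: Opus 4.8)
The plan is to start from Lemma~\ref{lem:derivationfromdescentlemma}, which already gives the decrease inequality
\begin{align*}
\E_k[F(x_{k+1})] &\leq F(x_k) + \tfrac{\alpha_k}{2}\|\delta_k\|^2 - \tfrac{\alpha_k}{2}\|\nabla F(x_k)\|^2 \\
&\quad + L_{\nabla F}\alpha_k^2(1+\theta^2)\Bigl(\|\delta_k\|^2 + \|\nabla F(x_k)\|^2 + \tfrac12 \E_{T_k}[\|g_{T_k}(x_k)-g(x_k)\|^2]\Bigr),
\end{align*}
and then to bound the bias $\|\delta_k\|^2$ and the vector-sampling variance $\E_{T_k}[\|g_{T_k}(x_k)-g(x_k)\|^2]$ separately for each of the five estimators (FD, GS, SS, RC, RS). For FD the bias is a standard forward-difference bias, controlled by Assumption~\ref{assum:LipschitzF}: $\|\nabla^{FD}F(x_k)-\nabla F(x_k)\| \le \tfrac{\sqrt{d}\,L_{\nabla F}\nu}{2}$, and the variance term vanishes since $T_k$ is deterministic. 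For the randomized methods I expect $\delta_k$ to split into a ``finite-difference bias'' piece (again $O(\sqrt d\, L_{\nabla F}\nu)$, or its subspace analogue) plus, for the smoothing methods, a ``smoothing bias'' piece $\|\nabla F_\nu^{GS}(x_k)-\nabla F(x_k)\|$ or $\|\nabla F_\nu^{SS}(x_k)-\nabla F(x_k)\|$, each bounded by a constant times $L_{\nabla F}\nu\sqrt d$ via Assumption~\ref{assum:LipschitzF}; these are classical Nesterov--Spokoiny / Flaxman-type estimates. The vector-sampling variance $\E_{T_k}[\|g_{T_k}(x_k)-g(x_k)\|^2]$ is where the $1/|T_k|$ (equivalently $1/N$) improvement enters: since $g_{T_k}(x_k)$ is an average over $|T_k|$ i.i.d. rank-one terms (for GS, SS, RC, RS) with common mean $g(x_k)$, the variance scales like $\tfrac{1}{|T_k|}$ times a per-direction second-moment bound, which I would bound using Assumption~\ref{assum:LipschitzF} (to control $\|F(x_k+\nu u_j)-F(x_k)\|$ in terms of $\nu\|\nabla F(x_k)\|$ and $\nu^2$) together with known moment bounds for Gaussian / spherical / coordinate directions (e.g. $\E\|u\|^2 = d$, $\E\|u\|^4 = d(d+2)$ for Gaussians, etc.).

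Concretely, the key steps in order would be: (i) for each estimator, derive an upper bound of the form $\|\delta_k\|^2 \le B_{1}^{(\cdot)}\|\nabla F(x_k)\|^2 + B_{2}^{(\cdot)}$ and $\E_{T_k}[\|g_{T_k}(x_k)-g(x_k)\|^2] \le C_1^{(\cdot)}\|\nabla F(x_k)\|^2 + C_2^{(\cdot)}$, where the $B$'s and $C$'s depend on $d$, $|T_k|$, $\nu$, $L_{\nabla F}$ (and, via the finite-difference bias, on the choice of $\nu$); (ii) substitute these into the Lemma~\ref{lem:derivationfromdescentlemma} bound, collecting the $\|\nabla F(x_k)\|^2$ coefficient and the constant term; (iii) observe the $\|\nabla F(x_k)\|^2$ coefficient has the form $-\tfrac{\alpha_k}{2} + \alpha_k\bigl(\tfrac12 B_1 + L_{\nabla F}\alpha_k(1+\theta^2)(B_1 + 1 + \tfrac12 C_1)\bigr)$, and choose the threshold $\bar\alpha_k$ precisely so that this whole coefficient is $\le -\tfrac{\alpha_k}{4}$ — i.e. solve the resulting linear-in-$\alpha_k$ inequality, giving $\bar\alpha_k = \dfrac{\tfrac14 - \tfrac12 B_1}{L_{\nabla F}(1+\theta^2)(B_1 + 1 + \tfrac12 C_1)}$ (with $\nu$ implicitly chosen small enough that $B_1 < \tfrac12$ so $\bar\alpha_k>0$); (iv) define $\chi_k$ to collect the leftover constant (bias-floor) terms, $\chi_k = \tfrac12 B_2 + L_{\nabla F}\alpha_k(1+\theta^2)(B_2 + \tfrac12 C_2)$, matching the entries of Table~\ref{tbl:alphachi}. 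Then the claimed inequality $\E_k[F(x_{k+1})] \le F(x_k) - \tfrac{\alpha_k}{4}\|\nabla F(x_k)\|^2 + \alpha_k\chi_k$ follows immediately.

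The main obstacle I anticipate is step (i) for the smoothing methods, specifically getting clean, $d$-explicit bounds on the \emph{second moment} of a single smoothing direction term, i.e. $\E_{u}\bigl[\bigl(\tfrac{F(x_k+\nu u)-F(x_k)}{\nu}\bigr)^2\|u\|^2\bigr]$ for $u\sim\mathcal N(0,I)$ (GS) or $u\sim\mathcal U(\mathcal S(0,1))$ (SS) — this requires carefully combining the Lipschitz-gradient bound $|F(x_k+\nu u)-F(x_k) - \nu\nabla F(x_k)^T u| \le \tfrac{L_{\nabla F}\nu^2}{2}\|u\|^2$ with fourth-moment computations for $\|u\|$ and cross terms like $\E[(\nabla F(x_k)^T u)^2\|u\|^2]$, and keeping track of how the factor $\gamma_k = d/|T_k|$ (SS) or $1/|T_k|$ (GS) interacts with these to produce the final $\tfrac{d}{|T_k|}$-type dependence. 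The FD, RC, and RS cases are comparatively routine: FD has no vector-sampling variance, RC is a without-replacement average of coordinate finite-difference components so its variance is exactly controlled by $\|\nabla^{FD}F(x_k)\|^2$ (itself comparable to $\|\nabla F(x_k)\|^2$ up to the $O(\nu)$ bias and a $d$ factor), and RS reduces to RC after the orthonormal change of basis. A secondary bookkeeping point is ensuring $\nu$ is small enough (or absorbed into the stated constants) that $B_1^{(\cdot)} < \tfrac12$ uniformly, so that $\bar\alpha_k$ in Table~\ref{tbl:alphachi} is genuinely positive; I would handle this either by an explicit smallness condition on $\nu$ or by noting the table's constants already presuppose it.
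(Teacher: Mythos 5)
Your proposal is correct and follows essentially the same route as the paper: start from Lemma~\ref{lem:derivationfromdescentlemma}, bound the bias $\|\delta_k\|^2$ (the paper imports these bounds, Table~\ref{tbl:deltaupper}, from \cite{bollapragada2023adaptive,berahas2021theoretical} rather than re-deriving them) and the vector-sampling variance $\E_{T_k}[\|g_{T_k}(x_k)-g(x_k)\|^2]$ method by method, then choose $\bar\alpha_k$ so the $\|\nabla F(x_k)\|^2$ coefficient is at most $-\alpha_k/4$ and absorb the leftover constants into $\chi_k$; your coefficient bookkeeping reproduces the Table~\ref{tbl:alphachi} entries (with $B_1=0$, since the bias bounds are pure $O(\nu^2)$ constants, so your smallness-of-$\nu$ worry is moot).
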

\begin{proof}
See Appendix \ref{sec:proofofgeneralconv}.
\end{proof}
\begin{table}[htp]
\centering
\def\arraystretch{1.7}
\caption{
$\bar \alpha_k$ and $\chi_k$ values for different gradient estimation methods. Here, $\Omega_1 \defeq \frac{1}{4(1+\theta^2)L_{\nabla F}}$ and $\Omega_2 \defeq \frac{3 L_{\nabla F}^2 \nu^2 d}{16}$.
}
{%
\begin{tabular}{|c|c|c|}
\hline
Method & $\bar \alpha_k$ & $\chi_k$ \\ \hline
FD & $ \Omega_1 $ & $ \Omega_2 $ \\ 
GS & $ \frac{|T_k|}{(|T_k| + 4.5d)} \Omega_1$ & $\frac{8|T_k| + 24d + (d+2)(d+4)}{2|T_k| + 9d} \Omega_2$ \\ 
SS & $ \frac{|T_k|}{(|T_k| + 4.5d)} \Omega_1$ & $\frac{8|T_k| + 24d + d^2}{(2|T_k| + 9d)d} \Omega_2 $ \\ 
RC & $\frac{|T_k|}{d} \Omega_1$ & $ \Omega_2$ \\ 
RS & $\frac{|T_k|}{d} \Omega_1$ & $ \Omega_2$ \\
\hline
\end{tabular}
}
\label{tbl:alphachi}
\end{table}

We now consider strongly convex objective functions and present linear convergence results. A function $F(x)$ is said to be strongly convex with parameter $\mu > 0$ if
\begin{align}\label{eq:strcvx}
    \|\nabla F(x)\|^2 \geq 2 \mu (F(x) - F(x^*)) \quad \forall x \in \mathbb{R}^d,
\end{align}
where $x^*$ is the optimal solution. 
\begin{lem}\cite[Lemma 5]{bollapragada2023adaptive}. \label{lem:linearconv}%
If $F(x)$ is strongly convex with parameter $\mu$, and if
\begin{align}\label{eq:conv lemma input}
    \E_{k}[F(x_{k+1})] \leq F(x_k) - \frac{a_1}{2} \|\nabla F(x_k)\|^2 + a_2
\end{align}
holds for every $k \in \Z_{+}$ and for some $a_1,a_2 > 0$, then we have
\begin{align}\label{eq:conv lemma output}
    \E[F(x_k) - F(x^*)] \leq (1 - \mu a_1)^{k} \left( F(x_0) - F(x^*) - \frac{a_2}{\mu a_1} \right) + \frac{a_2}{\mu a_1} \quad \forall k \in \mathbb{Z_{+}},
\end{align}
where $\E[\cdot]$ is the total expectation.
\end{lem}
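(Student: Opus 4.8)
The plan is to turn the one-step descent inequality \eqref{eq:conv lemma input} into a linear contraction for the expected optimality gap and then unroll that contraction. First I would inject strong convexity: bounding the gradient term in \eqref{eq:conv lemma input} from below by \eqref{eq:strcvx}, namely $\|\nabla F(x_k)\|^2 \ge 2\mu(F(x_k) - F(x^*))$, gives for every $k \in \Z_+$ the inequality $\E_k[F(x_{k+1})] \le F(x_k) - \mu a_1(F(x_k) - F(x^*)) + a_2$.

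Next I would pass to the optimality gap and remove the conditioning. Subtracting $F(x^*)$ from both sides and then applying the tower property over the filtration $\{\mathcal{F}_k\}$ yields the scalar recursion $\phi_{k+1} \le (1-\mu a_1)\phi_k + a_2$, where $\phi_k \defeq \E[F(x_k) - F(x^*)] \ge 0$ and $\phi_0 = F(x_0) - F(x^*)$ is deterministic since $x_0$ is fixed. To unroll this, I would shift by the fixed point $c \defeq a_2/(\mu a_1)$ of the affine map $t \mapsto (1-\mu a_1)t + a_2$ (well defined because $\mu, a_1 > 0$), rewriting the recursion as $\phi_{k+1} - c \le (1-\mu a_1)(\phi_k - c)$. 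A straightforward induction on $k$ then gives $\phi_k - c \le (1-\mu a_1)^k(\phi_0 - c)$, which is exactly \eqref{eq:conv lemma output} once the definitions of $c$ and $\phi_0$ are substituted back in.

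The argument is entirely elementary, so I expect no real obstacle; the only point needing a little care is the sign bookkeeping in the induction, which requires $0 \le 1 - \mu a_1$ (equivalently $\mu a_1 \le 1$) so that multiplying an inequality through by $1-\mu a_1$ preserves its direction. This always holds in the applications of the lemma in this paper, since $a_1$ is proportional to the step size $\alpha_k$ and Lemma~\ref{lem:generalconv} constrains $\alpha_k \le \bar\alpha_k \le \Omega_1$, which together with $\mu \le L_{\nabla F}$ forces $\mu a_1 < 1$. In short, this is a convenience lemma encoding the standard ``linear convergence to a noise neighborhood'' estimate, and the whole proof is two substitutions plus a geometric-series/fixed-point induction.
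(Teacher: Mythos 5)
Your proof is correct and follows essentially the same route as the paper's: substitute the strong convexity bound \eqref{eq:strcvx} into \eqref{eq:conv lemma input}, subtract $F(x^*)$ and the fixed point $a_2/(\mu a_1)$, take total expectation to obtain the contraction $\phi_{k+1}-c \le (1-\mu a_1)(\phi_k - c)$, and iterate. Your explicit remark that the unrolling requires $\mu a_1 \le 1$ is a point the paper's proof passes over silently, and it is a worthwhile (and correct) observation since the lemma's hypotheses as stated do not guarantee it, though it does hold in every application within the paper.
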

\begin{proof}
    See Appendix~\ref{sec:proofoflinearconv}
\end{proof}

Finally, the following theorem states the convergence results for a fixed step size $\alpha_k$ and a fixed number of vectors sampled $|T_k|$ at each iteration $k \in \mathbb{Z}_{+}$.
\begin{thm}(Linear convergence). \label{thm:linearconvresults}
If $F(x)$ is strongly convex with parameter $\mu$, and if the conditions for Lemma \ref{lem:generalconv} are satisfied, then under the conditions where $\alpha_k = \bar \alpha_k$ and $|T_k| = N$ for all $k \in \mathbb{Z}_{+}$, the sequence of iterates $\{x_k: k \in \mathbb{Z}_{++}\}$ generated by \eqref{eq:iter} converges linearly in expectation to a neighborhood of the solution. Table \ref{tbl:convres} summarizes the convergence results for each gradient estimation method.

\end{thm}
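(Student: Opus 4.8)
\textbf{Proof proposal for Theorem~\ref{thm:linearconvresults}.}
The plan is to combine Lemma~\ref{lem:generalconv} with Lemma~\ref{lem:linearconv} and then instantiate the constants for each gradient estimation method. First I would invoke Lemma~\ref{lem:generalconv}: since the hypotheses of that lemma are assumed to hold and we set $\alpha_k = \bar\alpha_k$ and $|T_k| = N$ for all $k$, the per-iteration decrease
\begin{equation*}
\E_k[F(x_{k+1})] \leq F(x_k) - \frac{\bar\alpha_k}{4}\|\nabla F(x_k)\|^2 + \bar\alpha_k \chi_k
\end{equation*}
holds for every $k \in \Z_+$, with $\bar\alpha_k$ and $\chi_k$ the method-dependent constants from Table~\ref{tbl:alphachi}. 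Because $|T_k| = N$ is fixed, these constants no longer depend on $k$; write $\bar\alpha := \bar\alpha_k$ and $\chi := \chi_k$. This puts us exactly in the form required by the hypothesis \eqref{eq:conv lemma input} of Lemma~\ref{lem:linearconv} with $a_1 = \bar\alpha/2$ and $a_2 = \bar\alpha\chi$, both strictly positive.

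Next I would apply Lemma~\ref{lem:linearconv} with $F$ strongly convex with parameter $\mu$. This yields
\begin{equation*}
\E[F(x_k) - F(x^*)] \leq (1 - \mu\bar\alpha/2)^{k}\left(F(x_0) - F(x^*) - \frac{2\chi}{\mu}\right) + \frac{2\chi}{\mu} \qquad \forall k \in \Z_+.
\end{equation*}
I would note that $1 - \mu\bar\alpha/2 \in (0,1)$: positivity follows because $\bar\alpha \leq \Omega_1 = \frac{1}{4(1+\theta^2)L_{\nabla F}}$ (possibly scaled down by a factor $\leq 1$ for the randomized methods) and $\mu \leq L_{\nabla F}$, so $\mu\bar\alpha/2 \leq \tfrac{1}{8(1+\theta^2)} < 1$. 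This establishes the claimed linear convergence in expectation to a neighborhood of radius $\frac{2\chi}{\mu}$ in function value, with contraction factor $1 - \mu\bar\alpha/2$.

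Finally I would fill in Table~\ref{tbl:convres} by substituting, for each method, the specific $\bar\alpha_k$ and $\chi_k$ from Table~\ref{tbl:alphachi} into the contraction factor $1 - \mu\bar\alpha_k/2$ and the neighborhood size $\frac{2\chi_k}{\mu}$; e.g.\ for FD one gets rate $1 - \frac{\mu}{8(1+\theta^2)L_{\nabla F}}$ and neighborhood $\frac{2\Omega_2}{\mu} = \frac{3L_{\nabla F}^2\nu^2 d}{8\mu}$, and for GS/SS/RC/RS the rate picks up the extra $\frac{N}{N+4.5d}$ or $\frac{N}{d}$ factor and $\chi_k$ its method-specific multiplier. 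The main obstacle is not really in the convergence argument itself, which is a clean two-lemma chain, but in the bookkeeping: one must verify that for every method the quantities $\bar\alpha_k,\chi_k$ are indeed positive and that $\mu\bar\alpha_k/2 < 1$ so the geometric factor is a genuine contraction, and then present the resulting expressions cleanly in Table~\ref{tbl:convres} — including observing the monotonic improvement in both the rate constant and the neighborhood size as $N$ increases, which is the qualitative takeaway the theorem advertises.
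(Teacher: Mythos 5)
Your proposal is correct and follows essentially the same route as the paper: substitute $\alpha_k=\bar\alpha_k$ and $|T_k|=N$ so the constants in Lemma~\ref{lem:generalconv} become iteration-independent, then apply Lemma~\ref{lem:linearconv} with $a_1=\bar\alpha/2$ and $a_2=\bar\alpha\chi$, which reproduces the rate $\mu a_1$ and neighborhood $a_2/(\mu a_1)=2\chi/\mu$ reported in Table~\ref{tbl:convres}. Your extra check that $\mu a_1<1$ (via $\mu\le L_{\nabla F}$) is a sensible addition the paper leaves implicit.
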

\begin{proof}
After substituting $\alpha_k = \bar \alpha_k$ and $|T_k| = N$ in Table \ref{tbl:alphachi}, the $\alpha_k $ value becomes constant. Then, we apply Lemma \ref{lem:linearconv} with $ \frac{a_1}{2} \defeq \frac{\alpha_k}{4}$ and $a_2 \defeq \alpha_k \chi_k$ to obtain linear convergence in expectation.
\end{proof}
\begin{table}[htp]
\centering
\def\arraystretch{1.7}
\caption{
Convergence results for different gradient estimation methods. Here, $\Omega_3 \defeq \frac{\mu}{8(1+\theta^2)L_{\nabla F}}$ and $\Omega_4 \defeq \frac{3 L_{\nabla F}^2 \nu^2 d}{8 \mu}$. 
}
\begin{tabular}{|c|c|c|c|}
\hline
Method & $\mu a_1$ (Rate) & $\frac{a_2}{\mu a_1}$ (Neighborhood) \\ \hline
FD & $\Omega_3$ & $ \Omega_4$ \\ 
GS & $\frac{N}{(N + 4.5d)} \Omega_3 $ & $\frac{8N + 24d + (d+2)(d+4)}{2N + 9d } \Omega_4 $ \\ 
SS & $\frac{N}{(N + 4.5d)} \Omega_3 $ & $\frac{8N + 24d + d^2}{(2N + 9d)d } \Omega_4 $ \\ 
RC & $\frac{N}{d} \Omega_3 $ & $ \Omega_4 $ \\
RS & $\frac{N}{d} \Omega_3 $ & $ \Omega_4 $ \\
\hline
\end{tabular}
\label{tbl:convres}
\end{table}
\begin{remark}
We note that $N \leq d$ for the RC and RS methods (see Section~\ref{sec:gradestmethods}). Consequently, from Table~\ref{tbl:convres}, we observe that the FD method exhibits the best convergence rate among all the considered gradient estimation methods. The convergence rate improves for all other methods up to that of the FD method as $N$ increases, but it is important to note that the number of function evaluations per iteration also increases. Regarding the convergence neighborhoods, we observe that the RC and RS methods match the FD method, while the GS method consistently has a larger neighborhood. For $N \leq d$, the GS method converges to a neighborhood which is approximately $d$ times larger than that of the FD method. For SS, the convergence neighborhood exhibits a similar order of dependency on $d$ as FD. As $N$ increases, the convergence neighborhoods for SS and GS methods shrink. It is worth noting that these conclusions are drawn theoretically when $\alpha_k$ is chosen to be the maximum allowable value, and a different choice of $\alpha_k$ will alter the convergence rate and convergence neighborhood of each method.

\end{remark}

While the FD method may offer superior convergence results, the other methods have the advantage of being able to choose $N < d$, resulting in a lower cost per iteration. To facilitate a more comprehensive comparison of the methods, we will analyze the total sample complexity required to obtain an accurate solution in each method in the next section. We utilize the following high probability result to establish these complexity results.

\begin{thm}(Uniform bound with high probability). \label{thm:highprob}
    Suppose conditions of Theorem~\ref{thm:linearconvresults} 
    are satisfied. Then, the sequence of iterates  $\{x_k: k \in \mathbb{Z}_{++}\}$ generated by \eqref{eq:iter} satisfies %
    \begin{equation} \label{eq:highprob}
        F(x_k) - F(x^*) - \frac{a_2}{p \mu a_1}\leq \frac{(1-\mu a_1)^k}{p} \Big[ F(x_0) - F(x^*) - \frac{a_2}{\mu a_1} \Big], \quad \forall k \in \mathbb{Z_{+}},
    \end{equation}
    with probability at least $1-p$, where $p \in (0,1]$.
\end{thm}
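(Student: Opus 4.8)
The plan is to derive the high-probability bound \eqref{eq:highprob} from the in-expectation linear convergence result of Theorem~\ref{thm:linearconvresults} via a single application of Markov's inequality to a carefully chosen nonnegative random variable. First I would recall from the proof of Theorem~\ref{thm:linearconvresults} that Lemma~\ref{lem:linearconv} gives, with $a_1$ and $a_2$ as defined there (so that $\mu a_1$ is the rate and $a_2/(\mu a_1)$ the neighborhood size in Table~\ref{tbl:convres}),
\begin{equation*}
    \E[F(x_k) - F(x^*)] \leq (1 - \mu a_1)^{k} \Big( F(x_0) - F(x^*) - \frac{a_2}{\mu a_1} \Big) + \frac{a_2}{\mu a_1} \quad \forall k \in \Z_{+}.
\end{equation*}
Equivalently, subtracting $a_2/(\mu a_1)$ from both sides,
\begin{equation*}
    \E\Big[F(x_k) - F(x^*) - \tfrac{a_2}{\mu a_1}\Big] \leq (1 - \mu a_1)^{k}\Big(F(x_0) - F(x^*) - \tfrac{a_2}{\mu a_1}\Big).
\end{equation*}

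Next I would define the random variable $Y_k \defeq F(x_k) - F(x^*) - \frac{a_2}{\mu a_1} + C$ for a suitable constant shift $C$ chosen so that $Y_k \geq 0$ almost surely; then apply Markov's inequality, $\Pr(Y_k \geq t) \leq \E[Y_k]/t$. The natural approach that matches the stated conclusion is to take $C = a_2/(\mu a_1)$ so that $Y_k = F(x_k) - F(x^*) \geq 0$ (nonnegativity follows since $x^*$ is a minimizer). Then $\E[Y_k] \le (1-\mu a_1)^k\big(F(x_0)-F(x^*) - \tfrac{a_2}{\mu a_1}\big) + \tfrac{a_2}{\mu a_1}$, but this does not cleanly reproduce \eqref{eq:highprob}; instead I would apply Markov directly to the shifted quantity. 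Specifically, assuming $F(x_0) - F(x^*) \geq \frac{a_2}{\mu a_1}$ (which can be taken WLOG, since otherwise the right-hand side of the expectation bound is dominated and the argument simplifies), the quantity $Z_k \defeq F(x_k) - F(x^*) - \frac{a_2}{\mu a_1}$ satisfies $\E[Z_k] \le (1-\mu a_1)^k(F(x_0)-F(x^*)-\tfrac{a_2}{\mu a_1})$. One then wants $Z_k$ to be nonnegative to invoke Markov; this need not hold pointwise, so the cleaner route is to apply Markov to $Z_k^+ = \max\{Z_k, 0\}$ or, more simply, to bound $\Pr\big(Z_k \ge \tfrac{1}{p}(1-\mu a_1)^k(F(x_0)-F(x^*)-\tfrac{a_2}{\mu a_1})\big)$ using that $Z_k \le F(x_k) - F(x^*)$ is bounded below by $-\tfrac{a_2}{\mu a_1}$; shifting by this lower bound makes the variable nonnegative. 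I would carry out the shift $W_k \defeq Z_k + \frac{a_2}{\mu a_1} = F(x_k) - F(x^*) \ge 0$, apply Markov to $W_k$, and then unshift, tracking constants so the final threshold lands exactly as in \eqref{eq:highprob} with failure probability $p$.

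Concretely, the key steps in order are: (i) invoke Theorem~\ref{thm:linearconvresults}/Lemma~\ref{lem:linearconv} to get the expectation bound on $F(x_k)-F(x^*)$; (ii) rearrange into a bound on $\E[F(x_k)-F(x^*)]$; (iii) note $F(x_k)-F(x^*)\ge 0$ a.s.\ so Markov applies, giving $\Pr\big(F(x_k)-F(x^*) \ge s\big) \le \E[F(x_k)-F(x^*)]/s$ for any $s>0$; (iv) choose $s = \frac{1}{p}(1-\mu a_1)^k\big(F(x_0)-F(x^*)-\tfrac{a_2}{\mu a_1}\big) + \tfrac{a_2}{\mu a_1}$ and verify via the expectation bound that the Markov ratio is at most $p$ (using that $\tfrac{a_2}{p\mu a_1} \ge \tfrac{a_2}{\mu a_1}$ since $p\le 1$); (v) rewrite the resulting event complement, $\Pr\big(F(x_k)-F(x^*) - \tfrac{a_2}{p\mu a_1} \le \tfrac{(1-\mu a_1)^k}{p}[F(x_0)-F(x^*)-\tfrac{a_2}{\mu a_1}]\big) \ge 1-p$, which is exactly \eqref{eq:highprob}; (vi) observe the bound holds for all $k$ (it is a per-$k$ statement, so no union bound is needed).

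The main obstacle — really the only subtlety — is getting the constants in step (iv) to line up so that the Markov ratio is bounded by $p$ rather than something slightly larger; this requires carefully splitting $s$ into the "transient" part $\frac{1}{p}(1-\mu a_1)^k(\cdots)$ and the "neighborhood" part, matching each against the corresponding term in $\E[F(x_k)-F(x^*)]$, and using $p\in(0,1]$ to absorb the $1/p$ factors correctly. A secondary point to handle cleanly is the sign of $F(x_0)-F(x^*)-\tfrac{a_2}{\mu a_1}$: if it is negative the "transient" term should be treated as contributing a non-positive quantity and the inequality still holds (indeed becomes easier), so I would either assume it nonnegative WLOG or remark that the other case is immediate. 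No deeper machinery — no concentration inequality, no martingale argument — is needed beyond Markov, since the heavy lifting is already done by the expectation-level linear convergence in Theorem~\ref{thm:linearconvresults}.
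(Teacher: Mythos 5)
Your proposal is correct and follows essentially the same route as the paper: apply Markov's inequality to the nonnegative random variable $F(x_k)-F(x^*)$ with threshold equal to $\tfrac{1}{p}$ times the entire right-hand side $z_k \defeq (1-\mu a_1)^k\big(F(x_0)-F(x^*)-\tfrac{a_2}{\mu a_1}\big)+\tfrac{a_2}{\mu a_1}$ of the expectation bound from Lemma~\ref{lem:linearconv}, so that the Markov ratio is exactly $p$. The only slip is the threshold written in your step (iv), whose neighborhood part should also carry the $1/p$ factor (i.e.\ $s=z_k/p$, matching your step (v)); with that choice the constant-matching you worry about is automatic, and no case analysis on the sign of $F(x_0)-F(x^*)-\tfrac{a_2}{\mu a_1}$ is needed since $z_k>0$ always holds.
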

\begin{proof}
    Let %
    \begin{align}\label{eq:defofzk}
    z_k \defeq (1 - \mu a_1)^{k} \left( F(x_0) - F(x^*) - \frac{a_2}{\mu a_1} \right) + \frac{a_2}{\mu a_1}, \quad \forall k \in \mathbb{Z_{+}}.
    \end{align}
    Now consider the probability
    \begin{align}\label{eq:highprobstep2}
    P\left(F(x_k) - F(x^*) > \frac{z_k}{p} \right) \leq \frac{\E[F(x_k) - F(x^*)] p}{z_k} \leq p,
    \end{align}
    where the first inequality is due to Markov's inequality and the second inequality is by \eqref{eq:conv lemma output} and \eqref{eq:defofzk}. Rearranging the terms completes the proof. %
\end{proof}

\subsection{Complexity Analysis}\label{sec:companalysis}
In this section, we analyze the iteration and sample complexities of our proposed algorithm, i.e., the total number of iterations and stochastic function evaluations required to get an $\epsilon$-accurate solution. We employ the following metric for the accuracy: $x_k$ is said to be an $\epsilon$-accurate solution if it satisfies
\begin{equation}\label{eq:defofepsilonaccuratesoln}
F(x_k) - F(x^*) \leq \epsilon,
\end{equation}
with high probability of $1-p$, where $p \in (0,1]$ is a small positive constant and $x^*$ is the optimal solution. 
\subsubsection{Iteration Complexity}\label{sec:itercomp}
We now provide the iteration complexity results. We first state the following result for any generic sequence of iterates $\{x_k\}$ converging with high probability to a neighborhood of the solution. %
\begin{lem}
    \label{lem:itergen}
    Suppose that for any $x_0$ and $\epsilon > 0$, $\{x_k: k\in \Z_{++}\}$ satisfies
    \begin{equation} \label{eq:itercompsupposition}
        F(x_k) - F(x^*) - \frac{\eta}{p} \leq \frac{(1-\rho)^k}{p} \Big[ F(x_0) - F(x^*) - \eta \Big],
    \end{equation}
    with high probability $1-p$, where $p \in (0,1]$, $0 < \rho < 1$, and $0 < \eta < \epsilon p$. If the number of iterations $k$ satisfies
    \begin{equation}\label{eq:kepsilonbound}
        k \geq \mathcal{K}(\epsilon) \defeq \frac{\log(F(x_0) - F(x^*)) - \log(\epsilon p - \eta))}{\log ((1-\rho)^{-1})},
    \end{equation}
    then $x_{k}$ is an $\epsilon$-accurate solution.  
\end{lem}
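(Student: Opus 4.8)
The plan is to work directly from the high-probability recursion \eqref{eq:itercompsupposition}: on the event of probability at least $1-p$ on which it holds, it already gives the explicit bound
\[
F(x_k) - F(x^*) \;\le\; \frac{\eta}{p} + \frac{(1-\rho)^k}{p}\big[F(x_0) - F(x^*) - \eta\big],
\]
so it suffices to show that $k \ge \mathcal{K}(\epsilon)$ forces the right-hand side to be at most $\epsilon$. Multiplying by $p>0$ and rearranging, the inequality $\frac{\eta}{p} + \frac{(1-\rho)^k}{p}[F(x_0) - F(x^*) - \eta] \le \epsilon$ is equivalent to
\[
(1-\rho)^k\,\big[F(x_0) - F(x^*) - \eta\big] \;\le\; \epsilon p - \eta,
\]
where $\epsilon p - \eta > 0$ by the hypothesis $\eta < \epsilon p$.

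Next I would split into two cases. If $F(x_0) - F(x^*) \le \eta$, then the left-hand side is nonpositive (as $(1-\rho)^k > 0$) while the right-hand side is positive, so the inequality holds for every $k \in \Z_{+}$, in particular for $k \ge \mathcal{K}(\epsilon)$. If instead $F(x_0) - F(x^*) > \eta$, I divide by the positive quantity $F(x_0) - F(x^*) - \eta$ and take logarithms; since $\log(1-\rho) < 0$ (because $0 < 1-\rho < 1$), the inequality reverses, giving the requirement
\[
k \;\ge\; \frac{\log\big(F(x_0) - F(x^*) - \eta\big) - \log(\epsilon p - \eta)}{\log\big((1-\rho)^{-1}\big)}.
\]
Because $F(x_0) - F(x^*) - \eta < F(x_0) - F(x^*)$ and $\log((1-\rho)^{-1}) > 0$, the right-hand side above is strictly smaller than $\mathcal{K}(\epsilon)$ as defined in \eqref{eq:kepsilonbound}, so $k \ge \mathcal{K}(\epsilon)$ already implies the displayed lower bound, and hence $F(x_k) - F(x^*) \le \epsilon$ on the same probability-$(1-p)$ event. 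This is exactly the definition of $x_k$ being an $\epsilon$-accurate solution.

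I do not expect a genuine obstacle here: the argument is elementary manipulation of the recursion. The only points requiring care are tracking the sign of $\log(1-\rho)$ when passing to logarithms (so that the inequality direction is correct), isolating the degenerate case $F(x_0) - F(x^*) \le \eta$ so that no logarithm of a nonpositive number is taken, and the tacit assumption $F(x_0) > F(x^*)$ that makes $\log(F(x_0) - F(x^*))$ in $\mathcal{K}(\epsilon)$ well defined. Finally, the probability statement simply carries over unchanged from \eqref{eq:itercompsupposition}, since all subsequent steps are deterministic implications on that event.
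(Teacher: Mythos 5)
Your proposal is correct and follows essentially the same elementary route as the paper: both reduce to showing that the geometric bound from \eqref{eq:itercompsupposition} falls below $\epsilon$ once $k \geq \mathcal{K}(\epsilon)$. The only cosmetic difference is that the paper first upper-bounds $F(x_0)-F(x^*)-\eta$ by $F(x_0)-F(x^*)$ and then verifies the resulting expression equals exactly $\epsilon$ at $k=\mathcal{K}(\epsilon)$, which avoids your case split on the sign of $F(x_0)-F(x^*)-\eta$; your extra care there (and your remark about the tacit assumption $F(x_0)>F(x^*)$) is harmless and, if anything, slightly more thorough.
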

\begin{proof}
By \eqref{eq:itercompsupposition} and \eqref{eq:kepsilonbound}, we have
\begin{align*}
    F(x_{k}) - F(x^*) 
    & \leq (1-\rho)^{k}\frac{F(x_0) - F(x^*)}{p} + \frac{\eta}{p} \\
    &\leq (1-\rho)^{\frac{\log(F(x_0) - F(x^*)) - \log(\epsilon p - \eta))}{\log((1-\rho)^{-1})}}\frac{F(x_0) - F(x^*)}{p} + \frac{\eta}{p}
    = \epsilon.%
\end{align*}    
\end{proof}

We employ the above result to establish the iteration complexity results for all the gradient estimation methods. 
\begin{thm}(Iteration complexity).
    \label{thm:itercomp}
Suppose conditions of Theorem~\ref{thm:linearconvresults} %
are satisfied and the parameter $\nu$ is chosen to be $\hat{\nu}$ given in Table~\ref{tbl:tuned nu} for different gradient estimation methods. Then, %
for any 
\begin{equation}\label{eq:IterVal}
    k \geq \mathcal{K}(\epsilon) = \mathcal{O}\left(\frac{\log (\epsilon^{-1}p^{-1})}{\mu a_1}\right),
\end{equation}
where the values of $\mu a_1$ are given in Table~\ref{tbl:convres}, $x_{k}$ is an $\epsilon$-accurate solution. Table \ref{tbl:tuned nu} summarizes the iteration complexity results for each gradient estimation method. 
\end{thm}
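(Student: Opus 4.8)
The plan is to reduce the iteration-complexity claim to the generic bound of Lemma~\ref{lem:itergen}, instantiated with the convergence rate and neighborhood size coming from Theorem~\ref{thm:linearconvresults} (equivalently, the high-probability bound of Theorem~\ref{thm:highprob}). First I would match notation: in the notation of Lemma~\ref{lem:itergen}, set $\rho \defeq \mu a_1$ and $\eta \defeq a_2/(\mu a_1)$, where $\mu a_1$ and $a_2/(\mu a_1)$ are the per-method rate and neighborhood constants recorded in Table~\ref{tbl:convres}. By Theorem~\ref{thm:highprob}, the iterates satisfy exactly the hypothesis \eqref{eq:itercompsupposition} with these choices, so Lemma~\ref{lem:itergen} applies provided the side condition $0 < \eta < \epsilon p$ holds. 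This is where the choice $\nu = \hat\nu$ from Table~\ref{tbl:tuned nu} enters: the neighborhood size $a_2/(\mu a_1)$ is proportional to $\nu^2$ (through $\Omega_4 = 3L_{\nabla F}^2\nu^2 d/(8\mu)$), so choosing $\hat\nu$ small enough — concretely $\hat\nu = \Theta(\sqrt{\epsilon p}/\cdot)$ with the method-dependent constant absorbing the $d$- and $N$-dependent factors multiplying $\Omega_4$ in Table~\ref{tbl:convres} — guarantees $\eta \le \epsilon p/2 < \epsilon p$, which is the content of picking $\hat\nu$ from the table.

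Next I would plug these into the bound \eqref{eq:kepsilonbound}: the sufficient number of iterations is
\begin{equation*}
k \ge \mathcal{K}(\epsilon) = \frac{\log\bigl(F(x_0)-F(x^*)\bigr) - \log\bigl(\epsilon p - \eta\bigr)}{\log\bigl((1-\mu a_1)^{-1}\bigr)}.
\end{equation*}
With $\eta \le \epsilon p/2$ the numerator is $\log\bigl(F(x_0)-F(x^*)\bigr) - \log(\epsilon p/2) = \mathcal{O}(\log(\epsilon^{-1}p^{-1}))$ (treating $F(x_0)-F(x^*)$ as a problem-dependent constant). For the denominator, since $\mu a_1 \in (0,1)$ we use the elementary inequality $\log((1-t)^{-1}) \ge t$, giving $\log((1-\mu a_1)^{-1})^{-1} \le (\mu a_1)^{-1}$. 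Combining, $\mathcal{K}(\epsilon) = \mathcal{O}\bigl(\log(\epsilon^{-1}p^{-1})/(\mu a_1)\bigr)$, which is \eqref{eq:IterVal}. Substituting the method-specific values of $\mu a_1$ from Table~\ref{tbl:convres} — $\Omega_3$ for FD, $\frac{N}{N+4.5d}\Omega_3$ for GS and SS, $\frac{N}{d}\Omega_3$ for RC and RS — then produces the per-method entries of Table~\ref{tbl:tuned nu}, which is what the theorem asserts.

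The main obstacle I anticipate is not the reduction itself, which is essentially bookkeeping, but verifying that the tuned radius $\hat\nu$ in Table~\ref{tbl:tuned nu} is simultaneously (i) small enough to force $\eta < \epsilon p$ for \emph{every} method with its method-specific neighborhood prefactor, and (ii) still consistent with any implicit constraints on $\nu$ used earlier (e.g., the Lipschitz-based bias estimates behind Lemma~\ref{lem:generalconv} and Table~\ref{tbl:alphachi}). In particular one must track how the $N$- and $d$-dependent factors multiplying $\Omega_4$ in the neighborhood column of Table~\ref{tbl:convres} feed back into $\hat\nu$: for GS the factor is $\frac{8N+24d+(d+2)(d+4)}{2N+9d}$, which is $\Theta(d)$ when $N \lesssim d$, so $\hat\nu_{\mathrm{GS}}$ must be a $\Theta(\sqrt{d})$ factor smaller than $\hat\nu_{\mathrm{FD}}$, whereas for RC/RS the prefactor is $1$ and no such loss occurs. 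Once $\hat\nu$ is pinned down so that $\epsilon p - \eta$ is a fixed fraction of $\epsilon p$, the rest follows from Lemma~\ref{lem:itergen} and the elementary $\log$ estimate above, and the $\mathcal{O}(\cdot)$ in \eqref{eq:IterVal} hides only these problem-dependent constants.
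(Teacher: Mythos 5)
Your proposal is correct and follows essentially the same route as the paper: verify via Theorem~\ref{thm:highprob} that the hypotheses of Lemma~\ref{lem:itergen} hold with $\rho = \mu a_1$ and $\eta = a_2/(\mu a_1) = \epsilon p/2$ (forced by the choice $\nu = \hat\nu$ from Table~\ref{tbl:tuned nu}), then simplify \eqref{eq:kepsilonbound}. Your use of the inequality $\log((1-t)^{-1}) \ge t$ is in fact slightly more careful than the paper's appeal to the first-order Taylor approximation $\log(1-\rho) \approx -\rho$, and the consistency concern you raise about $\hat\nu$ is unfounded since the bias and variance estimates behind Lemma~\ref{lem:generalconv} and Table~\ref{tbl:alphachi} hold for every $\nu > 0$.
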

\begin{proof}
From Table~\ref{tbl:convres} and the choice of $\nu = \hat{\nu}$ given in Table~\ref{tbl:tuned nu}, we get $\eta = \frac{\epsilon p}{2} < \epsilon p$ for each gradient estimation method. Therefore, the conditions of Lemma~\ref{lem:itergen} are satisfied by the iterates generated by \eqref{eq:iter} with $0 < \rho = \mu a_1 < 1$. From \eqref{eq:kepsilonbound} and $\eta = \frac{\epsilon p}{2}$, it follows that
\begin{align*}
    \mathcal{K}(\epsilon) = \frac{\log(F(x_0) - F(x^*)) - \log(\epsilon p/2)}{\log ((1-\rho)^{-1})} = \mathcal{O}\left(\frac{\log(\epsilon^{-1}p^{-1})}{\mu a_1}\right),
\end{align*}
where the equality is due to first order Taylor's approximation of  $\log(1 - \rho) \approx -\rho$. %
\end{proof}
\begin{table}[htp]
\def\arraystretch{1.8}
\centering
\caption{
The choice of $\nu =\hat \nu$ values such that the convergence neighborhood from Table \ref{tbl:convres} is $\frac{\epsilon p}{2}$, %
and the corresponding iteration complexity results. Here,  $\Omega_5 \defeq \sqrt{\frac{4 \epsilon p \mu}{3 L_{\nabla F}^2 d}}$ and $\Omega_6 \defeq \log(1/\epsilon p)\frac{L_{\nabla F}}{\mu} $. 
}
\begin{tabular}{|c|c|c|c|c|}
\hline
Method & $\hat \nu$ & $\mathcal{K}(\epsilon)$ \\ \hline
FD & $\Omega_5$ & $ \mathcal{O} (\Omega_6) $ \\ 
GS & $ \sqrt{\frac{2(N + 4.5d)}{8N + 24d + (d+2)(d+4)}} \Omega_5 $ & $ \mathcal{O} (\frac{\max\{N,d\}}{N} \Omega_6 ) $ \\ 
SS & $  \sqrt{\frac{2(N + 4.5d)d}{8N + 24d + d^2}} \Omega_5 $ & $ \mathcal{O} (\frac{\max\{N,d\}}{N} \Omega_6 ) $\\ 
RC & $  \Omega_5 $ & $ \mathcal{O} (\frac{d}{N} \Omega_6 ) $ \\
RS & $ \Omega_5 $ & $ \mathcal{O} (\frac{d}{N} \Omega_6 ) $ \\
\hline
\end{tabular}
\label{tbl:tuned nu}
\end{table}

\begin{remark}
Assuming $N < d$, Table~\ref{tbl:tuned nu} illustrates that FD outperforms other methods in terms of iteration complexity. The remaining methods exhibit similar complexities, requiring $\mathcal{O}(\frac{d}{N})$ times more iterations compared to FD. However, for $N \geq d$, the GS and SS methods align with the iteration complexity of FD.
\end{remark}
\subsubsection{Sample Complexity}\label{sec:totalworkcomp}
We now provide sample complexity results that establish bounds on total number of stochastic function evaluations required to get an $\epsilon$-accurate solution.
We observe that the number of stochastic function evaluations at  any iteration $k\in \Z_{+}$ is given as, 
\begin{equation}\label{eq:defofWk}
    \mathcal{W}_k \defeq (N+1) |S_k|,
\end{equation}
since each gradient estimation method evaluates $|T_k| + 1$ number of subsampled functions $F_{S_k}(\cdot)$ and $|T_k| = N$ for all $k\in \Z_{+}$. Therefore, the total number of stochastic function evaluations until $\mathcal{K}(\epsilon) \in \Z_{++}$ is given as
\begin{equation}\label{eq:defofW}
    \mathcal{W}(\epsilon) \defeq \sum_{i = 0}^{\mathcal{K}(\epsilon)-1} \mathcal{W}_i = (N+1)\sum_{i = 0}^{\mathcal{K}(\epsilon)-1}|S_i|.
\end{equation}
We have already established a bound on the number of iterations $\mathcal{K}(\epsilon)$ to achieve an $\epsilon$-accurate solution. In the following lemma, we establish an upper bound on the sample sizes $|S_k|$ at each iteration to provide an overall upper bound on the total sample complexity when $|S_k|$ is chosen by Condition~\ref{cond:theoreticalnormcond2} or Condition~\ref{cond:theoreticalnormcond3}. 

\begin{lem}\label{lem:boundonSk}
Suppose the conditions of Theorem~\ref{thm:linearconvresults} hold. For all iterations $k\in \Z_{+}$ such that $\|g(x_k)\|^2 > \epsilon'$,
\begin{align}
    a)& \text{ If $|S_k|$ is chosen using Condition~\ref{cond:theoreticalnormcond2}, then we have} \quad \E_{T_k}[|S_k|] \leq b_1 + \frac{ b_2 }{\epsilon'}, \label{eq:boundonSkexp} \\
    b)& \text{ If $|S_k|$ is chosen using Condition~\ref{cond:theoreticalnormcond3}, then we have}
    \quad |S_k| \leq b_1 + \frac{ b_2 }{\epsilon'} \label{eq:boundonSk},
\end{align}
where $b_1,b_2 \in \R$ are constants that depend on the gradient estimation method. Table \ref{tbl:b1andb2} summarizes the order results for $b_1$ and $b_2$ values. 
\end{lem}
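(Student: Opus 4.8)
The plan is to bound $|S_k|$ (or its conditional expectation) using the definition of the sample size dictated by the relevant theoretical norm condition, and then express the resulting bound in terms of the lower bound $\epsilon'$ on $\|g(x_k)\|^2$. First I would recall that when $|S_k|$ is chosen by Condition~\ref{cond:theoreticalnormcond2} or Condition~\ref{cond:theoreticalnormcond3}, the sample size satisfies (up to the ceiling, which contributes an additive $1$)
\begin{equation*}
    |S_k| \leq 1 + \frac{\E_{\zeta_i}[\|g_{\zeta_i,T_k}(x_k) - g_{T_k}(x_k)\|^2]}{\theta^2 \E_{T_k}[\|g_{T_k}(x_k)\|^2]} \quad \text{or} \quad |S_k| \leq 1 + \frac{\E_{T_k}[\E_{\zeta_i}[\|g_{\zeta_i,T_k}(x_k) - g_{T_k}(x_k)\|^2]]}{\theta^2 \E_{T_k}[\|g_{T_k}(x_k)\|^2]},
\end{equation*}
respectively, with the first being interpreted inside $\E_{T_k}[\cdot]$ for part (a). The key is therefore to bound the numerator $\E_{\zeta_i}[\|g_{\zeta_i,T_k}(x_k) - g_{T_k}(x_k)\|^2]$ from above and the denominator $\E_{T_k}[\|g_{T_k}(x_k)\|^2]$ from below.

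For the numerator, I would use the compact representations of each estimator (equations \eqref{eq:RCFFDcompactform}, \eqref{eq:RScompactform}, and the analogous forms for FD, GS, SS), together with Assumption~\ref{assum:Lipschitzstochf} and Assumption~\ref{assum:boundedvarinstochgrad}: expanding $g_{\zeta_i,T_k}(x_k)$ as a sum over $u_j \in T_k$ of finite-difference quotients of $f(\cdot,\zeta_i)$, using the Lipschitz bound to replace each quotient by the true directional derivative plus an $O(L_{\nabla f}\nu)$ error, and then applying the variance bound $\beta_1\|\nabla F(x_k)\|^2 + \beta_2$. This produces a bound of the shape $c_1\|\nabla F(x_k)\|^2 + c_2$ where $c_1, c_2$ carry the method-dependent factors of $d$, $N$, $\gamma_k$ (this is essentially the content of the boundedness claim proven in \ref{sec:proofofboundedvar}, now made quantitative). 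For the denominator, I would lower-bound $\E_{T_k}[\|g_{T_k}(x_k)\|^2] \geq \|\E_{T_k}[g_{T_k}(x_k)]\|^2 = \|g(x_k)\|^2 > \epsilon'$ by Jensen, and also relate $\|\nabla F(x_k)\|^2$ to $\|g(x_k)\|^2$ (up to bias $\|\delta_k\|^2$, which is $O(L_{\nabla F}^2\nu^2 d)$ and hence absorbed into constants once $\nu$ is fixed as in Table~\ref{tbl:tuned nu}), so that the numerator is itself bounded by a constant times $\|g(x_k)\|^2$ plus a constant. Dividing, the $\|g(x_k)\|^2$-proportional part of the numerator gives the constant $b_1$, and the remaining additive part divided by $\|g(x_k)\|^2 > \epsilon'$ gives the $b_2/\epsilon'$ term.

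The distinction between parts (a) and (b) is essentially bookkeeping: under Condition~\ref{cond:theoreticalnormcond3} the numerator is already an expectation over $T_k$ and the denominator is deterministic given $\mathcal{F}_k$, so the bound on $|S_k|$ holds deterministically; under Condition~\ref{cond:theoreticalnormcond2} the numerator is random in $T_k$, so I would take $\E_{T_k}[\cdot]$ of both sides and note that the denominator $\theta^2\E_{T_k}[\|g_{T_k}(x_k)\|^2]$ is unaffected, yielding the stated bound on $\E_{T_k}[|S_k|]$. The main obstacle I anticipate is obtaining the numerator bound cleanly and uniformly across all five estimators — in particular correctly tracking how the scaling $\gamma_k$ (which is $1$, $1/N$, or $d/N$ depending on the method) combines with $\|u_j\|$ (unbounded for GS, equal to $1$ for SS/RC/RS) and with the number of terms $N$, since these are exactly what determines the order of $b_1$ and $b_2$ reported in Table~\ref{tbl:b1andb2}. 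The GS case is the delicate one because the Gaussian directions have unbounded norm, so fourth-moment-type bounds $\E[\|u\|^4]$, $\E[\|u\|^2 u u^T]$ on the standard normal enter, whereas for the sphere and (sub)coordinate methods $\|u_j\| = 1$ keeps everything elementary.
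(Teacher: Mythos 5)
Your overall strategy is exactly the paper's: bound the minimal $|S_k|$ dictated by the condition by $1$ plus a ratio, upper-bound the numerator via the $c_{i,j}$ decomposition (Lipschitz error plus Assumption~\ref{assum:boundedvarinstochgrad}, with Gaussian fourth-moment identities for GS), relate $\|\nabla F(x_k)\|^2$ back to $\|g(x_k)\|^2$ through the bias bounds of Table~\ref{tbl:deltaupper}, and treat part (a) versus part (b) as taking or not taking $\E_{T_k}[\cdot]$ of the resulting inequality. One small correction: the lemma does not fix $\nu=\hat\nu$; the constants in Table~\ref{tbl:b1andb2} retain $\nu$ explicitly, and the bias term is absorbed for any $\nu$, not only the tuned one.

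There is, however, one concrete gap. You propose to lower-bound the denominator \emph{uniformly} by Jensen, $\E_{T_k}[\|g_{T_k}(x_k)\|^2]\ge\|g(x_k)\|^2>\epsilon'$. For FD, GS, and SS this is what the paper does, but for RC and RS it is too lossy: the numerator for these methods carries a factor $\gamma_k^2|T_k|=d^2/N^2\cdot N=d^2/N$ scaled down to $d/N$ after averaging over $T_k$, so dividing by $\epsilon'$ alone yields $b_1=\mathcal{O}(\beta_1 d/N)$ and $b_2=\mathcal{O}((\beta_2+L_{\nabla f}^2\nu^2 d)\,d/N)$, which is worse by the factor $d/N\ge 1$ than the entries of Table~\ref{tbl:b1andb2}. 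The paper instead evaluates the denominator exactly for these two methods, $\E_{T_k}[\|g_{T_k}(x_k)\|^2]=\tfrac{d}{N}\|\nabla^{FD}F(x_k)\|^2>\tfrac{d}{N}\epsilon'$ for RC (and $\tfrac{d}{N}\E_{\tilde T_k}[\|U_{\tilde T_k}b_{\tilde T_k}\|^2]$ for RS, using orthonormality of $U_{T_k}$), so that the $d/N$ factors cancel and $b_1=\mathcal{O}(\beta_1)$, $b_2=\mathcal{O}(\beta_1 L_{\nabla f}^2\nu^2 d+\beta_2)$. Without this sharper denominator your argument proves a bound of the stated \emph{form} but not the orders claimed in Table~\ref{tbl:b1andb2}, and those orders are what make the downstream sample complexities of RC and RS match FD in Table~\ref{tbl:total work complexity}.
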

\begin{proof}
See Appendix \ref{sec:proofofboundonSk}.
\end{proof}

\begin{table}[htp]
\centering
\def\arraystretch{1.7}
\caption{
Order results for $b_1$ and $b_2$, where $\beta_1$ and $\beta_2$ are constants in Assumption \ref{assum:boundedvarinstochgrad}. These order results only indicate dependencies on $d, N, L_{\nabla f}, \mu, \nu, \beta_1$, and $\beta_2$.
}
\begin{tabular}{|c|c|c|c|c|}
\hline
Method & $ b_1 $ & $ b_2 $ \\ \hline
FD
& $\mathcal{O} (\beta_1)$
& $\mathcal{O} (\beta_1 L_{\nabla f}^2 \nu^2 d + \beta_2)$\\
GS
& $\mathcal{O} (\beta_1 d)$
& $\mathcal{O} ((\beta_1+d) L_{\nabla f}^2 \nu^2 d^2 + \beta_2 d)$\\
SS & $\mathcal{O} (\beta_1 d)$
& $\mathcal{O} ((\beta_1 + d) L_{\nabla f}^2 \nu^2 d + \beta_2 d )$\\
RC & $\mathcal{O} (\beta_1)$
& $\mathcal{O} (\beta_1 L_{\nabla f}^2 \nu^2 d + \beta_2 )$\\
RS
& $\mathcal{O} (\beta_1)$
& $\mathcal{O} (\beta_1 L_{\nabla f}^2 \nu^2 d + \beta_2 )$ \\
\hline
\end{tabular}
\label{tbl:b1andb2}
\end{table}
\begin{remark}
    Lemma \ref{lem:boundonSk} establishes an upper bound on sample sizes $|S_k|$ for all iterations till termination, thereby providing a bound on the maximum sample size encountered within the algorithm. Moreover, with further assumptions, similar bounds on $|S_k|$ could be established when $|S_k|$ is determined using Condition \ref{cond:theoreticalnormcond}.
\end{remark}
Finally, the following theorem states the total sample complexity results.
\begin{thm}(Sample complexity). \label{thm:totalworkcomp}
Suppose the conditions of Theorem \ref{thm:linearconvresults} hold, and the parameter $\nu$ is chosen to be $\hat{\nu}$ as provided in Table~\ref{tbl:tuned nu} for different gradient estimation methods. If $|S_k|$ is chosen according to Condition~\ref{cond:theoreticalnormcond2}, then the expected total sample complexities for each gradient estimation method is given as
\begin{align}\label{eq:Wcomplexitysimplifiedexp}
    \E[\mathcal{W}(\epsilon)] = \mathcal{O}\left(Nb_1\mathcal{K}(\epsilon) + \frac{N b_2 \mathcal{K}(\epsilon)}{ \epsilon \mu}\right).
\end{align}
If $|S_k|$ is chosen according to Condition~\ref{cond:theoreticalnormcond3}, then total sample complexities for each gradient estimation method is given as
\begin{align}\label{eq:Wcomplexitysimplified}
    \mathcal{W}(\epsilon) = \mathcal{O}\left(Nb_1\mathcal{K}(\epsilon) + \frac{N b_2 \mathcal{K}(\epsilon)}{ \epsilon \mu}\right),
\end{align}
where $b_1$ and $b_2$ values are provided in Table~\ref{tbl:b1andb2}, and $\mathcal{K}(\epsilon)$ values are given in Table~\ref{tbl:tuned nu}. Table~\ref{tbl:total work complexity} summarizes the sample complexity results for each gradient estimation method\footnote{ Table~\ref{tbl:total work complexity} provides bounds on expected total sample complexity (i.e. $\E[\mathcal{W}(\epsilon)]$), when Condition~\ref{cond:theoreticalnormcond2} is employed, and the total sample complexity (i.e. $\mathcal{W}(\epsilon)$), when Condition~\ref{cond:theoreticalnormcond3} is employed.}. 
\end{thm}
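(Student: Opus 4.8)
The plan is to combine the iteration bound $\mathcal{K}(\epsilon)$ from Theorem~\ref{thm:itercomp} with a per-iteration bound on the sample size $|S_k|$ supplied by Lemma~\ref{lem:boundonSk}. Starting from the definition \eqref{eq:defofW}, $\mathcal{W}(\epsilon) = (N+1)\sum_{i=0}^{\mathcal{K}(\epsilon)-1}|S_i|$, so it suffices to bound each summand $|S_i|$ (under Condition~\ref{cond:theoreticalnormcond3}) or its conditional expectation $\E_{T_i}[|S_i|]$ (under Condition~\ref{cond:theoreticalnormcond2}) by $b_1+b_2/\epsilon'$ for a single, carefully chosen threshold $\epsilon'>0$ that is valid at every iteration before an $\epsilon$-accurate iterate is reached, and then to sum over the (at most $\mathcal{K}(\epsilon)$) such iterations.

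The key quantitative step is to show $\|g(x_k)\|^2 > \epsilon'$ with $\epsilon' = \Theta(\mu\epsilon)$ whenever $x_k$ is not yet $\epsilon$-accurate. First I would use that, while $F(x_k)-F(x^*)>\epsilon$, strong convexity \eqref{eq:strcvx} gives $\|\nabla F(x_k)\|^2 \ge 2\mu\epsilon$. For FD, RC, and RS, where $g(x_k)=\nabla^{FD}F(x_k)$ (or its rotated/subspace analogue), I would then use $\|g(x_k)\| \ge \|\nabla F(x_k)\| - \|\delta_k\|$ together with the bias bound $\|\delta_k\| = \mathcal{O}(L_{\nabla F}\nu\sqrt d)$ already established in the analysis leading to Lemma~\ref{lem:generalconv}; substituting the tuned value $\nu=\hat\nu$ from Table~\ref{tbl:tuned nu} (for which $L_{\nabla F}^2\nu^2 d = \Theta(\epsilon p\mu)$) makes $\|\delta_k\|^2$ a fixed small fraction of $2\mu\epsilon$, so $\|g(x_k)\|^2 = \Omega(\mu\epsilon)$. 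For GS and SS, where $g(x_k)=\nabla F_\nu^{GS}(x_k)$ or $\nabla F_\nu^{SS}(x_k)$ is an exact gradient of a $\mu$-strongly convex smoothed function whose minimal value lies within $\mathcal{O}(L_{\nabla F}\nu^2 d)$ of $F(x^*)$, I would instead apply strong convexity directly to the smoothed function to again obtain $\|g(x_k)\|^2 = \Omega(\mu\epsilon)$ under the tuned $\nu$. In all cases this yields an admissible $\epsilon' = \Theta(\mu\epsilon)$, so $b_2/\epsilon' = \mathcal{O}(b_2/(\mu\epsilon))$.

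With such an $\epsilon'$ in hand the two cases follow quickly. Under Condition~\ref{cond:theoreticalnormcond3}, Lemma~\ref{lem:boundonSk}(b) gives the deterministic bound $|S_i|\le b_1+b_2/\epsilon'$ at every iteration counted, hence $\mathcal{W}(\epsilon)\le (N+1)\mathcal{K}(\epsilon)\big(b_1 + \mathcal{O}(b_2/(\mu\epsilon))\big)$, which is \eqref{eq:Wcomplexitysimplified}. Under Condition~\ref{cond:theoreticalnormcond2}, Lemma~\ref{lem:boundonSk}(a) gives $\E_{T_i}[|S_i|]\le b_1+b_2/\epsilon'$ on the event $\{\|g(x_i)\|^2>\epsilon'\}$, which is $\mathcal{F}_i$-measurable; taking total expectations, $\E[|S_i|]=\E[\E_{T_i}[|S_i|]]\le b_1+b_2/\epsilon'$, and summing over $i<\mathcal{K}(\epsilon)$ gives \eqref{eq:Wcomplexitysimplifiedexp}. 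Finally, Table~\ref{tbl:total work complexity} follows by inserting the order expressions for $b_1,b_2$ from Table~\ref{tbl:b1andb2} and for $\mathcal{K}(\epsilon)$ from Table~\ref{tbl:tuned nu} into these bounds and simplifying the resulting dependence on $d$ and $N$.

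The main obstacle is the bookkeeping around ``reaching'' an $\epsilon$-accurate solution: the iteration bound $\mathcal{K}(\epsilon)$ and the guarantee $F(x_k)-F(x^*)\le\epsilon$ hold only with probability $1-p$ (Theorem~\ref{thm:highprob}) and the first-hitting index is random, so under Condition~\ref{cond:theoreticalnormcond2} one must ensure the expectation in \eqref{eq:Wcomplexitysimplifiedexp} is taken compatibly with the high-probability statement (e.g.\ by conditioning on the good event or absorbing the $p$-probability tail into the constants), and one must argue that $\|g(x_i)\|^2>\epsilon'$ indeed covers every iteration being summed (equivalently, that $\|g(x_i)\|^2\le\epsilon'$ forces $x_i$ to be $\epsilon$-accurate, so counting it is harmless). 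A secondary technical point is that the bias constant, and hence the precise admissible value of $\epsilon'$, is method-dependent, so one has to verify case by case — using the specific $\hat\nu$ of Table~\ref{tbl:tuned nu} — that $\epsilon' = \Theta(\mu\epsilon)$ can be taken uniformly across all five estimators.
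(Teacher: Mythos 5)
Your proposal matches the paper's proof: the paper likewise sets $\epsilon' = (\sqrt{2}-1)^2\epsilon\mu$, uses strong convexity \eqref{eq:strcvx} together with the bias bound $\|\delta_k\|\le\sqrt{\epsilon p\mu}$ (which the tuned $\hat\nu$ guarantees uniformly for all five estimators via Table~\ref{tbl:deltaupper}) to show that any iterate with $\|g(x_k)\|^2\le\epsilon'$ is already $\epsilon$-accurate, and then multiplies the per-iteration bound of Lemma~\ref{lem:boundonSk} by $(N+1)\mathcal{K}(\epsilon)$ before substituting the table values. The only differences are cosmetic: your separate smoothed-function argument for GS and SS is unnecessary (the reverse-triangle-inequality route works there too), and the high-probability/hitting-time bookkeeping you flag as the main obstacle is left equally implicit in the paper's own proof.
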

\begin{proof}
By employing $\hat \nu$ values from Table~\ref{tbl:tuned nu}, we %
observe
\begin{align*}
\|\delta_k\| = \|g(x_k) - \nabla F(x_k)\| \leq \sqrt{\epsilon p \mu} \leq \sqrt{\epsilon \mu},
\end{align*}
for each gradient estimation method (see Table~\ref{tbl:deltaupper} for the upper bounds on $\delta_k$). Using this bound and choosing 
$\epsilon' = (\sqrt{2} - 1)^2\epsilon\mu$
in $\|g(x_k)\|^2 \leq \epsilon'$, we get 
\begin{align*}
    \sqrt{F(x_k) - F(x^*)} &\leq \frac{1}{\sqrt{2\mu}}\|\nabla F(x_k)\| \leq \frac{1}{\sqrt{2\mu}}(\|g(x_k)\| + \|\delta_k \|) \leq \sqrt{\epsilon}, %
\end{align*}
where the first inequality is by \eqref{eq:strcvx} and the second inequality is by triangle inequality (i.e. $\|a + b\| \leq \|a\| + \|b\|$). Therefore, for any iteration $k$ such that $\|g(x_k)\|^2 \leq \epsilon'$, we can ensure that $x_k$ is an $\epsilon$-accurate solution. From Theorem~\ref{thm:itercomp}, we can guarantee that $k \leq \mathcal{K}(\epsilon)$. Combining this inequality with \eqref{eq:IterVal} and \eqref{eq:defofW}, and considering Lemma \ref{lem:boundonSk}, if Condition \ref{cond:theoreticalnormcond2} holds, then
\begin{align*}
    \E[\mathcal{W}(\epsilon)] = (N+1)\sum_{i = 0}^{k-1}\E_{T_k}[|S_i|] \leq (N+1)\left(b_1 + \frac{6 b_2}{\epsilon\mu}\right)\mathcal{K}(\epsilon).
\end{align*}
If Condition \ref{cond:theoreticalnormcond3} holds, then
\begin{align*}
    \mathcal{W}(\epsilon) = (N+1)\sum_{i = 0}^{k-1}|S_i| \leq (N+1)\left(b_1 + \frac{6 b_2}{\epsilon\mu}\right)\mathcal{K}(\epsilon).
\end{align*}
Substituting the bounds for $b_1$ and $b_2$ provided in Table~\ref{tbl:b1andb2}, and for $\hat{\nu}$ and $\mathcal{K}(\epsilon)$ as given in Table~\ref{tbl:tuned nu}, complete the proof. The results in Table~\ref{tbl:total work complexity} are obtained through these substitutions, as detailed in Appendix~\ref{sec:proofoftotalworkcomp}.  %
\end{proof}

\begin{table}[htp]
\centering
\def\arraystretch{1.5}
\caption{Sample complexity results for different gradient estimation methods when Condition~\ref{cond:theoreticalnormcond2} or Condition~\ref{cond:theoreticalnormcond3} is employed. Here, 
$\Omega_7 \defeq \log(1/\epsilon p)\frac{L_{\nabla F} d}{\mu} $, and $\Omega_8 \defeq \frac{\log(1/\epsilon p)}{\epsilon}\frac{L_{\nabla F} d}{\mu^2} $.
These complexity results only indicate dependencies on $\epsilon, p, d, N, L_{\nabla f}, L_{\nabla F}$, and $\mu$.}
\begin{tabular}{|c|c|c|}
\hline
Method & $\E[\mathcal{W}(\epsilon)]~\text{or}~\mathcal{W}(\epsilon)$ \\ \hline
FD & $ \mathcal{O} (\Omega_7 + \Omega_8) $ \\
GS & $ \mathcal{O} \left( \frac{(\max\{N,d\})^2d}{\max\{N,d^2\}} \Omega_7 + \max\{N,d\} \Omega_8 \right) $ \\
SS & $ \mathcal{O} \left( \frac{(\max\{N,d\})^2d}{\max\{N,d^2\}}\Omega_7 + \max\{N,d\} \Omega_8 \right) $ \\
RC & $ \mathcal{O} (\Omega_7 + \Omega_8) $ \\
RS & $ \mathcal{O} (\Omega_7 + \Omega_8) $ \\
\hline
\end{tabular}

\label{tbl:total work complexity}
\end{table}
\begin{remark}
Theorem \ref{thm:totalworkcomp} shows that all the methods considered in our framework have a sample complexity of $\mathcal{O}(\epsilon^{-1} \log(1/\epsilon p))$ (dominating term) for strongly convex functions. It is worth noting that $\mathcal{O}(\epsilon^{-1})$ represents an optimal complexity result. The additional $\log(1/\epsilon p )$ term in our results arises from employing the pessimistic maximum possible sample size upper bound at all iterations, as specified in Lemma~\ref{lem:boundonSk}.
Furthermore, we observe that the sample complexities of the $RC$ and $RS$ methods match those of the $FD$ method. However, the smoothing methods have complexity results that are $\max\{N,d\}$ times worse compared to $FD$. 
\end{remark}
\section{Numerical Experiments}
\label{sec:numericalexperiments}

In this section, we illustrate the numerical performance of the proposed methods on binary classification logistic regression and nonlinear least squares (NLLS) problems. We implemented all the proposed gradient estimation methods\footnote{We do not report results for the RS method as the results look similar to that of RC method.} where the sample sizes are chosen based on Test~\ref{test:practicalnormtest} with the adaptive sampling test parameter $\theta = 0.9$. 
The number of sampled vectors $N$, the sampling radius parameter $\nu$, and the step size parameter $\alpha$ are considered as tunable hyperparameters in our implementation. We tested different combinations of values for these parameters, considering seven values\footnote{We note that $N=d$ is not a hyperparameter for the FD method.} for the number of sampled vectors ($N \in \{1,5,0.1d,0.2d,0.5d,d,1.2d\}$ for SS and GS methods, and $N \in \{1,5,0.1d,0.2d,0.5d,0.7d,d\}$ for the RC method), five values for the sampling radius ($\nu \in 10^i \colon i \in [-10, -6]$), and twenty-seven values for the step size ($\alpha \in {2^j \colon j \in [-17, 9]}$) independently.

We present results corresponding to the parameters that yielded the best performance, alongside an analysis of sensitivity to key parameters. For each method, we followed a rigorous tuning procedure: we conducted experiments with all possible combinations of $(N, \nu, \alpha)$ parameters for three random runs for a fixed computational budget of total stochastic function evaluations. We then analyzed the worst-case performance in terms of the achieved optimality gap ($F(x_k) - F(x^*)$) across these runs to choose the best combination of $(N,\nu,\alpha)$ for each method. Subsequently, we conducted an additional 17 random runs with these \textit{best} parameters for each method. Finally, the corresponding results are depicted in figures, with the legend indicating the method, $N,\nu,$ and $\alpha$ in ``method(N,$\nu$,$\alpha$)" format.

\subsection{Logistic Regression Problems}\label{sec:logreg}

We first consider the binary classification problems where the objective function is given as the logistic loss with $\ell_2$ regularization: %
\begin{equation}\label{eq:logRegProb}
    F(x) = \frac{1}{N_{\text{data}}} \sum_{i = 1}^{N_{\text{data}}} \log(1 + \exp(-z^i x^T y^i)) + \frac{\lambda}{2} \|x\|^2,
\end{equation}
where %
$(y_i,z_i) \in \R^d\times\{-1,1\}, i=1,2,\cdots, N_{\text{data}}$ denote the training examples, and $\lambda = 1/N_{data}$ is the regularization parameter.

We tested our framework on \texttt{mushroom} and \texttt{MNIST}\footnote{We classify hand-written digits into even and odd digits.} data sets from LIBSVM collection \cite{10.1145/1961189.1961199}, where $N_{\text{data}} = 5,500, d = 112$ and $N_{\text{data}} = 60,000, d = 784$ respectively. For each data set, we computed the minimal function value $F^*$ by running the deterministic (full-batch) LBFGS method \cite{nocedal2006numerical} until $\|\nabla F(x)\| \approx 10^{-8}$. We chose the starting point for our numerical experiments to be a random point which is \textit{close} to the optimal solution. Although the starting point is determined randomly, it is then fixed in our numerical experiments to ensure that all the runs start from the same initial point. We set the initial batch size $|S_0|$ to be $10 \%$ of the size of the data set $N_{\text{data}}$. We ran the experiments until the number of cumulative function evaluations have elapsed $100dN_{\text{data}}$. 
\paragraph{Mushroom data set. }
Figure~\ref{fig:mushroombestvsbest} reports results for the \texttt{mushroom} data set. Figures~\ref{fig:mushroombestvsbestoptgap} and \ref{fig:mushroombestvsbestbatch} show the median performances, while the bands around them indicate the 35th and 65th quantiles across 20 random runs, corresponding to the \textit{best} set of hyperparameters ($N,\nu,\alpha$) used for each method. In Figure \ref{fig:mushroombestvsbestoptgap}, the y-axis measures the optimality gap ($F(x_k) - F^*$), and the x-axis measures the number of (stochastic) function evaluations. The results demonstrate that FD performs better than others for small computational budgets (function evaluations); however, the smoothing methods outperform others when the full computational budget is utilized. Regarding batch sizes, Figure \ref{fig:mushroombestvsbestbatch} shows that they increase more rapidly for the smoothing methods compared to other methods. Finally, in Figure \ref{fig:mushroomcoordvsspherical}, the performance, in terms of the optimality gap over iterations across different $N$, of SS and RC methods is compared with the corresponding tuned $\nu$ and $\alpha$ values. Here, we report the mean performances, and the bands reflect the minimum and maximum values across three seeds. The results indicate that for small $N$, SS outperforms RC, and for large $N$, the vice versa is true. This suggests that there exists a \textit{threshold} for $N$, where it is better to use a smoothing method for smaller values of N and randomized coordinate finite difference methods for larger values of $N$.

\begin{figure}[ht]
    \centering
    \begin{subfigure}{0.33\textwidth}
        \centering
        \includegraphics[width=1.1\textwidth]{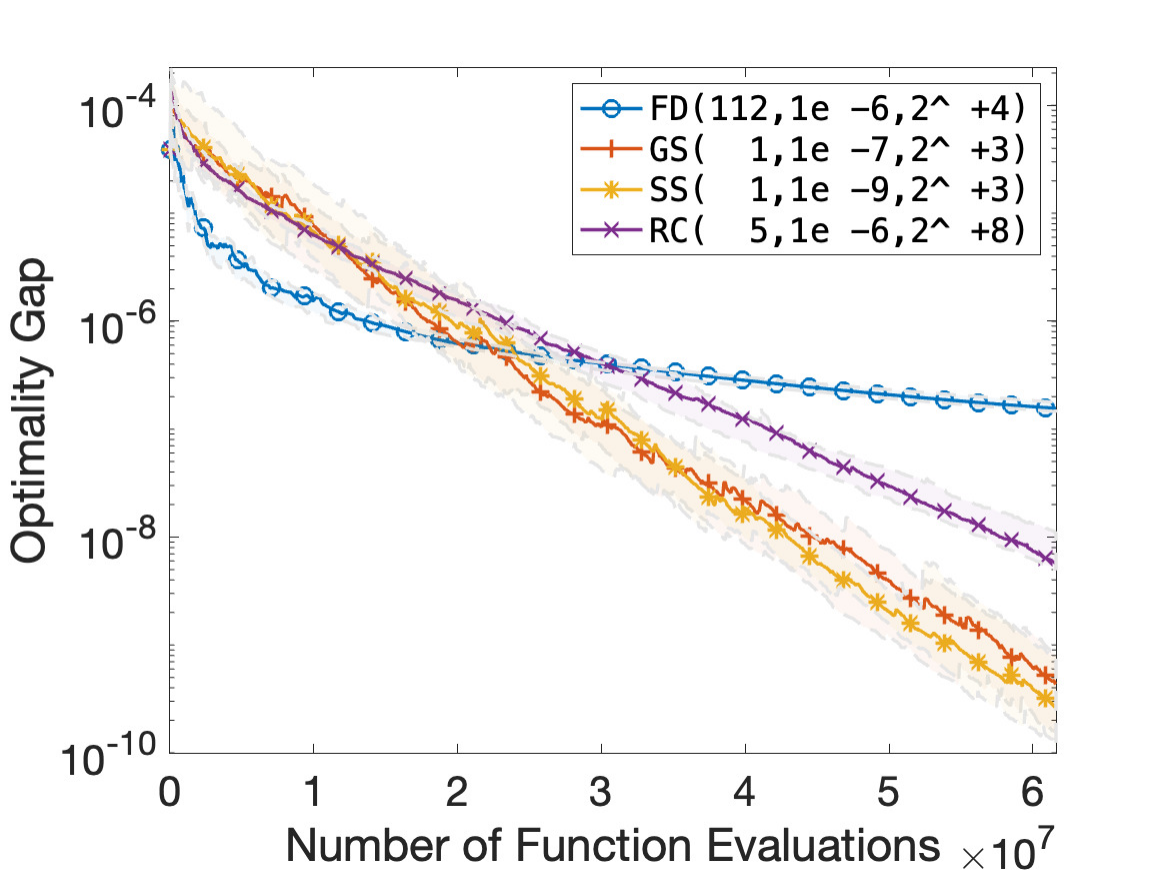}
        \caption{Optimality Gap}
        \label{fig:mushroombestvsbestoptgap}
    \end{subfigure}%
    \begin{subfigure}{0.33\textwidth}
        \centering
        \includegraphics[width=1.1\textwidth]{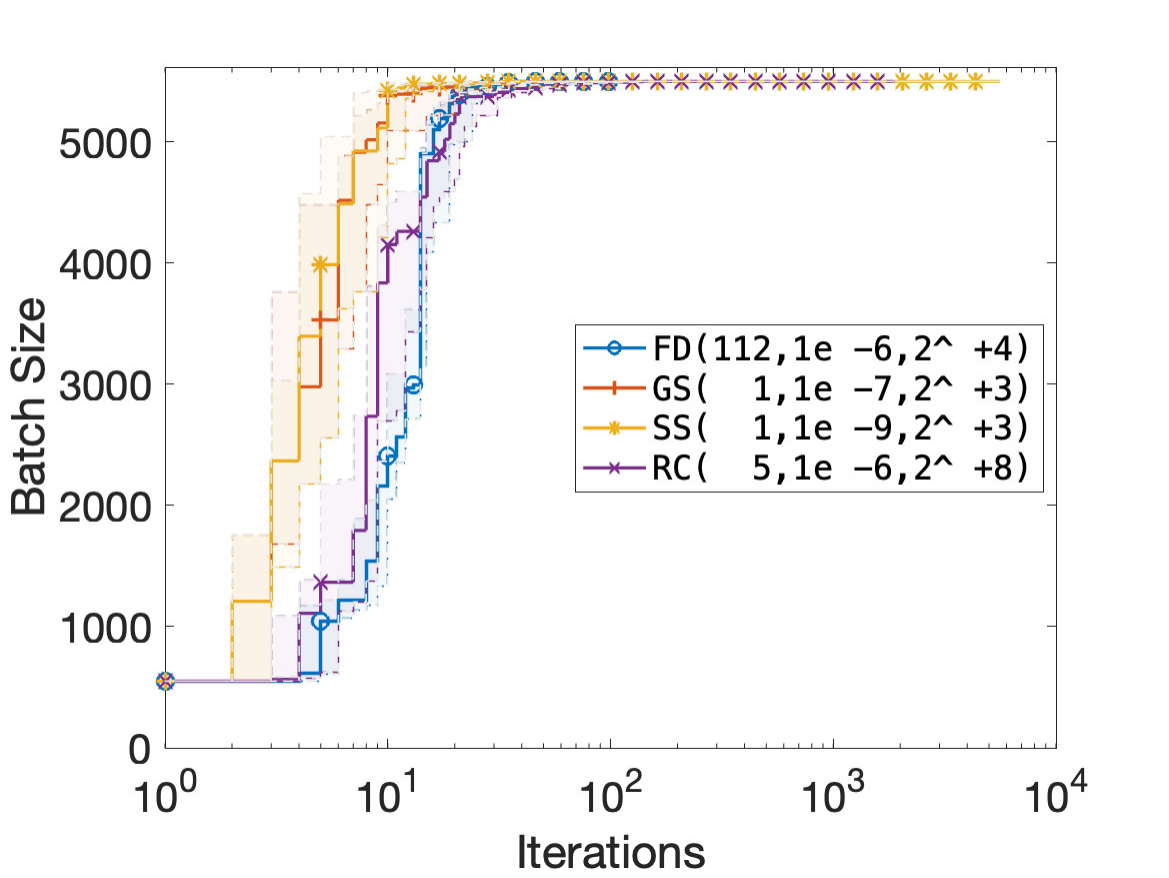}
        \caption{Batch Size}
        \label{fig:mushroombestvsbestbatch}
    \end{subfigure}
    \begin{subfigure}{0.33\textwidth}
        \centering
        \includegraphics[width=1.1\textwidth]{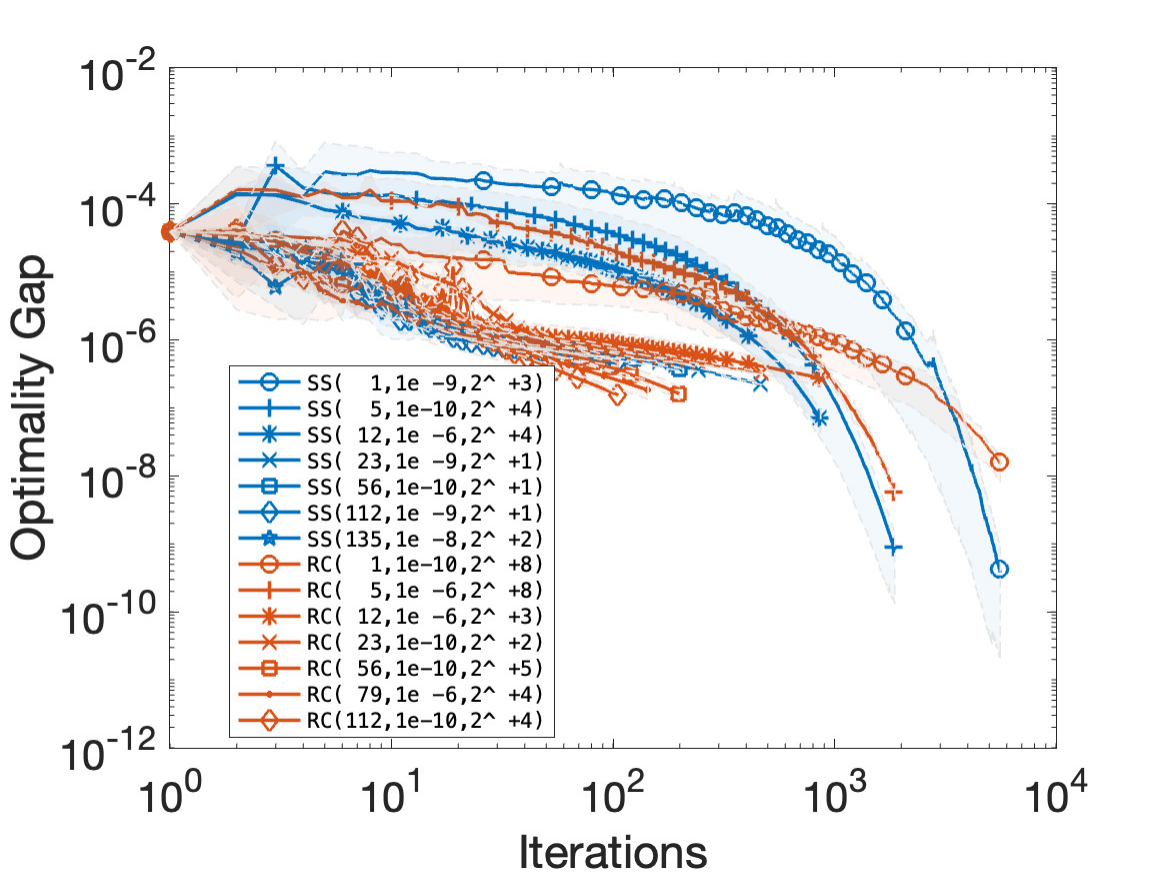}
        \caption{Comparison of SS and RC}
        \label{fig:mushroomcoordvsspherical}
    \end{subfigure}
    \caption{Performance of different gradient estimation methods using the tuned hyperparameters on the \texttt{mushroom} data set.}
    \label{fig:mushroombestvsbest}
\end{figure}

In Figure~\ref{fig:mushroomnumdirsens}, we investigated the effect of $N$ on the performance of GS, SS, and RC methods, with $\nu$ and $\alpha$ values tuned for best performance. The figure presents the mean performances, with bands indicating the minimum and maximum values across three random seeds. Across all three methods, smaller $N$ values result in better performance, albeit with increased bandwidths due to larger variances in gradient estimation. Additionally, larger $N$ values exhibit similar performance for each method individually. This suggests that there is no significant advantage of using larger $N$ values beyond a certain threshold.

\begin{figure}[ht]
\centering
    \begin{subfigure}{0.33\textwidth}
        \includegraphics[width=1.1\linewidth]{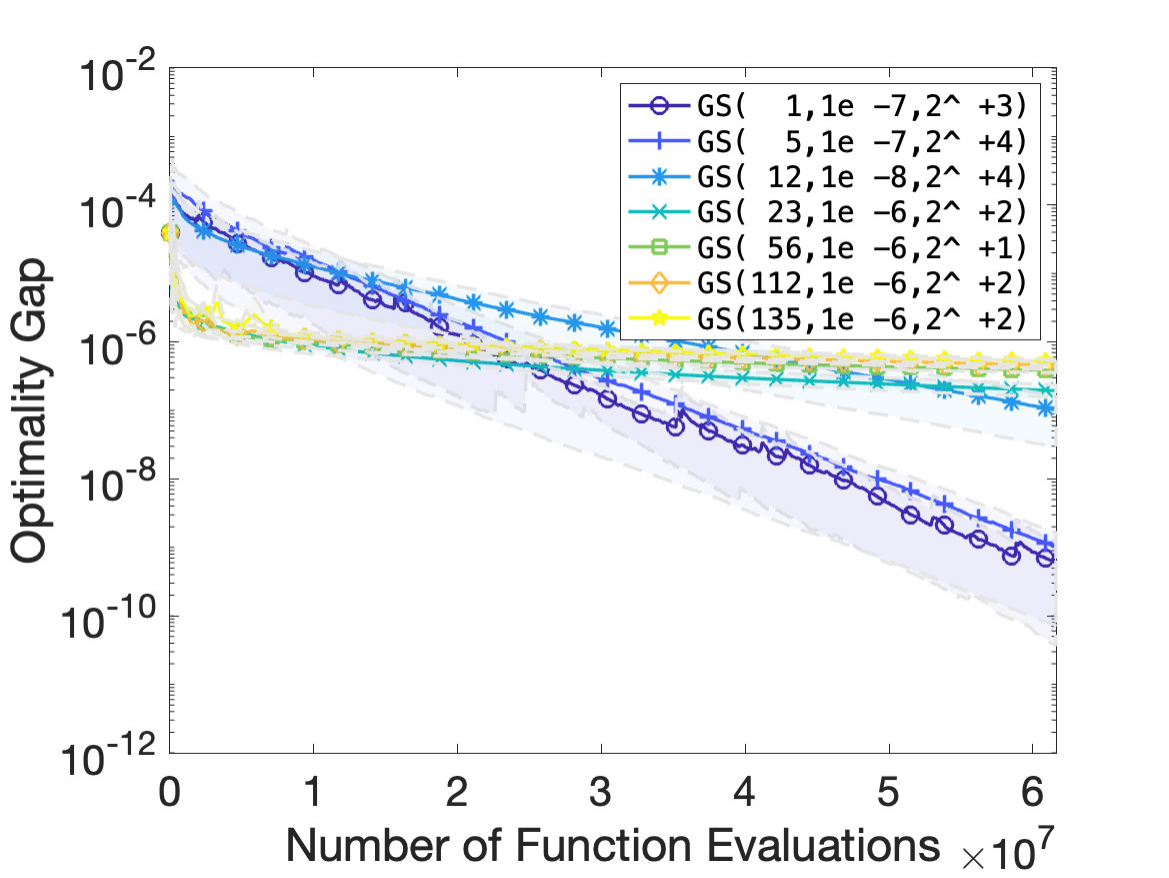}
        \caption{Performance of GS}
        \label{fig:mushroomnumdirsensGSFFD}
    \end{subfigure}%
    \begin{subfigure}{0.33\textwidth}
        \centering
        \includegraphics[width=1.1\linewidth]{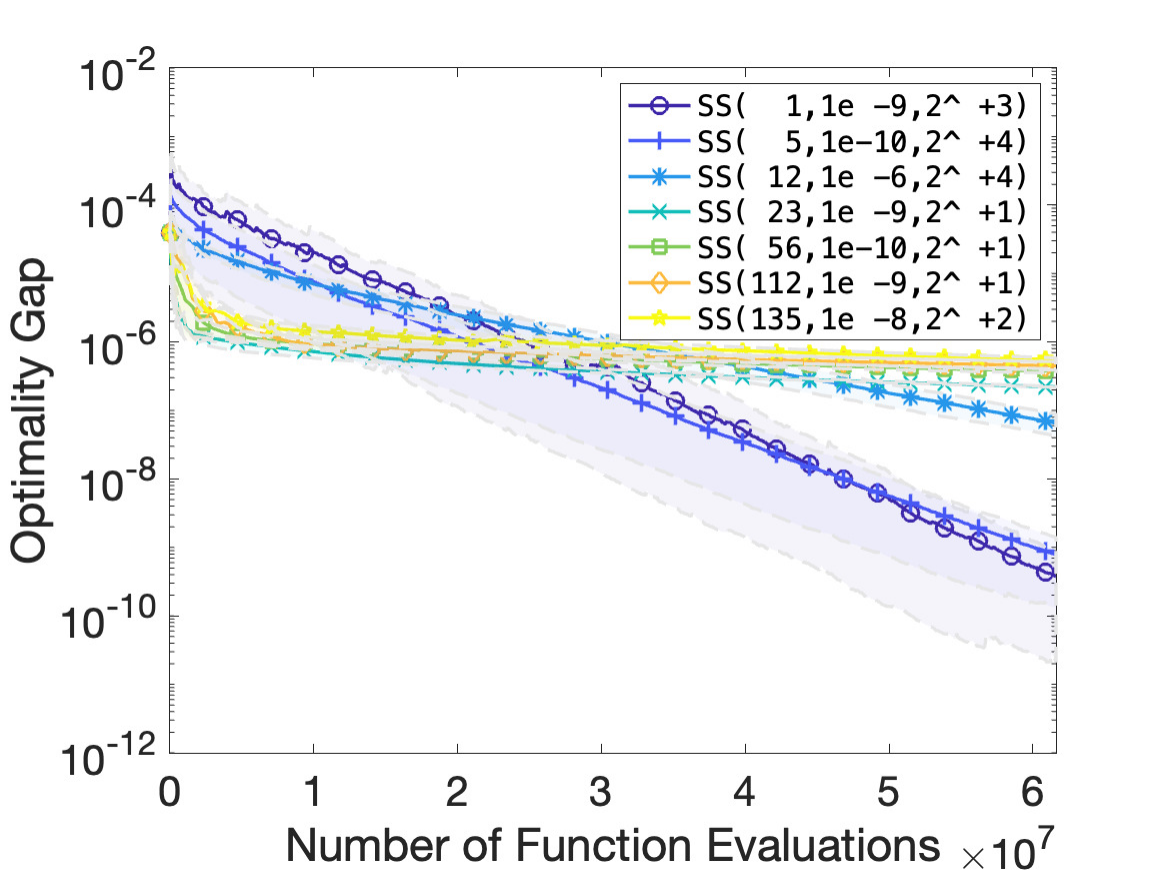}
        \caption{Performance of SS}
        \label{fig:mushroomnumdirsensSSFFD}
    \end{subfigure}
    \begin{subfigure}{0.33\textwidth}
        \centering
        \includegraphics[width=1.1\linewidth]{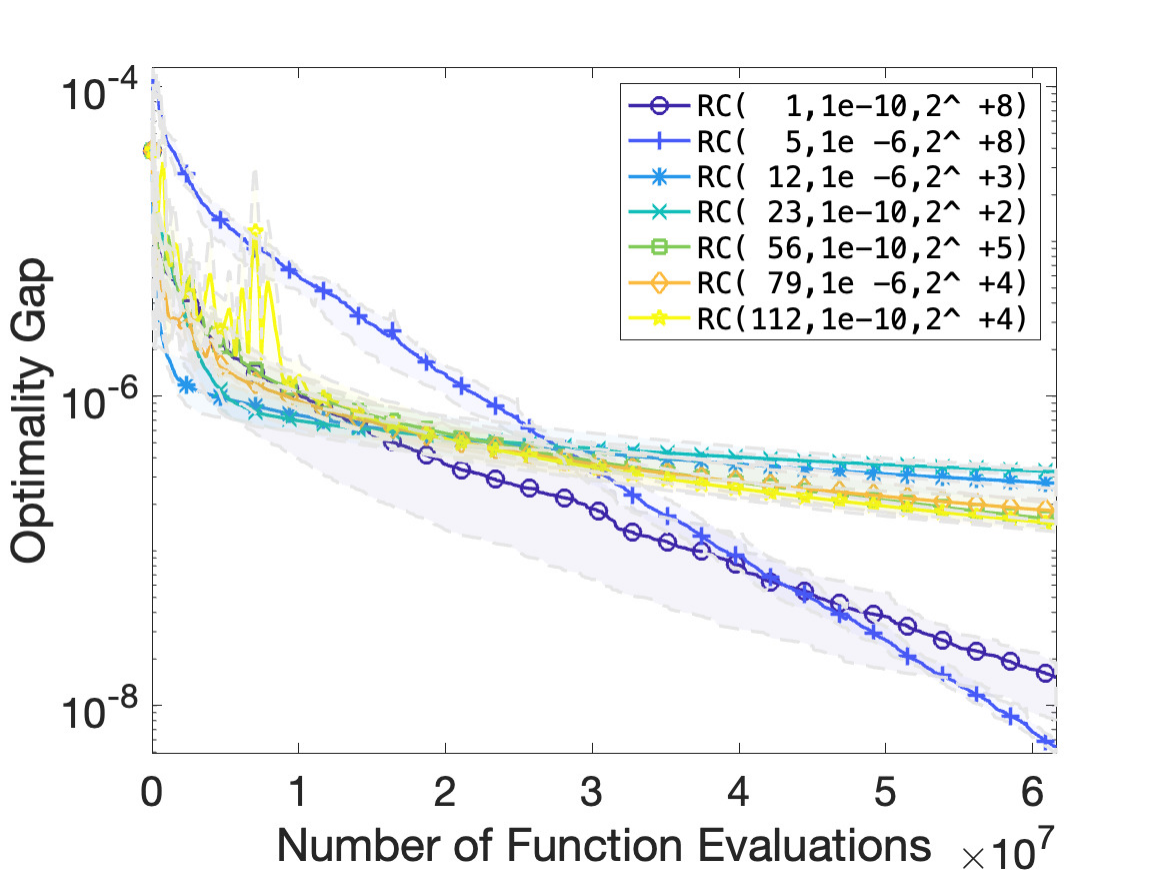}
        \caption{Performance of RC}
        \label{fig:mushroomnumdirsensRCFFD}
    \end{subfigure}
    \caption{The effect of number of directions on the performance of different randomized gradient estimation methods on the \texttt{mushroom} data set. All other hyperparameters are tuned to achieve the best performance.}
    \label{fig:mushroomnumdirsens}
\end{figure}

In Figure~\ref{fig:mushroomalphasens}, we evaluate the sensitivity of $\alpha$ for selected (tuned) combinations of $N$ and $\nu$. We observe that larger $\alpha$ values initially yield poorer performances due to their higher variances. However, as the algorithm progresses and the batch sizes increase, larger $\alpha$ values begin to outperform the smaller ones. Nevertheless, excessively large $\alpha$ values may cause divergence in the algorithm. Therefore, our tuning procedure tends to select the largest $\alpha$ value that ensures stable convergence.

\begin{figure}[ht]
\centering
    \begin{subfigure}{0.33\textwidth}
        \includegraphics[width=1.1\linewidth]{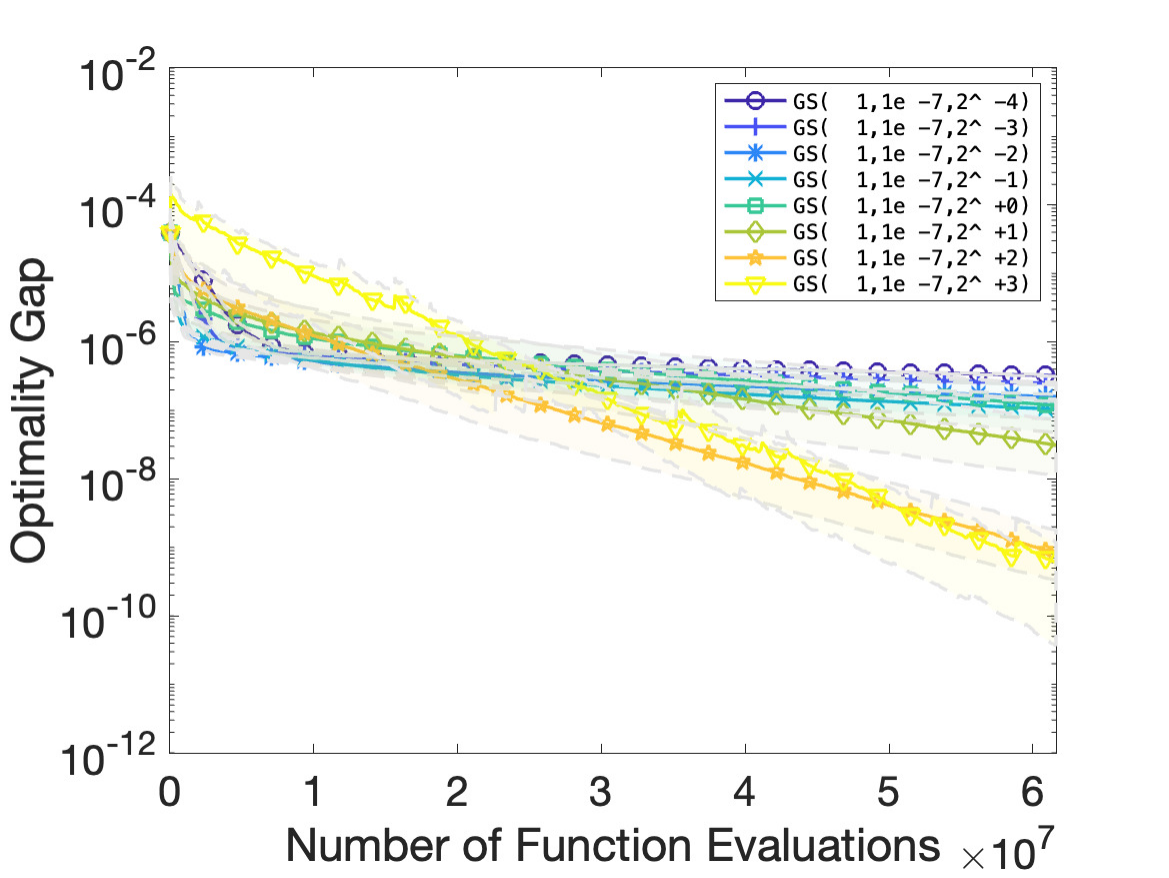}
        \caption{Performance of GS}
        \label{fig:mushroomalphasensGSFFD}
    \end{subfigure}%
    \begin{subfigure}{0.33\textwidth}
        \centering
        \includegraphics[width=1.1\linewidth]{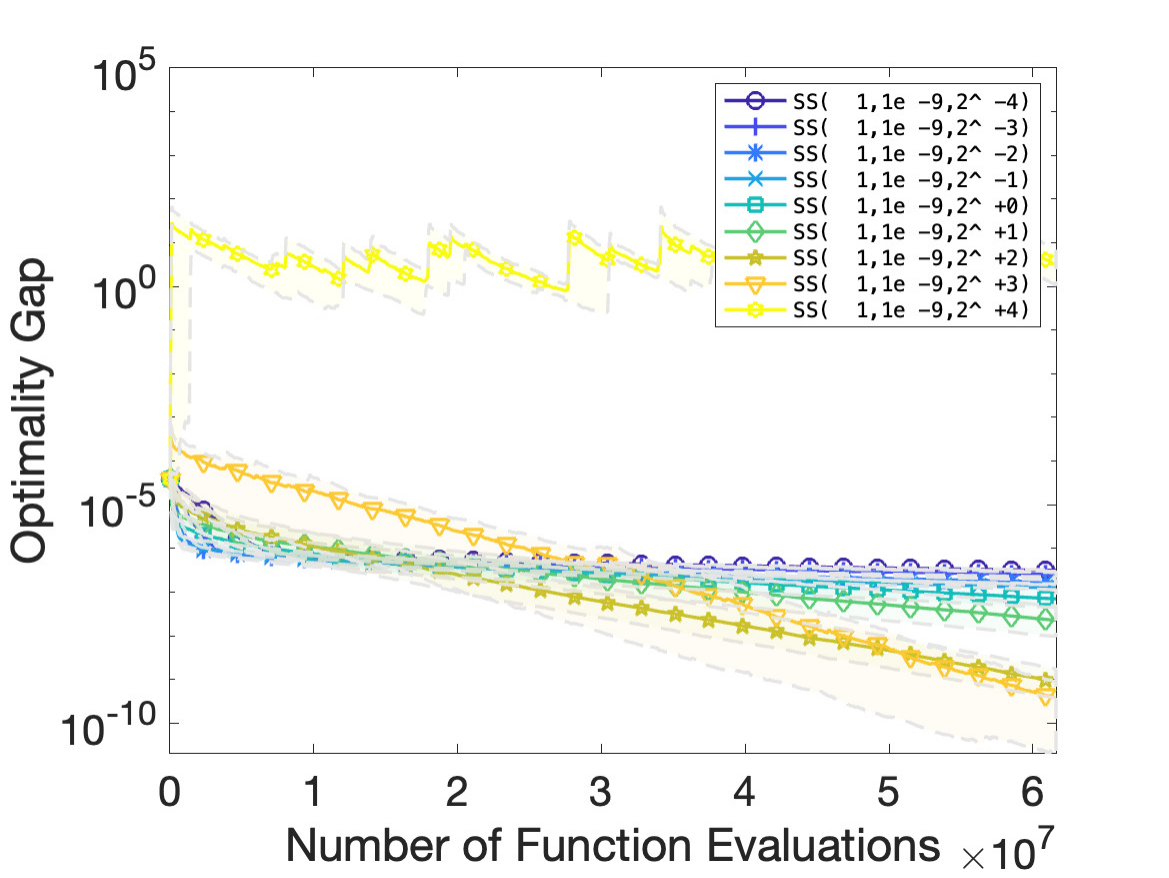}
        \caption{Performance of SS}
        \label{fig:mushroomalphasensSSFFD}
    \end{subfigure}
    \begin{subfigure}{0.33\textwidth}
        \centering
        \includegraphics[width=1.1\linewidth]{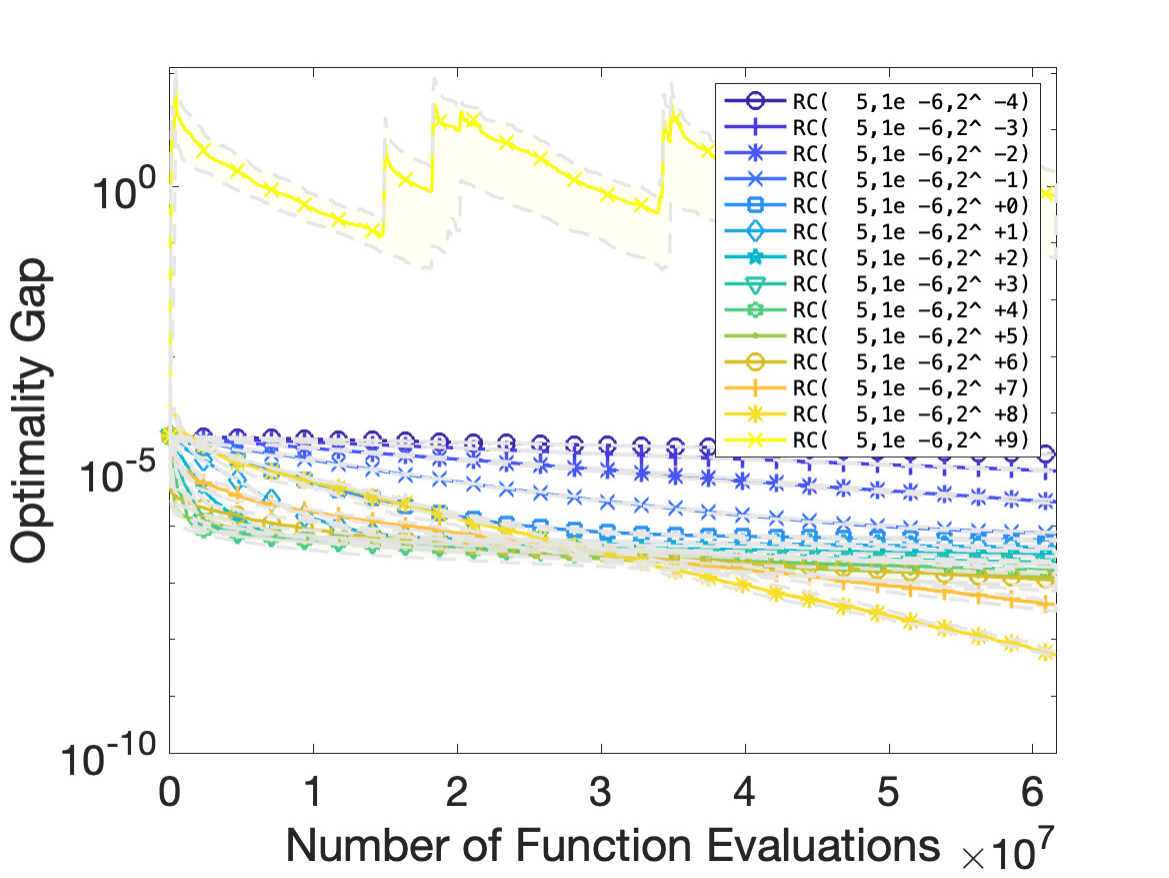}
        \caption{Performance of RC}
        \label{fig:mushroomalphasensRCFFD}
    \end{subfigure}
    \caption{The effect of step size on the performance of different randomized gradient estimation methods on the \texttt{mushroom} data set.}
    \label{fig:mushroomalphasens}
\end{figure}

\begin{figure}[ht]
    \centering
    \begin{subfigure}{0.33\textwidth}
        \centering
        \includegraphics[width=1.1\textwidth]{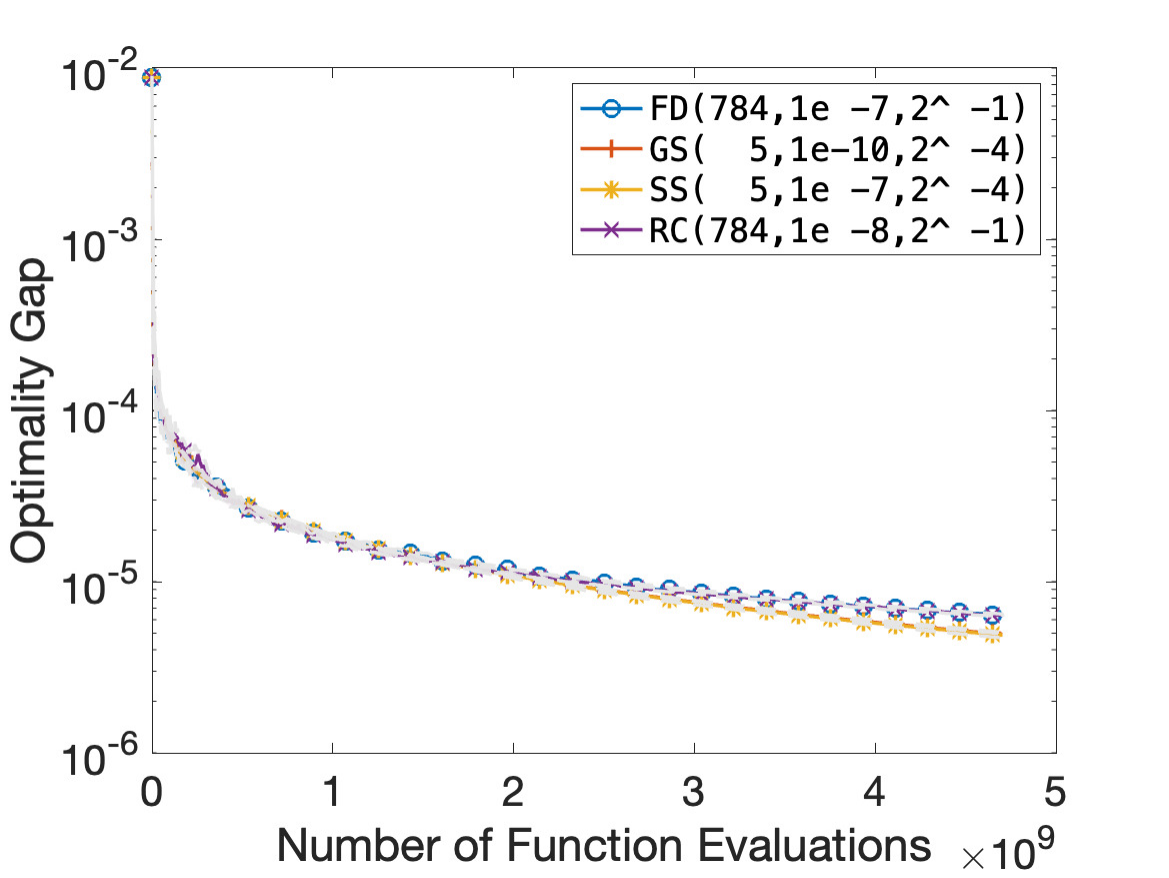}
        \caption{Optimality Gap}
        \label{fig:mnistbestvsbestoptgap}
    \end{subfigure}%
    \begin{subfigure}{0.33\textwidth}
        \centering
        \includegraphics[width=1.1\textwidth]{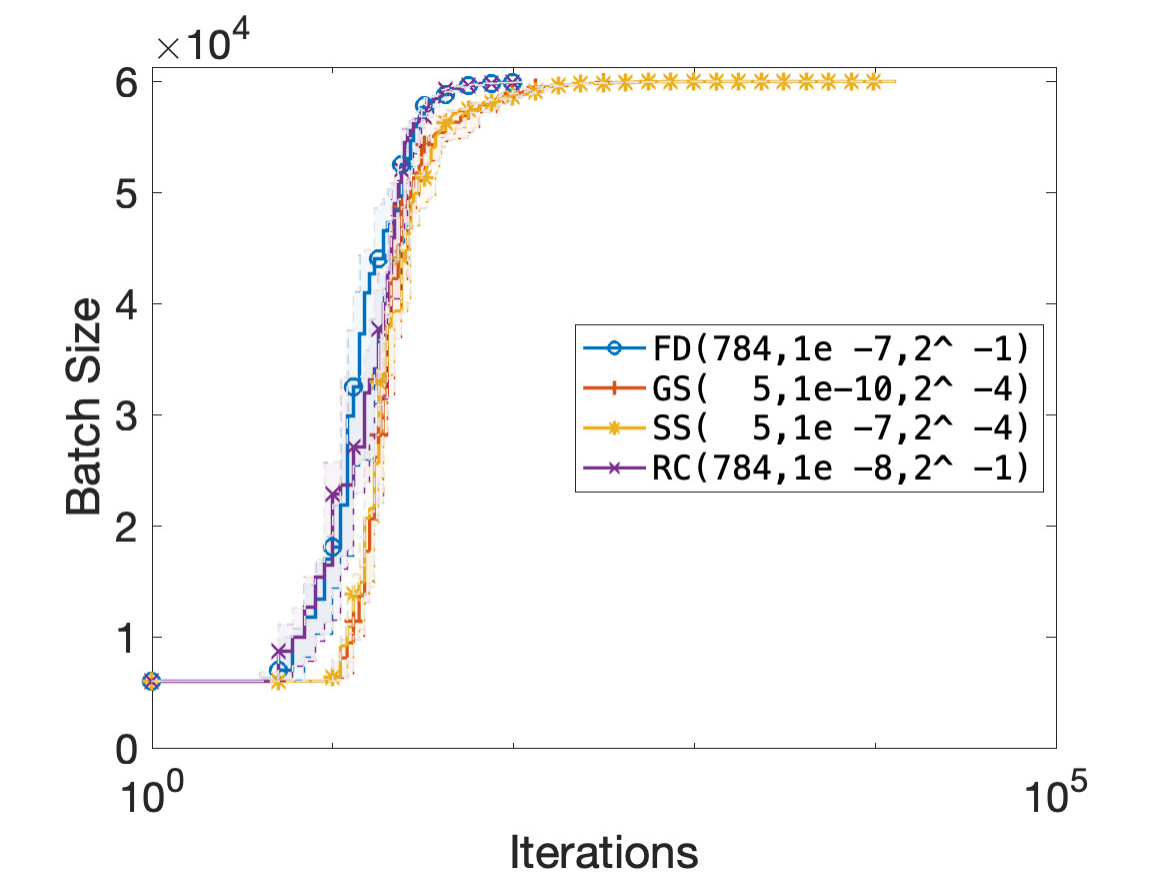}
        \caption{Batch Size}
        \label{fig:mnistbestvsbestbatch}
    \end{subfigure}
    \begin{subfigure}{0.33\textwidth}
        \centering
        \includegraphics[width=1.1\textwidth]{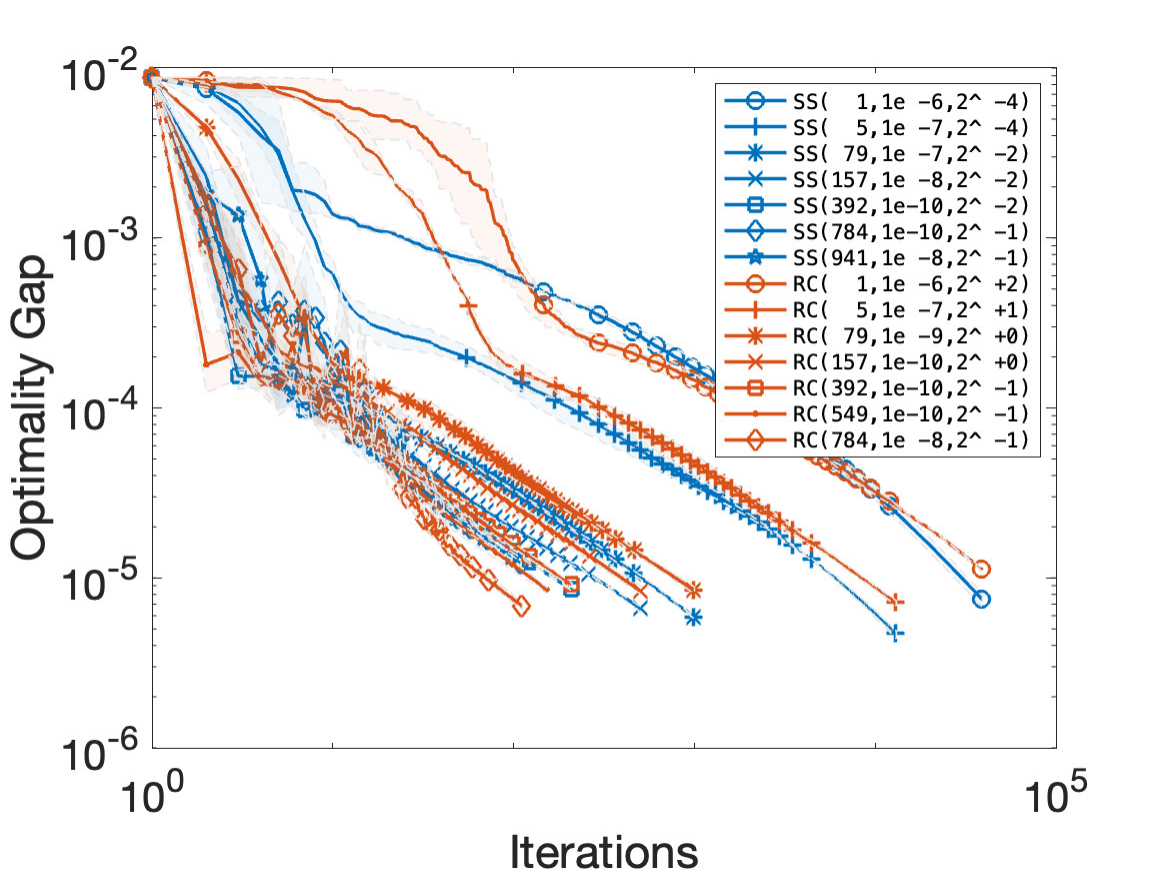}
        \caption{Comparison of SS and RC}
        \label{fig:mnistcoordvsspherical}
    \end{subfigure}
    \caption{Performance of different gradient estimation methods using the tuned hyperparameters on the \texttt{MNIST} data set.}
    \label{fig:mnistbestvsbest}
\end{figure}

\paragraph{MNIST data set. } 
In Figure~\ref{fig:mnistbestvsbest}, we present the results for the \texttt{MNIST} data set where the solid line reports the median performance, and the bands show the 35th and 65th percentiles for 20 random runs, similar to Figure~\ref{fig:mushroombestvsbest}.  Both Figures~\ref{fig:mnistbestvsbestoptgap} and \ref{fig:mnistbestvsbestbatch} show that all methods perform similarly, but the smoothing methods exhibit slightly better performance with slower increases in batch sizes compared to other methods. Moreover, Figure~\ref{fig:mnistcoordvsspherical} demonstrates a pattern similar to Figure~\ref{fig:mushroomcoordvsspherical}: SS outperforms RC for small $N$ values, whereas the reverse is true for larger $N$ values. Additionally, similar to the observations made in Figure~\ref{fig:mushroomnumdirsens}, Figure~\ref{fig:mnistnumdirsens} indicates that smoothing methods perform better with smaller $N$ values. However, there is no significant effect of $N$ on the performance for the RC method. 

\begin{figure}[ht]
\centering
\begin{subfigure}{0.33\textwidth}
  \centering
  \includegraphics[width=1.1\linewidth]{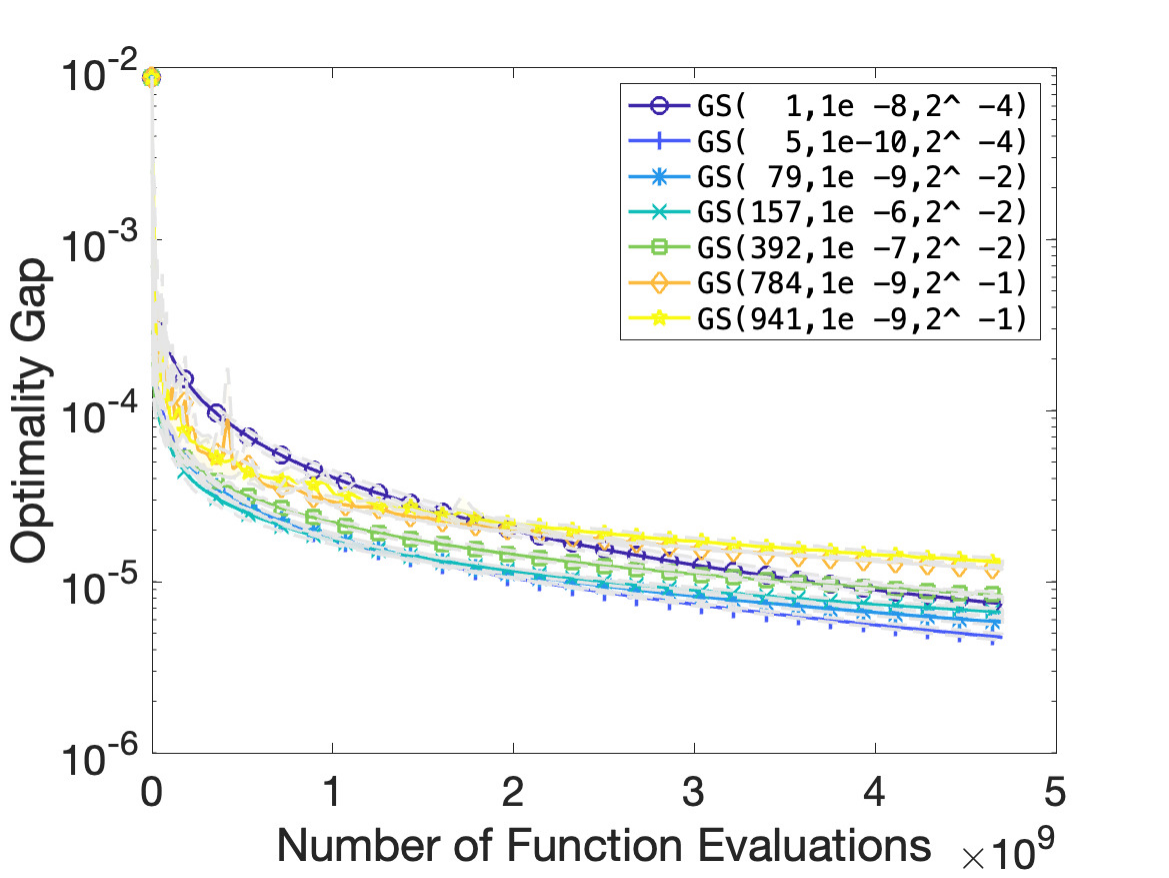}
  \caption{Performance of GS}
  \label{fig:mnistnumdirsensGSFFD}
\end{subfigure}%
\begin{subfigure}{0.33\textwidth}
  \centering
  \includegraphics[width=1.1\linewidth]{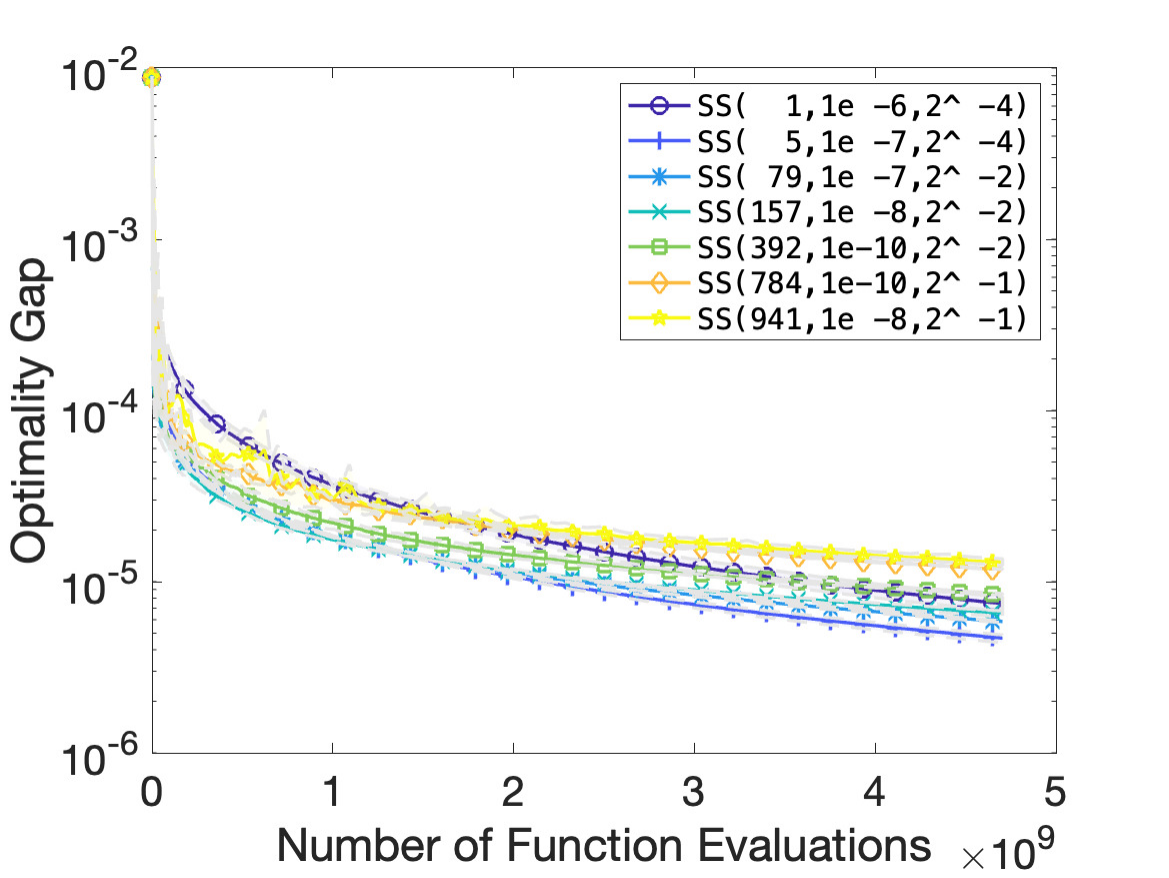}
  \caption{Performance of SS}
  \label{fig:mnistnumdirsensSSFFD}
\end{subfigure}
\begin{subfigure}{0.33\textwidth}
  \centering
  \includegraphics[width=1.1\linewidth]{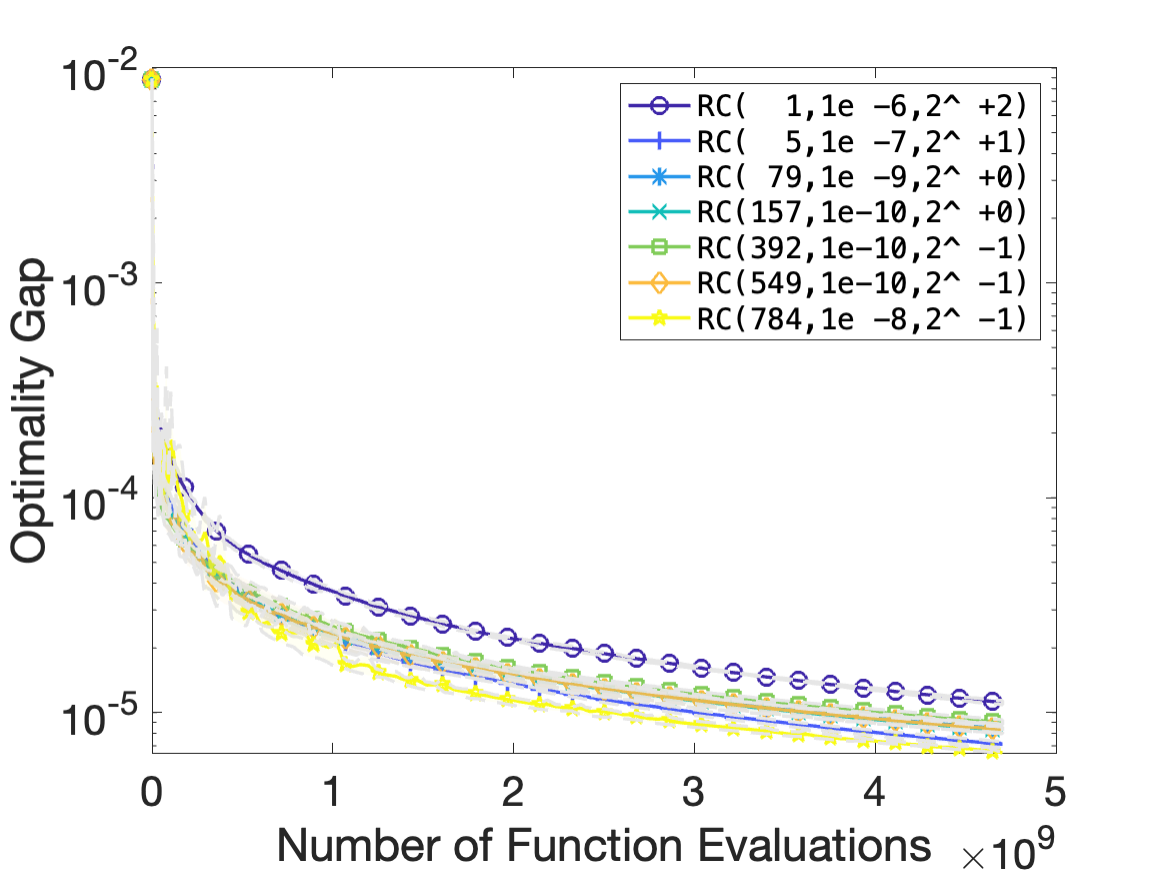}
  \caption{Performance of RC}
  \label{fig:mnistnumdirsensRCFFD}
\end{subfigure}
\caption{The effect of number of directions on the performance of different randomized gradient estimation methods on the \texttt{MNIST} data set. All other hyperparameters are tuned to achieve the best performance.}
\label{fig:mnistnumdirsens}
\end{figure}

\subsection{Nonlinear Least Squares (NLLS) Problems}
We also tested the performance of our framework on nonlinear least squares problems affected by two types of stochastic noise: relative and absolute. For the NLLS function with relative noise, denoted as $f_{\rm rel}(x,\zeta)$, we define it as follows: 
\begin{equation*}
	f_{\rm rel}(x,\zeta) \defeq \frac{1}{1 + \sigma^2} \sum_{j=1}^{p}\phi^2_j(x) (1 + \zeta_j)^2,
\end{equation*}
where $\phi : \mathbb{R}^d \rightarrow \mathbb{R}^p$ represents a nonlinear function and $\zeta \sim \mathcal{N}(0,\sigma^2 I_p)$ denotes stochastic noise. Similarly, for the NLLS function with absolute noise, denoted as $f_{\rm abs}(x,\zeta)$, we define it as
\begin{equation*}
	f_{\rm abs}(x,\zeta) \defeq \sum_{j=1}^{p}(\phi_j(x) + \zeta_j)^2 - \sigma^2.
\end{equation*}
We considered four different problems for $\phi$ from the CUTEr \cite{gould2003cuter} collection of optimization problems, with details provided in Table~\ref{tbl:NLLSprob}. We experimented with two levels of standard deviations for the noise:  $\sigma = 10^{-5}$ and $\sigma = 10^{-3}$. Note that both forms of the function satisfy $\E_{\zeta}[f(x,\zeta)] = \sum_{j=1}^{p}\phi^2_j(x)$. Moreover,  $f(x,\zeta)$ and $\mathbb{E}_{\zeta}[f(x,\zeta)]$ are twice continuously differentiable. We chose the initial starting point as proposed in \cite{more2009benchmarking} and set the initial batch size $|S_0| = 2$.

\begin{table}[H]
\centering
\caption{NLLS problems}
\begin{tabular}{|c|c|c|}
\hline
Function & p & d \\ \hline
Chebyquad & 45 & 30 \\ 
Osborne & 65 & 11 \\ 
Bdqrtic & 92 & 50 \\ 
Cube & 30 & 20 \\
\hline
\end{tabular}
\label{tbl:NLLSprob}
\end{table}

\paragraph{Chebyquad function with absolute noise.} 
Figures~\ref{fig:15absbestvsbest}, \ref{fig:15absnumdirsens}, and \ref{fig:15absalphasens} present the empirical performance of Algorithm \ref{alg:pseudo_code} on minimizing the Chebyquad function with absolute noise. Figures~\ref{fig:15abs3bestvsbestoptgap}, \ref{fig:15abs3bestvsbestbatch}, \ref{fig:15abs5bestvsbestoptgap}, and \ref{fig:15abs5bestvsbestbatch} illustrate that the smoothing methods exhibit slightly better performance and slower increases in batch sizes compared to other methods. Additionally, Figures~\ref{fig:15abs3coordvsspherical} and \ref{fig:15abs5coordvsspherical} show that for small $N$, SS outperforms RC, consistent with our observations in Section \ref{sec:logreg}. However, contrary to the results in Section \ref{sec:logreg}, Figure \ref{fig:15absnumdirsens} indicates that large $N$ works better than small $N$ in this problem setting. We attribute this discrepancy in observations to the small $d$ value for this problem\footnote{The erratic behavior of SS observed in Figure \ref{fig:15abs3numdirsensSSFFD} (and also in Figure \ref{fig:15abs3alphasensSSFFD}) is due to its large step size, which did not result in divergence for any of the initial three random runs.}. Finally, Figure~\ref{fig:15absalphasens} illustrates that our tuning procedure tends to select the largest $\alpha$ value that ensures stable convergence. Results for the other problems listed in Table~\ref{tbl:NLLSprob} are provided in Appendix~\ref{sec:addnumresults}.

\begin{figure}[ht]
\centering
\begin{subfigure}{0.33\textwidth}
  \centering
  \includegraphics[width=1.1\linewidth]{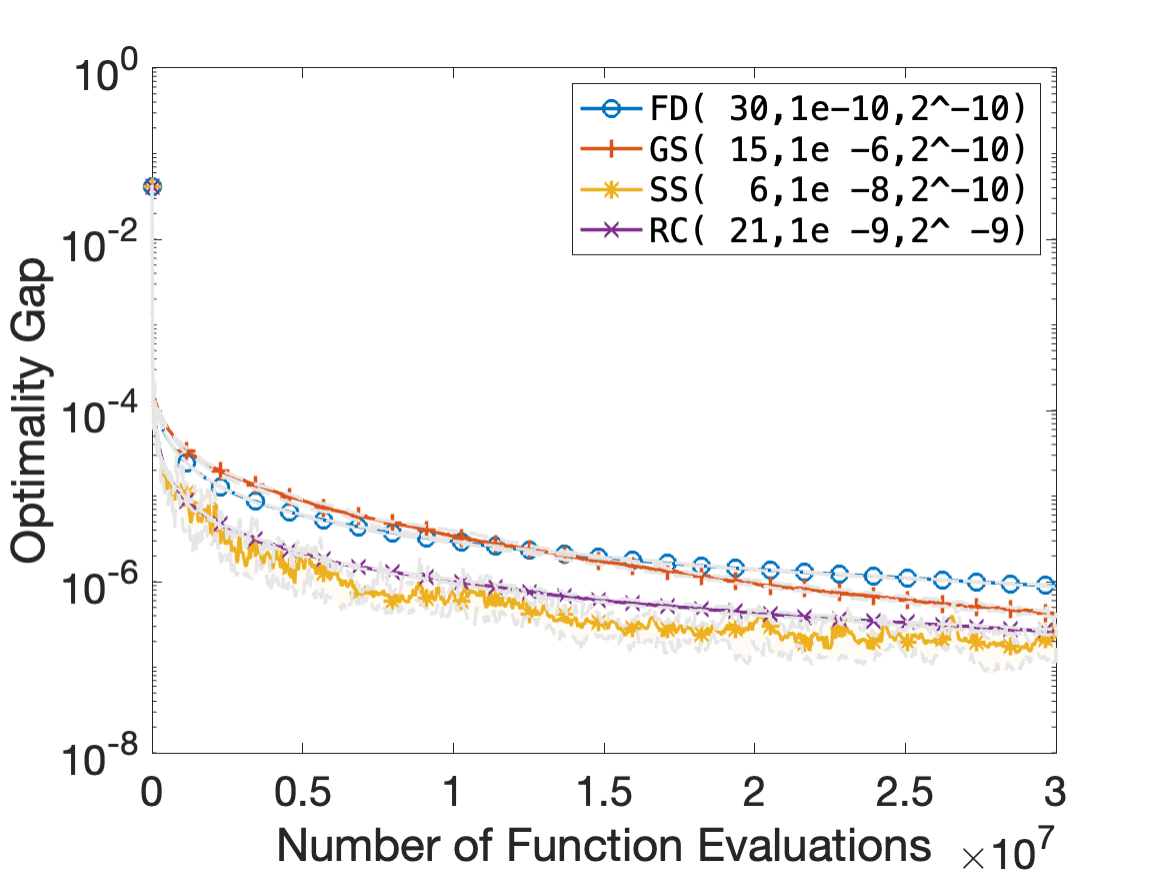}
  \caption{Optimality Gap}
  \label{fig:15abs3bestvsbestoptgap}
\end{subfigure}%
\begin{subfigure}{0.33\textwidth}
  \centering
  \includegraphics[width=1.1\linewidth]{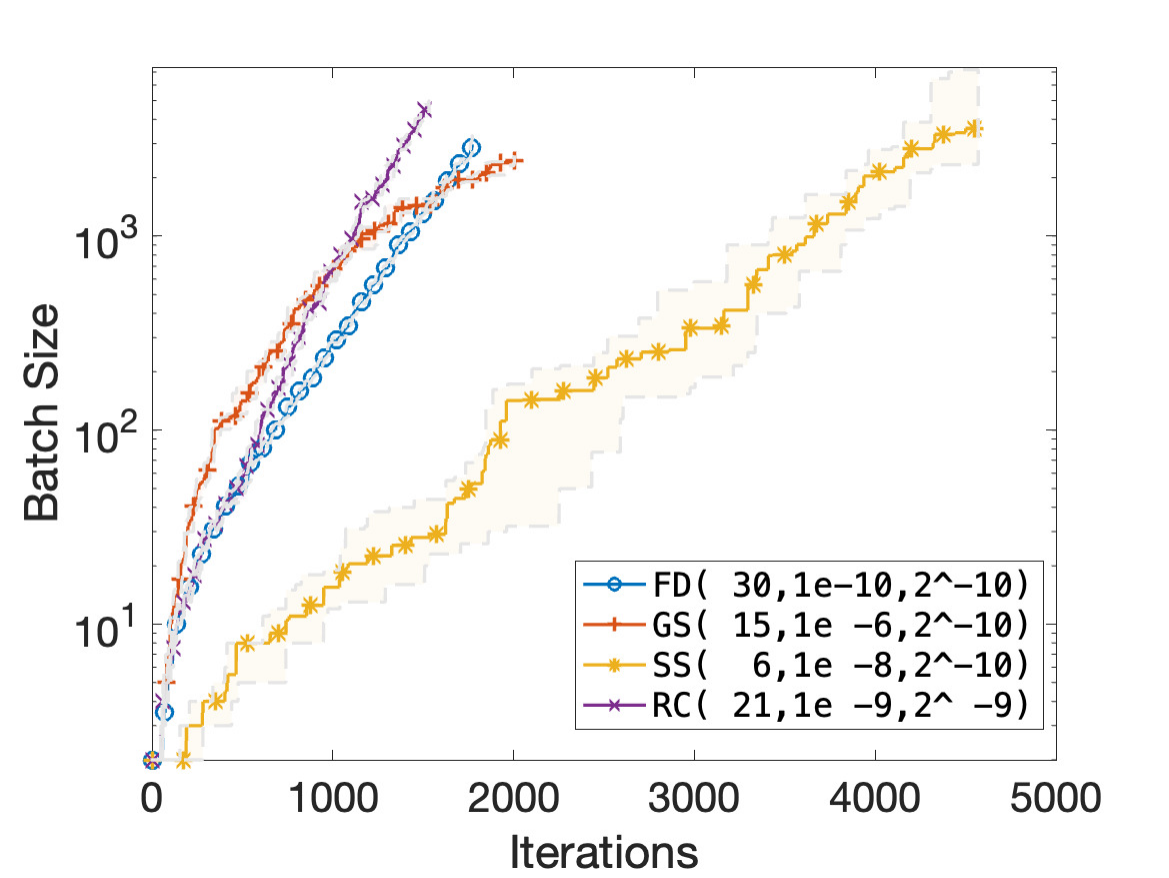}
  \caption{Batch Size}
  \label{fig:15abs3bestvsbestbatch}
\end{subfigure}
\begin{subfigure}{0.33\textwidth}
  \centering
  \includegraphics[width=1.1\linewidth]{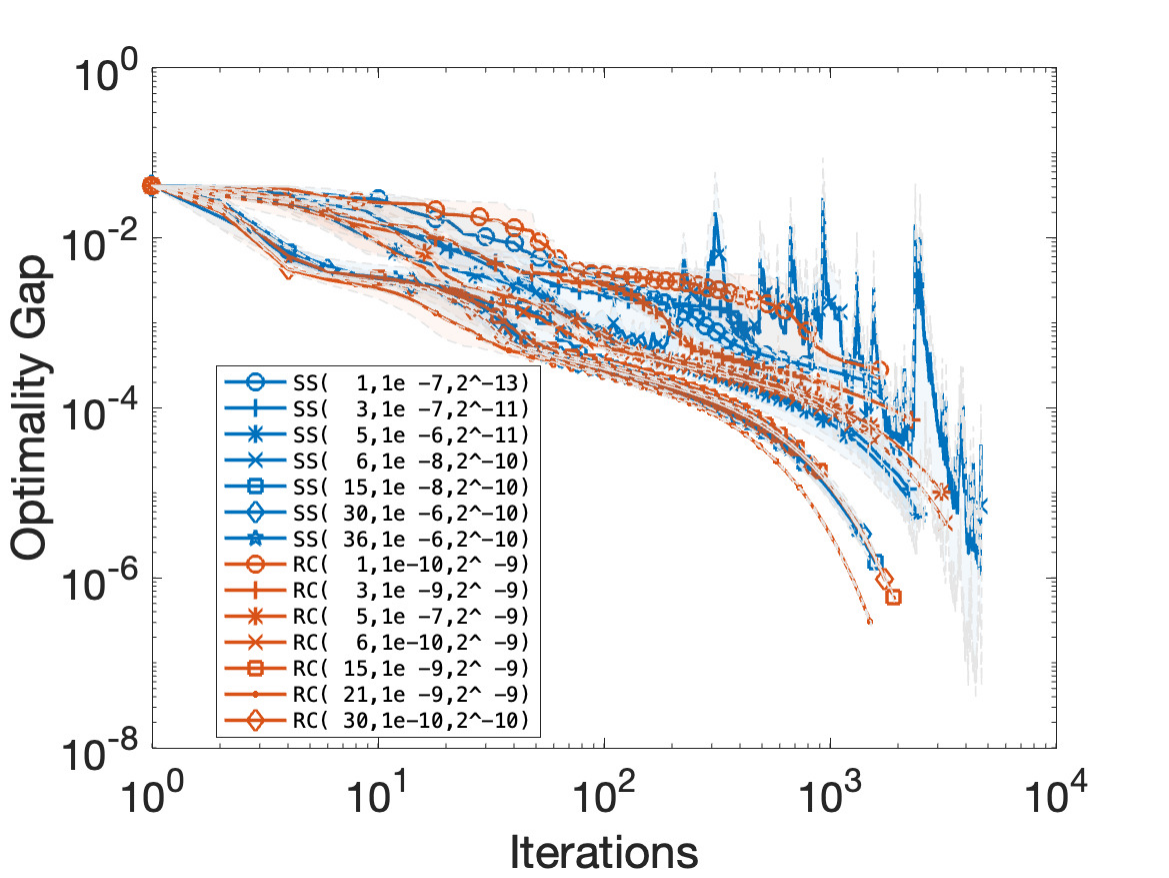}
  \caption{Comparison of SS and RC}
  \label{fig:15abs3coordvsspherical}
\end{subfigure}
\begin{subfigure}{0.33\textwidth}
  \centering
  \includegraphics[width=1.1\linewidth]{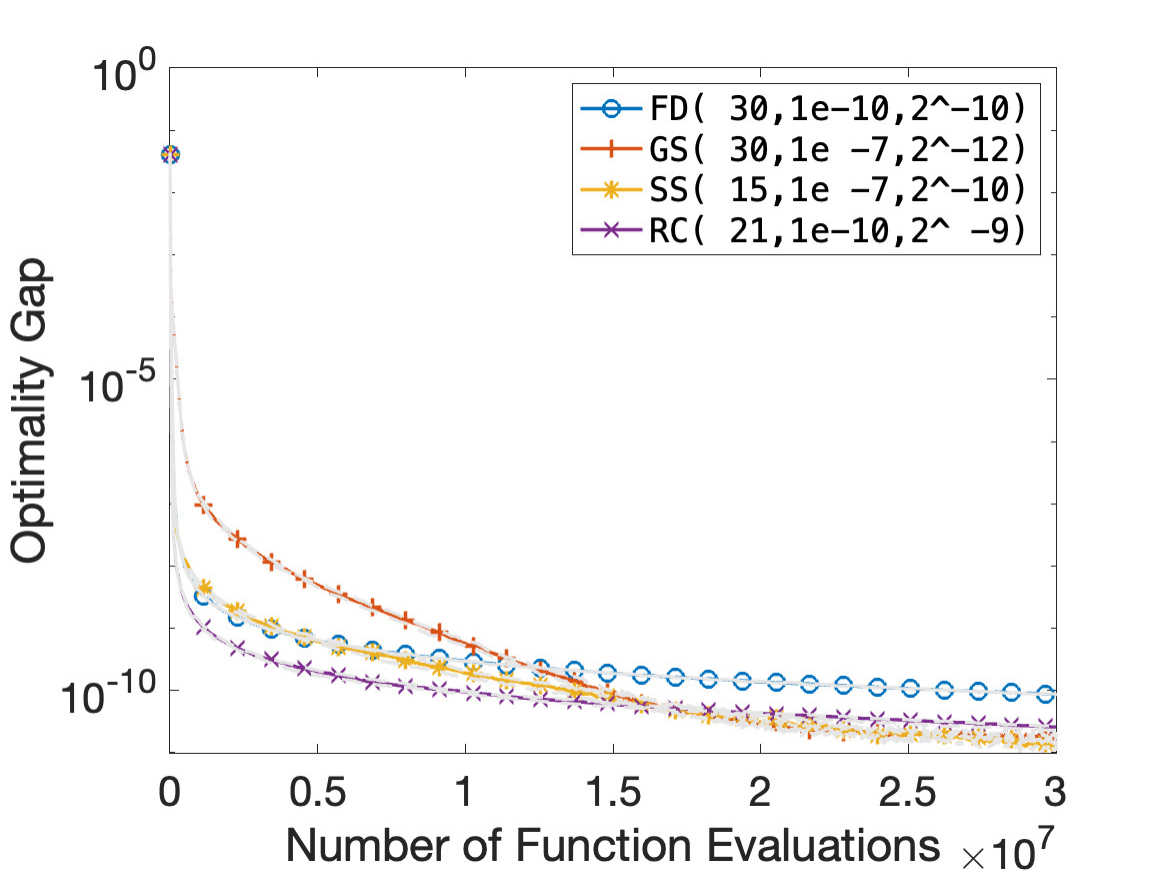}
  \caption{Optimality Gap}
  \label{fig:15abs5bestvsbestoptgap}
\end{subfigure}%
\begin{subfigure}{0.33\textwidth}
  \centering
  \includegraphics[width=1.1\linewidth]{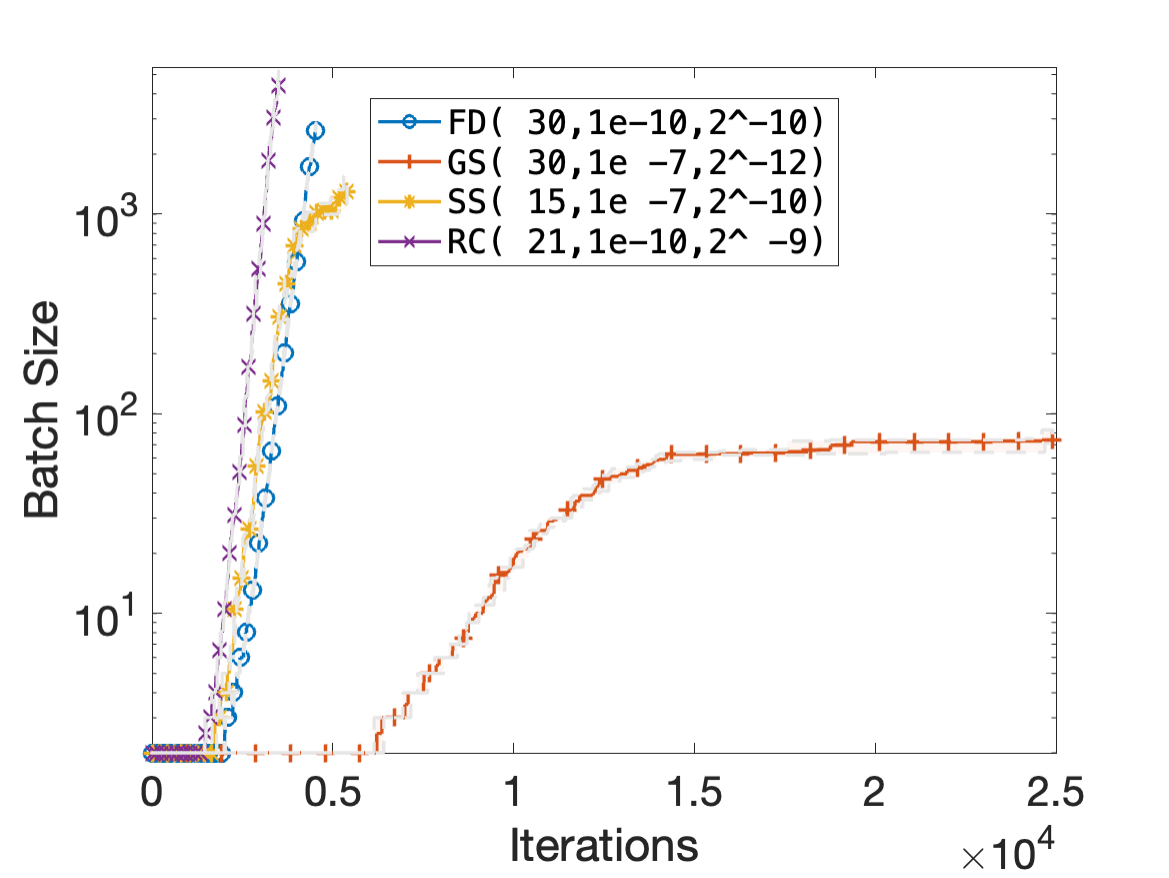}
  \caption{Batch Size}
  \label{fig:15abs5bestvsbestbatch}
\end{subfigure}
\begin{subfigure}{0.33\textwidth}
  \centering
  \includegraphics[width=1.1\linewidth]{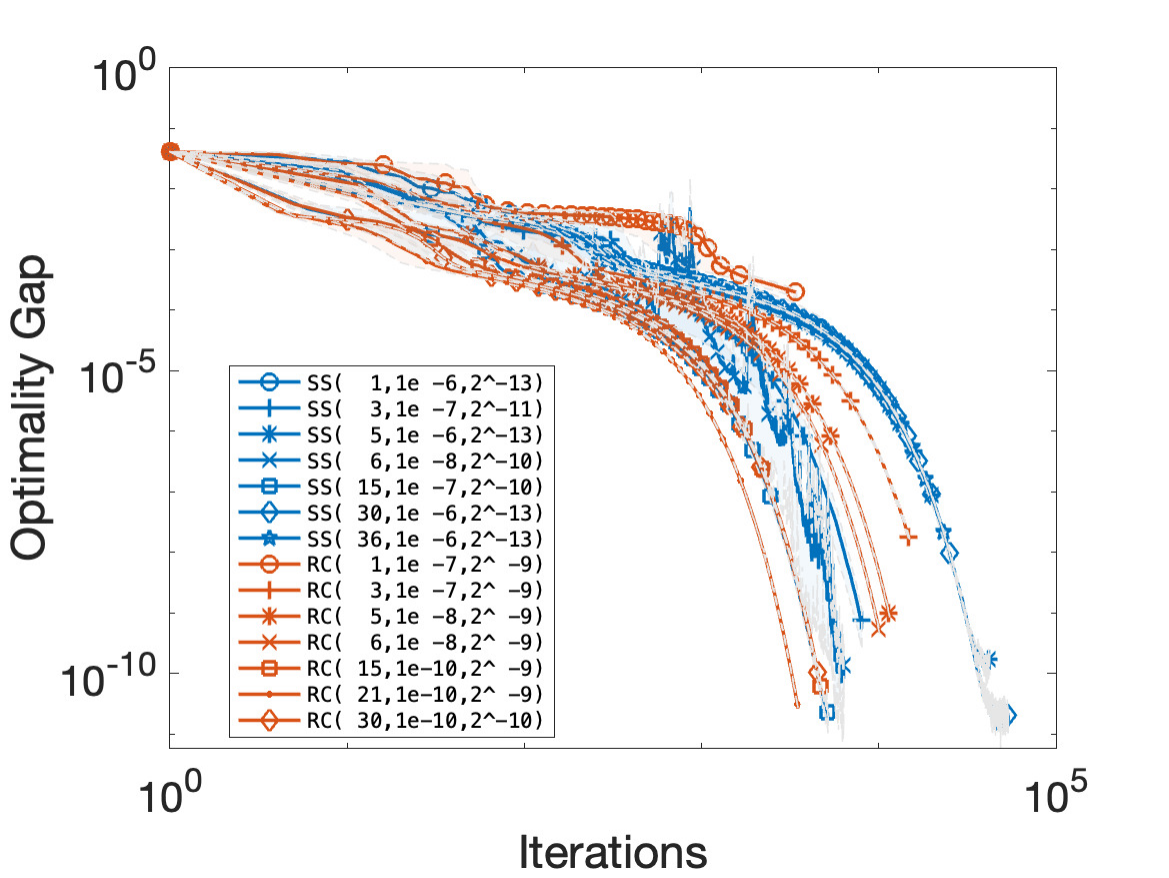}
  \caption{Comparison of SS and RC}
  \label{fig:15abs5coordvsspherical}
\end{subfigure}
\caption{Performance of different gradient estimation methods using the tuned hyperparameters on the  Chebyquad function with absolute error. Top row: $ \sigma = 10^{-3} $, bottom row: $ \sigma = 10^{-5} $.}
\label{fig:15absbestvsbest}
\end{figure}

\begin{figure}[ht]
\centering
\begin{subfigure}{0.33\textwidth}
  \centering
  \includegraphics[width=1.1\linewidth]{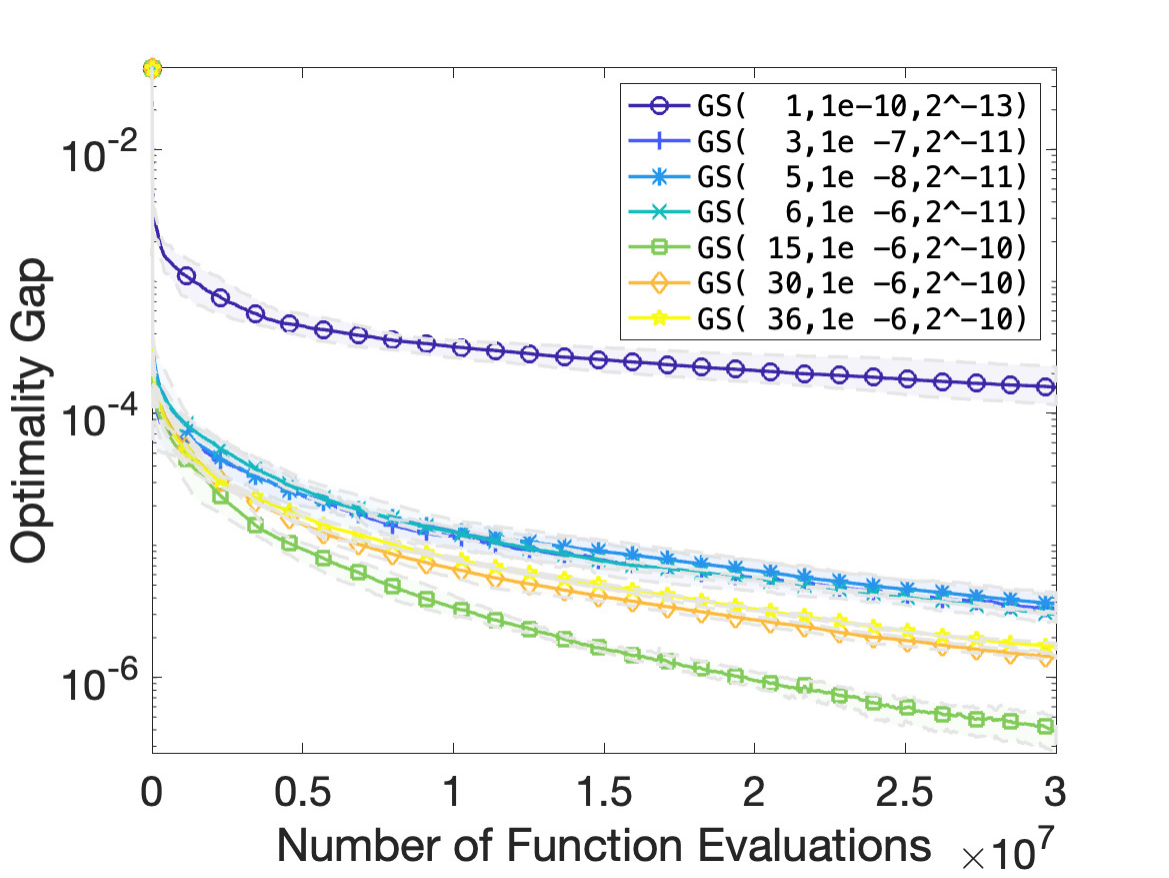}
  \caption{Performance of GS}
  \label{fig:15abs3numdirsensGSFFD}
\end{subfigure}%
\begin{subfigure}{0.33\textwidth}
  \centering
  \includegraphics[width=1.1\linewidth]{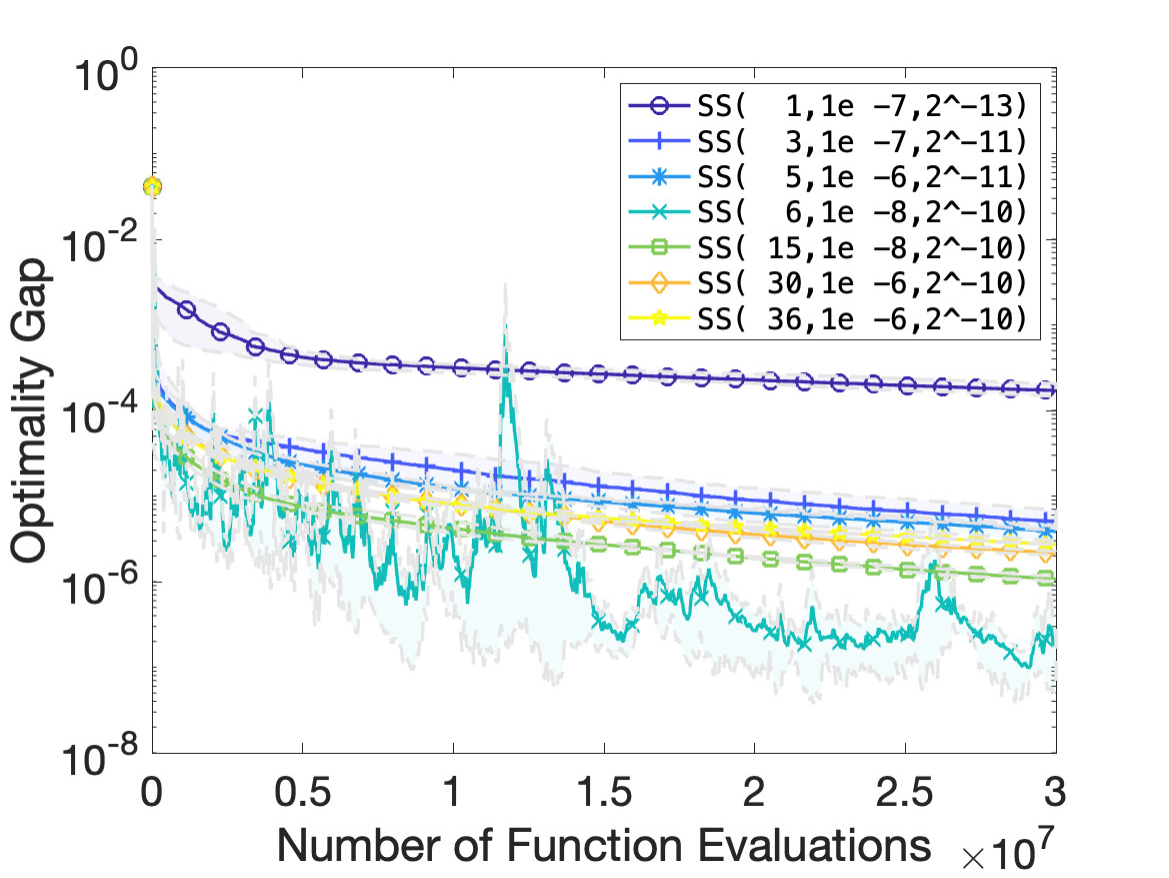}
  \caption{Performance of SS}
  \label{fig:15abs3numdirsensSSFFD}
\end{subfigure}%
\begin{subfigure}{0.33\textwidth}
  \centering
  \includegraphics[width=1.1\linewidth]{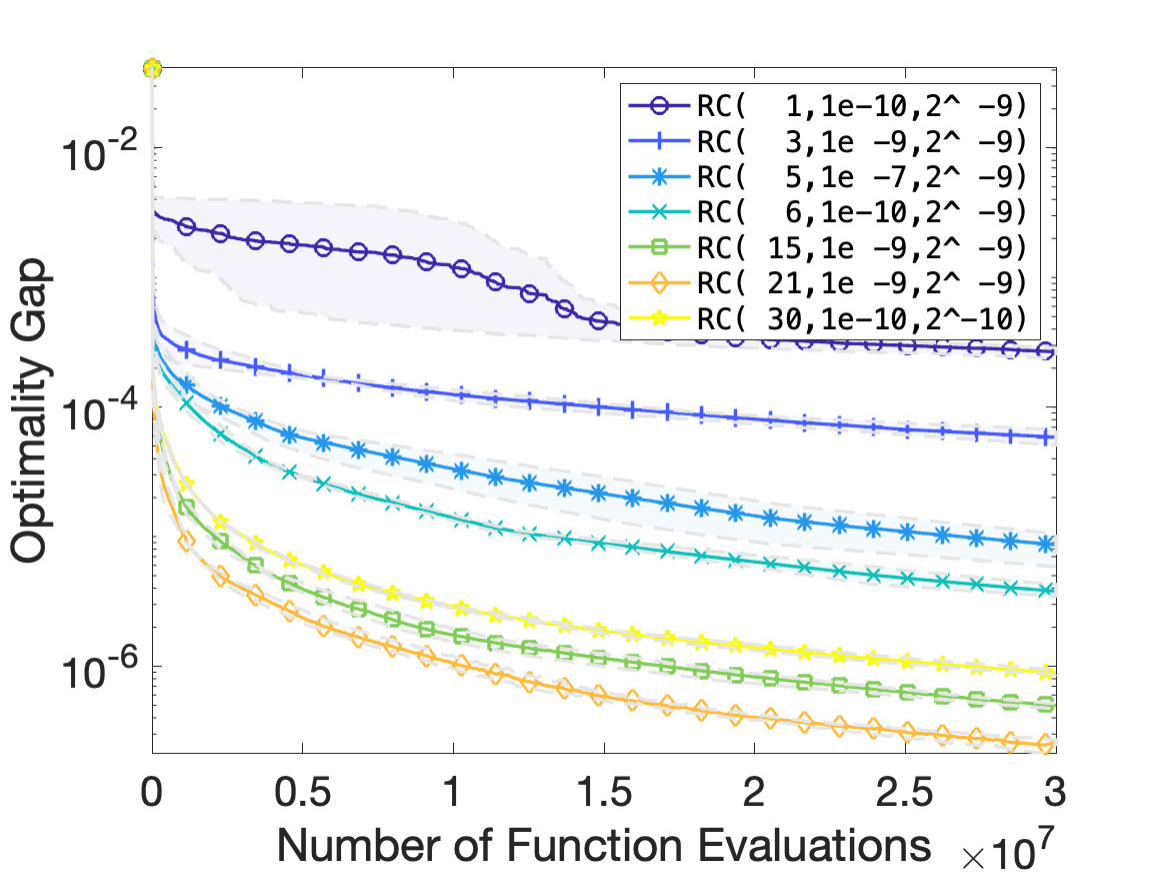}
  \caption{Performance of RC}
  \label{fig:15abs3numdirsensRCFFD}
\end{subfigure}
\begin{subfigure}{0.33\textwidth}
  \centering
  \includegraphics[width=1.1\linewidth]{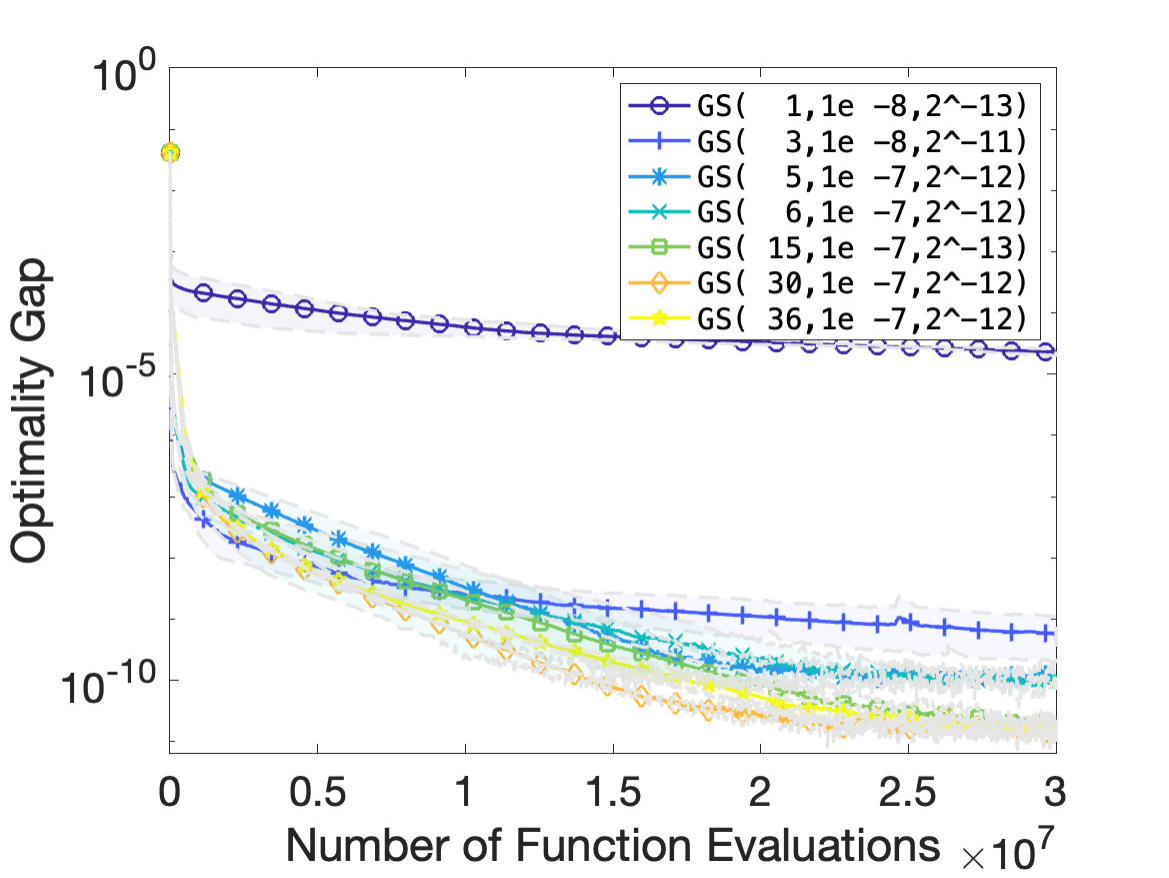}
  \caption{Performance of GS}
  \label{fig:15abs5numdirsensGSFFD}
\end{subfigure}%
\begin{subfigure}{0.33\textwidth}
  \centering
  \includegraphics[width=1.1\linewidth]{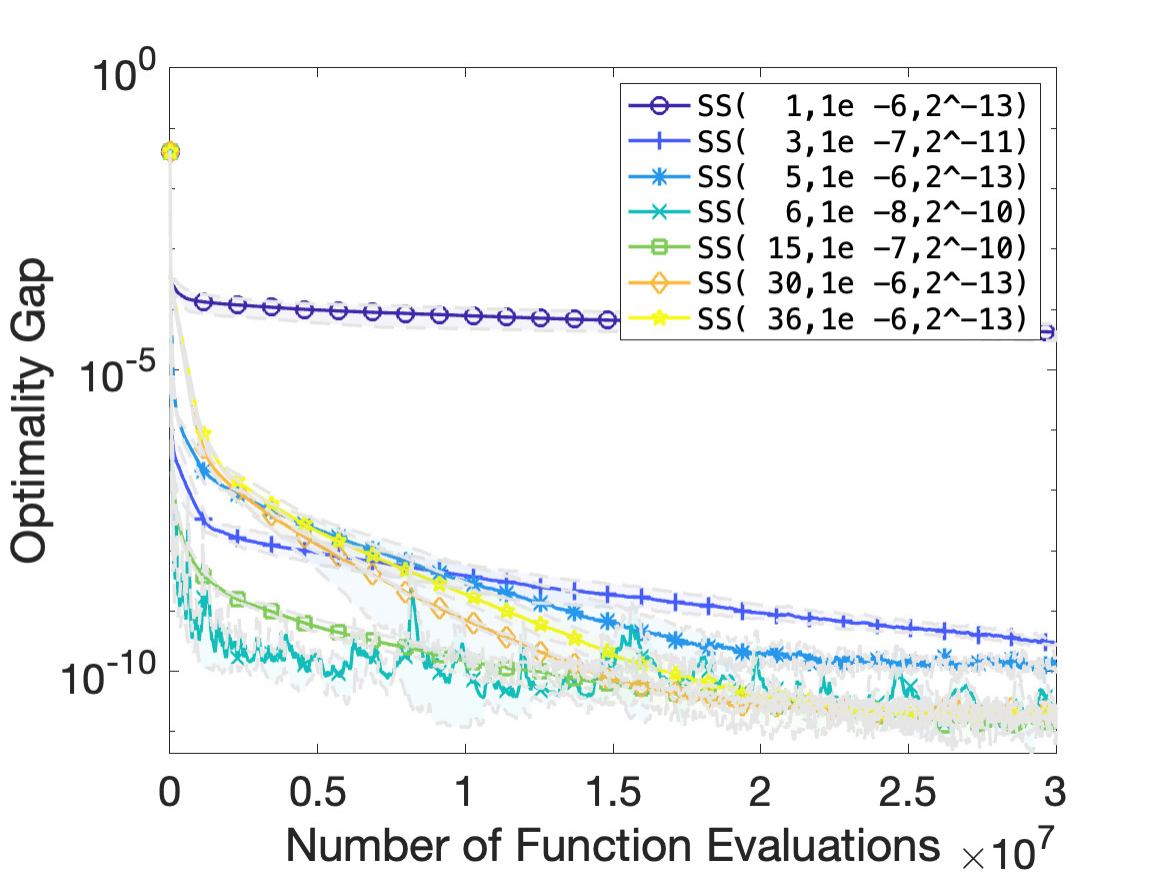}
  \caption{Performance of SS}
  \label{fig:15abs5numdirsensSSFFD}
\end{subfigure}%
\begin{subfigure}{0.33\textwidth}
  \centering
  \includegraphics[width=1.1\linewidth]{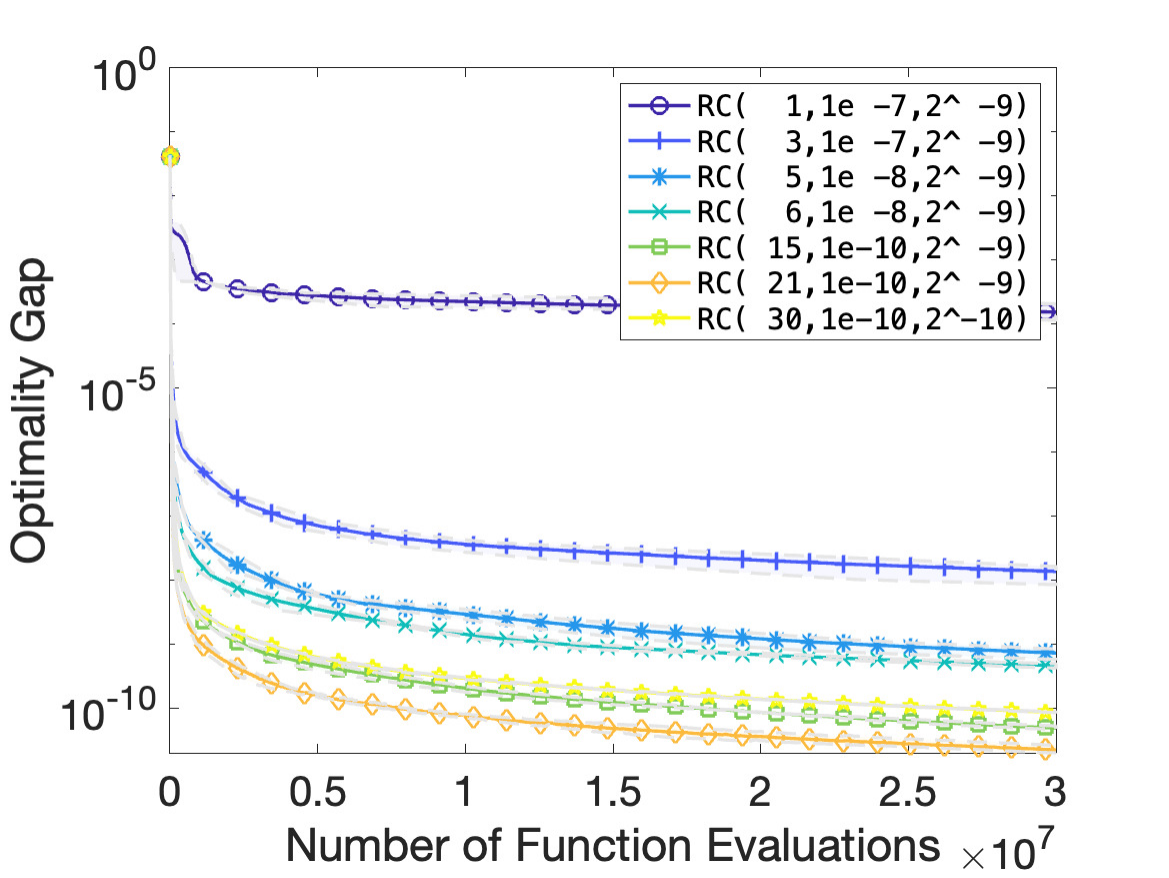}
  \caption{Performance of RC}
  \label{fig:15abs5numdirsensRCFFD}
\end{subfigure}
\caption{The effect of number of directions on the performance of different randomized gradient estimation methods on the Chebyquad function with absolute error. All other hyperparameters are tuned to achieve the best performance. Top row: $ \sigma = 10^{-3} $, bottom row: $ \sigma = 10^{-5} $.}
\label{fig:15absnumdirsens}
\end{figure}

\begin{figure}[ht]
\centering
\begin{subfigure}{0.33\textwidth}
  \centering
  \includegraphics[width=1.1\textwidth]{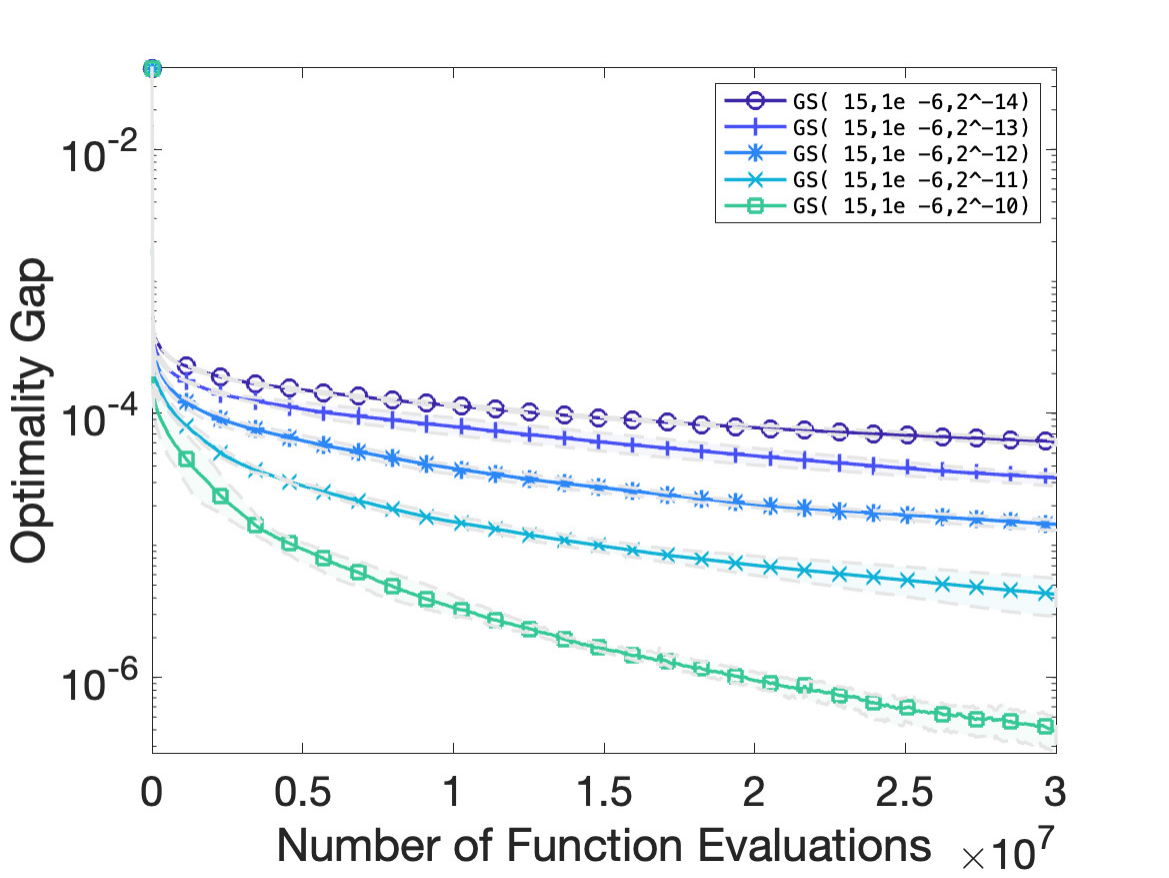}
  \caption{Performance of GS}
  \label{fig:15abs3alphasensGSFFD}
\end{subfigure}%
\begin{subfigure}{0.33\textwidth}
  \centering
  \includegraphics[width=1.1\textwidth]{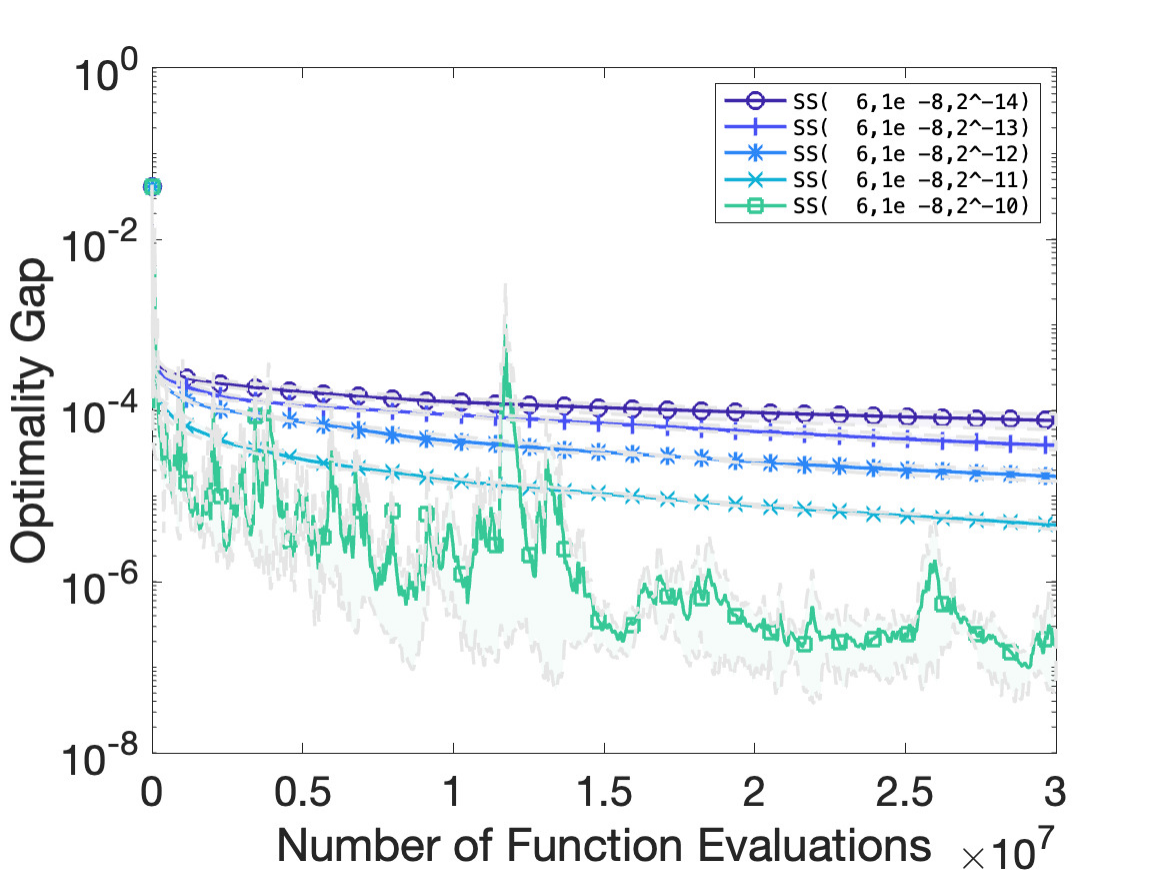}
  \caption{Performance of SS}
  \label{fig:15abs3alphasensSSFFD}
\end{subfigure}%
\begin{subfigure}{0.33\textwidth}
  \centering
  \includegraphics[width=1.1\textwidth]{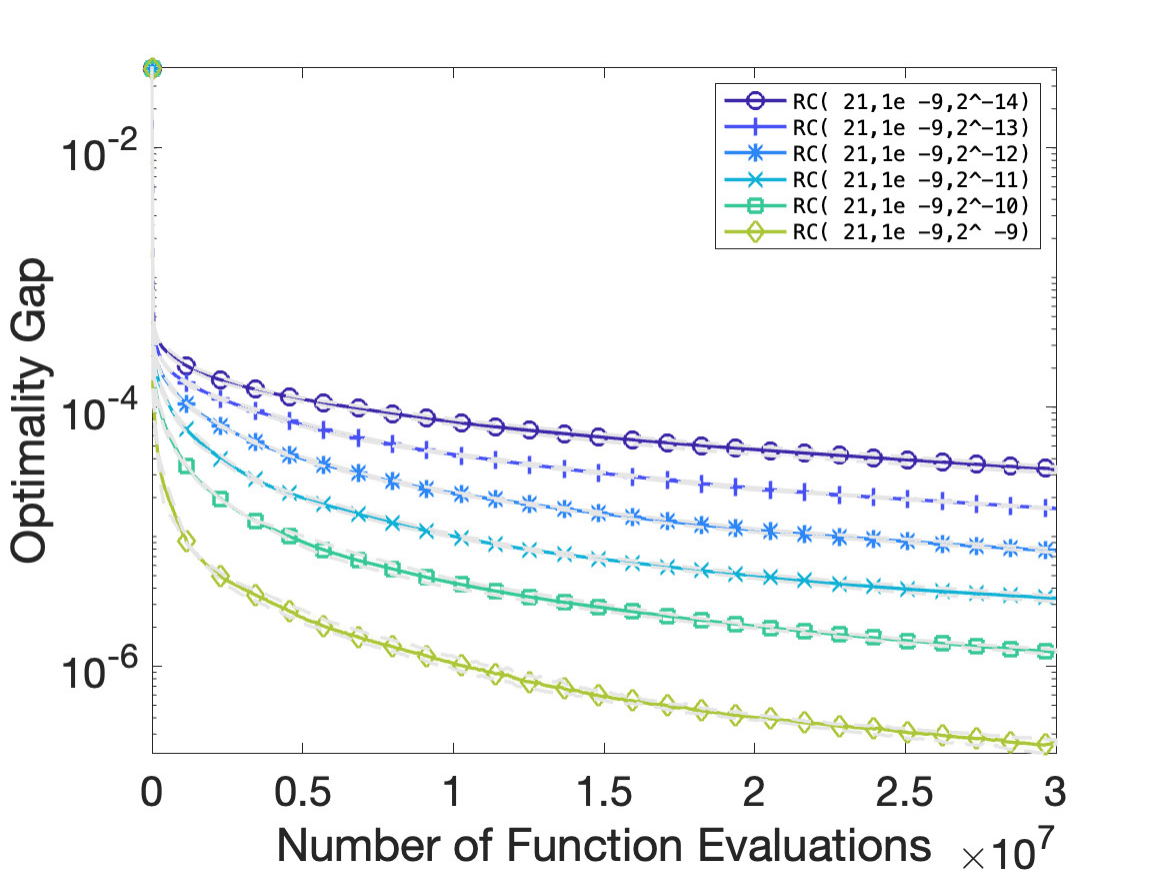}
  \caption{Performance of RC}
  \label{fig:15abs3alphasensRCFFD}
\end{subfigure}
\begin{subfigure}{0.33\textwidth}
  \centering
  \includegraphics[width=1.1\textwidth]{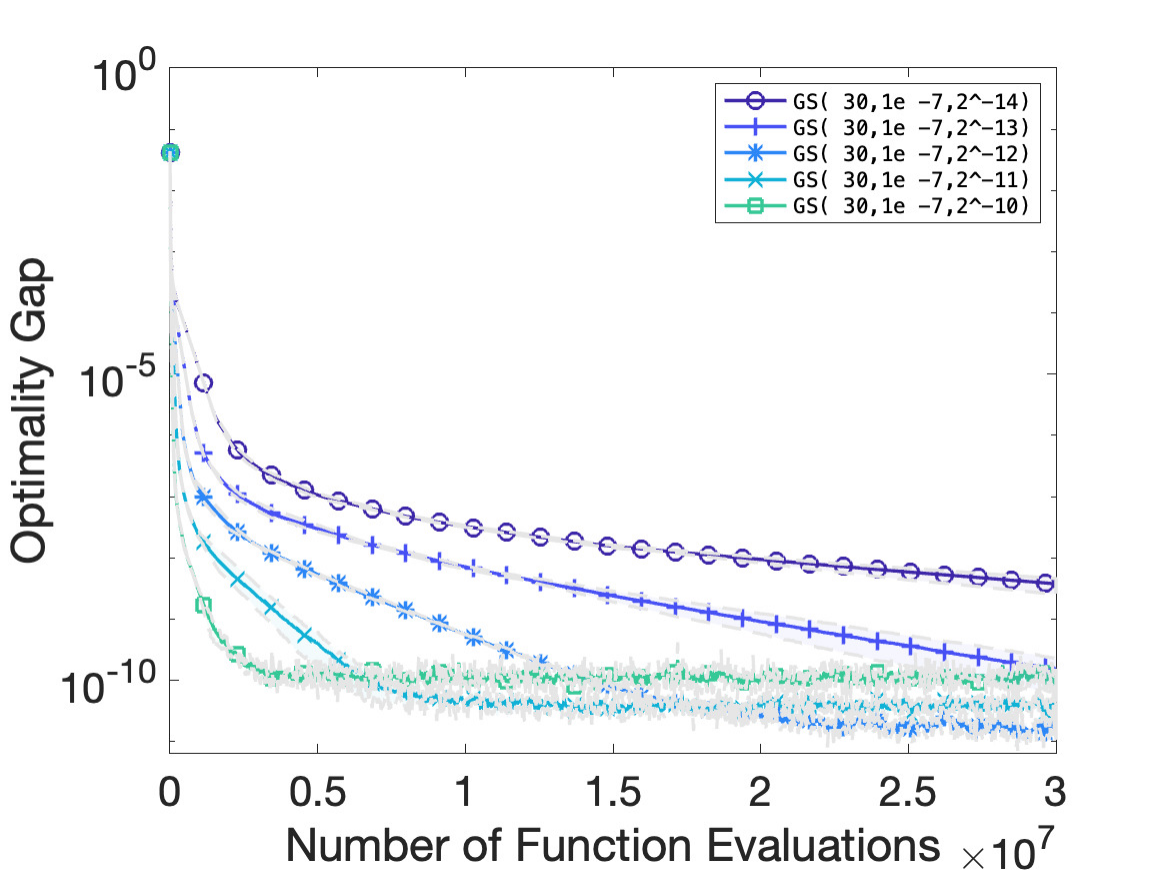}
  \caption{Performance of GS}
  \label{fig:15abs5alphasensGSFFD}
\end{subfigure}%
\begin{subfigure}{0.33\textwidth}
  \centering
  \includegraphics[width=1.1\textwidth]{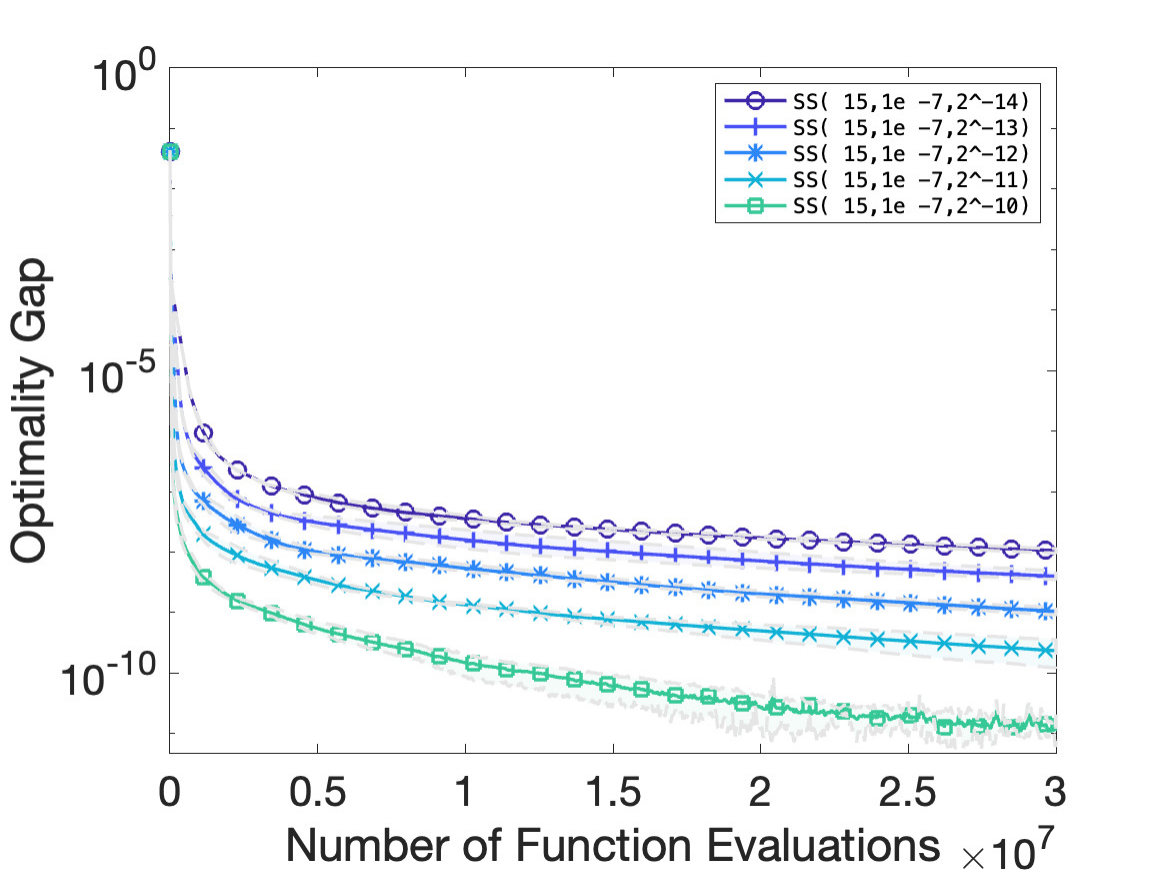}
  \caption{Performance of SS}
  \label{fig:15abs5alphasensSSFFD}
\end{subfigure}%
\begin{subfigure}{0.33\textwidth}
  \centering
  \includegraphics[width=1.1\textwidth]{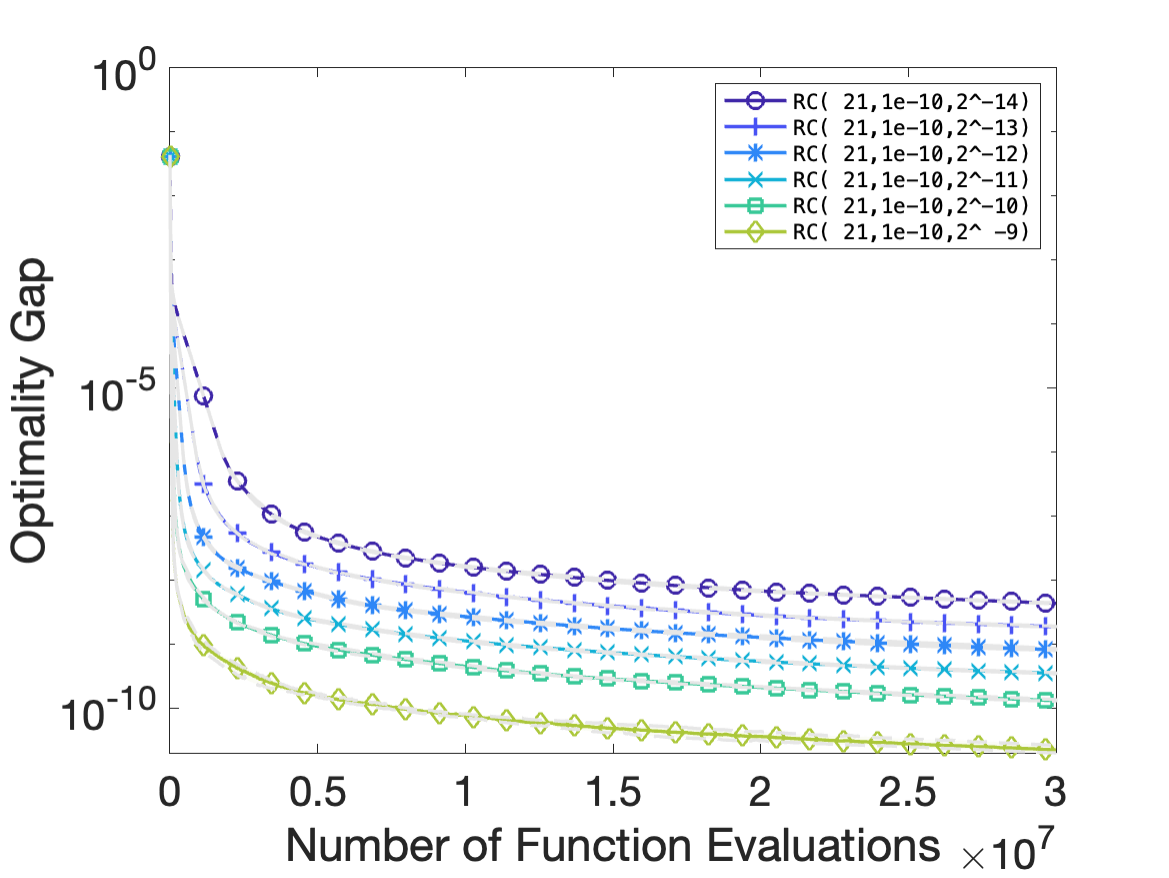}
  \caption{Performance of RC}
  \label{fig:15abs5alphasensRCFFD}
\end{subfigure}
\caption{The effect of step size on the performance of different randomized gradient estimation methods on the Chebyquad function with absolute error. Top row: $ \sigma = 10^{-3} $, bottom row: $ \sigma = 10^{-5} $.}
\label{fig:15absalphasens}
\end{figure}

\section{Final Remarks}
\label{sec:conclusion}

In this work, we developed a derivative-free optimization framework by integrating adaptive sampling strategies with gradient estimation techniques. Accuracy in stochastic approximations is adaptively controlled at each iteration, with Condition~\ref{cond:theoreticalnormcond3} serving as the cornerstone of the proposed framework, a natural adaptation of the well-known \textit{norm condition} from gradient-based optimization. We established linear convergence to a neighborhood of the solution for strongly convex functions and characterized the rate of convergence and the size of the neighborhood for both deterministic and randomized gradient estimation methods. Additionally, we determined the optimal worst-case iteration complexity as $\mathcal{O}(\log(1/\epsilon))$ and near-optimal worst-case sample complexity as $\mathcal{O}(\epsilon^{-1} \log(1/\epsilon))$ to achieve an $\epsilon$-accurate solution for each gradient estimation method. We illustrated that the randomized methods are typically $d$-times worse than the standard finite difference method in terms of iteration complexity. Furthermore, we demonstrated that smoothing methods typically lead to $d$-times worse sample complexity compared to all the other methods. Through extensive numerical investigation, we illustrated the efficiency of practical adaptive sampling tests. Additionally, we found that when the number of directional derivatives $N$ is small, smoothing methods are efficient compared to randomized coordinate finite difference methods, and vice versa when $N$ is large. 

While our framework incorporates adaptive sample size selection, it still requires tuning hyperparameters like step size, sampling radius, and the number of directional derivatives to achieve optimal performance in practice. Developing adaptive approaches such as stochastic line search for parameter selection could further improve the practical efficacy of our framework. Additionally, a natural extension of the framework involves incorporating quasi-Newton \cite{berahas2021theoretical,bollapragada2023adaptive} and Gauss-Newton \cite{cartis2019derivative,bergou2016levenberg} approaches. An interesting open question is identifying the suitable gradient estimation techniques for these settings, and controlling the accuracy in gradient estimation via adjusting both the sample sizes and the number of directional derivatives to ensure stability in the quasi-Newton and Gauss-Newton updates. Finally, we believe that our proposed framework can serve as a basis for developing constrained derivative-free optimization algorithms.

\subsection*{Acknowledgments}
	This material was based upon work supported by the U.S.\ Department of
	Energy, Office of Science, Office of Advanced Scientific Computing
	Research, applied mathematics program under Contract No.\
    DE-AC02-05CH11231, Lawrence Livermore National Laboratory, and National Science Foundation, Division of Mathematical Science grant No. DMS 2324643.
    
\bibliographystyle{spmpsci}
\bibliography{references}

\appendix

\clearpage

\section{Supplementary Proofs}
\label{appA}
\subsection{Bounded Variance in \eqref{eq:upperboundnormcond}}
\label{sec:proofofboundedvar}

Let us define
\begin{align} \label{eq:defofci}
    c_{i,j} \defeq \Big( \frac{f(x_k + \nu u_j,\zeta_i) - f(x_k,\zeta_i)}{\nu} - \frac{F(x_k + \nu u_j) - F(x_k)}{\nu} \Big).
\end{align}
By \eqref{eq:gen_grad_est} and Assumptions \ref{assum:sampling} and \ref{assum:independ} we have
\begin{align} \label{eq:boundedvarstep1}
& \E_{\zeta_i} \Bigg[  \|g_{\zeta_i, T_k}(x_k) - g_{T_k}(x_k)\|^2 \Bigg] \nonumber \\
& \quad = \E_{\zeta_i} \Bigg[ \Big\| \gamma_k\sum_{u_j \in T_k} \Big[ \Big( \frac{f(x_k + \nu u_j,\zeta_i) - f(x_k,\zeta_i)}{\nu} - \frac{F(x_k + \nu u_j) - F(x_k)}{\nu} \Big) u_j \Big] \Big \|^2 \Bigg] \nonumber \\
& \quad = \E_{\zeta_i} \Bigg[ \Big\| \gamma_k\sum_{u_j \in T_k} c_{i,j} u_j \Big \|^2 \Bigg] \leq \gamma_k^2 |T_k|\sum_{u_j \in T_k} \E_{\zeta_i} [ \| c_{i,j} u_j \|^2 ] = \gamma_k^2 |T_k| \sum_{u_j \in T_k} \E_{\zeta_i} [ c_{i,j}^2] \|u_j\|^2 ,
\end{align}
where the inequality is due to $ (a_1 + a_2 + \dots + a_n)^2 \leq n(a_1^2 + a_2^2 + \dots + a_n^2)$. Now, consider
\begin{align} 
c_{i,j}^2 & = \Big( \frac{f(x_k + \nu u_j,\zeta_i) - f(x_k,\zeta_i)}{\nu} - \frac{F(x_k + \nu u_j) - F(x_k)}{\nu} \Big)^2 \nonumber \\
& = \Big( u_j^T [\nabla f (x_k,\zeta_i) - \nabla F (x_k)] \nonumber \\
& \quad + \frac{f(x_k + \nu u_j,\zeta_i) - f(x_k,\zeta_i) - \nu u_j^T \nabla f (x_k,\zeta_i)}{\nu} - \frac{F(x_k + \nu u_j) - F(x_k) - \nu u_j^T \nabla F (x_k)}{\nu} \Big)^2 \nonumber \\
& \leq 2\Big( u_j^T [\nabla f (x_k,\zeta_i) - \nabla F (x_k)] \Big)^2 \nonumber \\
& \quad + 2 \Big(\frac{f(x_k + \nu u_j,\zeta_i) - f(x_k,\zeta_i) - \nu u_j^T \nabla f (x_k,\zeta_i)}{\nu} - \frac{F(x_k + \nu u_j) - F(x_k) - \nu u_j^T \nabla F (x_k)}{\nu} \Big)^2 \nonumber \\
& \leq 2\Big( u_j^T [\nabla f (x_k,\zeta_i) - \nabla F (x_k)] \Big)^2 + 4 \Big(\frac{f(x_k + \nu u_j,\zeta_i) - f(x_k,\zeta_i) - \nu u_j^T \nabla f (x_k,\zeta_i)}{\nu} \Big)^2 \nonumber \\
& \quad + 4 \Big( \frac{F(x_k + \nu u_j) - F(x_k) - \nu u_j^T \nabla F (x_k)}{\nu} \Big)^2 \nonumber \\
& \leq 2\Big( u_j^T [\nabla f (x_k,\zeta_i) - \nabla F (x_k)] \Big)^2 + (L_{\nabla F}^2 + L_{\nabla f}^2) \nu^2 \|u_j\|^4 \nonumber \\
& \leq 2\Big( u_j^T [\nabla f (x_k,\zeta_i) - \nabla F (x_k)] \Big)^2 + 2L_{\nabla f}^2 \nu^2 \|u_j\|^4 \label{eq:boundedvarstep2} \\
& \leq 2 \| \nabla f (x_k,\zeta_i) - \nabla F (x_k)\|^2 \|u_j\|^2 + 2L_{\nabla f}^2 \nu^2 \|u_j\|^4 \label{eq:boundedvarstep3}.
\end{align}
where the first and second inequalities are due to $(a+b)^2 \leq 2 a^2 + 2 b^2$, the third inequality is due to Assumptions \ref{assum:LipschitzF} and \ref{assum:Lipschitzstochf}, the fourth inequality is due to $L_{\nabla F} \leq L_{\nabla f}$, and the last inequality is due to $a^Tb \leq \|a\| \|b\|$. By \eqref{eq:boundedvarstep3} we have
\begin{align}\label{eq:boundedvarstep4}
\E_{\zeta_i}[c_{i,j}^2] & \leq 2 \E_{\zeta_i}[\| \nabla f (x_k,\zeta_i) - \nabla F (x_k)\|^2] \|u_j\|^2 + 2L_{\nabla f}^2 \nu^2 \|u_j\|^4 \nonumber \\
& \leq 2(\beta_1 \|\nabla F (x_k)\|^2 + \beta_2) \|u_j\|^2 + 2L_{\nabla f}^2 \nu^2 \|u_j\|^4,
\end{align}
where the second inequality is due to Assumption \ref{assum:boundedvarinstochgrad}.
Finally, by \eqref{eq:boundedvarstep1} and \eqref{eq:boundedvarstep4} we have
\begin{align}\label{eq:boundedvarstep5}
    \E_{\zeta_i} \Bigg[  \|g_{\zeta_i, T_k}(x_k) - g_{T_k}(x_k)\|^2 \Bigg] \leq 2 \gamma_k^2 |T_k| \sum_{u_j \in T_k} \Big[(\beta_1 \|\nabla F (x_k)\|^2 + \beta_2) \|u_j\|^4 + L_{\nabla f}^2 \nu^2 \|u_j\|^6\Big].
\end{align}
Therefore, for all iterations $k$ where $\|\nabla F(x_k)\| < \infty$, and either for deterministic or finite realizations of random vectors $u_j$ (i.e. $\|u_j\| < \infty$ for all $u_j \in T_k$) with $\gamma_k \in \mathbb{Z}_{++}$ and $|T_k| \in \mathbb{Z}_{++}$, we have
\begin{align*}
    \E_{\zeta_i}[\| g_{\zeta_i, T_k}(x_k) - g_{T_k}(x_k)\|^2] < \infty. %
\end{align*}

\subsection{Proof of Lemma \ref{lem:generalconv}}\label{sec:proofofgeneralconv}

\begin{proof} 

Table~\ref{tbl:deltaupper} presents the established upper bounds on $\| \delta_k\|$ from existing literature. Using these results, we derive the bounds on per-iteration decrease in expected function values for each gradient estimation method. 
\begin{table}[h]
\centering
\def\arraystretch{1.7}
\caption{
Upper bound ($\bar \delta$) on $\|\delta_k\|$ for different gradient estimation methods.
}
\begin{tabular}{|c|c|c|}
\hline
Method & $ \bar \delta $ & Reference \\ \hline
FD & $\frac{\sqrt{d} L_{\nabla F} \nu }{2}$ & \cite[Equation 7]{bollapragada2023adaptive} \\
GS & $\sqrt{d} L_{\nabla F} \nu$ & \cite[Equation 2.10]{berahas2021theoretical}\\
SS & $L_{\nabla F} \nu$ & \cite[Equation 2.35]{berahas2021theoretical} \\
RC & $\frac{\sqrt{d} L_{\nabla F} \nu }{2}$ & \eqref{eq:expofRC} \\
RS & $\frac{\sqrt{d} L_{\nabla F} \nu }{2}$ & \eqref{eq:boundondeltakRS} \\
\hline
\end{tabular}
\label{tbl:deltaupper}
\end{table}
\begin{enumerate}[label=(\alph*)]
\item \textit{FD:} Note that $T_k$ is deterministic and hence $g_{T_k}(x_k) = g(x_k)$. Thus, by Lemma \ref{lem:derivationfromdescentlemma},
\begin{align*}
\E_{k}[F(x_{k+1})] &\leq  F(x_k) - \alpha_k\left(\frac{1}{2} - L_{\nabla F} \alpha_k (1 + \theta^2)\right) \|\nabla F(x_k)\|^2  + \alpha_k\left(\frac{1}{2} + L_{\nabla F} \alpha_k (1 + \theta^2)\right) \|\delta_k\|^2 \\
&\leq F(x_k) - \frac{\alpha_k}{4} \|\nabla F(x_k)\|^2 + \frac{3\alpha_k}{4} \|\delta_k\|^2 \\
&\leq F(x_k) - \frac{\alpha_k}{4} \|\nabla F(x_k)\|^2 + \frac{3\alpha_k L_{\nabla F}^2 \nu^2 d}{16},
\end{align*}
where the second inequality is due to \eqref{eq:alphaupper} and Table~\ref{tbl:alphachi}, and the third inequality is due to the bound on $\|\delta_k\|^2$ given in Table~\ref{tbl:deltaupper}.

\item \textit{GS:} An upper bound on the covariance term is given as,
\begin{align}\label{eq:covarianceupper}
    Var(g_{T_k}(x_k)) = \E_{T_k}[(g_{T_k}(x_k) - g(x_k))(g_{T_k}(x_k) - g(x_k))^T] \preceq \kappa(x_k) I,
\end{align}
where $\kappa(x_k) = \frac{3}{|T_k|}(3\|\nabla F(x_k)\|^2 + \frac{L_{\nabla F}^2 \nu^2}{4}(d+2)(d+4) )$ (see \cite[Lemma 2.4]{berahas2021theoretical}). Therefore, using \eqref{eq:covarianceupper}, we can bound the variance term as 
\begin{align}\label{eq:varianceupperkappax}
    \E_{T_k}[\| g_{T_k}(x_k) - g(x_k) \|^2] \leq d \kappa(x_k).
\end{align}

Now, by Lemma \ref{lem:derivationfromdescentlemma}, we have,
\begin{align*}
\E_{k}[F(x_{k+1})] & \leq  F(x_k) + \frac{\alpha_k}{2} \|\delta_k\|^2 - \frac{\alpha_k}{2} \|\nabla F(x_k)\|^2 %
+ L_{\nabla F} \alpha_k^2 (1 + \theta^2) [ \|\delta_k\|^2 + \|\nabla F (x_k)\|^2 \\
& \quad + \frac{1}{2}(\E_{T_k}[\|g_{T_k}(x_{k}) - g(x_k)\|^2]) ] \\
& \leq  F(x_k) + \frac{\alpha_k}{2} \|\delta_k\|^2 - \frac{\alpha_k}{2} \|\nabla F(x_k)\|^2 + L_{\nabla F} \alpha_k^2 (1 + \theta^2) [ \|\delta_k\|^2 + \|\nabla F (x_k)\|^2 \\
& \quad + \frac{3d}{2|T_k|}(3\|\nabla F(x_k)\|^2 + \frac{L_{\nabla F}^2 \nu^2}{4}(d+2)(d+4) )] \\
& \leq F(x_k) + \frac{\alpha_k}{2} d L_{\nabla F}^2 \nu^2 - \frac{\alpha_k}{2} \|\nabla F(x_k)\|^2 + L_{\nabla F} \alpha_k^2 (1 + \theta^2)[d L_{\nabla F}^2 \nu^2 + \|\nabla F (x_k)\|^2 \\
& \quad + \frac{3d}{2|T_k|}(3\|\nabla F(x_k)\|^2 + \frac{L_{\nabla F}^2 \nu^2}{4}(d+2)(d+4) )] \\
& \leq F(x_k) - \frac{\alpha_k}{4} \|\nabla F(x_k)\|^2 + \frac{16|T_k| + 72d + 8|T_k| + 3(d+2)(d+4)}{32(|T_k| + 4.5d)} \alpha_k d L_{\nabla F}^2 \nu^2 \\
& = F(x_k) - \frac{\alpha_k}{4} \|\nabla F(x_k)\|^2%
+ \frac{24|T_k| + 72d + 3(d+2)(d+4)}{32(|T_k| + 4.5d)} \alpha_k d L_{\nabla F}^2 \nu^2,
\end{align*}
where the second inequality is due to \eqref{eq:varianceupperkappax}, %
the third inequality is due to the bound on $\|\delta_k\|^2$ given in Table~\ref{tbl:deltaupper},  and the fourth inequality is due to \eqref{eq:alphaupper} and Table~\ref{tbl:alphachi}. 

\item \textit{SS:} A similar bound on the covariance term \eqref{eq:covarianceupper} is given in \cite[Lemma 2.9]{berahas2021theoretical} for the SS method where $\kappa(x_k) = \frac{3}{|T_k|}(\frac{3d}{d+2}\|\nabla F(x_k)\|^2 + \frac{d L_{\nabla F}^2 \nu^2}{4} )$ in \eqref{eq:varianceupperkappax}. Now, by Lemma \ref{lem:derivationfromdescentlemma}, we have,
\begin{align*}
\E_k[F(x_{k+1})] & \leq  F(x_k) + \frac{\alpha_k}{2} \|\delta_k\|^2 - \frac{\alpha_k}{2} \|\nabla F(x_k)\|^2 + L_{\nabla F} \alpha_k^2 (1 + \theta^2)[\|\delta_k\|^2 + \|\nabla F (x_k)\|^2 \\
& \quad + \frac{1}{2}(\E_{T_k}[\|g_{T_k}(x_{k}) - g(x_k)\|^2]) ] \\
& \leq  F(x_k) + \frac{\alpha_k}{2} \|\delta_k\|^2 - \frac{\alpha_k}{2} \|\nabla F(x_k)\|^2 + L_{\nabla F} \alpha_k^2 (1 + \theta^2)[\|\delta_k\|^2 + \|\nabla F (x_k)\|^2 \\
& \quad + \frac{3d}{2|T_k|}(\frac{3d}{d+2}\|\nabla F(x_k)\|^2 + \frac{d L_{\nabla F}^2 \nu^2}{4} )] \\
& \leq  F(x_k) + \frac{\alpha_k}{2} L_{\nabla F}^2 \nu^2 - \frac{\alpha_k}{2} \|\nabla F(x_k)\|^2 + L_{\nabla F} \alpha_k^2 (1 + \theta^2)[L_{\nabla F}^2 \nu^2 + \|\nabla F (x_k)\|^2 \\
& \quad + \frac{3d}{2|T_k|}(\frac{3d}{d+2}\|\nabla F(x_k)\|^2 + \frac{d L_{\nabla F}^2 \nu^2}{4} )] \\
& \leq  F(x_k) + \frac{\alpha_k}{2} L_{\nabla F}^2 \nu^2 - \frac{\alpha_k}{2} \|\nabla F(x_k)\|^2 + L_{\nabla F} \alpha_k^2 (1 + \theta^2)[L_{\nabla F}^2 \nu^2 + \|\nabla F (x_k)\|^2 \\
& \quad + \frac{3d}{2|T_k|}(3\|\nabla F(x_k)\|^2 + \frac{d L_{\nabla F}^2 \nu^2}{4} )] \\
& \leq  F(x_k) - \frac{\alpha_k}{4} \|\nabla F(x_k)\|^2 + \frac{16|T_k| + 72d + 8|T_k| + 3d^2}{32(|T_k| + 4.5d)} \alpha_k L_{\nabla F}^2 \nu^2 \\
& =  F(x_k) - \frac{\alpha_k}{4} \|\nabla F(x_k)\|^2 + \frac{24|T_k| + 72d + 3d^2}{32(|T_k| + 4.5d)} \alpha_k L_{\nabla F}^2 \nu^2,
\end{align*}
where the second inequality is due to \eqref{eq:varianceupperkappax}, %
the third inequality is due to the bound on $\|\delta_k\|^2$ given in Table~\ref{tbl:deltaupper}, the fourth inequality is because $\frac{d}{d+2} \leq 1$, and the fifth inequality is due to \eqref{eq:alphaupper} and Table~\ref{tbl:alphachi}. 

\item \textit{RC:} 

Consider 
\begin{align} \label{eq:RCgenconvstep1}
    & \E_{T_k}[\|g_{T_k}(x_{k}) - g(x_k)\|^2] \nonumber \\
    & \quad = \E_{T_k} \Big[ \sum_{e_j \in T_k} \Big(\frac{d}{|T_k|}[\nabla^{FD} F(x_k)]_{e_j} - [\nabla^{FD} F(x_k)]_{e_j} \Big)^2 + \sum_{e_j \notin T_k} \Big( [\nabla^{FD} F(x_k)]_{e_j} \Big)^2 \Big]  \nonumber \\
    & \quad = \E_{T_k} \Big[ \sum_{e_j \in T_k} \Big(\frac{d - |T_k|}{|T_k|}\Big)^2[\nabla^{FD} F(x_k)]^2_{e_j} + \sum_{e_j \notin T_k} [\nabla^{FD} F(x_k)]^2_{e_j} \Big]  \nonumber \\
    & \quad = \sum_{j = 1}^d \Big[ \frac{|T_k|}{d} \Big( \frac{d - |T_k|}{|T_k|} \Big)^2 [\nabla^{FD} F(x_k)]^2_{e_j} + \Big(1 - \frac{|T_k|}{d}\Big) [\nabla^{FD} F(x_k)]^2_{e_j} \Big]  \nonumber \\
    & \quad = \frac{d - |T_k|}{|T_k|} \| \nabla^{FD} F(x_k) \|^2 \leq \frac{2d - 2|T_k|}{|T_k|} \| \delta_k \|^2 + \frac{2d - 2|T_k|}{|T_k|} \| \nabla F(x_k) \|^2,
\end{align}
where the inequality is due to $(a+b)^2 \leq 2a^2 + 2b^2$ for any $a, b \in \R$.

Now, by Lemma \ref{lem:derivationfromdescentlemma} and \eqref{eq:RCgenconvstep1}, we have,
\begin{align*}
\E_{k}[F(x_{k+1})] & \leq  F(x_k) + \frac{\alpha_k}{2} \|\delta_k\|^2 - \frac{\alpha_k}{2} \|\nabla F(x_k)\|^2 \\
&\quad + L_{\nabla F} \alpha_k^2 (1 + \theta^2)\left( \frac{d}{|T_k|} \|\delta_k\|^2 + \frac{d}{|T_k|} \|\nabla F (x_k)\|^2 \right) \\
& \leq F(x_k) - \frac{\alpha_k}{4} \|\nabla F(x_k)\|^2 + \frac{3\alpha_k}{4} \|\delta_k\|^2 \\
& \leq F(x_k) - \frac{\alpha_k}{4} \|\nabla F(x_k)\|^2 + \frac{3\alpha_k L_{\nabla F}^2 \nu^2 d}{16},
\end{align*}
where the second inequality is due to \eqref{eq:alphaupper} and Table~\ref{tbl:alphachi}, and the third inequality is due to the bound on $\|\delta_k\|^2$ given in Table~\ref{tbl:deltaupper}.

\item \textit{RS:} 
From the proof of Lemma \ref{lem:derivationfromdescentlemma}, combining \eqref{eq:derivationfromdescentlemmaGSandSSstep1} and \eqref{eq:Dkstep1}, we get
\begin{align}\label{eq:RSLemma4step1}
    \E_k[F(x_{k+1})] &\leq F(x_k) + \frac{\alpha_k}{2}\|\delta_k\|^2 -  \frac{\alpha_k}{2}\|\nabla F(x_k)\|^2 +  \frac{L_{\nabla F} \alpha_k^2}{2}  (1 + \theta^2) \E_{T_k}[\|g_{T_k}(x_k)\|^2]. 
\end{align}
We now analyze the terms $\|\delta_k\|^2$ and $\E_{T_k}[\|g_{T_k}(x_k)\|^2]$.  
From %
 \eqref{eq:expofRS}, we have
\begin{align}
    \|\delta_k\|^2 &= \|\E_{\Tilde T_k}[U_{\Tilde T_k} b_{\Tilde T_k} - \nabla F(x_k)]\|^2 %
    \leq \E_{\Tilde T_k}[\|U_{\Tilde T_k} b_{\Tilde T_k} - \nabla F(x_k)\|^2] %
    = \E_{\Tilde T_k}[\| b_{\Tilde T_k} - U_{\Tilde T_k}^T \nabla F(x_k)\|^2] \label{eq:ortheq},
\end{align}
where the inequality is due to the Jensen's inequality, and the second equality is due to $U_{\Tilde T_k} U_{\Tilde T_k}^T = U_{\Tilde T_k}^TU_{\Tilde T_k} = I$ and $\|U_{\Tilde T_k}\| = 1$. 

Consider any $u_j^{(k)} \in \tilde{T}_k$. %
By Assumption \ref{assum:LipschitzF} and Lemma \ref{lem:descent}, we have
\begin{align*}
    F(x_k + \nu u_j^{(k)}) \leq F(x_k) + \nabla F(x_k)^T \nu u_j^{(k)} + \frac{L_{\nabla F}}{2} \nu^2 \|u_j^{(k)}\|^2, \quad \forall j=1,\cdots,d. 
\end{align*}
Therefore,
\begin{align*}
    \frac{F(x_k + \nu u_j^{(k)}) - F(x_k)}{\nu} - \nabla F(x_k)^T u_j^{(k)} \leq \frac{L_{\nabla F}}{2} \nu,  \quad \forall j=1,\cdots,d. 
\end{align*}
By concatenating the above inequalities, we have  
\begin{align} \label{eq:RSproxydeltak}
b_{\Tilde T_k} - U_{\Tilde T_k}^T \nabla F(x_k) \leq (\frac{L_{\nabla F}}{2} \nu)\vec{1}, 
\end{align}
where $\vec{1} \in \R^d$ is a vector composed of $1$'s. Therefore, we have
\begin{align}
    \| b_{\Tilde T_k} - U_{\Tilde T_k}^T \nabla F(x_k)\|^2 \leq (\frac{L_{\nabla F} \nu \sqrt{d}}{2})^2.
    \label{eq:boundgTk}
\end{align}
Combining this inequality with \eqref{eq:ortheq}, we get
\begin{equation} \label{eq:boundondeltakRS}
    \|\delta_k\|^2 \leq (\frac{L_{\nabla F} \nu \sqrt{d}}{2})^2.
\end{equation}

Consider 
\begin{align}
    & \E_{T_k}[\| g_{T_k}(x_k)\|^2] = \E_{T_k}\left[\left\| \frac{d}{|T_k|} U_{T_k} b_{T_k} \right\|^2\right] = \frac{d^2}{|T_k|^2}\E_{T_k}[\| U_{T_k} b_{ T_k}\|^2] = \frac{d^2}{|T_k|^2}\E_{T_k}[\| b_{ T_k}\|^2] \nonumber \\
    & \quad = \frac{d}{|T_k|}\E_{\Tilde T_k}[\| b_{\Tilde T_k}\|^2] = \frac{d}{|T_k|}\E_{\Tilde T_k}[\| U_{\Tilde T_k} b_{\Tilde T_k}\|^2], \label{eq:RSEtkgtk}
\end{align}
where we used the fact that $U_{T_k}$ and $U_{\Tilde T_k}$ have orthonormal columns. Therefore, we have 
\begin{align}
    \E_{T_k}[\| g_{T_k}(x_k)\|^2] %
    & \leq \frac{2d}{|T_k|}\E_{\Tilde T_k}[\| U_{\Tilde T_k} b_{\Tilde T_k} - \nabla F(x_k) \|^2] + \frac{2d}{|T_k|}\| \nabla F(x_k) \|^2 \nonumber \\
    & \leq \frac{2d}{|T_k|}(\frac{L_{\nabla F} \nu \sqrt{d}}{2})^2 + \frac{2d}{|T_k|}\| \nabla F(x_k) \|^2, \label{eq:RSEtkgtk2}
\end{align}
where the first inequality is due to $(a+b)^2 \leq 2a^2 + 2b^2$, and the second inequality is due to \eqref{eq:ortheq} and \eqref{eq:RSproxydeltak}. Combining \eqref{eq:RSLemma4step1}, \eqref{eq:boundondeltakRS}, and \eqref{eq:RSEtkgtk2}, we obtain
\begin{align*}
    \E_{k}[F(x_{k+1})]
    & \leq F(x_k) + \frac{\alpha_kL_{\nabla F}^2 \nu^2 d}{8} - \frac{\alpha_k}{2} \|\nabla F(x_k)\|^2 \\
    & \quad + L_{\nabla F} \alpha_k^2  \frac{d}{|T_k|} (1 + \theta^2) ( \frac{L_{\nabla F}^2 \nu^2 d}{4} + \|\nabla F (x_k)\|^2) \\
    & \leq F(x_k) - \frac{\alpha_k}{4} \|\nabla F(x_k)\|^2 + \frac{3 \alpha_k L_{\nabla F}^2 \nu^2 d}{16},
\end{align*}
where the second inequality is due to \eqref{eq:alphaupper} and  Table~\ref{tbl:alphachi}. %

\end{enumerate}
\end{proof}

\subsection{Proof of Lemma~\ref{lem:linearconv}}
\label{sec:proofoflinearconv}
\begin{proof}
By substituting \eqref{eq:strcvx} into \eqref{eq:conv lemma input} and subtracting $F(x^*)$ from both sides, we obtain
\begin{align}\label{eq:conv lemma step1}
    \E_{k}[F(x_{k+1}) - F(x^*)] \leq F(x_k) - F(x^*) - \mu a_1 (F(x_k) - F(x^*)) + a_2.
\end{align}
Subtracting the constant $\frac{a_2}{\mu a_1}$ from both sides and taking total expectation, we obtain
\begin{align}\label{eq:conv lemma output1}
    \E[F(x_{k+1}) - F(x^*)] - \frac{a_2}{\mu a_1} & \leq (1 - \mu a_1)\E[F(x_k) - F(x^*)] + a_2 - \frac{a_2}{\mu a_1}  \nonumber \\
    & = (1 - \mu a_1) \left( \E[F(x_k) - F(x^*)] - \frac{a_2}{\mu a_1} \right).
\end{align}
The lemma follows by applying \eqref{eq:conv lemma output1} repeatedly through iteration $k \in \mathbb{Z_{+}}$. %
\end{proof}

\subsection{Proof of Lemma~\ref{lem:boundonSk}} \label{sec:proofofboundonSk}

\begin{proof}

Without loss of generality and choosing the minimum sample size that satisfies Condition~\ref{cond:theoreticalnormcond3} at each iteration $k \in \Z_{+}$, we have
\begin{align}
    |S_k| & = \left \lceil \frac{\E_{T_k} [\E_{\zeta_i}[\| g_{\zeta_i, T_k}(x_k) - g_{T_k}(x_k)\|^2]]}{\theta^2 \E_{T_k} [\|g_{T_k} (x_k)\|^2]} \right \rceil \nonumber \\
    &\leq  1 + \frac{\E_{T_k} [\E_{\zeta_i}[\| g_{\zeta_i, T_k}(x_k) - g_{T_k}(x_k)\|^2]]}{\theta^2 \E_{T_k} [\|g_{T_k} (x_k)\|^2]}
    \label{eq:Skappx1} \\
    & \leq 1 + \frac{\E_{T_k} [\E_{\zeta_i}[\| g_{\zeta_i, T_k}(x_k) - g_{T_k}(x_k)\|^2]]}{\theta^2 \|\E_{T_k} [g_{T_k} (x_k)]\|^2} \label{eq:Skappx2},
\end{align}
where the second inequality is by Jensen's inequality. Similarly, if Condition~\ref{cond:theoreticalnormcond2} is utilized instead of Condition~\ref{cond:theoreticalnormcond3}, then both \eqref{eq:Skappx1} and \eqref{eq:Skappx2} hold in expectation $\E_{T_k}[\cdot]$.%

We use the weaker bound in \eqref{eq:Skappx2} whenever the bound in \eqref{eq:Skappx1} is difficult to compute. We employ $\epsilon'$ bound to lower bound the denominator in \eqref{eq:Skappx2}. That is, 
\begin{align}\label{eq:gxkandepsilonprime}
    \epsilon' < \| g(x_k) \|^2  =  \|\E_{T_k} [g_{T_k}(x_k)] \|^2. %
\end{align}
We utilize the results developed in Section~\ref{sec:proofofboundedvar} to analyze the variance term (numerator) in \eqref{eq:Skappx1} and \eqref{eq:Skappx2}. 
We now develop bounds on sample sizes for each gradient estimation method. 
\begin{enumerate}[label=(\alph*)]
\item \textit{FD:} Note that $T_k$ is not stochastic and \eqref{eq:Skappx1} can be written as
\begin{align}\label{eq:SkappxFD}
    |S_k| = 1 + \frac{ \E_{\zeta_i}[\| \nabla^{FD} f(x_k,\zeta_i) - \nabla^{FD} F(x_k)\|^2]}{\theta^2 \|\nabla^{FD} F(x_k)\|^2}. %
\end{align}
Considering the numerator in \eqref{eq:SkappxFD}, using \eqref{eq:defofci} and Table \ref{tbl:gammakandui}, we have
\begin{align}\label{eq:SkappxFDstep1}
    & \E_{\zeta_i} [ \| \nabla^{FD} f(x_k,\zeta_i) - \nabla^{FD} F(x_k)\|^2] \nonumber \\
    & \quad = \E_{\zeta_i}\Big[\sum_{i=1}^d \Big(\frac{f(x_k + \nu e_j,\zeta_i) - f(x_k,\zeta_i)}{\nu} - \frac{F(x_k + \nu e_j) - F(x_k)}{\nu} \Big)^2 \Big] = \E_{\zeta_i}[\sum_{i=1}^d c_{i,j}^2] \nonumber \\
    & \quad \leq \E_{\zeta_i}\Big[\sum_{i=1}^d \Big( 2\Big( e_j^T [\nabla f (x_k,\zeta_i) - \nabla F (x_k)] \Big)^2 + 2L_{\nabla f}^2 \nu^2 \|e_j\|^4 \Big) \Big] \nonumber \\
    & \quad = 2\E_{\zeta_i}[\| \nabla f (x_k,\zeta_i) - \nabla F (x_k)\|^2] + 2L_{\nabla f}^2 \nu^2 d \nonumber \\
    & \quad \leq 2\beta_1 \|\nabla F (x_k)\|^2 + 2\beta_2 + 2L_{\nabla f}^2 \nu^2 d \nonumber \\
    & \quad \leq 4\beta_1 \|\nabla^{FD} F (x_k)\|^2 + 4\beta_1 \|\nabla F (x_k) - \nabla^{FD} F (x_k)\|^2 + 2\beta_2 + 2L_{\nabla f}^2 \nu^2 d \nonumber \\
    & \quad \leq 4\beta_1 \|\nabla^{FD} F (x_k)\|^2 + 2\beta_2 + (2+\beta_1)L_{\nabla f}^2 \nu^2 d,
\end{align}
where the first inequality is due to \eqref{eq:boundedvarstep2}, the second inequality is due to Assumption \ref{assum:boundedvarinstochgrad}, the third inequality is due to $(a+b)^2 \leq 2a^2 + 2b^2$ for any $a,b \in \R$, and the last inequality is by Lemma \ref{lem:derivationfromdescentlemma}, Table \ref{tbl:deltaupper} and the fact that $L_{\nabla F} \leq L_{\nabla f}$.

Substituting \eqref{eq:SkappxFDstep1} in \eqref{eq:SkappxFD}, and using \eqref{eq:gxkandepsilonprime},
we have \eqref{eq:boundonSk} satisfied with $b_1 =1 + \frac{4 \beta_1}{\theta^2} $, $b_2 = \frac{(2+\beta_1) L_{\nabla f}^2 \nu^2 d}{\theta^2} + \frac{2\beta_2}{\theta^2} $.
\vspace{0.5em}
\item \textit{GS:}
Taking expectation $\E_{T_k}[\cdot]$ on both sides in \eqref{eq:boundedvarstep1}, we get
\begin{align} \label{eq:GSandSSvarstep1}
\E_{T_k} \left [ \E_{\zeta_i} [  \|g_{\zeta_i, T_k}(x_k) - g_{T_k}(x_k)\|^2 ] \right] &\leq \E_{T_k} \left[ \gamma_k^2|T_k| \E_{\zeta_i} \left[ \sum_{u_j \in T_k} \left\| c_{i,j} u_j \right\|^2 \right] \right] \nonumber \\
&= \E_{T_k} \left[ \gamma_k^2|T_k| \E_{\zeta_i} \left[ \sum_{u_j \in T_k} \tr(c_{i,j}^2 u_j u_j^T) \right] \right] \nonumber \\
&= \gamma_k^2 |T_k|^2 \tr \Big( \E_{u_j} [ \E_{\zeta_i} [c_{i,j}^2 u_j u_j^T]] \Big),
\end{align}
where the last equality is due to the linearity of $\tr(\cdot)$ and $\E[\cdot]$ operations, as well as independence of $u_j \in T_k$ vectors. 
By \eqref{eq:boundedvarstep2}, \eqref{eq:GSandSSvarstep1} and Table \ref{tbl:gammakandui}, we have
\begin{align}\label{eq:SkappxGSstep1}
    & \E_{T_k} \left[ \E_{\zeta_i} \left[  \|g_{\zeta_i, T_k}(x_k) - g_{T_k}(x_k)\|^2 \right] \right] \nonumber \\
    & \quad \leq 2 \E_{\zeta_i} \left[ \tr \left( \E_{u_j} \left[ \left( u_j^T [\nabla f (x_k,\zeta_i) - \nabla F (x_k) ] \right)^2 u_j u_j^T\right] \right) \right] %
    + 2 \tr \left( \E_{u_j} \left[ L_{\nabla f}^2 \nu^2 \|u_j\|^4 u_j u_j^T \right] \right) \nonumber \\
    & \quad = 2 \E_{\zeta_i} \left[ \tr \left( \| \nabla f (x_k,\zeta_i) - \nabla F (x_k)] \|^2 I + 2[\nabla f (x_k,\zeta_i) - \nabla F (x_k)][\nabla f (x_k,\zeta_i) - \nabla F (x_k)]^T \right)\right] \nonumber \\
    & \quad \quad + 2 \tr \left( L_{\nabla f}^2 \nu^2(d+2)(d+4)I \right) \nonumber \\
    & \quad = 2 \E_{\zeta_i} \Big[ (d+2)\| \nabla f (x_k,\zeta_i) - \nabla F (x_k)] \|^2 \Big] + 2 L_{\nabla f}^2 \nu^2 d(d+2)(d+4) \nonumber \\
    & \quad \leq 2 (d+2) \beta_1 \|\nabla F (x_k)\|^2 + 2 (d+2) \beta_2 + 2 L_{\nabla f}^2 \nu^2 d(d+2)(d+4) \nonumber \\
    & \quad \leq 4 (d+2) \beta_1 \|\nabla F^{GS}_{\nu} (x_k)\|^2 + 4 (d+2) \beta_1 \|\nabla F (x_k) - \nabla F^{GS}_{\nu} (x_k)\|^2 \nonumber \\
    & \quad \quad + 2 (d+2) \beta_2 + 2 L_{\nabla f}^2 \nu^2 d(d+2)(d+4) \nonumber \\
    & \quad \leq 4 (d+2) \beta_1 \|\nabla F^{GS}_{\nu} (x_k)\|^2 + 2 (d+2) \beta_2 + 2 L_{\nabla f}^2 \nu^2 d(d+2)(2 \beta_1 + d+4)
\end{align}
where the first equality is due to \cite[Equation 2.18]{berahas2021theoretical}, the second inequality is due to Assumption \ref{assum:boundedvarinstochgrad}, the third inequality is due to $(a+b)^2 \leq 2a^2 + 2b^2$ for any $a,b \in \R$, and the last inequality is by Lemma \ref{lem:derivationfromdescentlemma}, Table \ref{tbl:deltaupper} and $L_{\nabla F} \leq L_{\nabla f}$. By \eqref{eq:Skappx2}, \eqref{eq:gxkandepsilonprime}, and \eqref{eq:SkappxGSstep1}, we have \eqref{eq:boundonSk} satisfied with $b_1 =1 + \frac{4 (d+2) \beta_1}{\theta^2} $, $b_2 = \frac{2 (d+2) \beta_2 + 2 L_{\nabla f}^2 \nu^2 d(d+2)(2 \beta_1 + d+4)}{\theta^2} $.
\vspace{0.5em}
\item \textit{SS:}
By \eqref{eq:boundedvarstep2}, \eqref{eq:GSandSSvarstep1} and Table \ref{tbl:gammakandui}, we have
\begin{align}\label{eq:SkappxSSstep1}
    & \E_{T_k} \left[ \E_{\zeta_i} \left[  \|g_{\zeta_i, T_k}(x_k) - g_{T_k}(x_k)\|^2 \right] \right] \nonumber \\
    & \quad \leq 2d^2 \E_{\zeta_i} \left[ \tr \left( \E_{u_j} \left[ \left( u_j^T [\nabla f (x_k,\zeta_i) - \nabla F (x_k) ] \right)^2 u_j u_j^T\right] \right) \right] %
    + 2d^2 \tr \Big( \E_{u_j} \Big[ \Big( L_{\nabla f}^2 \nu^2 \|u_j\|^4 u_j u_j^T \Big] \Big) \nonumber \\
    & \quad = \frac{2d}{d+2} \E_{\zeta_i} \Big[ \tr \Big( \| \nabla f (x_k,\zeta_i) - \nabla F (x_k)] \|^2 I + 2[\nabla f (x_k,\zeta_i) - \nabla F (x_k)][\nabla f (x_k,\zeta_i) - \nabla F (x_k)]^T \Big)\Big] \nonumber \\
    & \quad \quad + 2 d \tr \Big( L_{\nabla f}^2 \nu^2I \Big) \nonumber \\
    & \quad = 2 d \E_{\zeta_i} \Big[ \| \nabla f (x_k,\zeta_i) - \nabla F (x_k)] \|^2 \Big] + 2 L_{\nabla f}^2 \nu^2 d^2 \nonumber \\
    & \quad \leq 2 d \beta_1 \|\nabla F (x_k)\|^2 + 2 d \beta_2 + 2 L_{\nabla f}^2 \nu^2 d^2 \nonumber \\
    & \quad \leq  4 d \beta_1 \|\nabla F^{SS}_{\nu} (x_k)\|^2 + 4 d \beta_1 \|\nabla F (x_k) - \nabla F^{SS}_{\nu} (x_k)\|^2 + 2 d \beta_2 + 2 L_{\nabla f}^2 \nu^2 d^2 \nonumber \\
    & \quad \leq  4 d \beta_1 \|\nabla F^{SS}_{\nu} (x_k)\|^2 + 2 d \beta_2 + 2 L_{\nabla f}^2 \nu^2 d(2\beta_1 + d),
\end{align}
where the first equality is due to \cite[Equation 2.39]{berahas2021theoretical}, the second inequality is due to Assumption \ref{assum:boundedvarinstochgrad}, the third inequality is due to $(a+b)^2 \leq 2a^2 + 2b^2$ for any $a,b \in \R$, and the last inequality is by Lemma \ref{lem:derivationfromdescentlemma}, Table \ref{tbl:deltaupper} and $L_{\nabla F} \leq L_{\nabla f}$. By \eqref{eq:Skappx2}, \eqref{eq:gxkandepsilonprime}, and \eqref{eq:SkappxSSstep1} we have \eqref{eq:boundonSk} satisfied with $b_1 = 1 + \frac{4 d \beta_1}{\theta^2} $, $b_2 = \frac{2 d \beta_2 + 2 L_{\nabla f}^2 \nu^2 d(2 \beta_1 + d)}{\theta^2} $.
\vspace{0.5em}
\item \textit{RC:}
Considering the numerator in \eqref{eq:Skappx1}, using \eqref{eq:defofci} and Table \ref{tbl:gammakandui}, we have
\begin{align}\label{eq:SkappxRCstep1}
    & \E_{T_k} \left[ \E_{\zeta_i} \left[ \| [\nabla^{FD} f(x_k,\zeta_i) - \nabla^{FD} F(x_k)]_{T_k}\|^2 \right] \right] \nonumber \\
    & \quad = \frac{d^2}{N^2} \E_{T_k} \left[ \E_{\zeta_i}\left[\sum_{e_j \in T_k} \left(\frac{f(x_k + \nu e_j,\zeta_i) - f(x_k,\zeta_i)}{\nu} - \frac{F(x_k + \nu e_j) - F(x_k)}{\nu} \right)^2 \right] \right] \nonumber \\
    & \quad = \frac{d}{N} \sum_{j=1}^d \E_{\zeta_i}[c_{i,j}^2] \leq \frac{d}{N} 
 \E_{\zeta_i}\left[\sum_{j=1}^d \left( 2\left( e_j^T [\nabla f (x_k,\zeta_i) - \nabla F (x_k)] \right)^2 + 2L_{\nabla f}^2 \nu^2 \|e_j\|^4 \right) \right] \nonumber \\
    & \quad = \frac{2d}{N}\E_{\zeta_i}[\| \nabla f (x_k,\zeta_i) - \nabla F (x_k)\|^2] + 2 L_{\nabla f}^2 \nu^2 \frac{d^2}{N} \nonumber \\
    & \quad \leq \frac{2\beta_1 d}{N} \|\nabla F (x_k)\|^2 + \frac{2\beta_2 d}{N} + 2L_{\nabla f}^2 \nu^2 \frac{d^2}{N} \nonumber \\
    & \quad \leq \frac{4\beta_1 d}{N} \|\nabla^{FD} F (x_k)\|^2 + \frac{4\beta_1 d}{N} \|\nabla F (x_k) - \nabla^{FD} F (x_k)\|^2 + \frac{2\beta_2 d}{N} + 2L_{\nabla f}^2 \nu^2 \frac{d^2}{N} \nonumber \\
    & \quad \leq \frac{4\beta_1 d}{N} \|\nabla^{FD} F (x_k)\|^2 + \frac{2\beta_2 d}{N} + (2+\beta_1)L_{\nabla f}^2 \nu^2 \frac{d^2}{N},
\end{align}
where the first inequality is due to \eqref{eq:boundedvarstep2}, the second inequality is due to Assumption \ref{assum:boundedvarinstochgrad}, the third inequality is due to $(a+b)^2 \leq 2a^2 + 2b^2$ for any $a,b \in \R$, and the last inequality is by Lemma \ref{lem:derivationfromdescentlemma}, Table \ref{tbl:deltaupper} and $L_{\nabla F} \leq L_{\nabla f}$. Note that the denominator in \eqref{eq:Skappx1} can be bounded from below as follows,
\begin{align}
    \E_{T_k}[\| g_{T_k}(x_k)\|^2] &= \frac{d^2}{N^2} \sum_{j = 1}^d \frac{N}{d} ([\nabla^{FD} F(x_k)]_{e_j})^2 = \frac{d}{N} \|\nabla^{FD} F (x_k)\|^2 > \frac{d\epsilon'}{N} \label{eq:denomRC}.
\end{align}
Therefore by \eqref{eq:gxkandepsilonprime}, \eqref{eq:SkappxRCstep1}, and \eqref{eq:denomRC} we have \eqref{eq:boundonSk} satisfied with $b_1 = 1 + \frac{4 \beta_1}{\theta^2} $, $b_2 = \frac{(2+\beta_1) L_{\nabla f}^2 \nu^2 d}{\theta^2} + \frac{2\beta_2}{\theta^2} $.
\vspace{0.5em}
\item \textit{RS:}
Considering \eqref{eq:Skappx1}, \eqref{eq:defofci} and Table \ref{tbl:gammakandui}, taking the expectation $\E_{T_k}[\cdot]$ on both sides in \eqref{eq:boundedvarstep1}, we get
\begin{align}\label{eq:SkappxRSstep1}
    & \E_{T_k} \left[ \E_{\zeta_i} \left[  \|g_{\zeta_i, T_k}(x_k) - g_{T_k}(x_k)\|^2 \right] \right] = \E_{T_k} \left[ \E_{\zeta_i} \left[ \| \frac{d}{N} \sum_{u_j \in T_k} c_{i,j} u_j  \|^2 \right] \right] \nonumber \\
    & \quad = \frac{d^2}{N^2} \E_{T_k} \left[ \E_{\zeta_i} \left[ \sum_{u_j \in T_k} c_{i,j}^2 \right] \right] = \frac{d}{N} \E_{\Tilde T_k} \left[ \E_{\zeta_i} \left[ \sum_{u_j \in \Tilde T_k} c_{i,j}^2 \right] \right] \nonumber \\
    & \quad \leq \frac{d}{N} 
 \E_{\zeta_i} \left[ \E_{\Tilde T_k}\left[\sum_{u_j \in \Tilde T_k} \left[ 2\left( u_j^T [\nabla f (x_k,\zeta_i) - \nabla F (x_k)] \right)^2 + 2L_{\nabla f}^2 \nu^2 \|u_j\|^4 \right] \right] \right]\nonumber \\
    & \quad = \frac{2d}{N}\E_{\zeta_i}[\| \nabla f (x_k,\zeta_i) - \nabla F (x_k)\|^2] + 2 L_{\nabla f}^2 \nu^2 \frac{d^2}{N} \nonumber \\
    & \quad \leq \frac{2\beta_1 d}{N} \|\nabla F (x_k)\|^2 + \frac{2\beta_2 d}{N} + 2L_{\nabla f}^2 \nu^2 \frac{d^2}{N} \nonumber \\
    & \quad \leq \frac{4\beta_1 d}{N} \|\E_{\Tilde T_k}[U_{\Tilde T_k} b_{\Tilde T_k}]\|^2 + \frac{4\beta_1 d}{N} \|\nabla F (x_k) - \E_{\Tilde T_k}[U_{\Tilde T_k} b_{\Tilde T_k}]\|^2 + \frac{2\beta_2 d}{N} + 2L_{\nabla f}^2 \nu^2 \frac{d^2}{N} \nonumber \\
    & \quad \leq \frac{4\beta_1 d}{N} \E_{\Tilde T_k}[\|U_{\Tilde T_k} b_{\Tilde T_k}\|^2] + \frac{4\beta_1 d}{N} \E_{\Tilde T_k}[\|\nabla F (x_k) - U_{\Tilde T_k} b_{\Tilde T_k}\|^2] + \frac{2\beta_2 d}{N} + 2L_{\nabla f}^2 \nu^2 \frac{d^2}{N} \nonumber \\
    & \quad \leq \frac{4\beta_1 d}{N} \E_{\Tilde T_k}[\|U_{\Tilde T_k} b_{\Tilde T_k}\|^2] + \frac{2\beta_2 d}{N} + (2+\beta_1)L_{\nabla f}^2 \nu^2 \frac{d^2}{N},
\end{align}
where the first inequality is due to \eqref{eq:boundedvarstep2}, the second inequality is due to Assumption \ref{assum:boundedvarinstochgrad}, the third inequality is due to $(a+b)^2 \leq 2a^2 + 2b^2$ for any $a,b \in \R$, the fourth inequality is by Jensen's inequality, the last inequality is by Lemma \ref{lem:derivationfromdescentlemma}, Table \ref{tbl:deltaupper} and $L_{\nabla F} \leq L_{\nabla f}$, and the last equality is due to $\| v^T U_{\Tilde T_k} \|^2 = \| v \|^2$ for any $v \in \R^d$. 

From \eqref{eq:RSEtkgtk}, we have
\begin{align}
    \E_{T_k}[\| g_{T_k}(x_k)\|^2] = \frac{d}{N}\E_{\Tilde T_k}[\| U_{\Tilde T_k} b_{\Tilde T_k}\|^2] > \frac{d\epsilon'}{N} \label{eq:denomRS}.
\end{align}

Therefore, by \eqref{eq:Skappx1}, \eqref{eq:gxkandepsilonprime}, \eqref{eq:SkappxRSstep1}, and \eqref{eq:denomRS}, we have \eqref{eq:boundonSk} satisfied with $b_1 = 1 + \frac{4 \beta_1}{\theta^2} $, $b_2 = \frac{(2+\beta_1) L_{\nabla f}^2 \nu^2 d}{\theta^2} + \frac{2\beta_2}{\theta^2} $.
\end{enumerate}
Since $\theta$ is a user-defined hyperparameter, we do not consider it in our complexity analysis (i.e. we use the fact that $\theta = \mathcal{O}(1)$). %
\end{proof}

\subsection{Proof of Table~\ref{tbl:total work complexity}}\label{sec:proofoftotalworkcomp}

\begin{proof}
To simplify the complexity results, we ignore dependencies on $\beta_1$ and $\beta_2$ (i.e. $\beta_1, \beta_2 = \mathcal{O}(1)$), and assume that $\frac{L_{\nabla f}}{L_{\nabla F}} = \mathcal{O}(1)$ (see Remark~\ref{remark:final}). Then, plugging $\hat \nu$ values from Table \ref{tbl:tuned nu} in the order results for $b_1$ and $b_2$ in Table \ref{tbl:b1andb2}, we obtain Table \ref{tbl:summarydataforcomplexity}. Using these results and substituting the values for $N \mathcal{K}(\epsilon)$ in \eqref{eq:Wcomplexitysimplified} completes the proof. %

\begin{table}[htp]
\centering
\def\arraystretch{1.5}
\caption{
Order results for $ b_1 + \frac{b_2}{\epsilon \mu} $.
}
\begin{tabular}{|c|c|c|}
\hline
Method & $ b_1 + \frac{b_2}{\epsilon \mu}  $ \\ \hline
FD
& $\mathcal{O} (1 + \frac{1}{\epsilon \mu} )$ \\
GS
& $\mathcal{O} (\frac{\max\{N,d\}}{\max\{N,d^2\}} d^2 + \frac{d}{\epsilon \mu})$ \\
SS
& $\mathcal{O} (\frac{\max\{N,d\}}{\max\{N,d^2\}} d^2 + \frac{d}{\epsilon \mu})$ \\
RC
& $\mathcal{O} (1 + \frac{1}{\epsilon \mu}) $ \\
RS
& $\mathcal{O} (1 + \frac{1}{\epsilon \mu})$ \\
\hline
\end{tabular}

\label{tbl:summarydataforcomplexity}
\end{table}
\end{proof}

\begin{remark}
\label{remark:final}
    Note that $\frac{L_{\nabla f}}{L_{\nabla F}} = \mathcal{O}(1)$ is indeed a mild assumption since $\hat \nu$ values in Table \ref{tbl:tuned nu} could be adjusted by multiplying with $\frac{L_{\nabla F}}{L_{\nabla f}}$. This modification shrinks the convergence neighborhood $\eta$ which results in the same order result for the iteration complexity in Theorem \ref{thm:itercomp} in terms of the dependence on $\epsilon$. Moreover, note that for small $\epsilon$, the term including $\frac{L_{\nabla f}}{L_{\nabla F}}$ is dominated anyways by the term which includes $\epsilon$ in Table~\ref{tbl:total work complexity}. 
\end{remark}

\newpage
\section{Additional Numerical Results}
\label{sec:addnumresults}

\subsection{Chebyquad Function with Relative Error, $\sigma = 10^{-3}$}

\begin{figure}[H]
\centering
\begin{subfigure}{0.33\textwidth}
  \centering
  \includegraphics[width=1.1\textwidth]{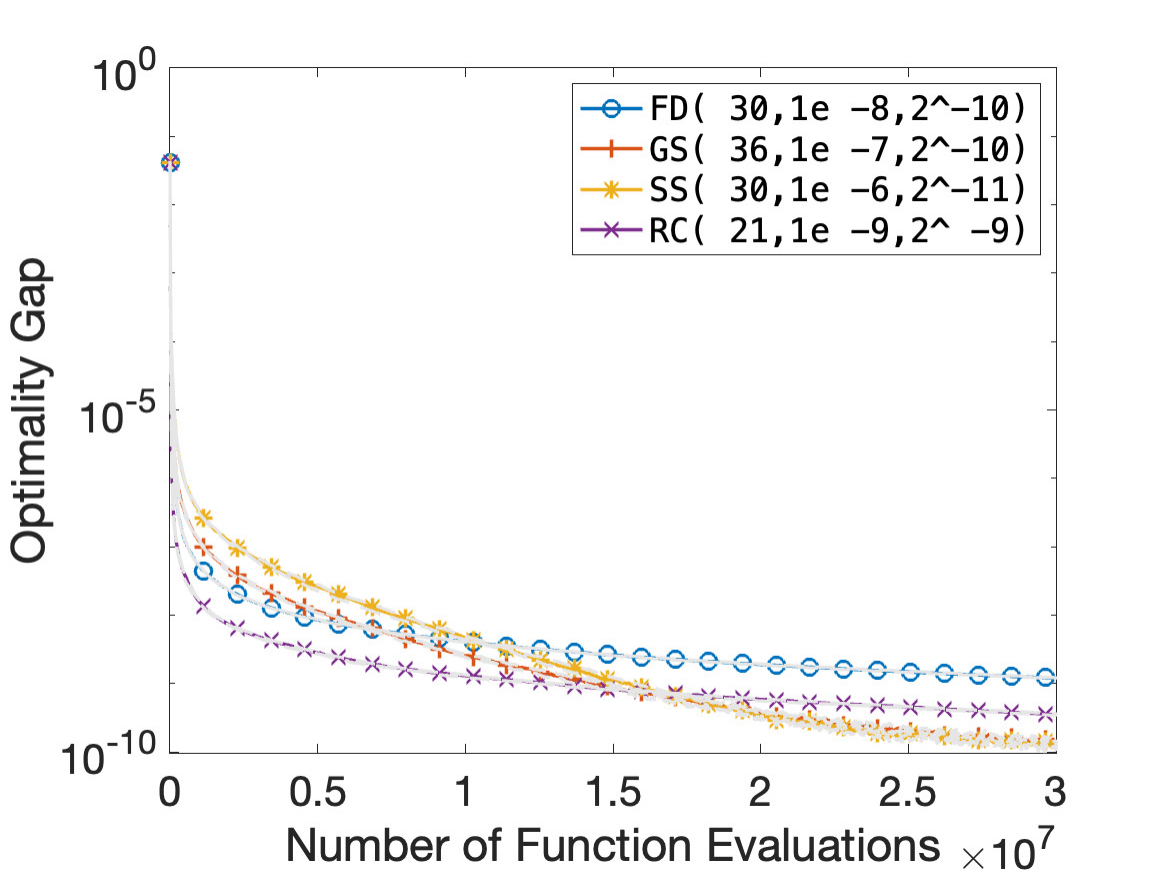}
  \caption{Optimality Gap}
  \label{fig:15rel3bestvsbestoptgap}
\end{subfigure}%
\begin{subfigure}{0.33\textwidth}
  \centering
  \includegraphics[width=1.1\textwidth]{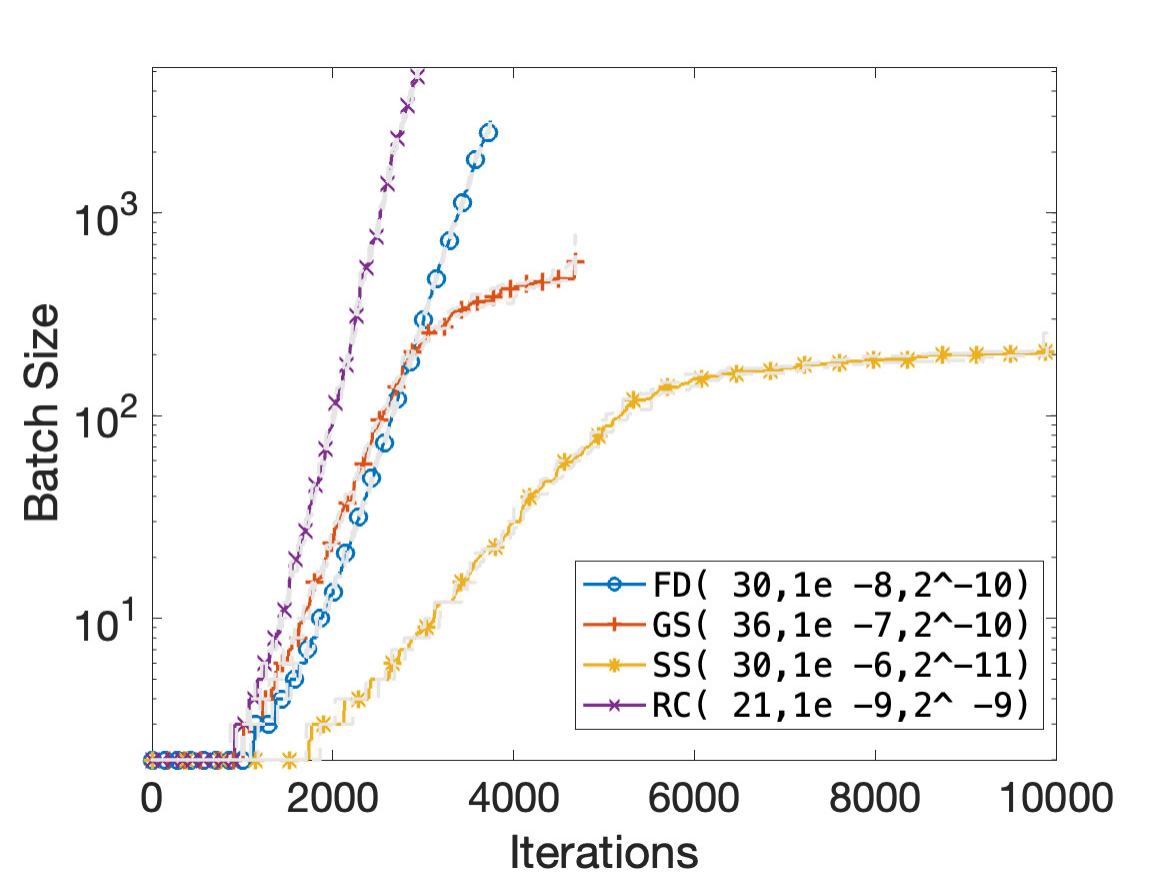}
  \caption{Batch Size}
  \label{fig:15rel3bestvsbestbatch}
\end{subfigure}
\begin{subfigure}{0.33\textwidth}
  \centering
  \includegraphics[width=1.1\textwidth]{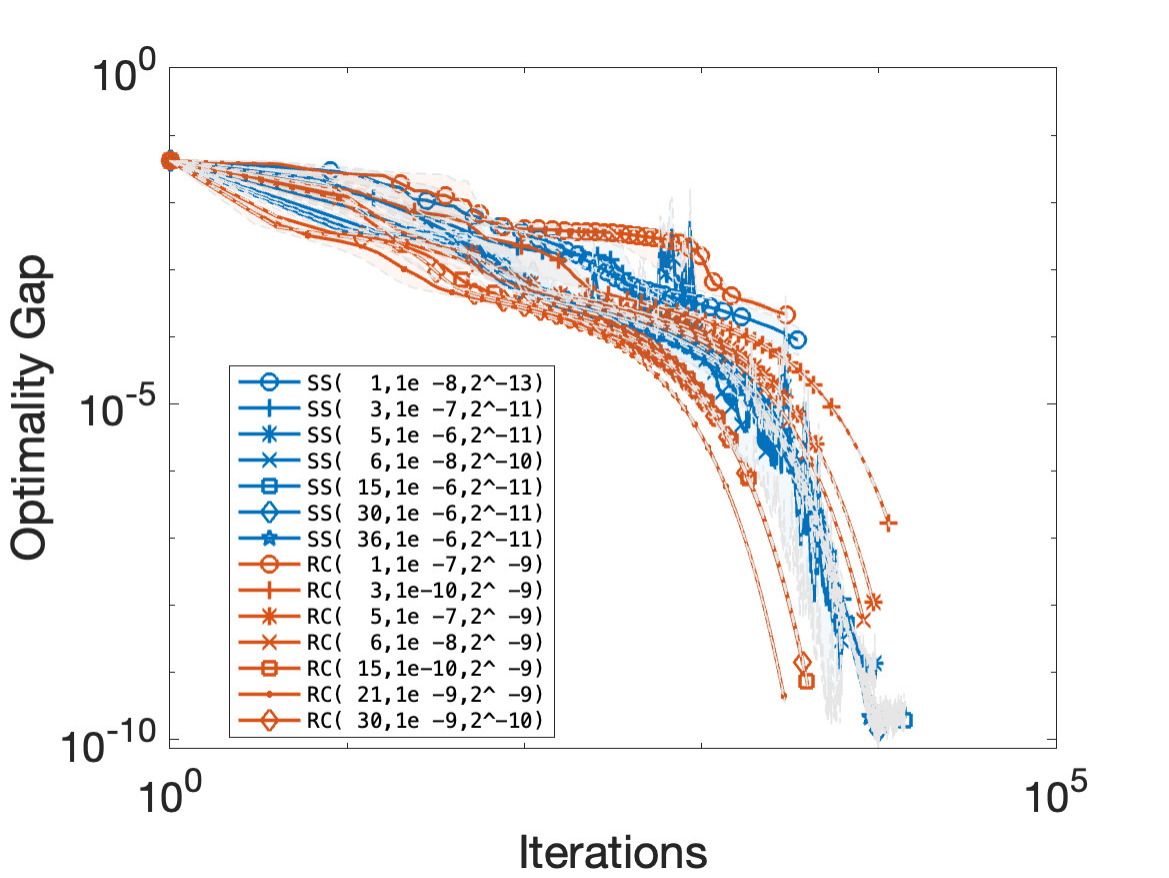}
  \caption{Comparison of SS and RC}
  \label{fig:15rel3bestvsbestcoordvsspherical}
\end{subfigure}
\caption{\small{Performance of different gradient estimation methods using the tuned hyperparameters on the Chebyquad function with relative error and $ \sigma = 10^{-3} $.}}
\label{fig:15rel3bestvsbest}
\end{figure}

\begin{figure}[H]
\centering
\begin{subfigure}{0.33\textwidth}
  \centering
  \includegraphics[width=1.1\textwidth]{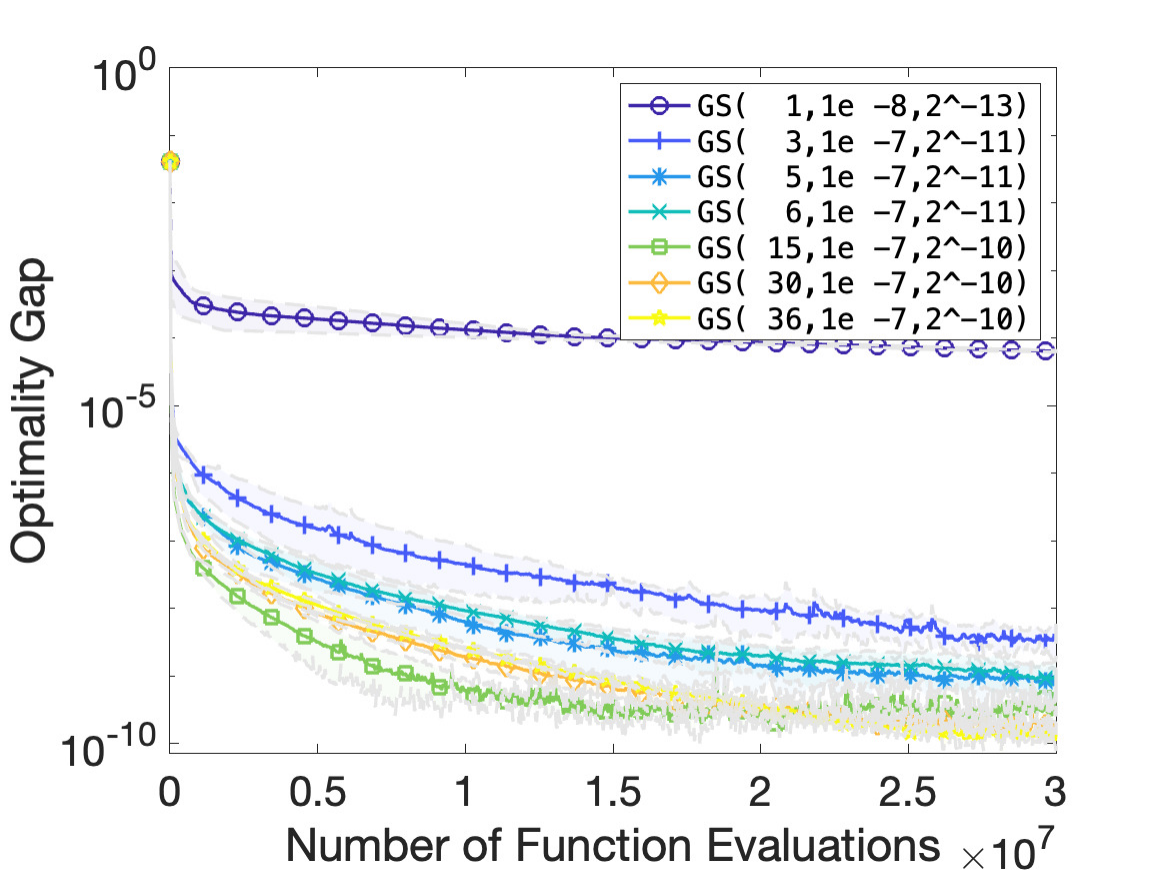}
  \caption{Performance of GS}
  \label{fig:15rel3numdirsensGSFFD}
\end{subfigure}%
\begin{subfigure}{0.33\textwidth}
  \centering
  \includegraphics[width=1.1\textwidth]{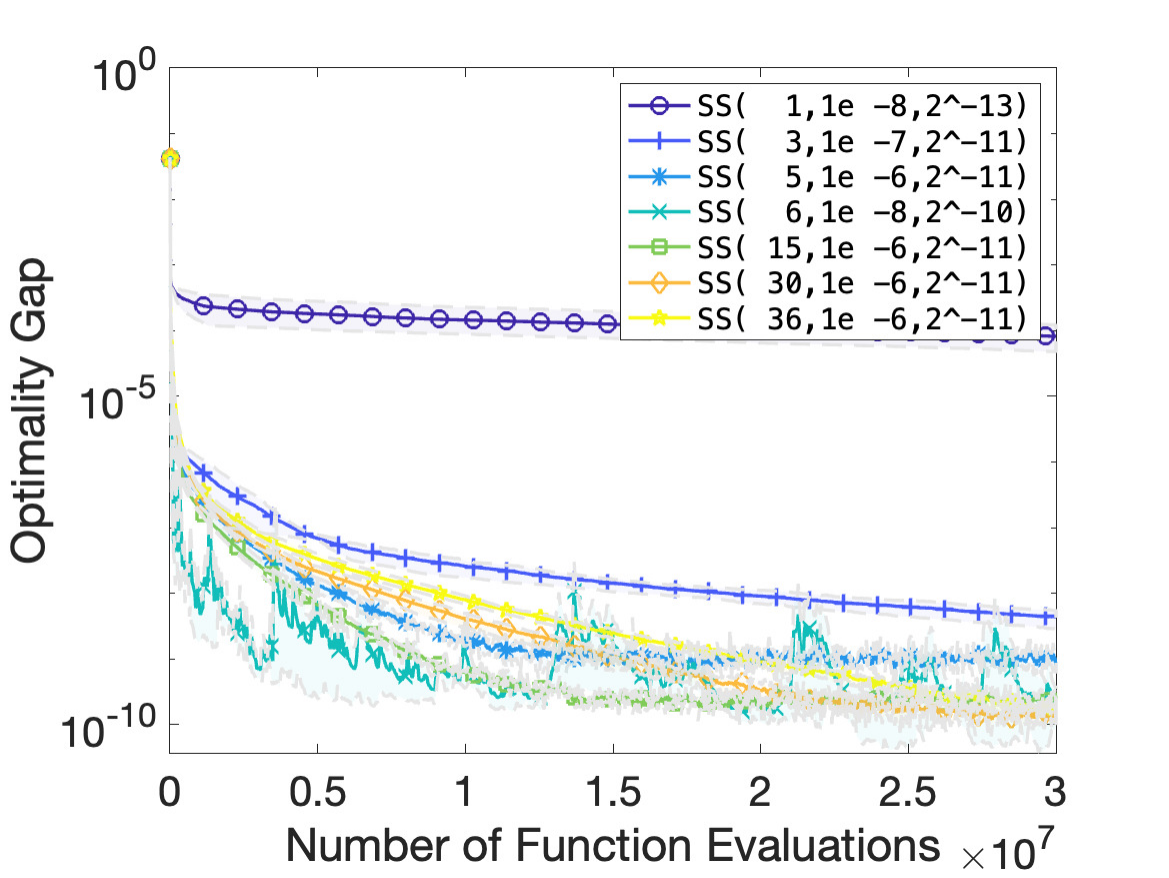}
  \caption{Performance of SS}
  \label{fig:15rel3numdirsensSSFFD}
\end{subfigure}%
\begin{subfigure}{0.33\textwidth}
  \centering
  \includegraphics[width=1.1\textwidth]{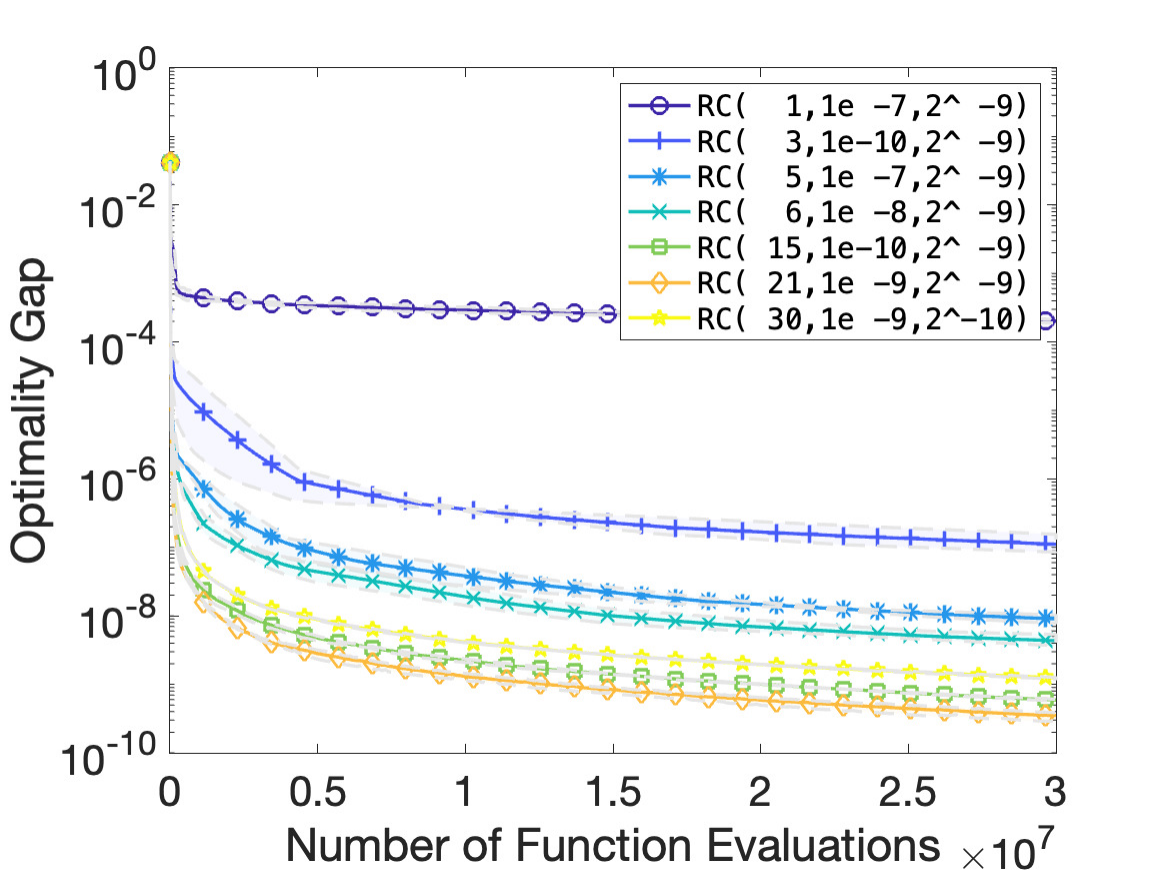}
  \caption{Performance of RC}
  \label{fig:15rel3numdirsensRCFFD}
\end{subfigure}
\caption{The effect of number of directions on the performance of different randomized gradient estimation methods on the Chebyquad function with relative error and $ \sigma = 10^{-3} $. All other hyperparameters are tuned to achieve the best performance.}
\label{fig:15rel3numdirsens}
\end{figure}

\newpage
\subsection{Chebyquad Function with Relative Error, $\sigma = 10^{-5}$}

\begin{figure}[H]
\centering
\begin{subfigure}{0.33\textwidth}
  \centering
  \includegraphics[width=1.1\textwidth]{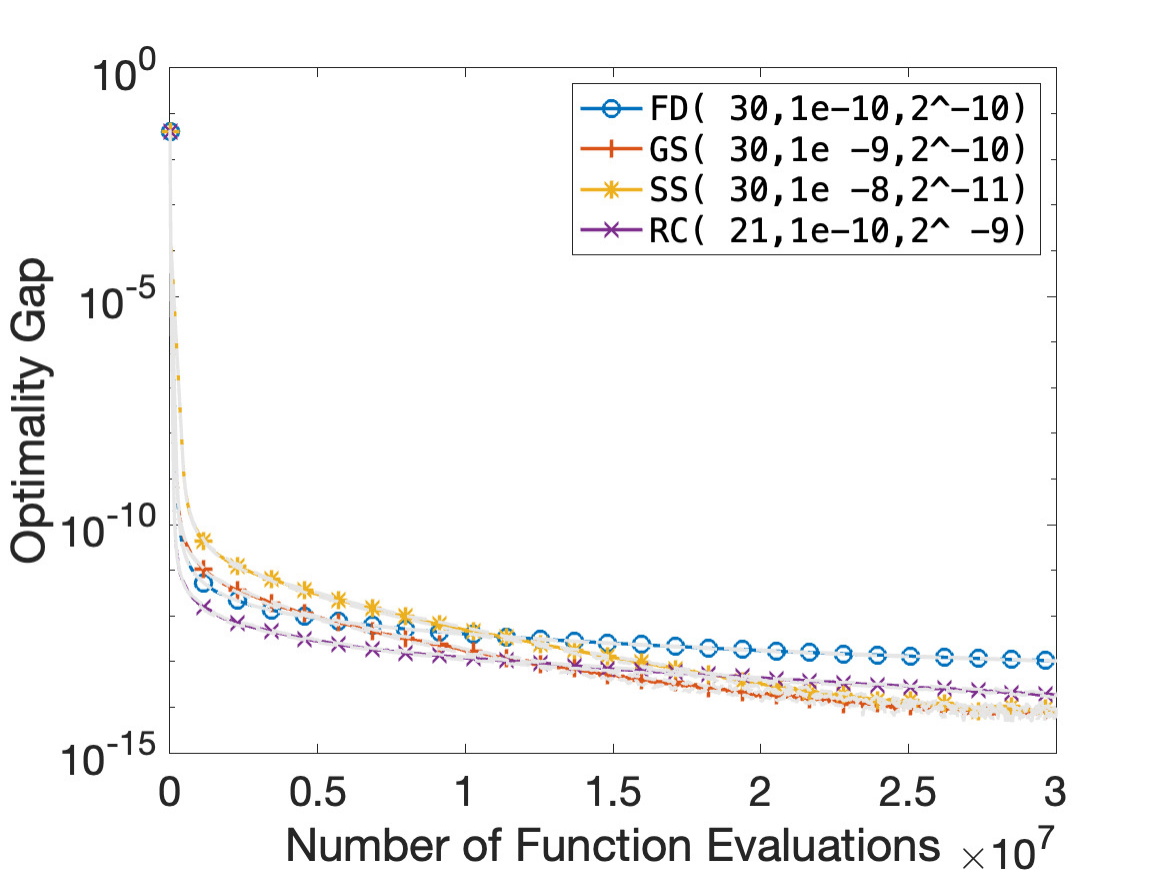}
  \caption{Optimality Gap}
  \label{fig:15rel5bestvsbestoptgap}
\end{subfigure}%
\begin{subfigure}{0.33\textwidth}
  \centering
  \includegraphics[width=1.1\textwidth]{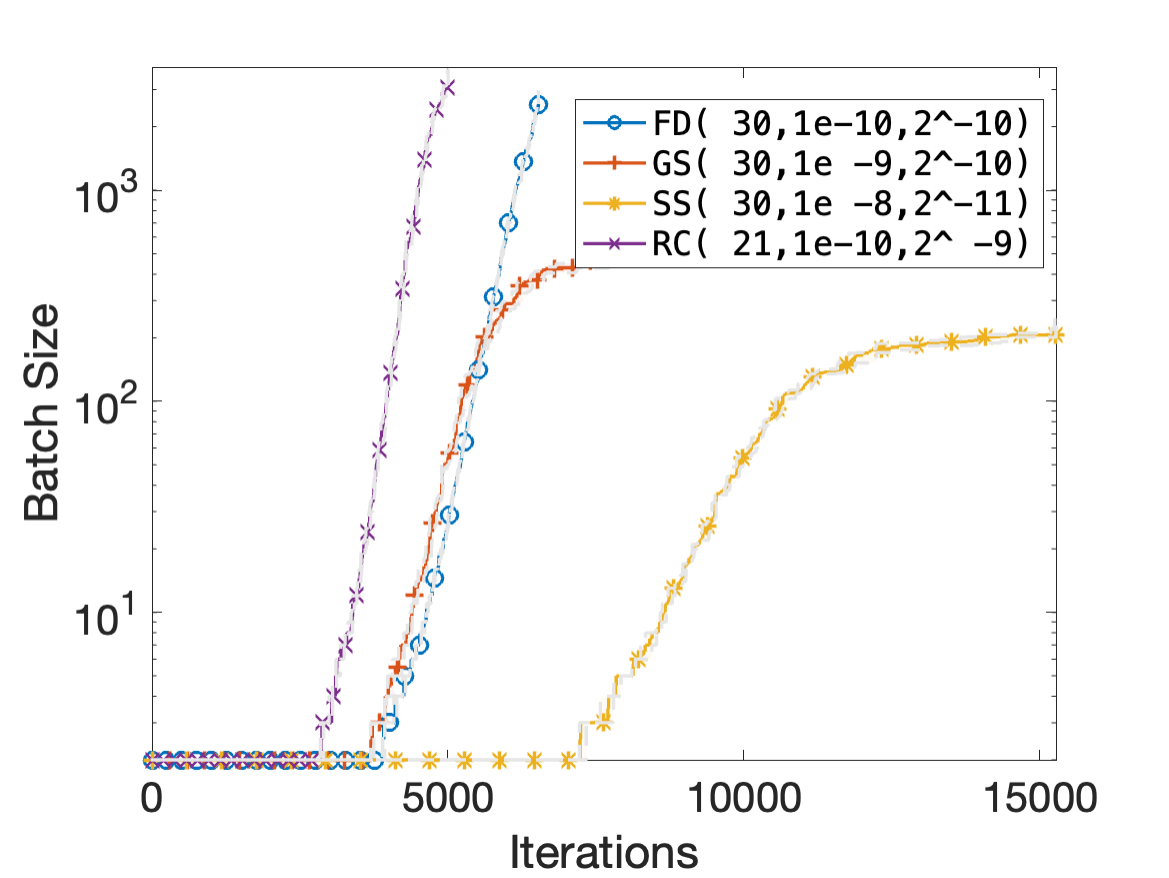}
  \caption{Batch Size}
  \label{fig:15rel5bestvsbestbatch}
\end{subfigure}
\begin{subfigure}{0.33\textwidth}
  \centering
  \includegraphics[width=1.1\textwidth]{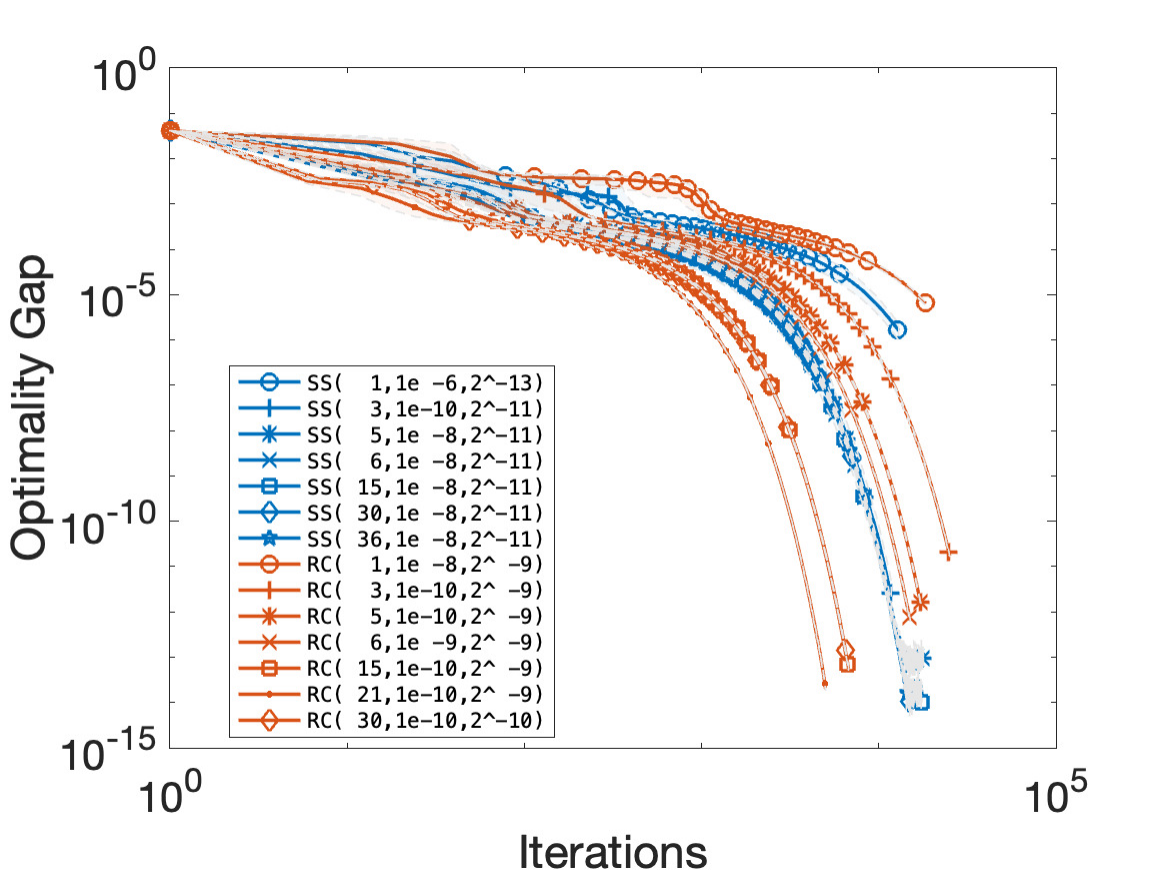}
  \caption{Comparison of SS and RC}
  \label{fig:15rel5coordvsspherical}
\end{subfigure}
\caption{Performance of different gradient estimation methods using the tuned hyperparameters on the Chebyquad function with relative error and $ \sigma = 10^{-5} $.}
\label{fig:15rel5bestvsbest}
\end{figure}

\begin{figure}[H]
\centering
\begin{subfigure}{0.33\textwidth}
  \centering
  \includegraphics[width=1.1\textwidth]{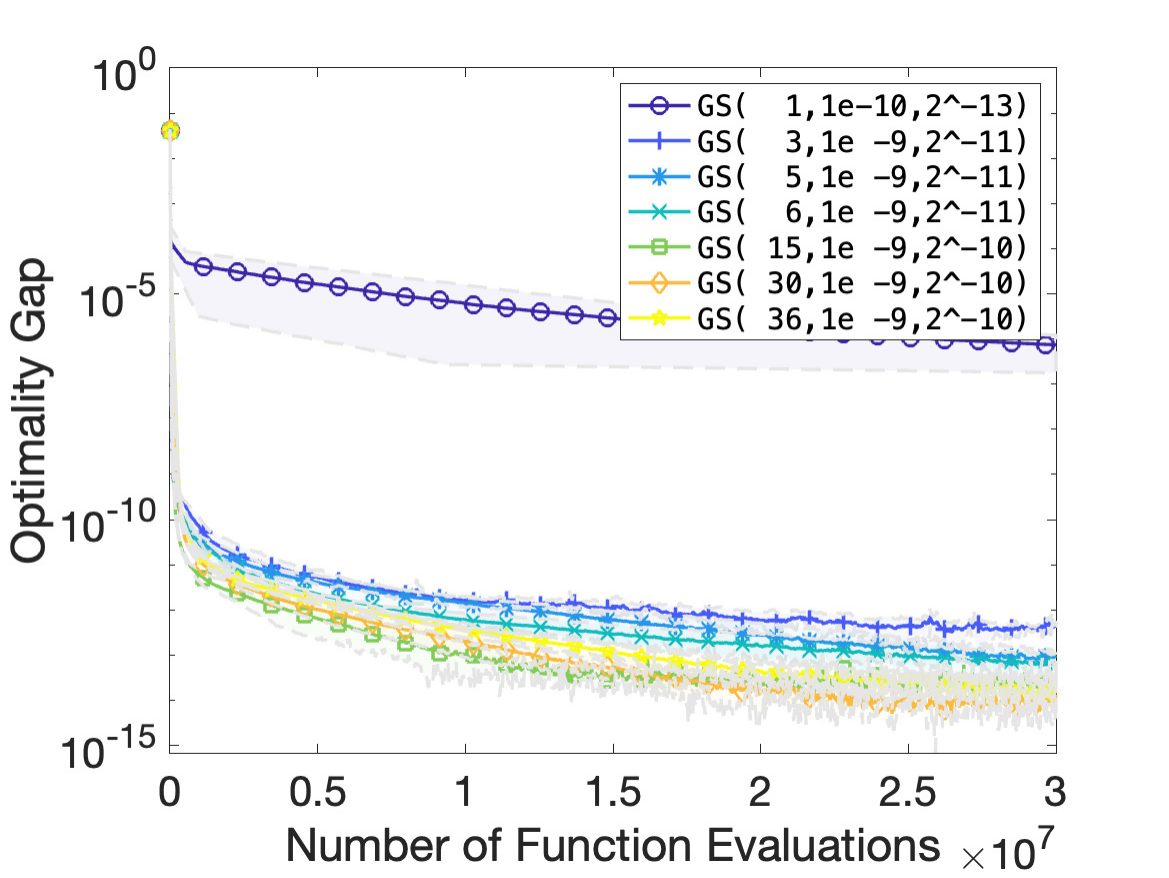}
  \caption{Performance of GS}
  \label{fig:15rel5numdirsensGSFFD}
\end{subfigure}%
\begin{subfigure}{0.33\textwidth}
  \centering
  \includegraphics[width=1.1\textwidth]{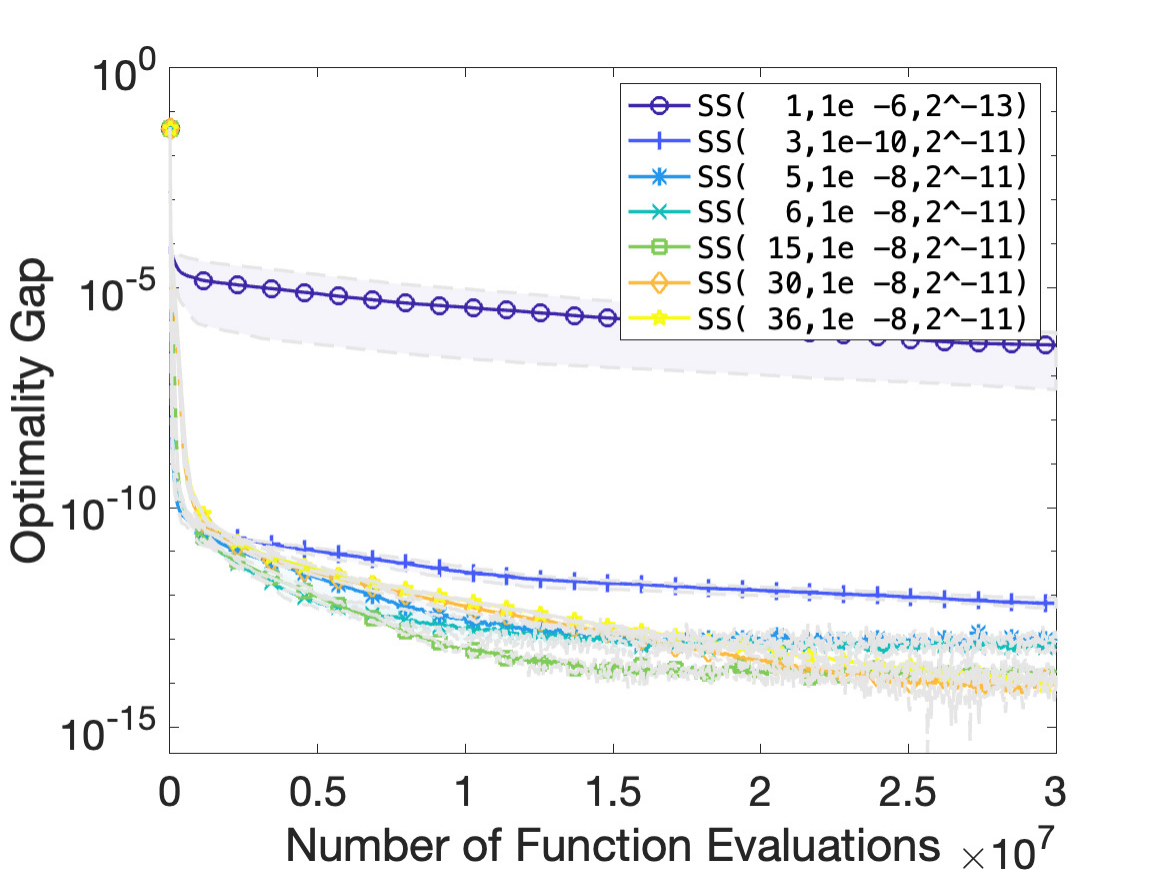}
  \caption{Performance of SS}
  \label{fig:15rel5numdirsensSSFFD}
\end{subfigure}
\begin{subfigure}{0.33\textwidth}
  \centering
  \includegraphics[width=1.1\textwidth]{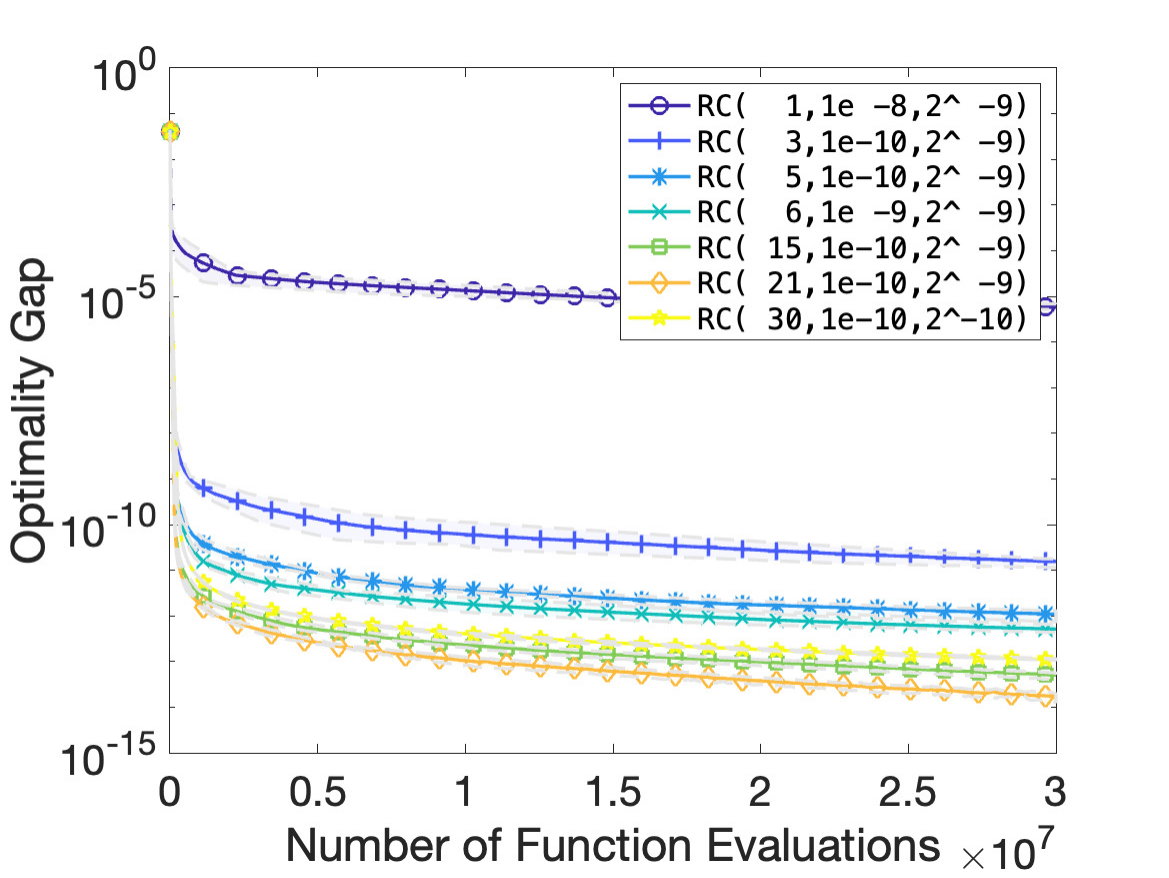}
  \caption{Performance of RC}
  \label{fig:15rel5numdirsensRCFFD}
\end{subfigure}
\caption{The effect of number of directions on the performance of different randomized gradient estimation methods on the Chebyquad function with relative error and $ \sigma = 10^{-5} $. All other hyperparameters are tuned to achieve the best performance.}
\label{fig:15rel5numdirsens}
\end{figure}

\newpage
\subsection{Osborne Function with Relative Error, $\sigma = 10^{-3}$}

\begin{figure}[H]
\centering
\begin{subfigure}{0.33\textwidth}
  \centering
  \includegraphics[width=1.1\textwidth]{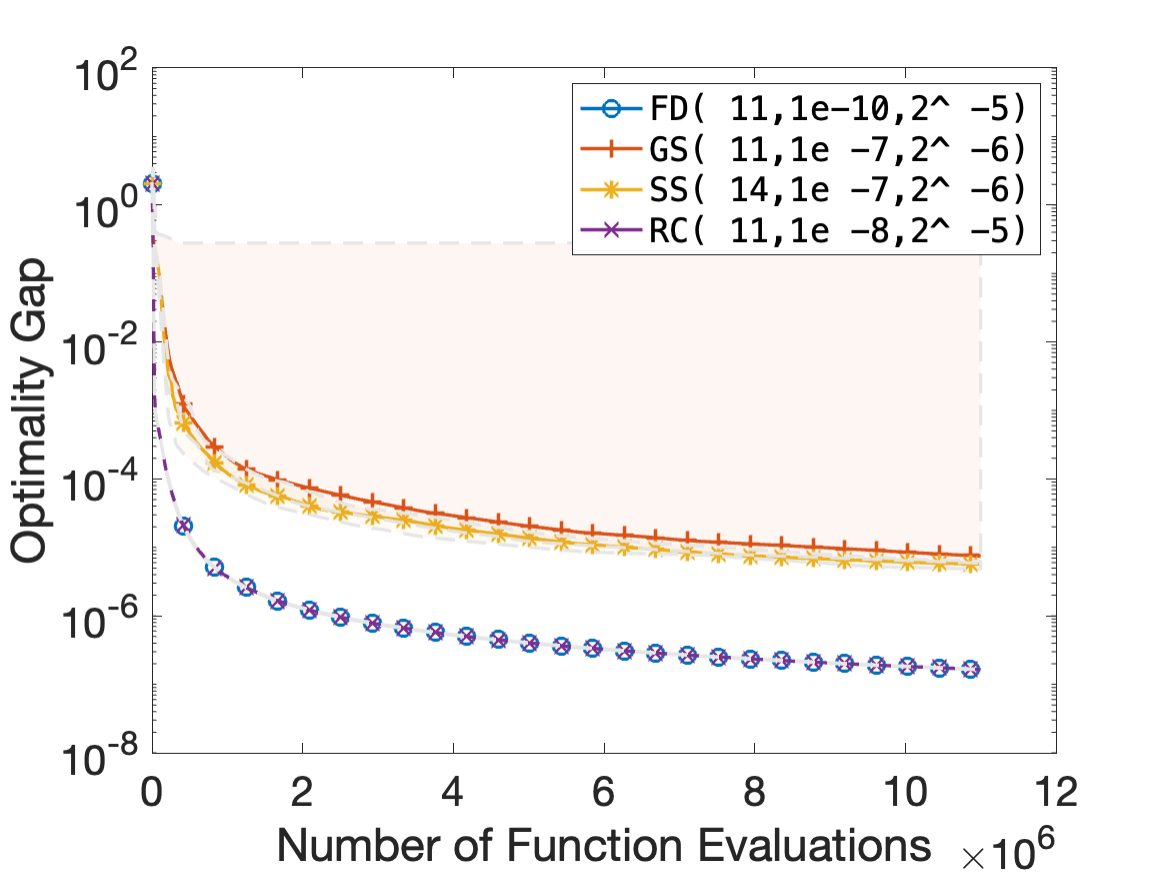}
  \caption{Optimality Gap}
  \label{fig:18rel3bestvsbestoptgap}
\end{subfigure}%
\begin{subfigure}{0.33\textwidth}
  \centering
  \includegraphics[width=1.1\textwidth]{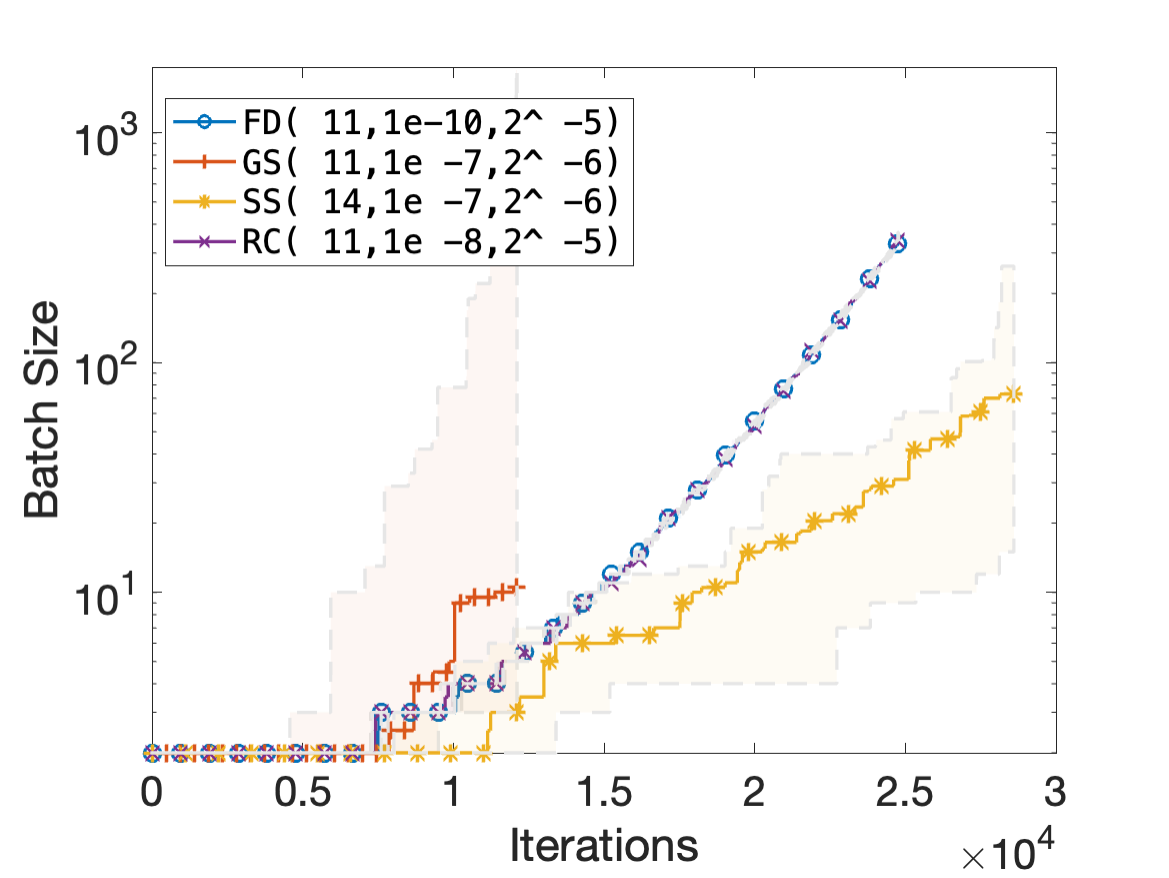}
  \caption{Batch Size}
  \label{fig:18rel3bestvsbestbatch}
\end{subfigure}
\begin{subfigure}{0.33\textwidth}
  \centering
  \includegraphics[width=1.1\textwidth]{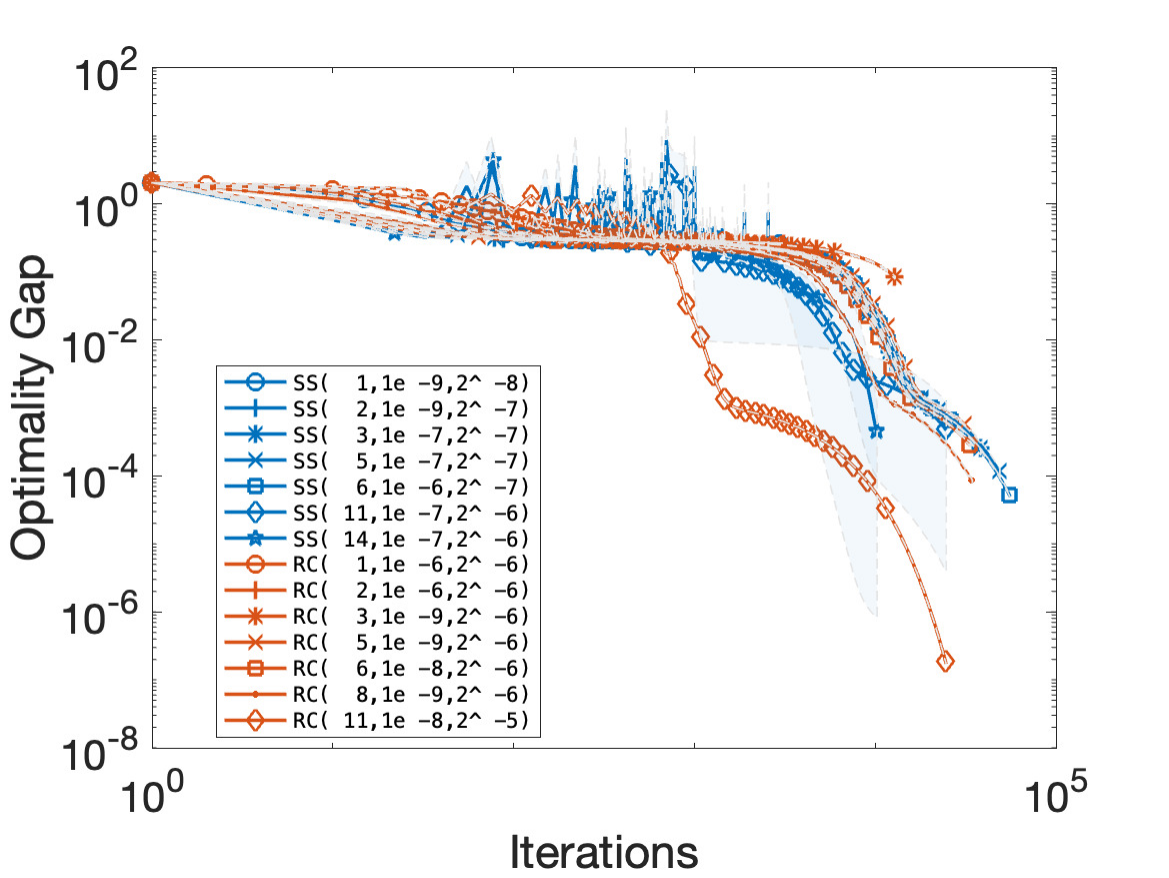}
  \caption{Comparison of SS and RC}
  \label{fig:18rel3coordvsspherical}
\end{subfigure}
\caption{Performance of different gradient estimation methods using the tuned hyperparameters on the Osborne function with relative error and $ \sigma = 10^{-3} $.}
\label{fig:18rel3bestvsbest}
\end{figure}

\begin{figure}[H]
\centering
\begin{subfigure}{0.33\textwidth}
  \centering
  \includegraphics[width=1.1\textwidth]{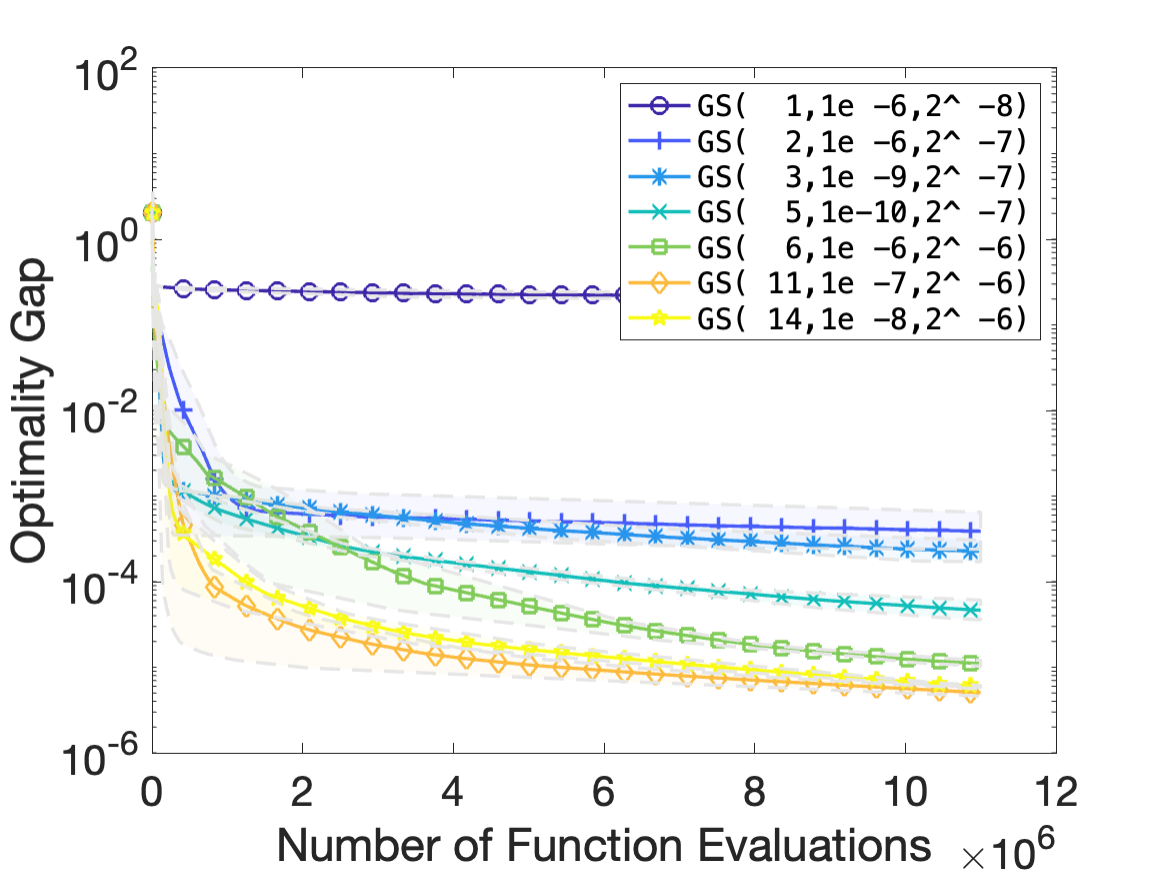}
  \caption{Performance of GS}
  \label{fig:18rel3numdirsensGSFFD}
\end{subfigure}%
\begin{subfigure}{0.33\textwidth}
  \centering
  \includegraphics[width=1.1\textwidth]{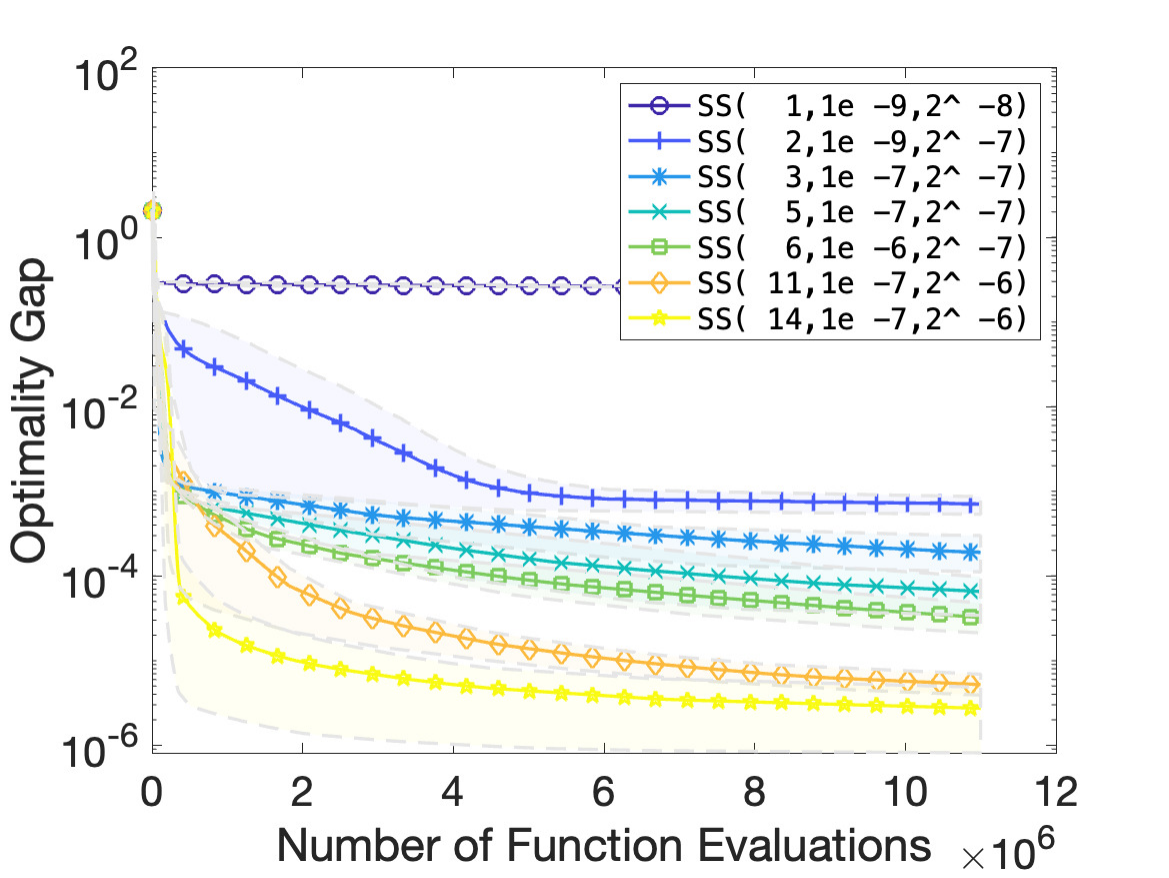}
  \caption{Performance of SS}
  \label{fig:18rel3numdirsensSSFFD}
\end{subfigure}%
\begin{subfigure}{0.33\textwidth}
  \centering
  \includegraphics[width=1.1\textwidth]{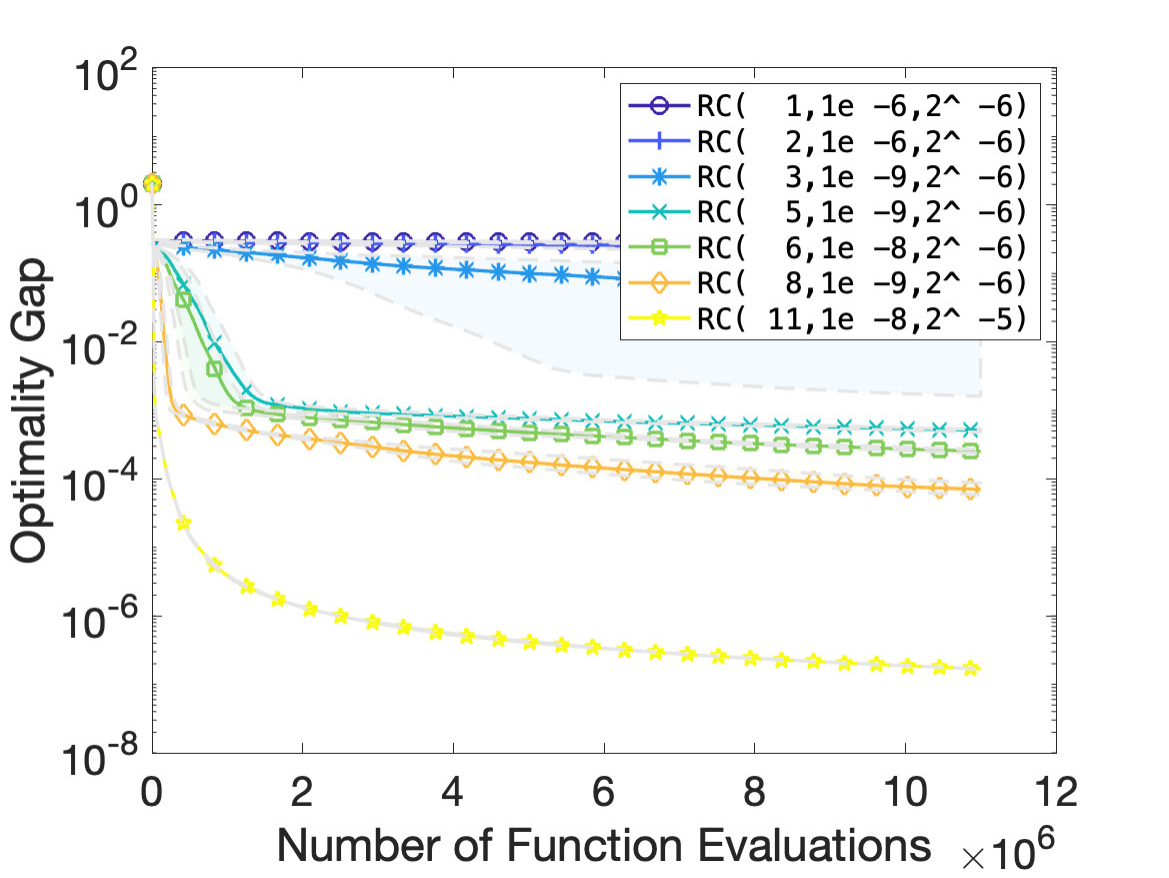}
  \caption{Performance of RC}
  \label{fig:18rel3numdirsensRCFFD}
\end{subfigure}
\caption{The effect of number of directions on the performance of different randomized gradient estimation methods on the Osborne function with relative error and $ \sigma = 10^{-3} $. All other hyperparameters are tuned to achieve the best performance.}
\label{fig:18rel3numdirsens}
\end{figure}

\newpage
\subsection{Osborne Function with Absolute Error, $\sigma = 10^{-3}$}

\begin{figure}[H]
\centering
\begin{subfigure}{0.33\textwidth}
  \centering
  \includegraphics[width=1.1\textwidth]{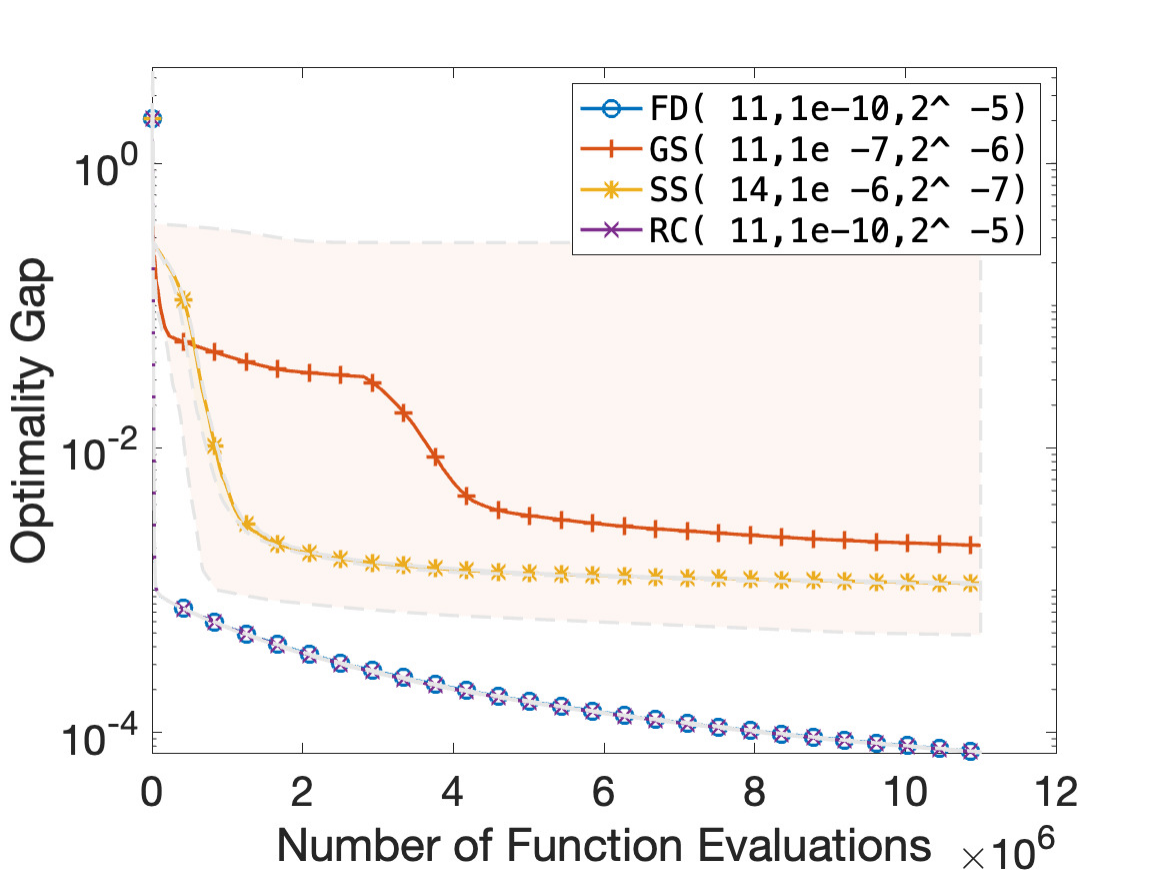}
  \caption{Optimality Gap}
  \label{fig:18abs3bestvsbestoptgap}
\end{subfigure}%
\begin{subfigure}{0.33\textwidth}
  \centering
  \includegraphics[width=1.1\textwidth]{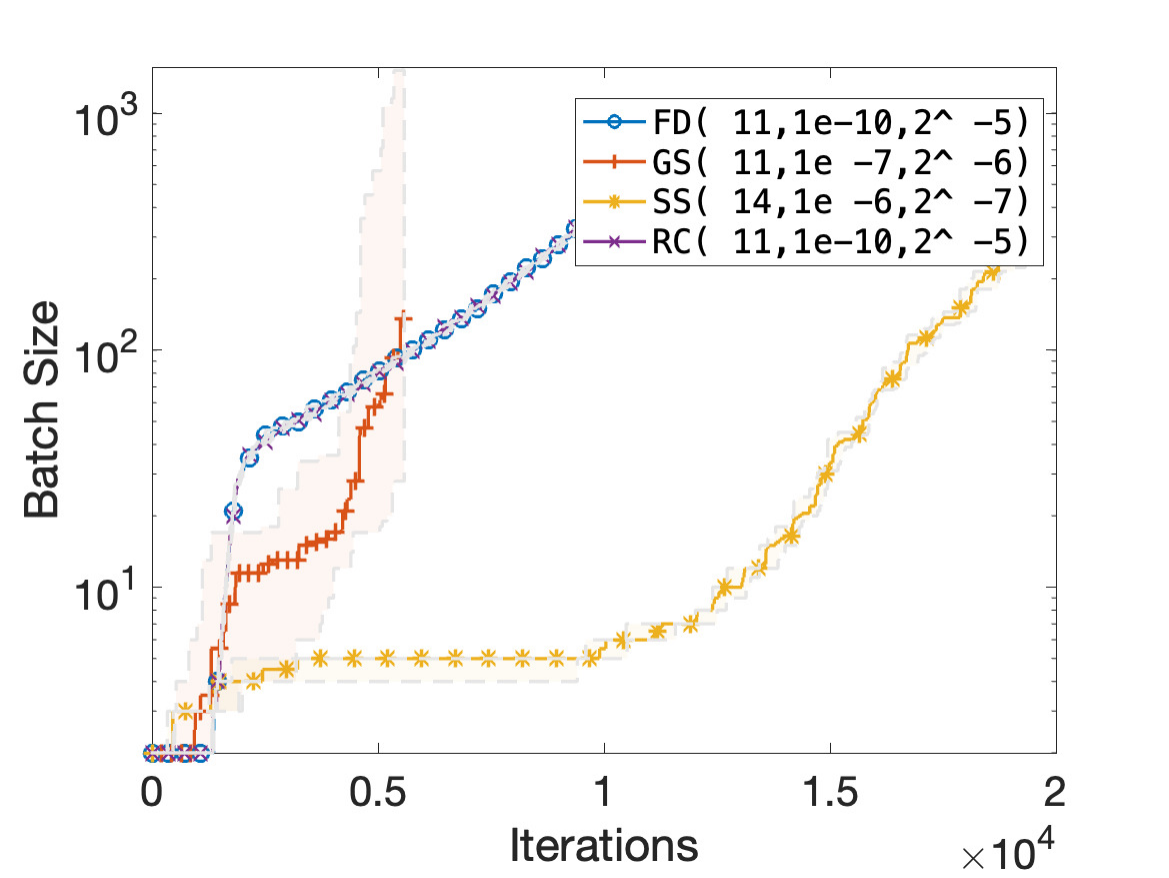}
  \caption{Batch Size}
  \label{fig:18abs3bestvsbestbatch}
\end{subfigure}
\begin{subfigure}{0.33\textwidth}
  \centering
  \includegraphics[width=1.1\textwidth]{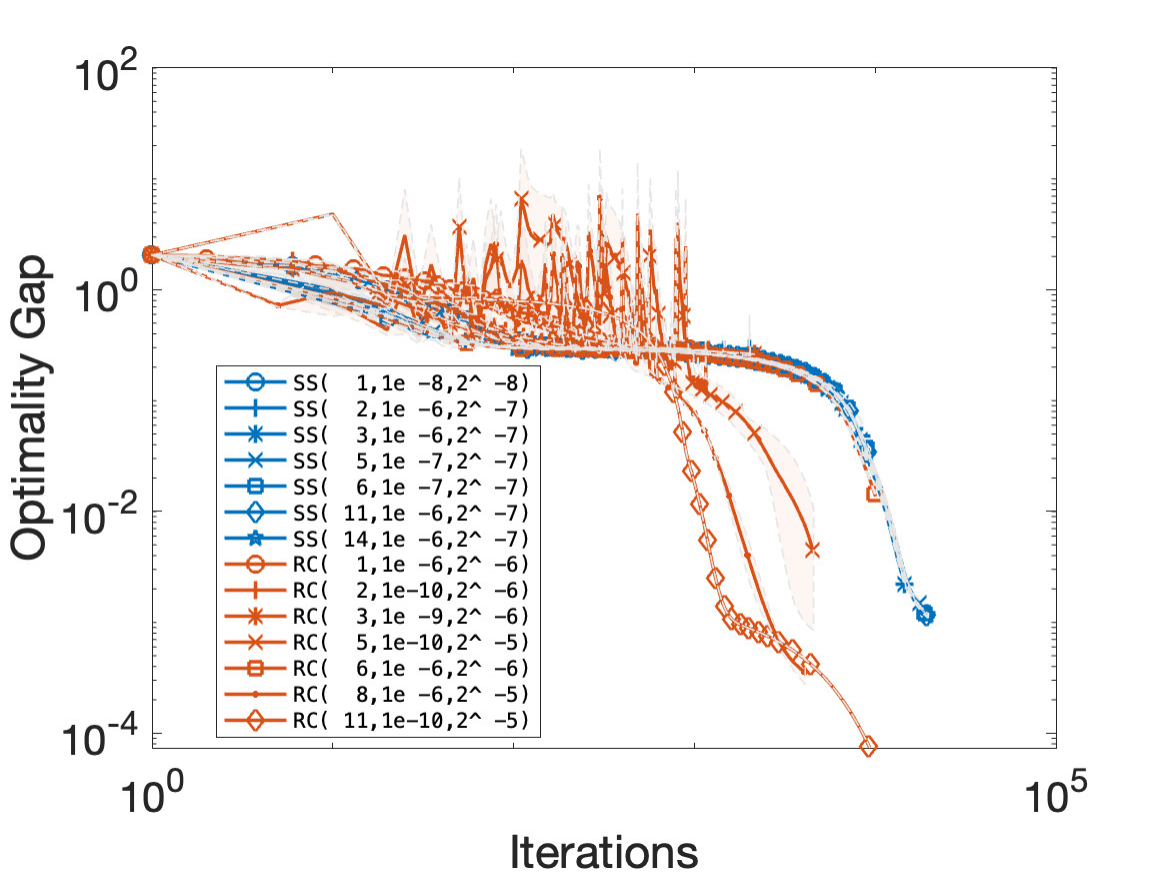}
  \caption{Comparison of SS and RC}
  \label{fig:18abs3coordvsspherical}
\end{subfigure}
\caption{Performance of different gradient estimation methods using the tuned hyperparameters on the Osborne function with absolute error and $ \sigma = 10^{-3} $.}
\label{fig:18abs3bestvsbest}
\end{figure}

\begin{figure}[H]
\centering
\begin{subfigure}{0.33\textwidth}
  \centering
  \includegraphics[width=1.1\textwidth]{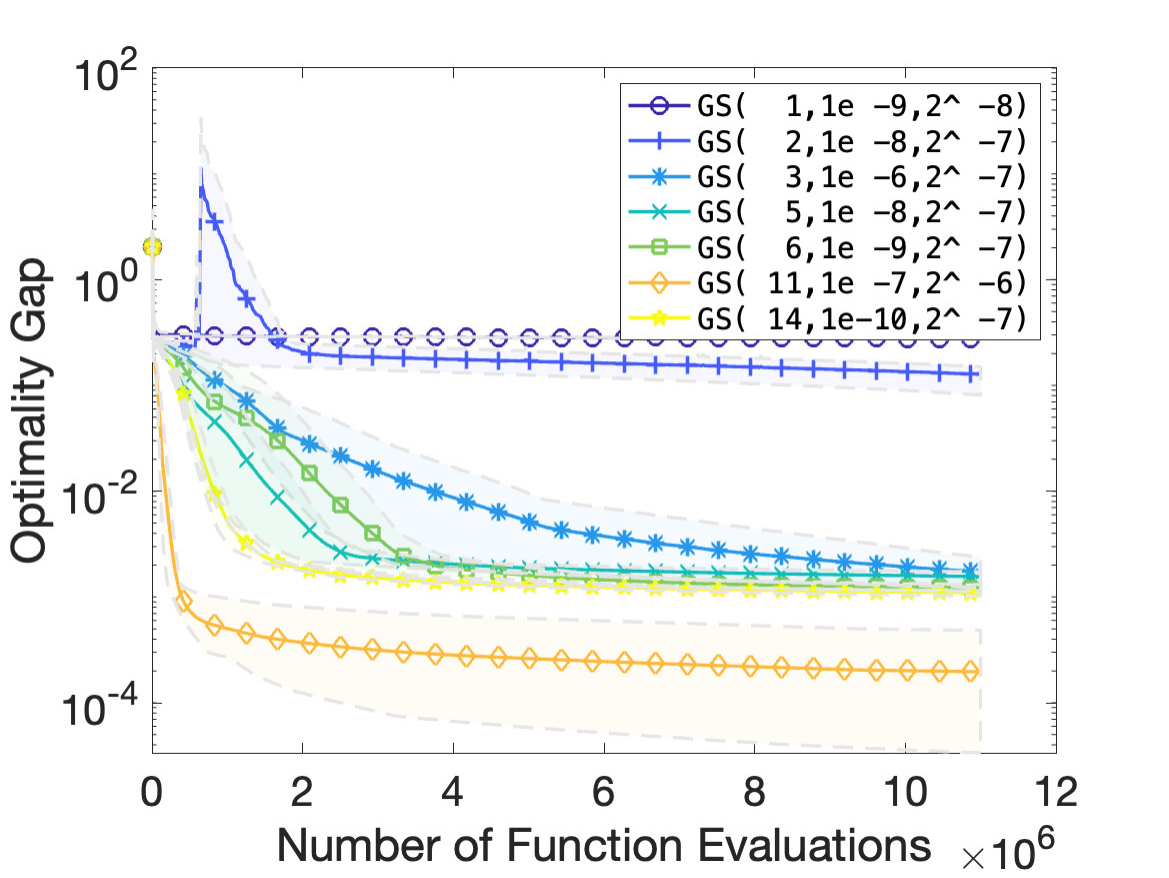}
  \caption{Performance of GS}
  \label{fig:18abs3numdirsensGSFFD}
\end{subfigure}%
\begin{subfigure}{0.33\textwidth}
  \centering
  \includegraphics[width=1.1\textwidth]{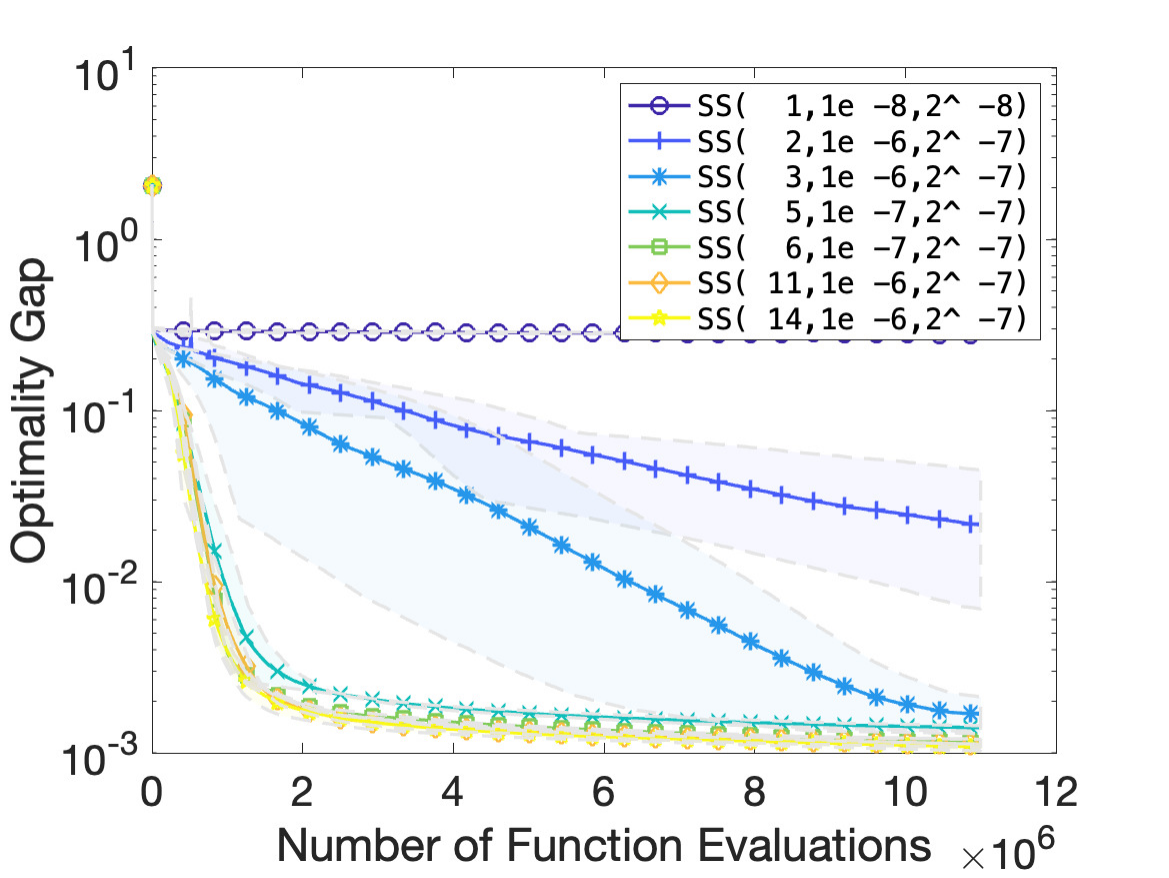}
  \caption{Performance of SS}
  \label{fig:18abs3numdirsensSSFFD}
\end{subfigure}%
\begin{subfigure}{0.33\textwidth}
  \centering
  \includegraphics[width=1.1\textwidth]{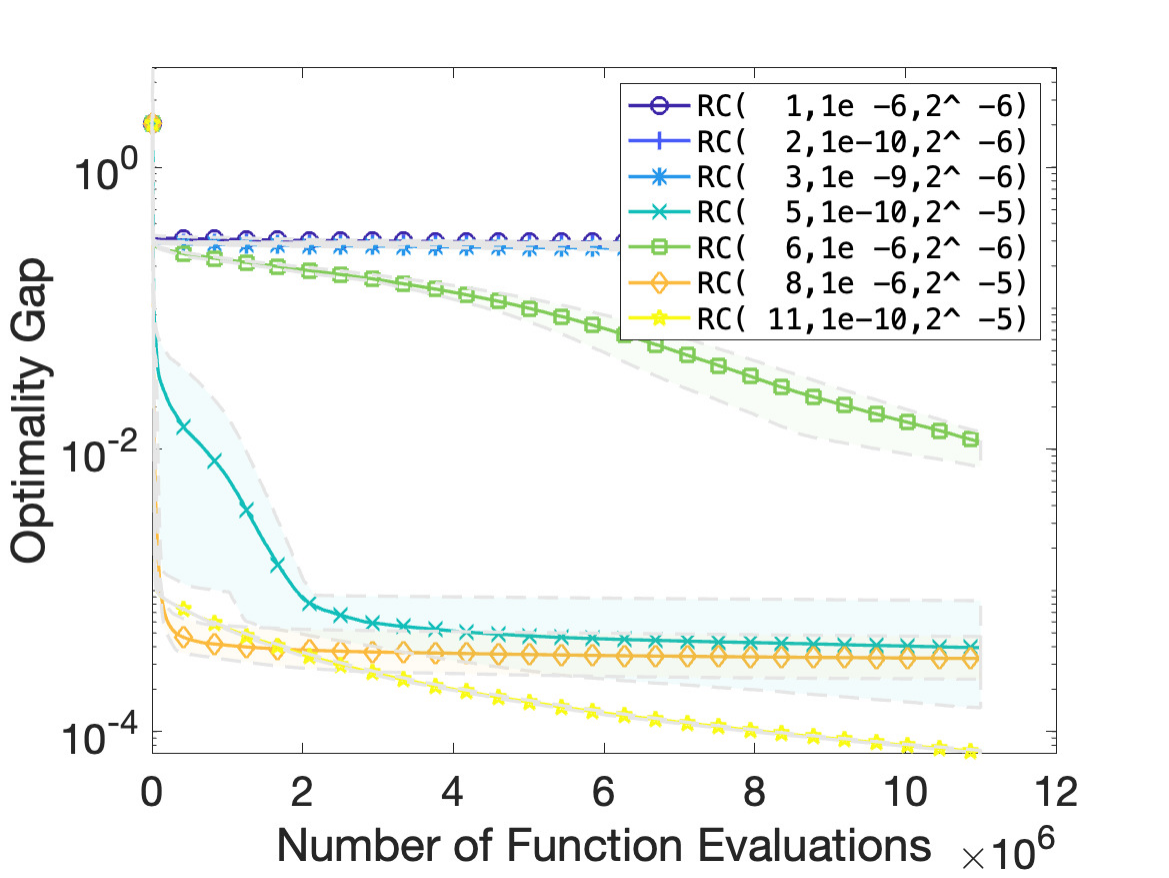}
  \caption{Performance of RC}
  \label{fig:18abs3numdirsensRCFFD}
\end{subfigure}
\caption{The effect of number of directions on the performance of different randomized gradient estimation methods on the Osborne function with relative error and $ \sigma = 10^{-3} $. All other hyperparameters are tuned to achieve the best performance.}
\label{fig:18abs3numdirsens}
\end{figure}

\newpage
\subsection{Osborne Function with Relative Error, $\sigma = 10^{-5}$}

\begin{figure}[H]
\centering
\begin{subfigure}{0.33\textwidth}
  \centering
  \includegraphics[width=1.1\textwidth]{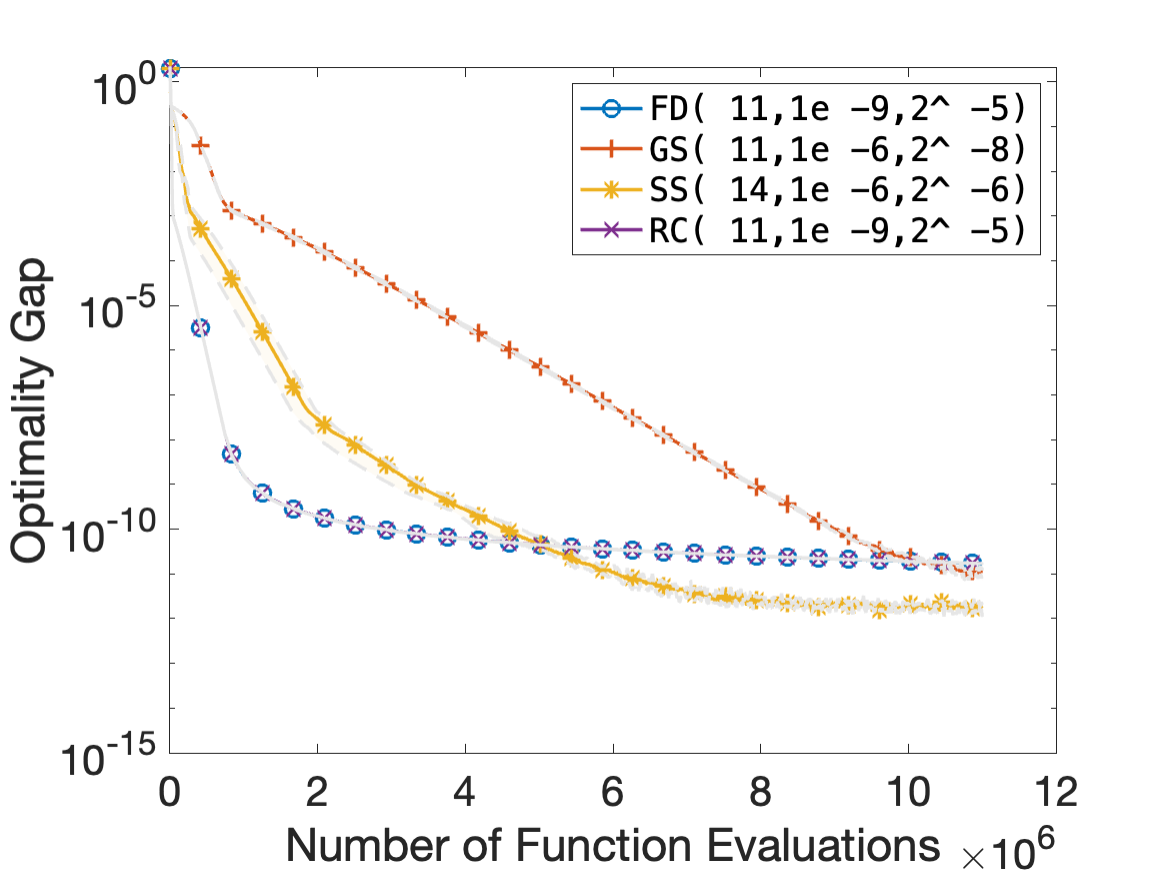}
  \caption{Optimality Gap}
  \label{fig:18rel5bestvsbestoptgap}
\end{subfigure}%
\begin{subfigure}{0.33\textwidth}
  \centering
  \includegraphics[width=1.1\textwidth]{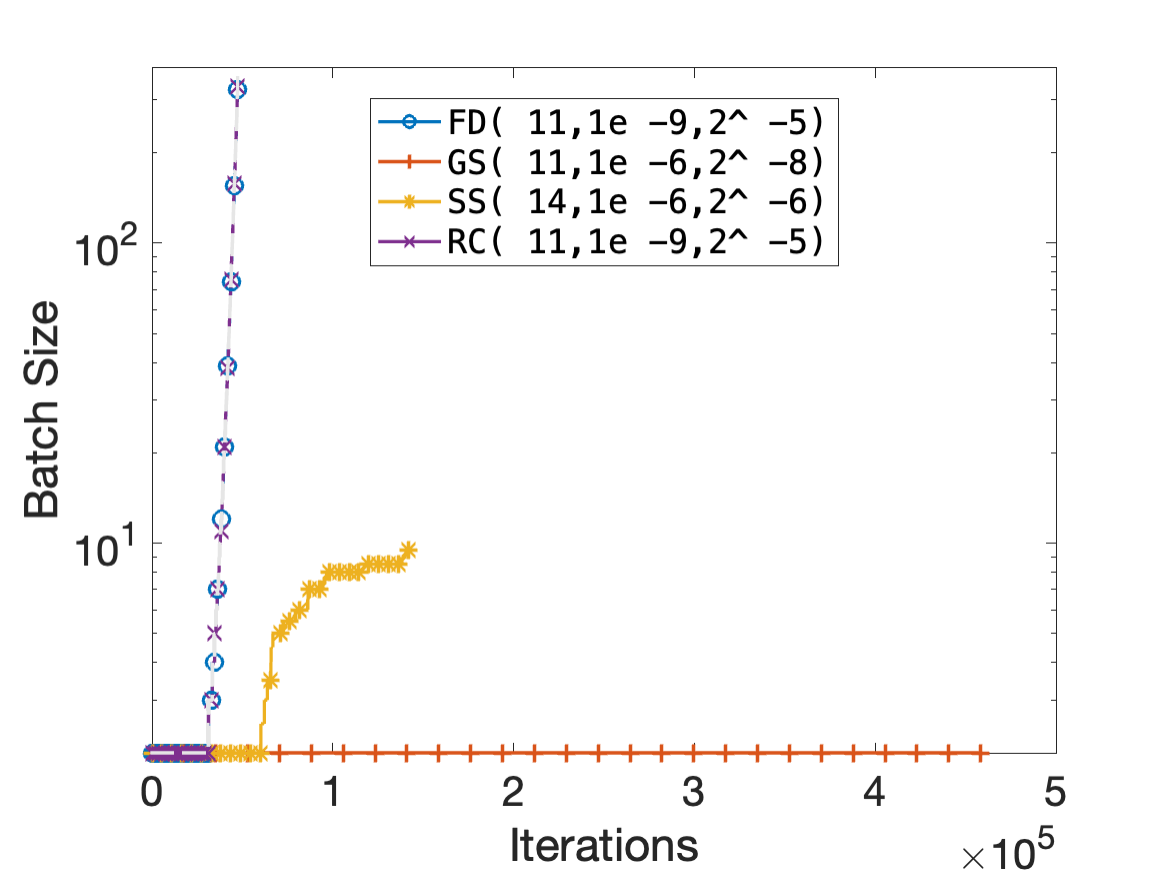}
  \caption{Batch Size}
  \label{fig:18rel5bestvsbestbatch}
\end{subfigure}
\begin{subfigure}{0.33\textwidth}
  \centering
  \includegraphics[width=1.1\textwidth]{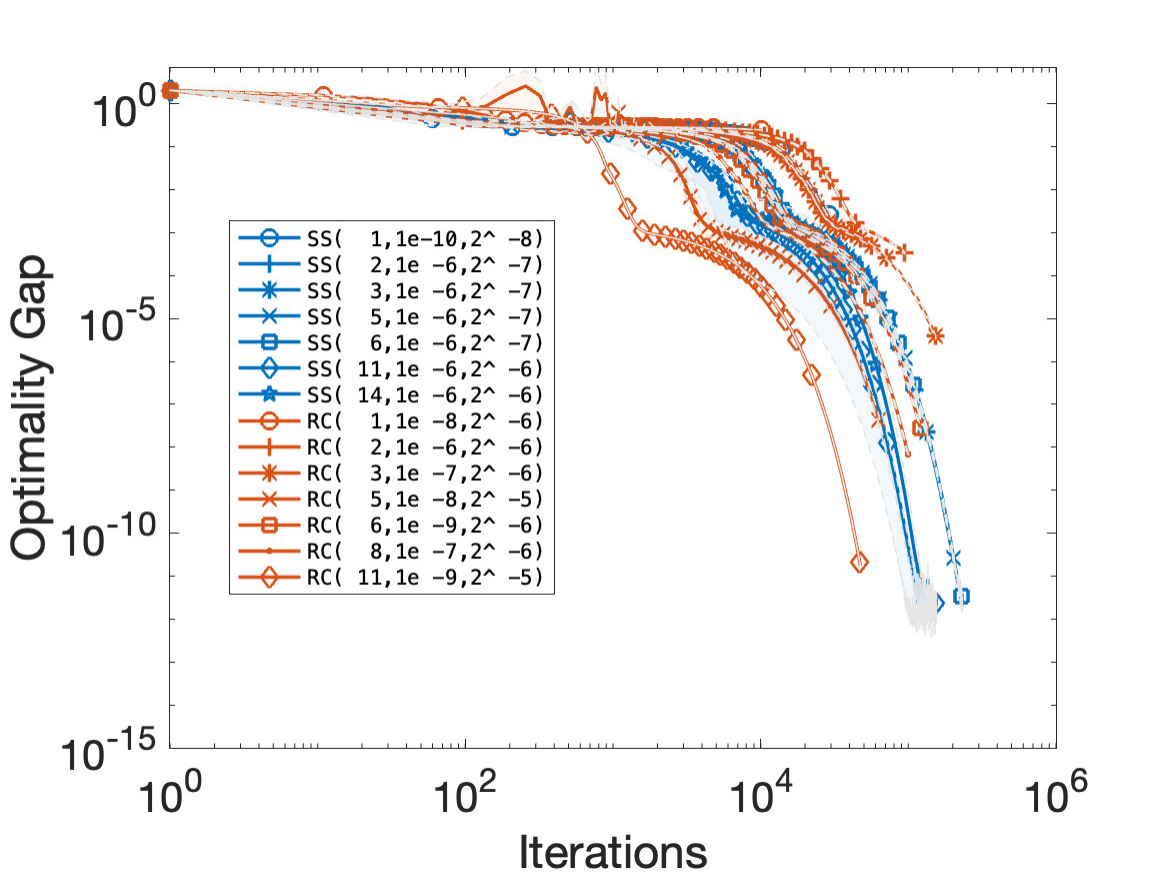}
  \caption{Comparison of SS and RC}
  \label{fig:18rel5coordvsspherical}
\end{subfigure}
\caption{Performance of different gradient estimation methods using the tuned hyperparameters on the Osborne function with relative error and $ \sigma = 10^{-5} $.}
\label{fig:18rel5bestvsbest}
\end{figure}

\begin{figure}[H]
\centering
\begin{subfigure}{0.33\textwidth}
  \centering
  \includegraphics[width=1.1\textwidth]{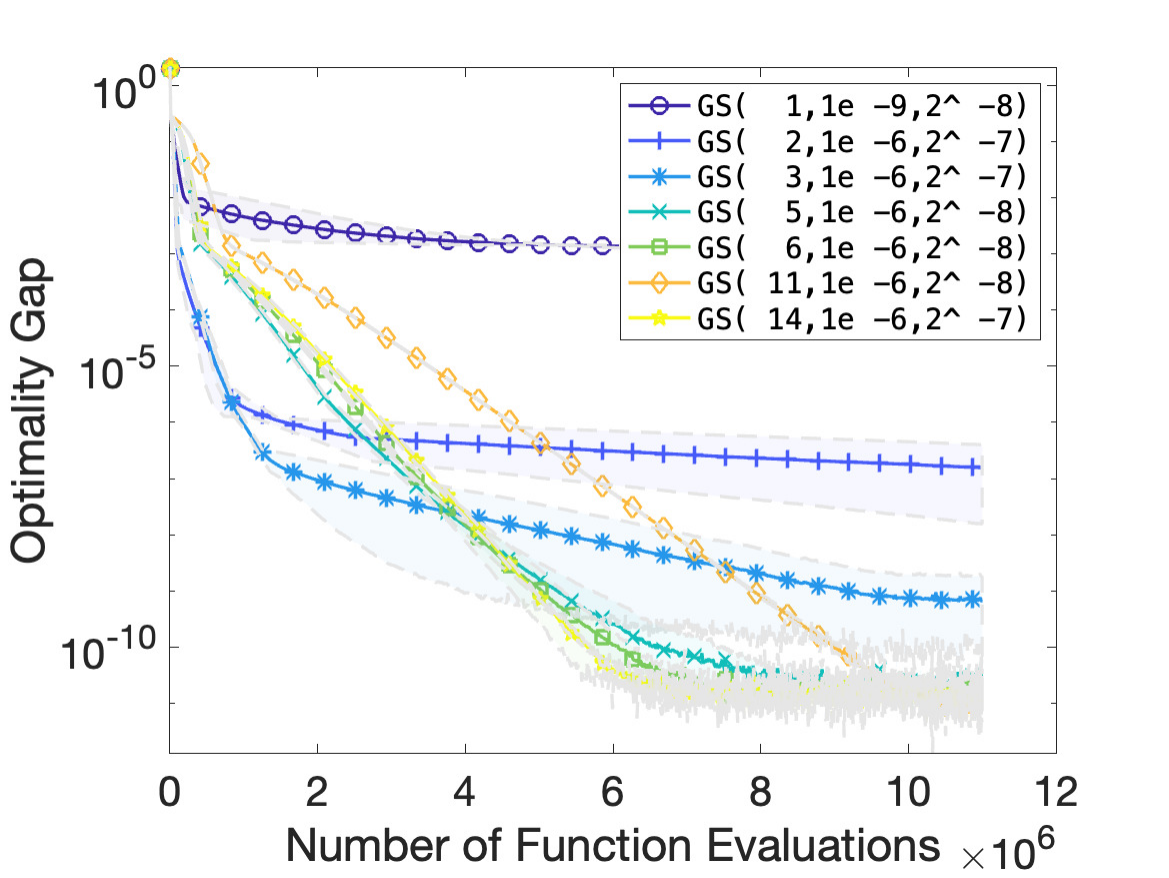}
  \caption{Performance of GS}
  \label{fig:18rel5numdirsensGSFFD}
\end{subfigure}%
\begin{subfigure}{0.33\textwidth}
  \centering
  \includegraphics[width=1.1\textwidth]{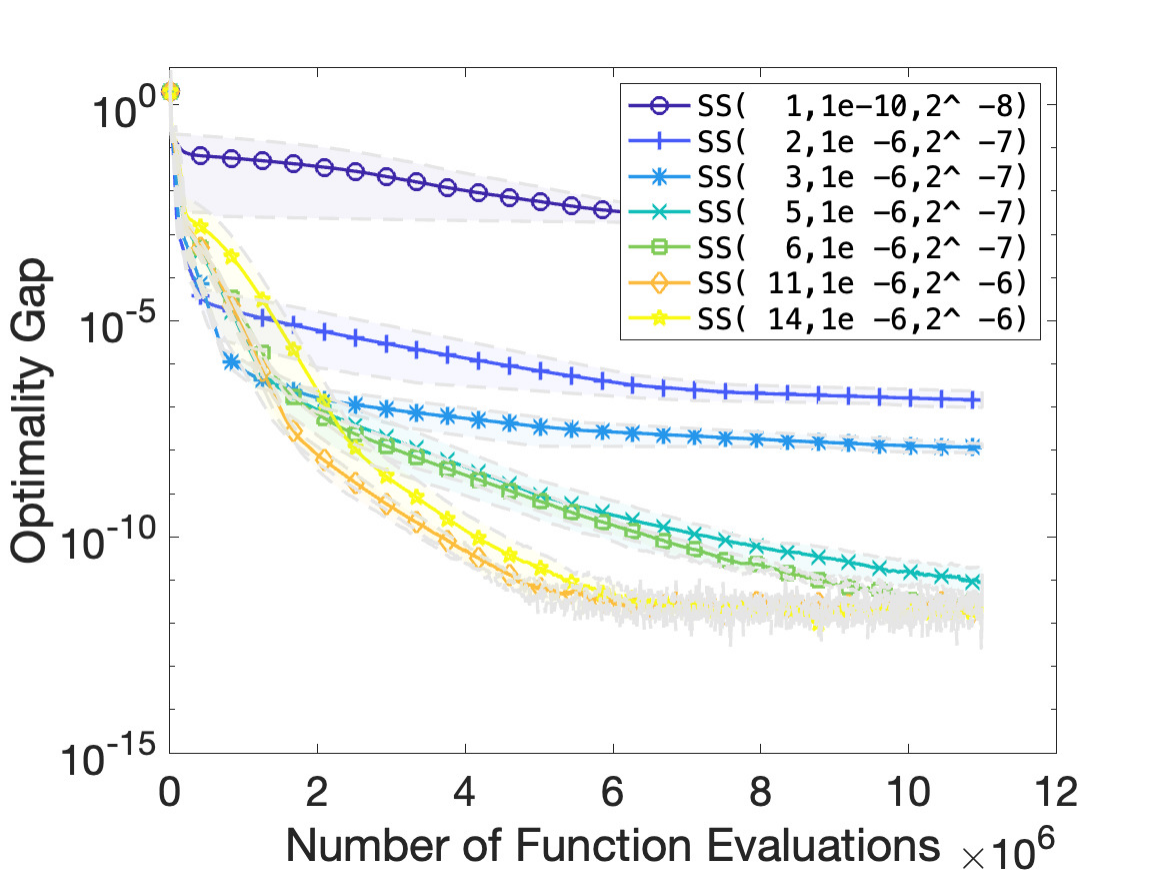}
  \caption{Performance of SS}
  \label{fig:18rel5numdirsensSSFFD}
\end{subfigure}%
\begin{subfigure}{0.33\textwidth}
  \centering
  \includegraphics[width=1.1\textwidth]{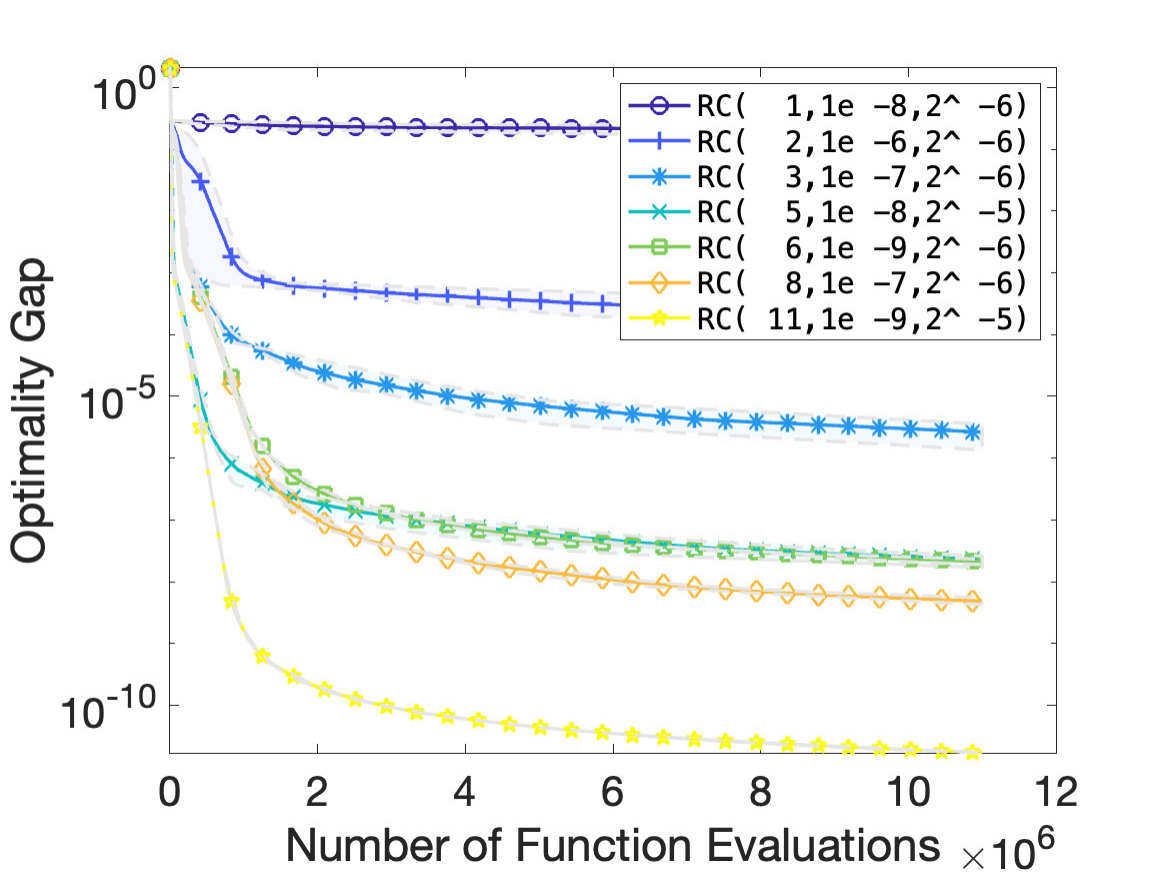}
  \caption{Performance of RC}
  \label{fig:18rel5numdirsensRCFFD}
\end{subfigure}
\caption{The effect of number of directions on the performance of different randomized gradient estimation methods on the Osborne function with relative error and $ \sigma = 10^{-5} $. All other hyperparameters are tuned to achieve the best performance.}
\label{fig:18rel5numdirsensRC}
\end{figure}

\newpage
\subsection{Osborne Function with Absolute Error, $\sigma = 10^{-5}$}

\begin{figure}[H]
\centering
\begin{subfigure}{0.33\textwidth}
  \centering
  \includegraphics[width=1.1\textwidth]{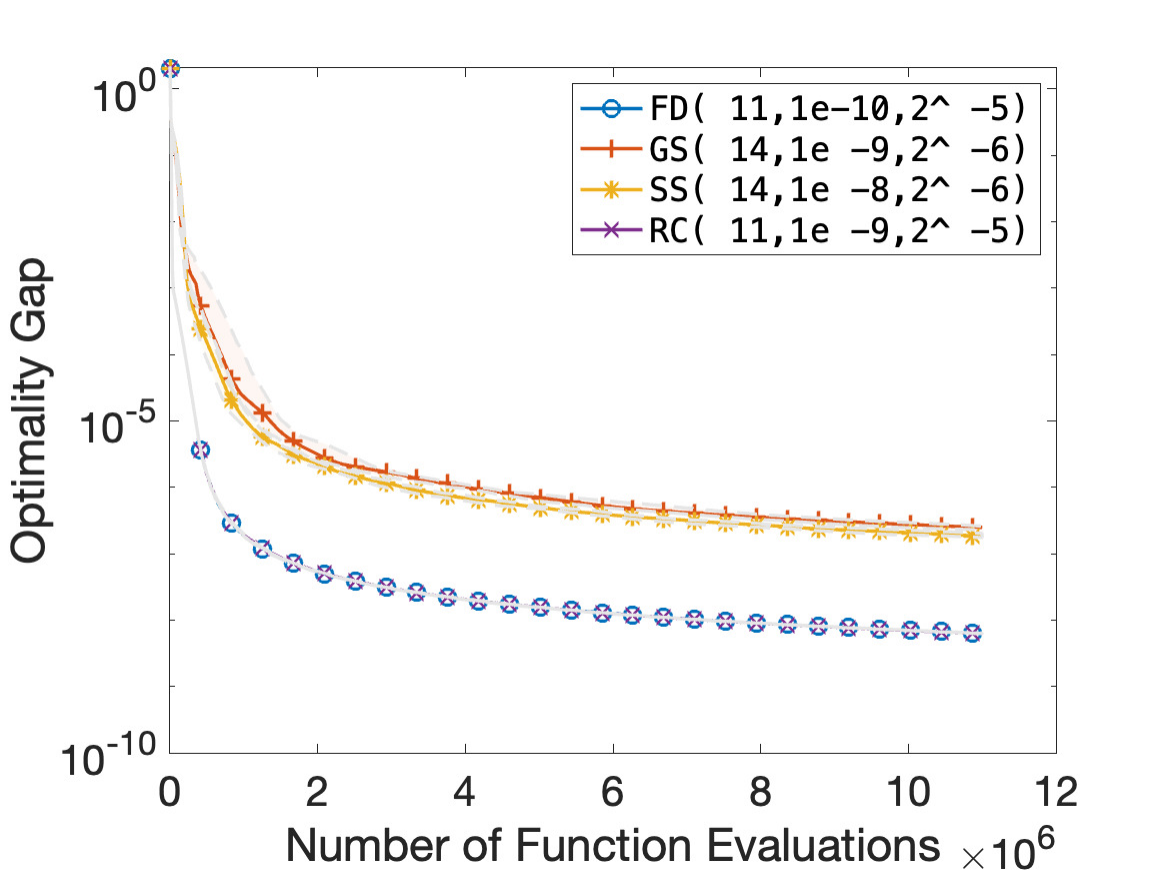}
  \caption{Optimality Gap}
  \label{fig:18abs5bestvsbestoptgap}
\end{subfigure}%
\begin{subfigure}{0.33\textwidth}
  \centering
  \includegraphics[width=1.1\textwidth]{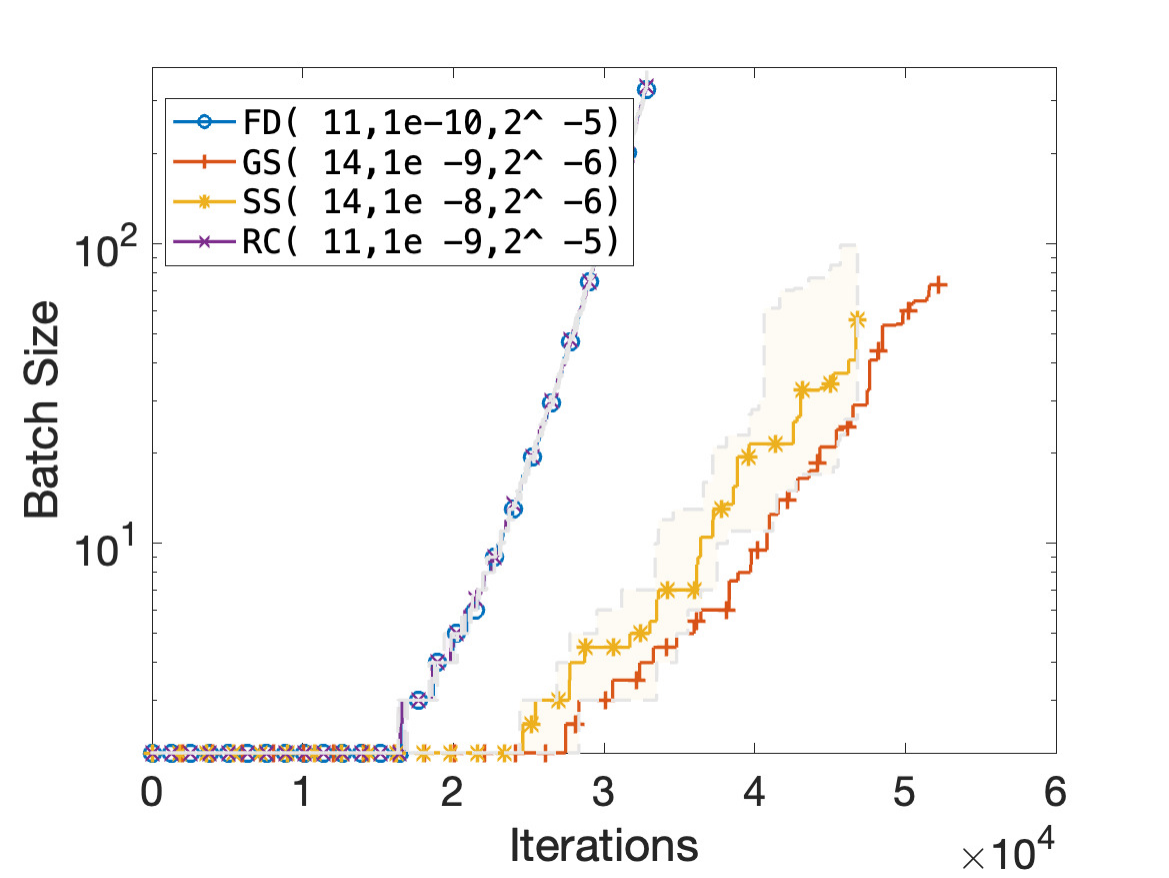}
  \caption{Batch Size}
  \label{fig:18abs5bestvsbestbatch}
\end{subfigure}
\begin{subfigure}{0.33\textwidth}
  \centering
  \includegraphics[width=1.1\textwidth]{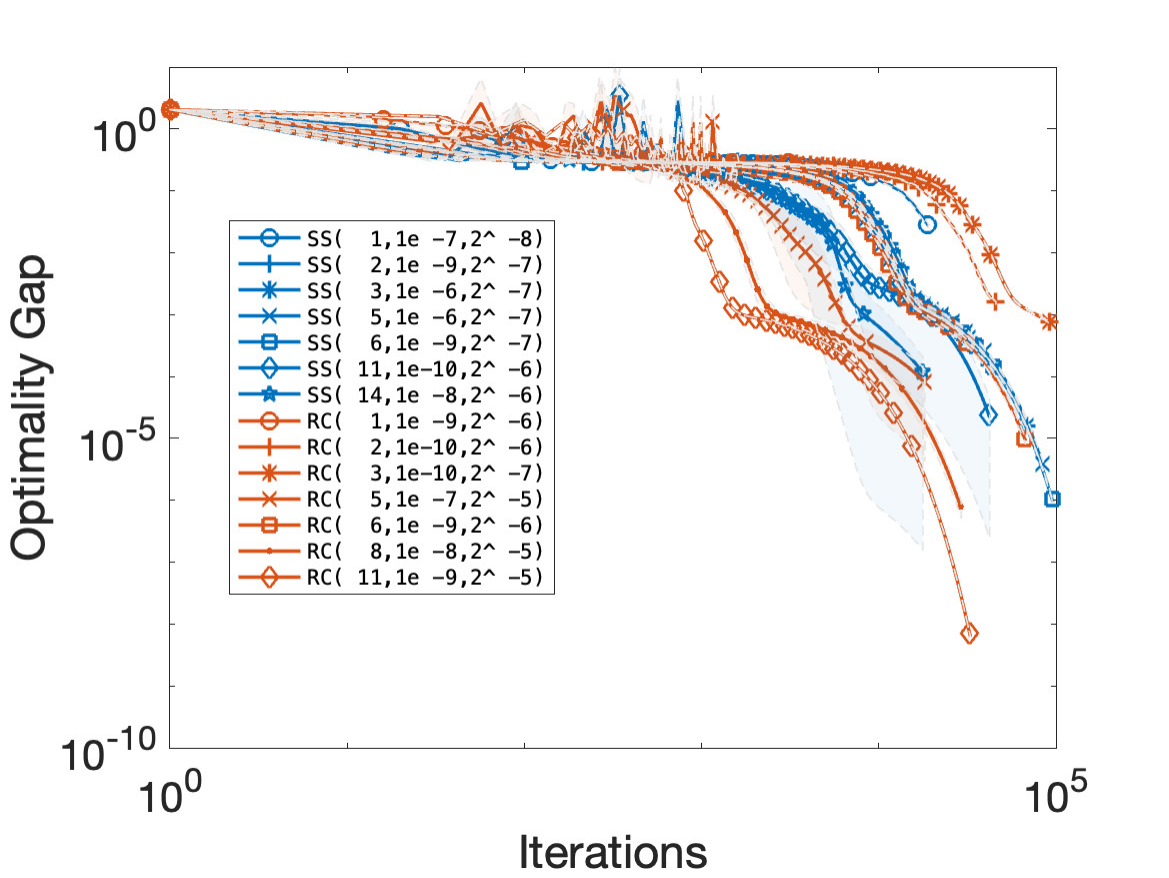}
  \caption{Comparison of SS and RC}
  \label{fig:18abs5coordvsspherical}
\end{subfigure}
\caption{Performance of different gradient estimation methods using the tuned hyperparameters on the Osborne function with absolute error and $ \sigma = 10^{-5} $.}
\label{fig:18abs5bestvsbest}
\end{figure}

\begin{figure}[H]
\centering
\begin{subfigure}{0.33\textwidth}
  \centering
  \includegraphics[width=1.1\textwidth]{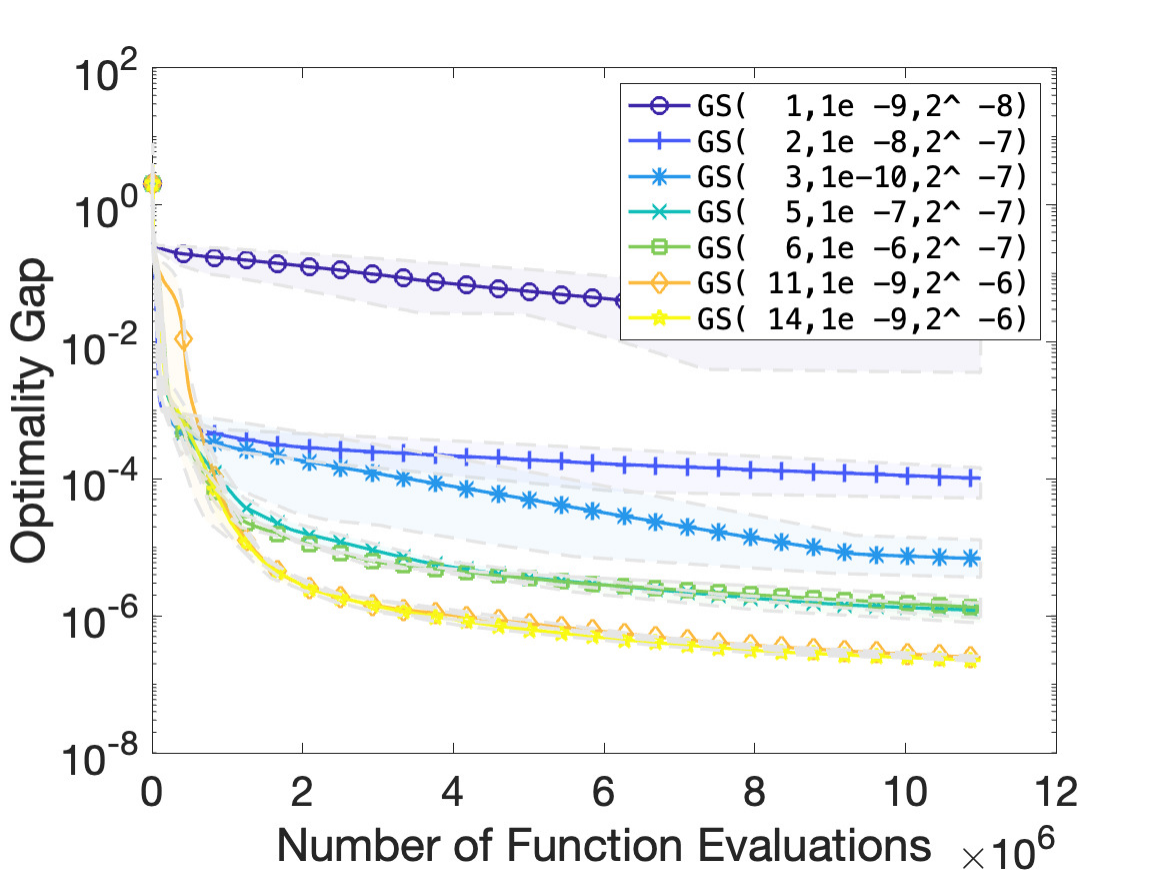}
  \caption{Performance of GS}
  \label{fig:18abs5numdirsensGSFFD}
\end{subfigure}%
\begin{subfigure}{0.33\textwidth}
  \centering
  \includegraphics[width=1.1\textwidth]{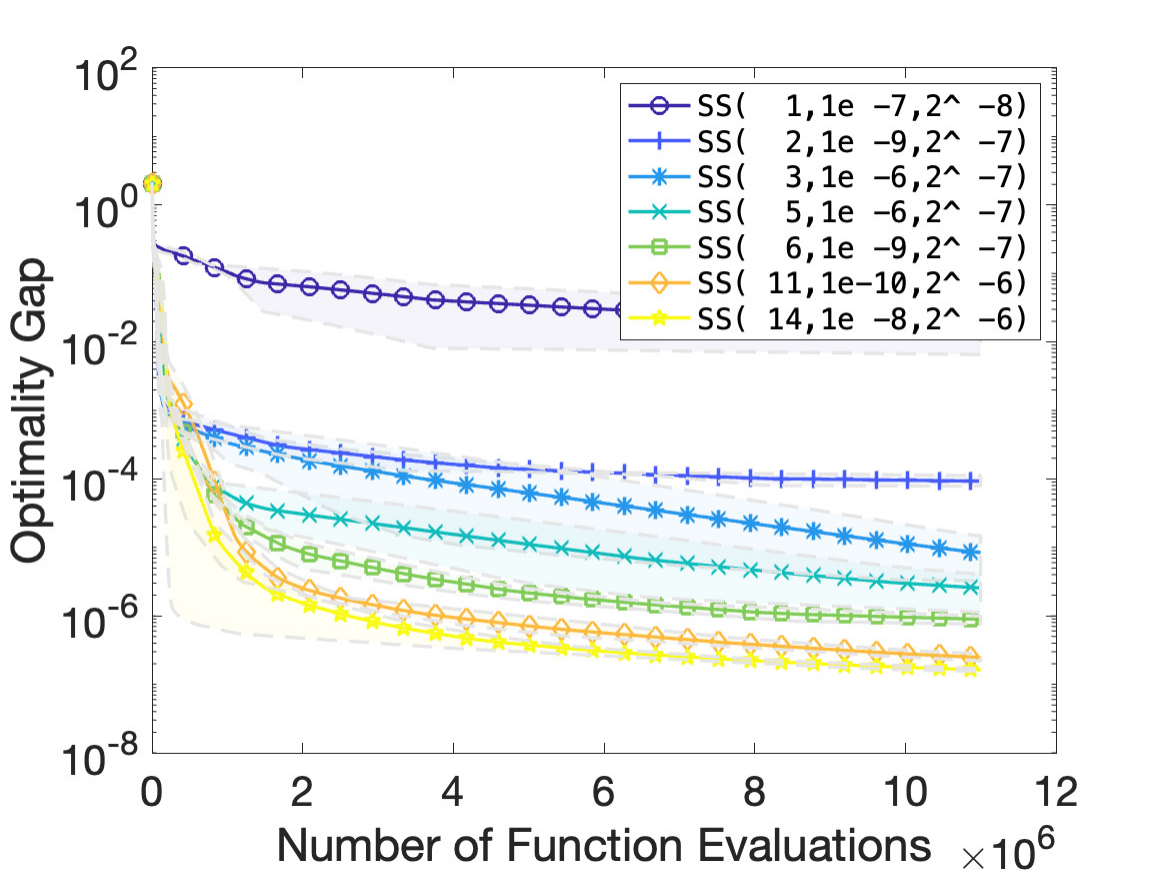}
  \caption{Performance of SS}
  \label{fig:18abs5numdirsensSSFFD}
\end{subfigure}%
\begin{subfigure}{0.33\textwidth}
  \centering
  \includegraphics[width=1.1\textwidth]{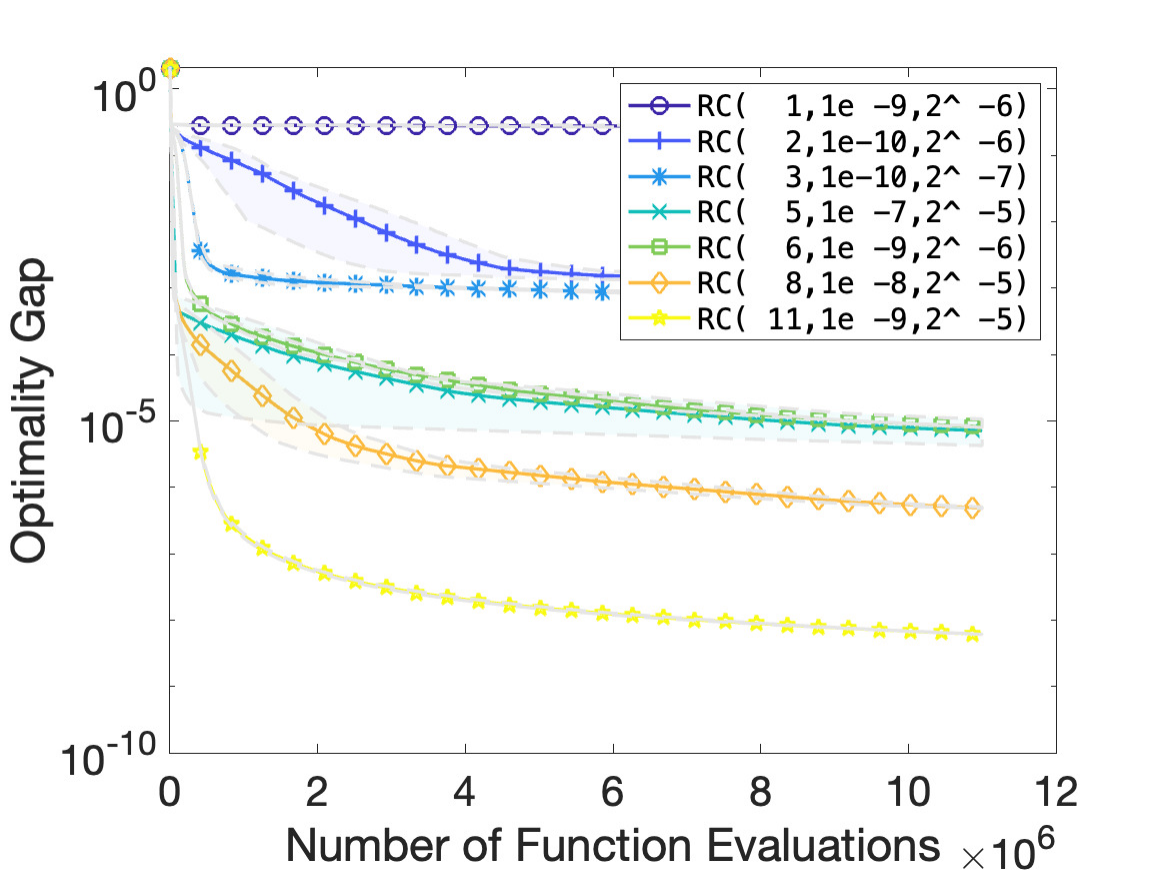}
  \caption{Performance of RC}
  \label{fig:18abs5numdirsensRCFFD}
\end{subfigure}
\caption{The effect of number of directions on the performance of different randomized gradient estimation methods on the Osborne function with absolute error and $ \sigma = 10^{-5} $. All other hyperparameters are tuned to achieve the best performance.}
\label{fig:18abs5numdirsens}
\end{figure}

\newpage
\subsection{Bdqrtic Function with Relative Error, $\sigma = 10^{-3}$}

\begin{figure}[H]
\centering
\begin{subfigure}{0.33\textwidth}
  \centering
  \includegraphics[width=1.1\textwidth]{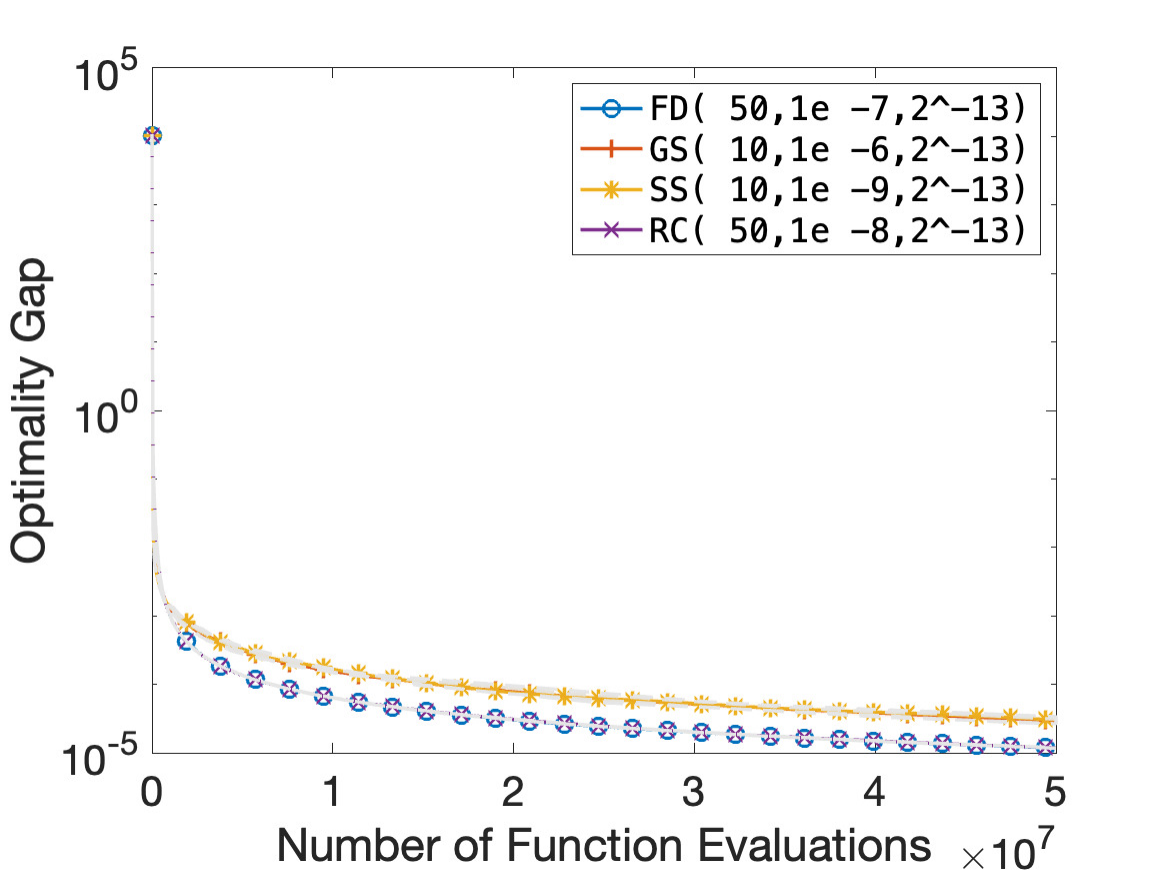}
  \caption{Optimality Gap}
  \label{fig:19rel3bestvsbestoptgap}
\end{subfigure}%
\begin{subfigure}{0.33\textwidth}
  \centering
  \includegraphics[width=1.1\textwidth]{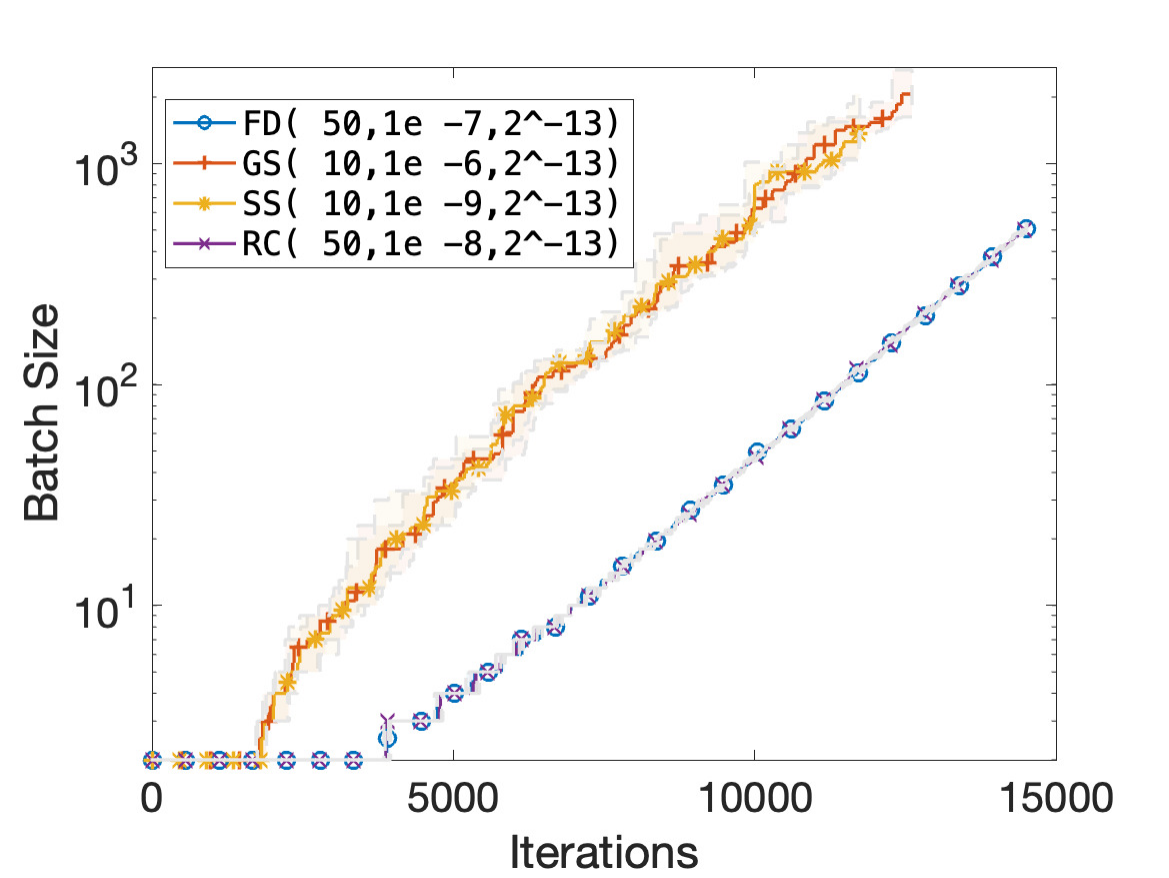}
  \caption{Batch Size}
  \label{fig:19rel3bestvsbestbatch}
\end{subfigure}
\begin{subfigure}{0.33\textwidth}
  \centering
  \includegraphics[width=1.1\textwidth]{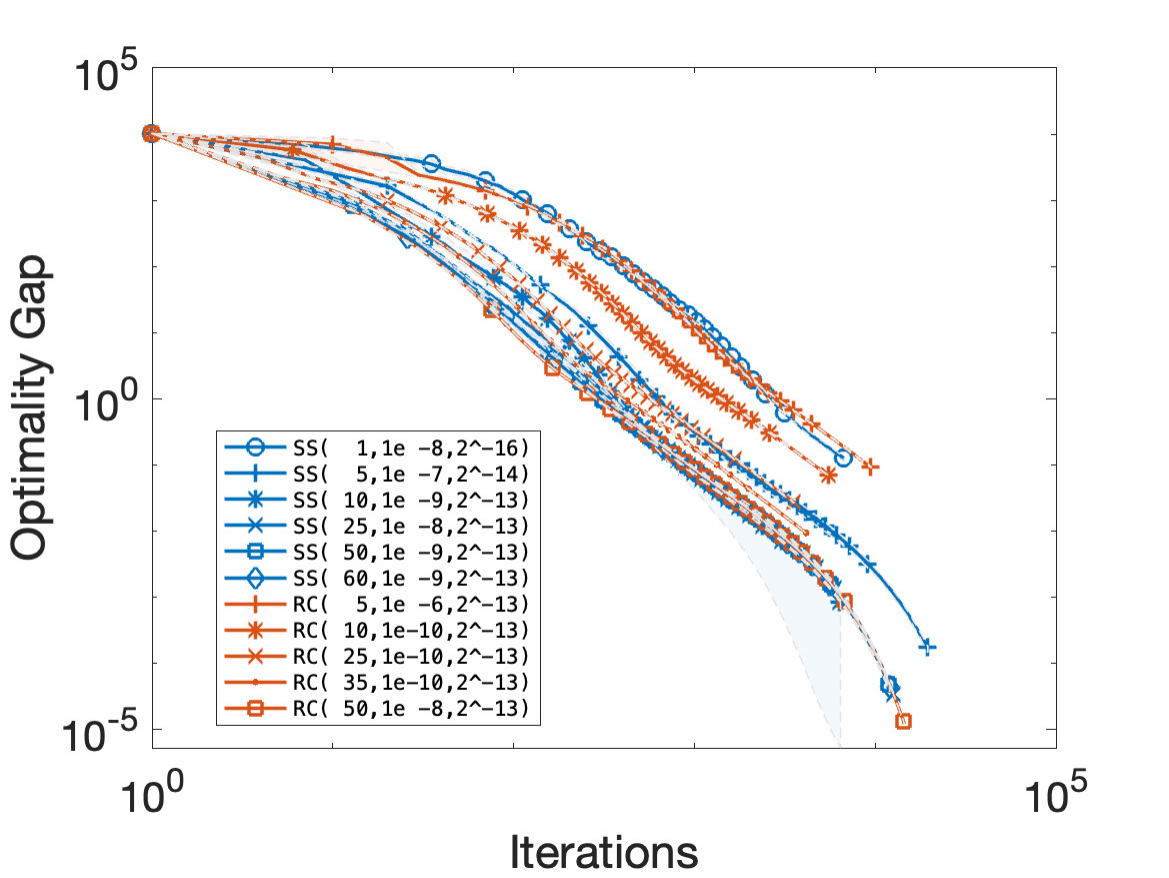}
  \caption{Comparison of SS and RC}
  \label{fig:19rel3coordvsspherical}
\end{subfigure}
\caption{Performance of different gradient estimation methods using the tuned hyperparameters on the Bdqrtic function with relative error and $ \sigma = 10^{-3} $.}
\label{fig:19rel3bestvsbest}
\end{figure}

\begin{figure}[H]
\centering
\begin{subfigure}{0.33\textwidth}
  \centering
  \includegraphics[width=1.1\textwidth]{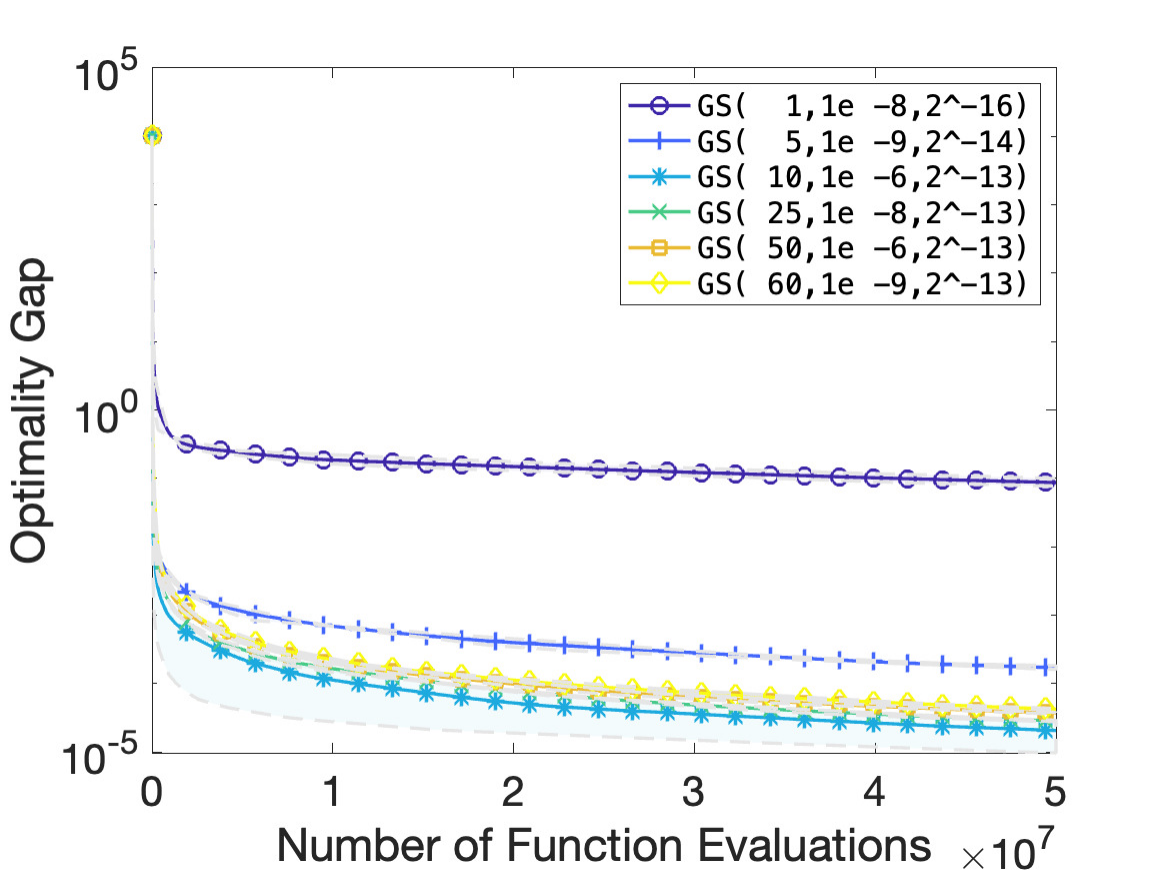}
  \caption{Performance of GS}
  \label{fig:19rel3numdirsensGSFFD}
\end{subfigure}%
\begin{subfigure}{0.33\textwidth}
  \centering
  \includegraphics[width=1.1\textwidth]{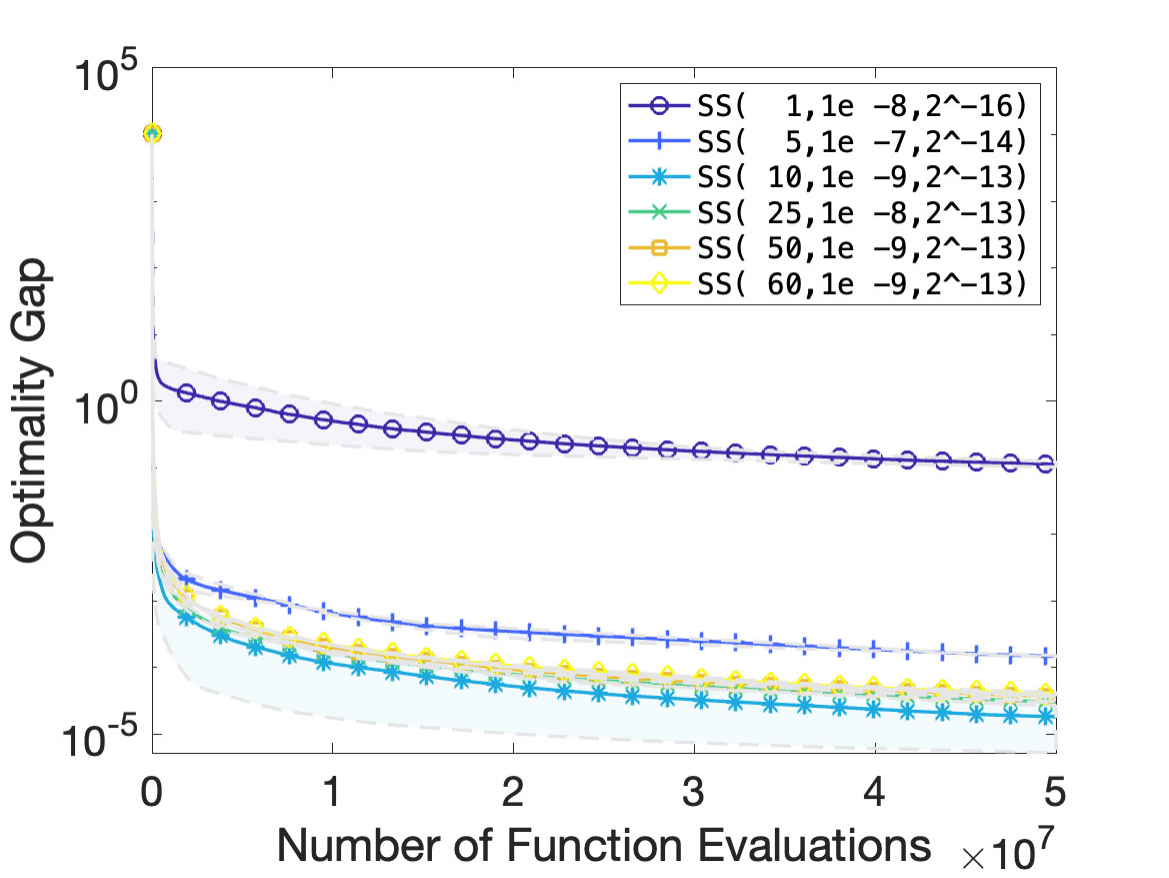}
  \caption{Performance of SS}
  \label{fig:19rel3numdirsensSSFFD}
\end{subfigure}%
\begin{subfigure}{0.33\textwidth}
  \centering
  \includegraphics[width=1.1\textwidth]{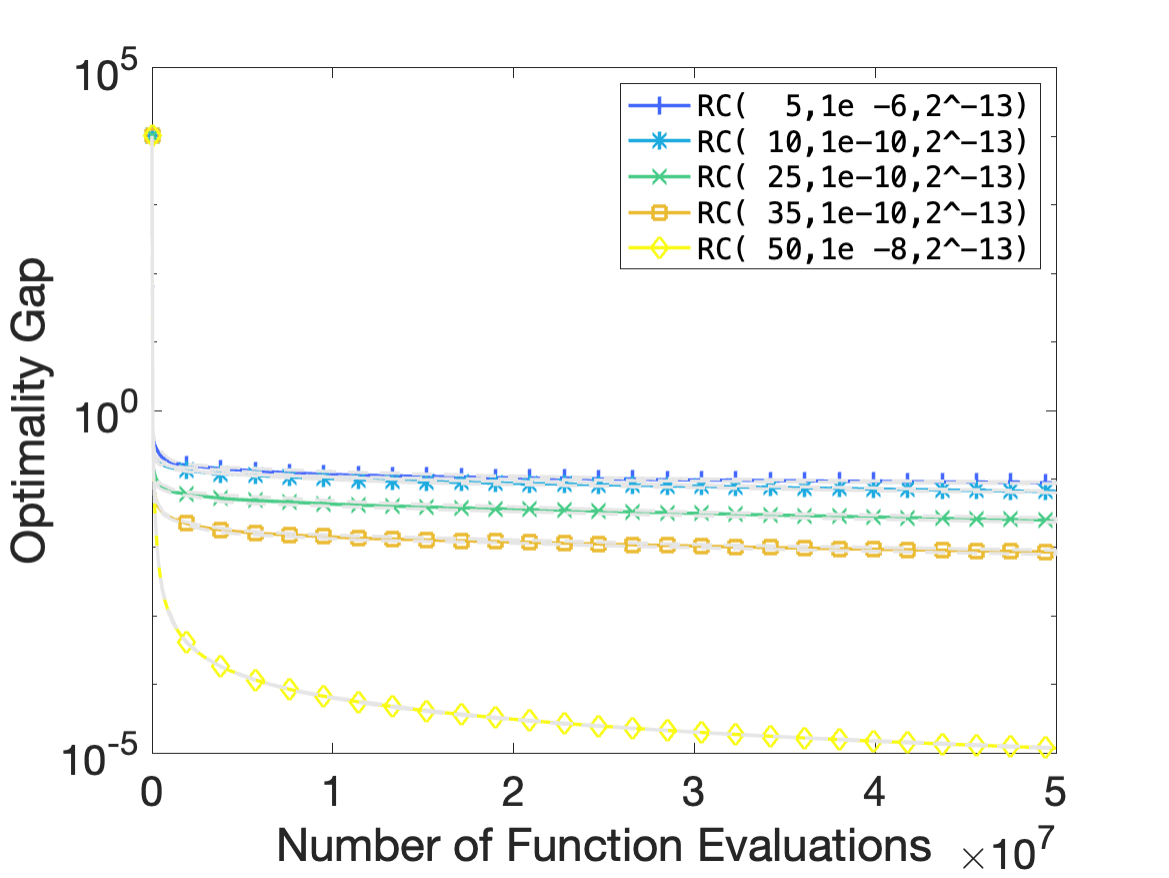}
  \caption{Performance of RC}
  \label{fig:19rel3numdirsensRCFFD}
\end{subfigure}
\caption{The effect of number of directions on the performance of different randomized gradient estimation methods on the Bdqrtic function with relative error and $ \sigma = 10^{-3} $. All other hyperparameters are tuned to achieve the best performance.}
\label{fig:19rel3numdirsens}
\end{figure}

\newpage
\subsection{Bdqrtic Function with Absolute Error, $\sigma = 10^{-3}$}

\begin{figure}[H]
\centering
\begin{subfigure}{0.33\textwidth}
  \centering
  \includegraphics[width=1.1\textwidth]{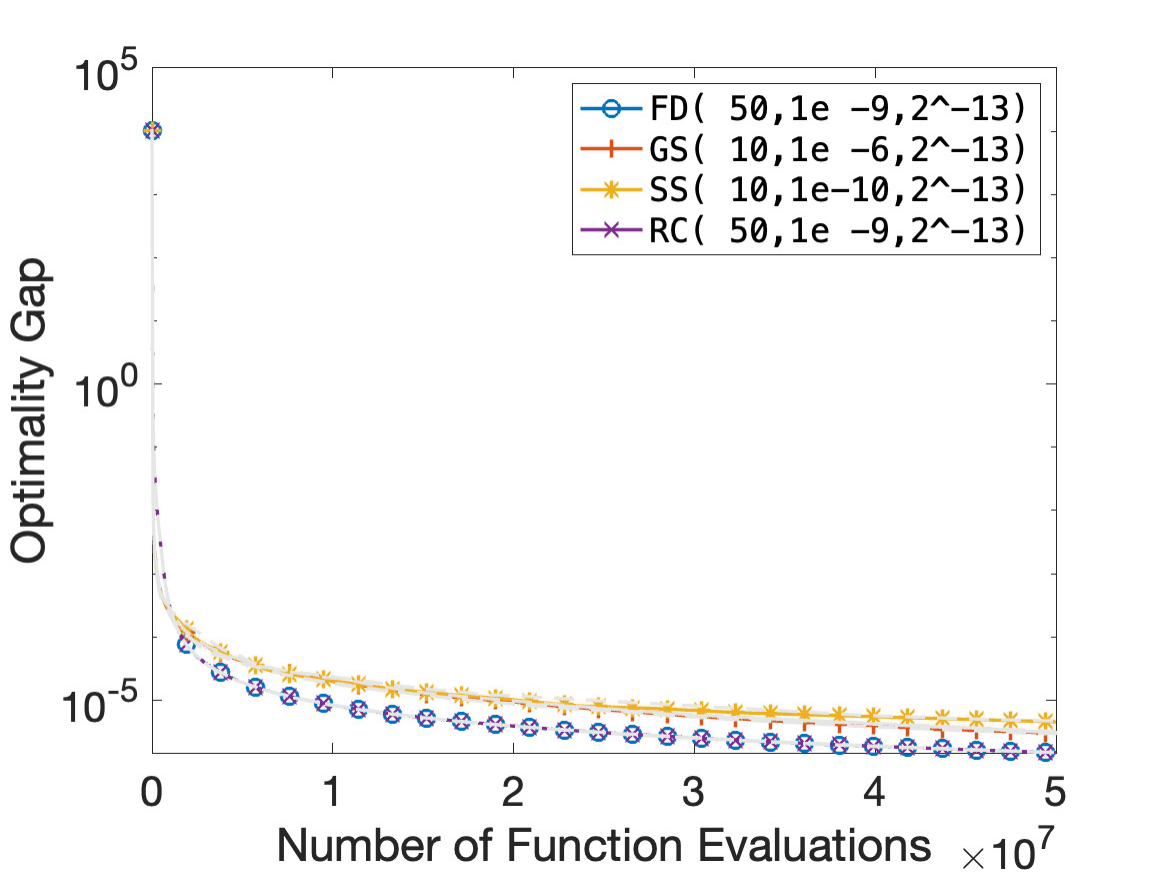}
  \caption{Optimality Gap}
  \label{fig:19abs3bestvsbestoptgap}
\end{subfigure}%
\begin{subfigure}{0.33\textwidth}
  \centering
  \includegraphics[width=1.1\textwidth]{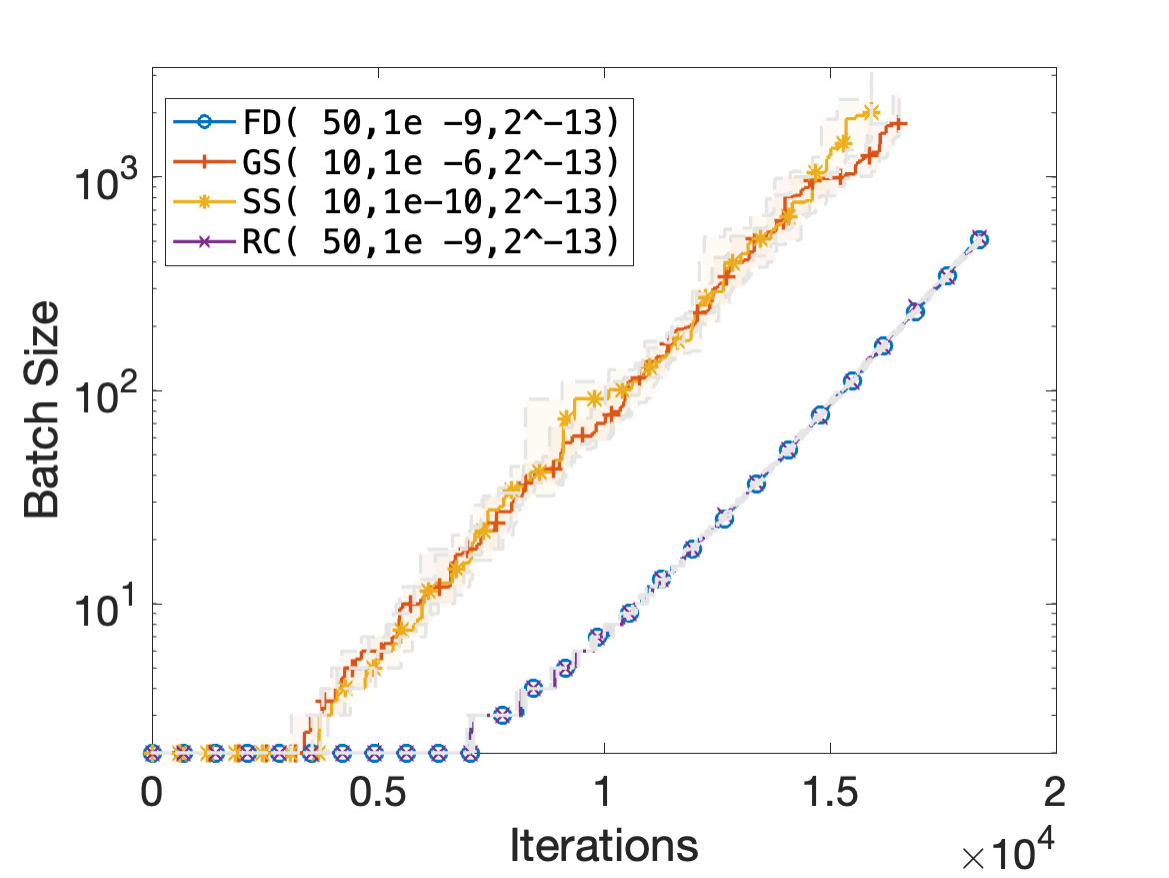}
  \caption{Batch Size}
  \label{fig:19abs3bestvsbestbatch}
\end{subfigure}
\begin{subfigure}{0.33\textwidth}
  \centering
  \includegraphics[width=1.1\textwidth]{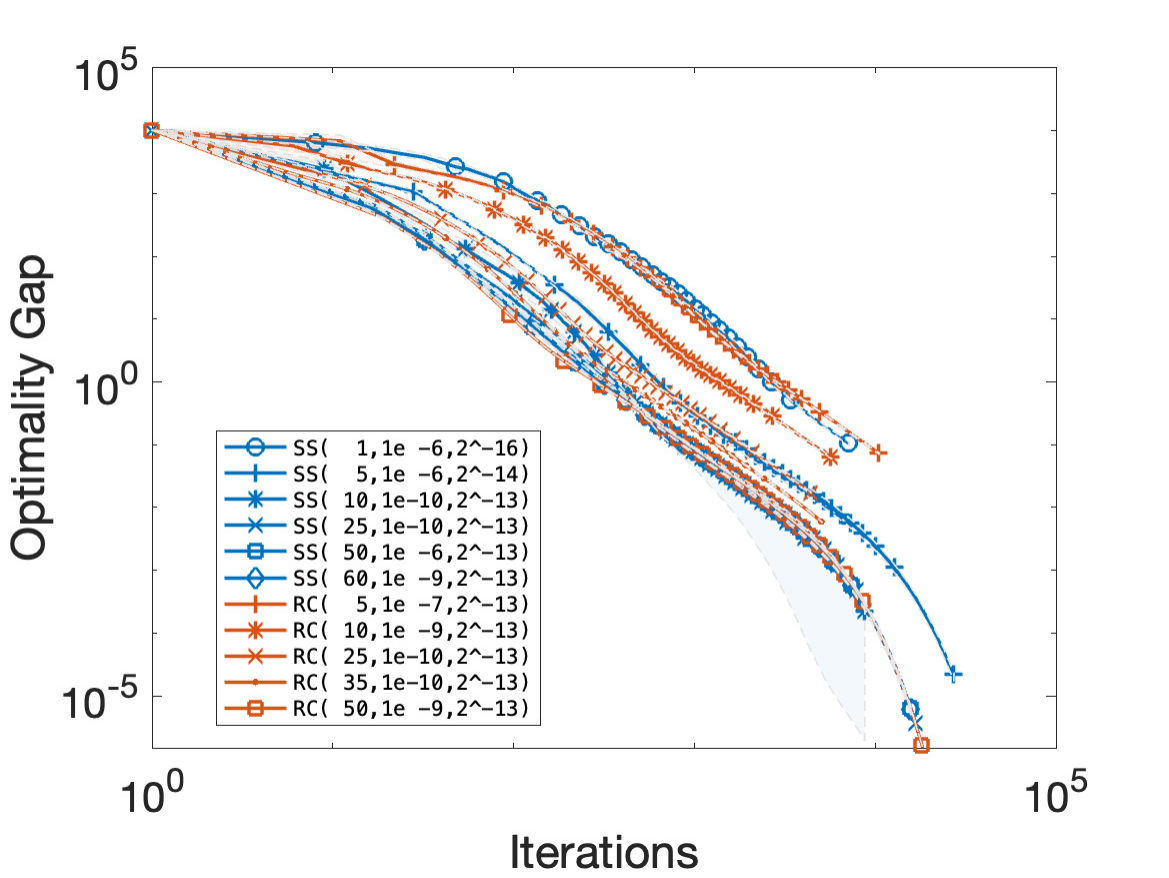}
  \caption{Comparison of SS and RC}
  \label{fig:19abs3coordvsspherical}
\end{subfigure}
\caption{Performance of different gradient estimation methods using the tuned hyperparameters on the Bdqrtic function with absolute error and $ \sigma = 10^{-3} $.}
\label{fig:19abs3bestvsbest}
\end{figure}

\begin{figure}[H]
\centering
\begin{subfigure}{0.33\textwidth}
  \centering
  \includegraphics[width=1.1\textwidth]{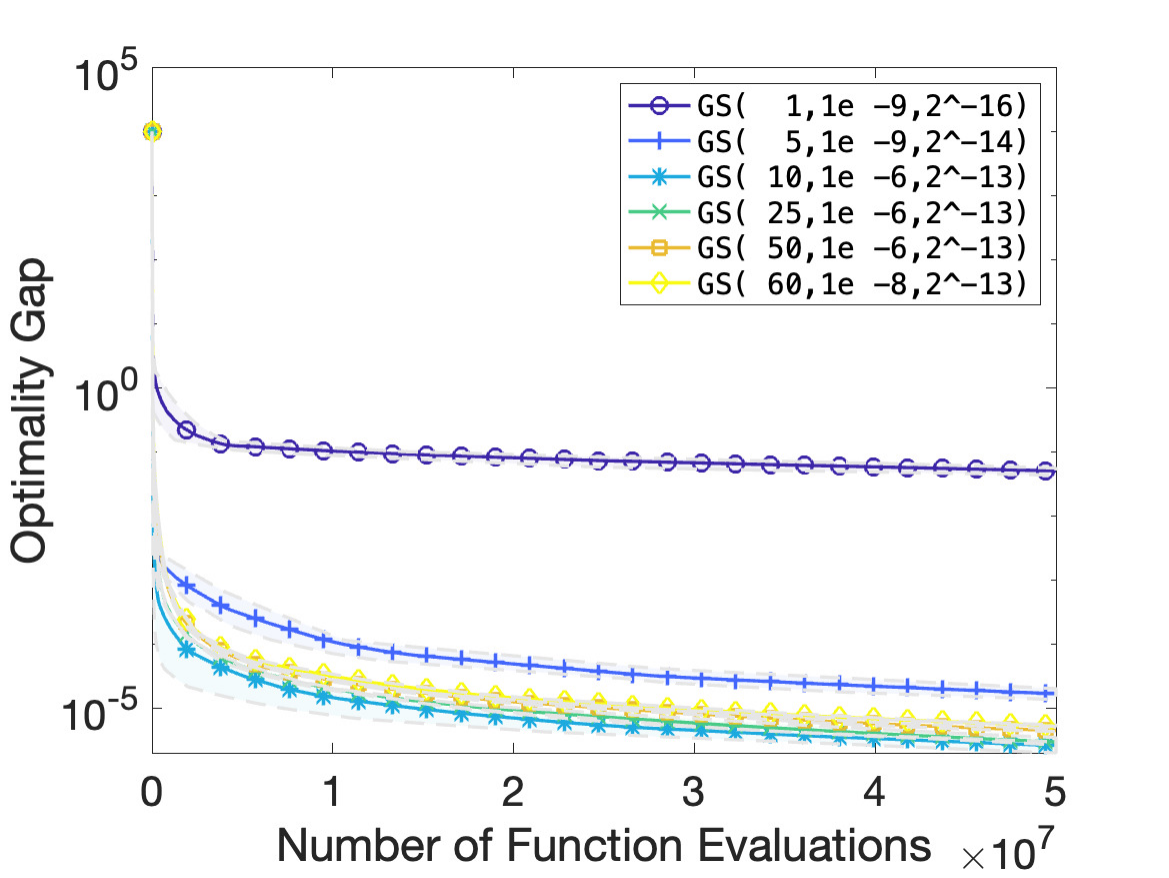}
  \caption{Performance of GS}
  \label{fig:19abs3numdirsensGSFFD}
\end{subfigure}%
\begin{subfigure}{0.33\textwidth}
  \centering
  \includegraphics[width=1.1\textwidth]{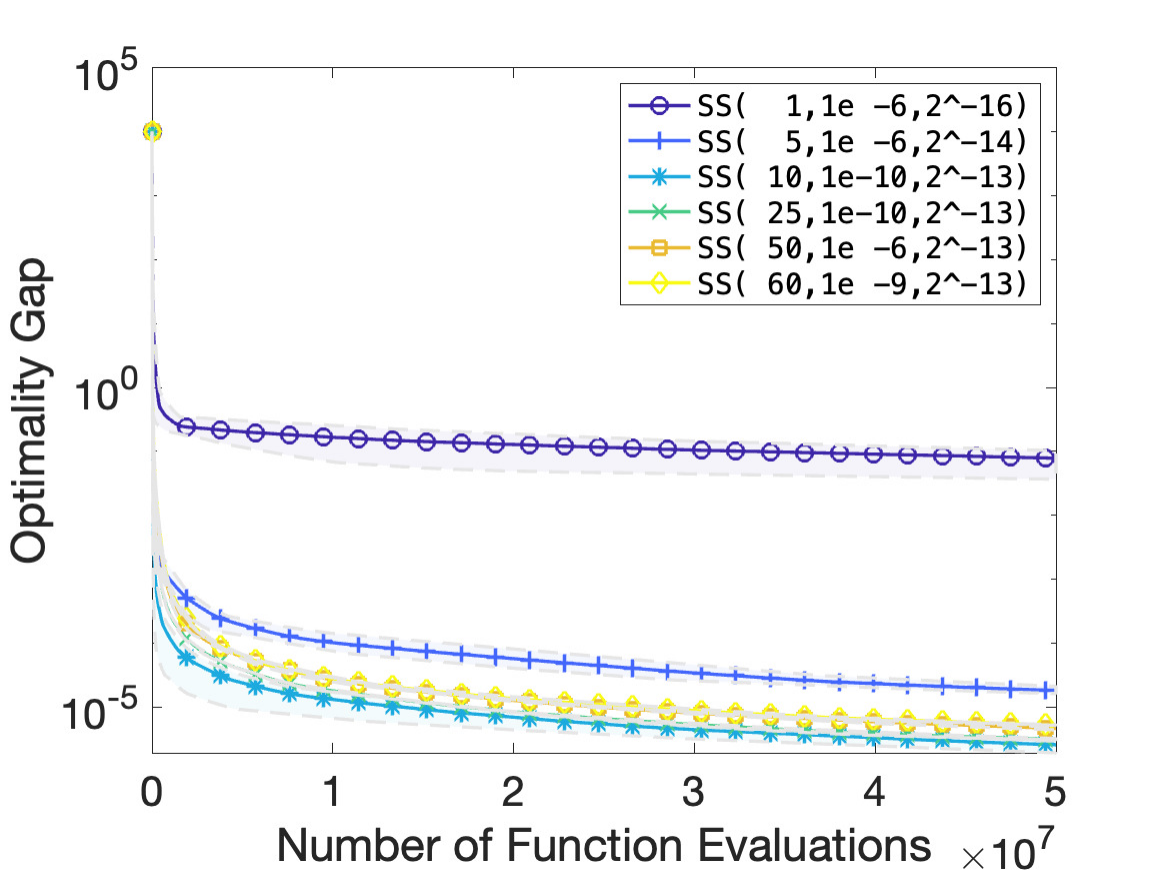}
  \caption{Performance of SS}
  \label{fig:19abs3numdirsensSSFFD}
\end{subfigure}%
\begin{subfigure}{0.33\textwidth}
  \centering
  \includegraphics[width=1.1\textwidth]{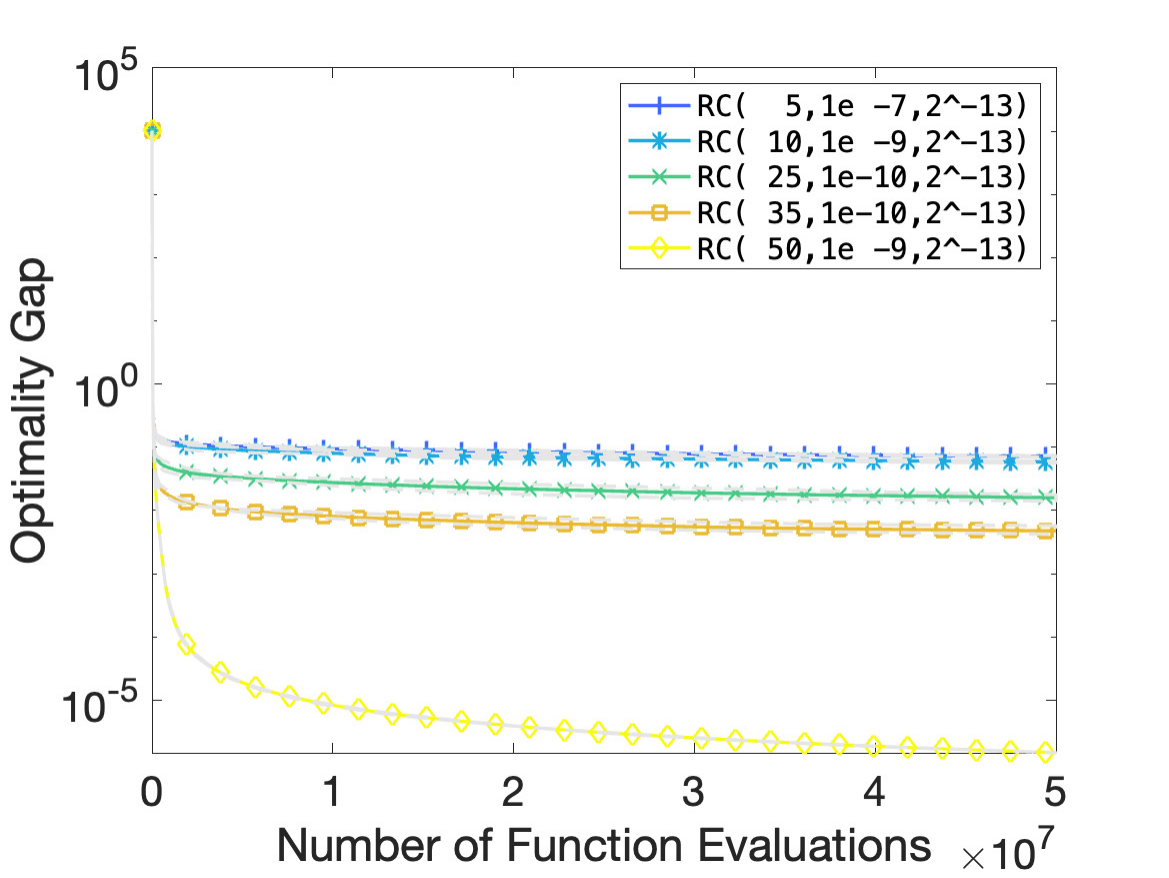}
  \caption{Performance of RC}
  \label{fig:19abs3numdirsensRCFFD}
\end{subfigure}
\caption{The effect of number of directions on the performance of different randomized gradient estimation methods on the Bdqrtic function with absolute error and $ \sigma = 10^{-3} $. All other hyperparameters are tuned to achieve the best performance.}
\label{fig:19abs3numdirsens}
\end{figure}

\newpage
\subsection{Bdqrtic Function with Relative Error, $\sigma = 10^{-5}$}

\begin{figure}[H]
\centering
\begin{subfigure}{0.33\textwidth}
  \centering
  \includegraphics[width=1.1\textwidth]{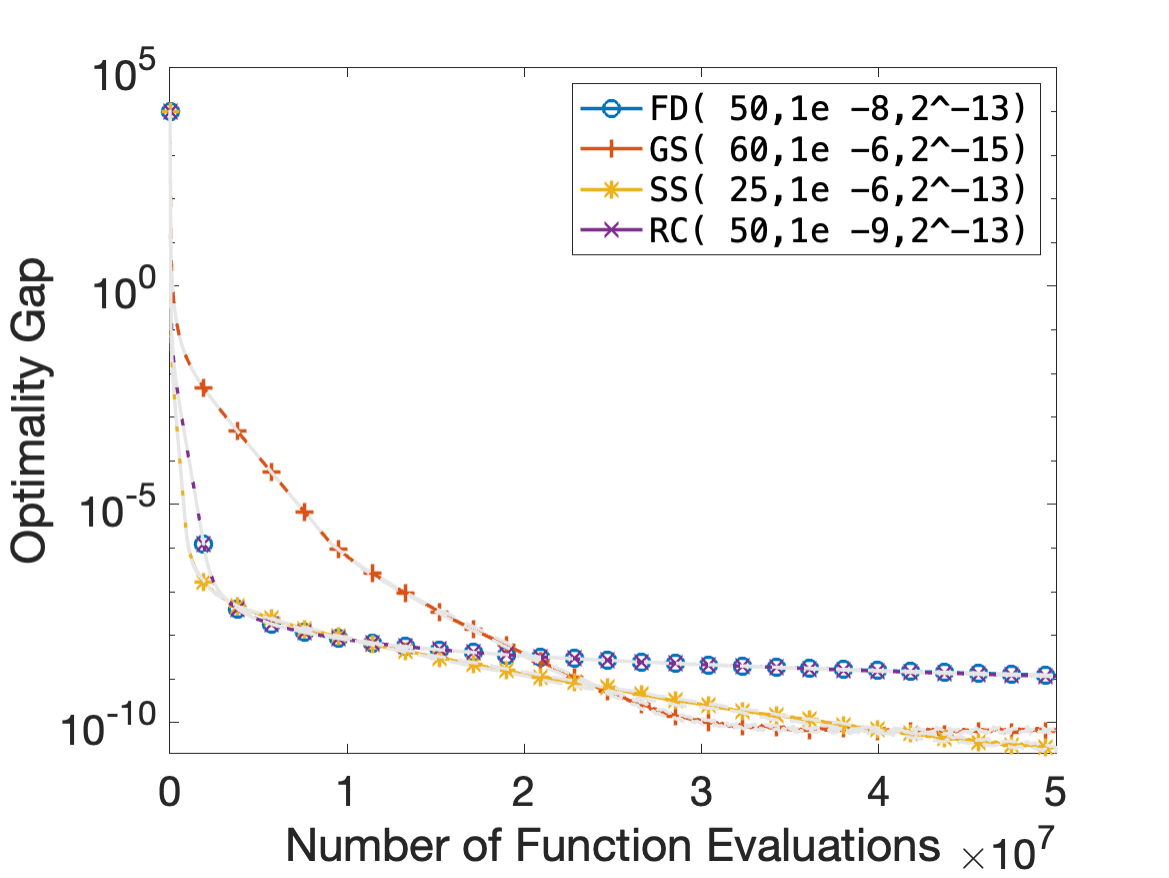}
  \caption{Optimality Gap}
  \label{fig:19rel5bestvsbestoptgap}
\end{subfigure}%
\begin{subfigure}{0.33\textwidth}
  \centering
  \includegraphics[width=1.1\textwidth]{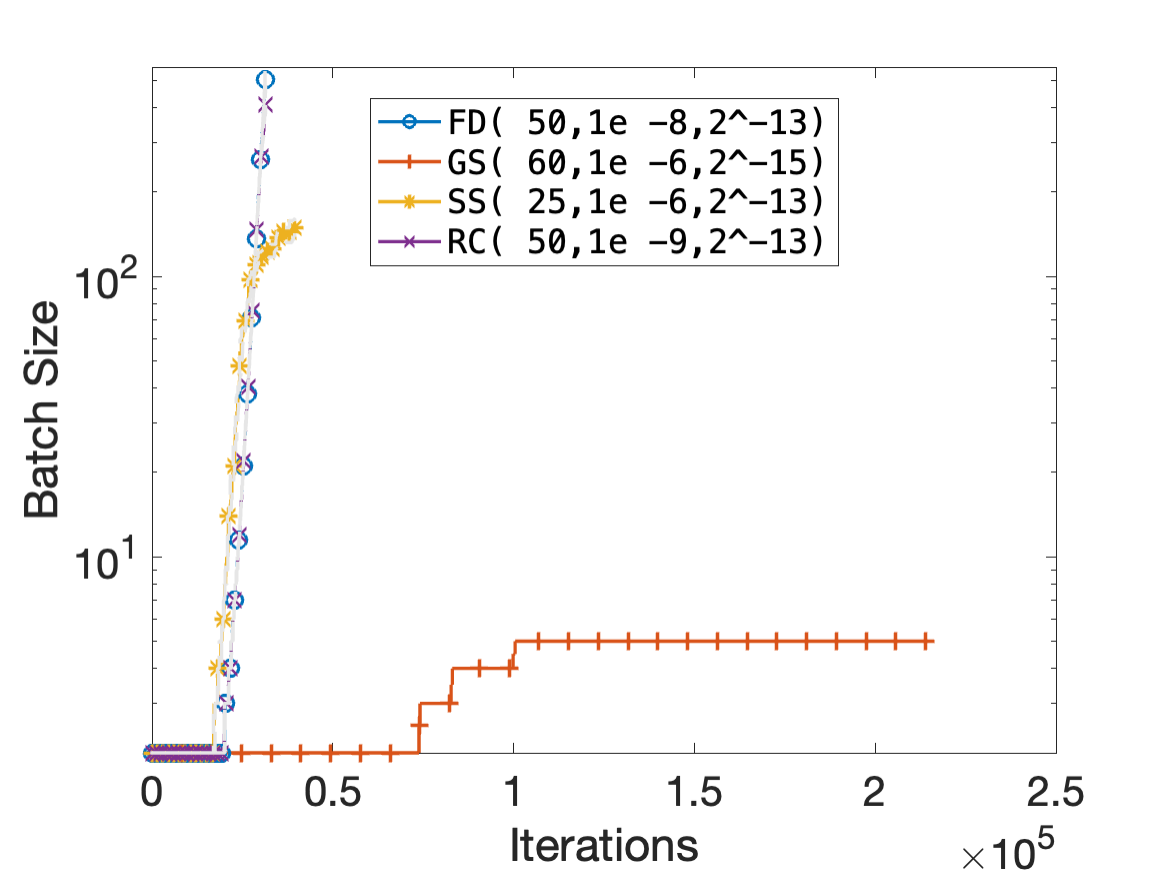}
  \caption{Batch Size}
  \label{fig:19rel5bestvsbestbatch}
\end{subfigure}
\begin{subfigure}{0.33\textwidth}
  \centering
  \includegraphics[width=1.1\textwidth]{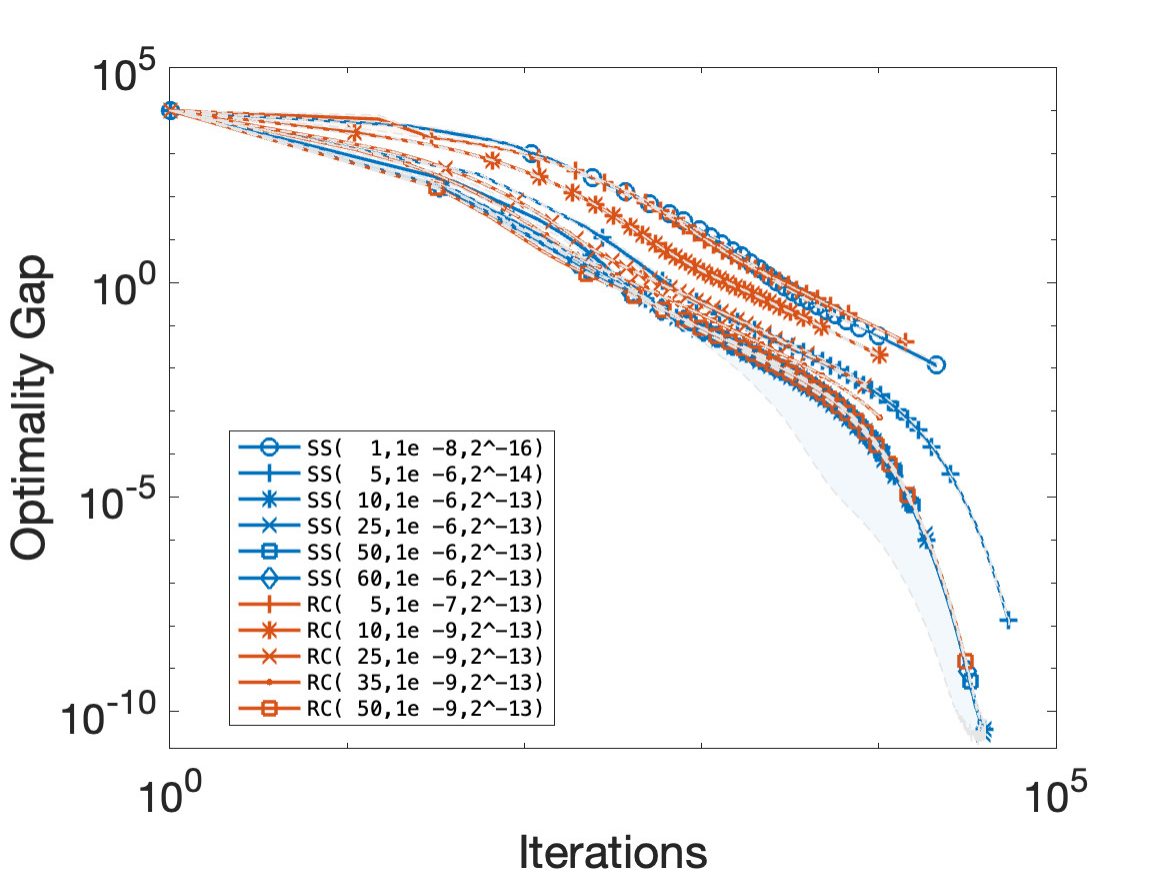}
  \caption{Comparison of SS and RC}
  \label{fig:19rel5coordvsspherical}
\end{subfigure}
\caption{Performance of different gradient estimation methods using the tuned hyperparameters on the Bdqrtic function with relative error and $ \sigma = 10^{-5} $.}
\label{fig:19rel5bestvsbest}
\end{figure}

\begin{figure}[H]
\centering
\begin{subfigure}{0.33\textwidth}
  \centering
  \includegraphics[width=1.1\textwidth]{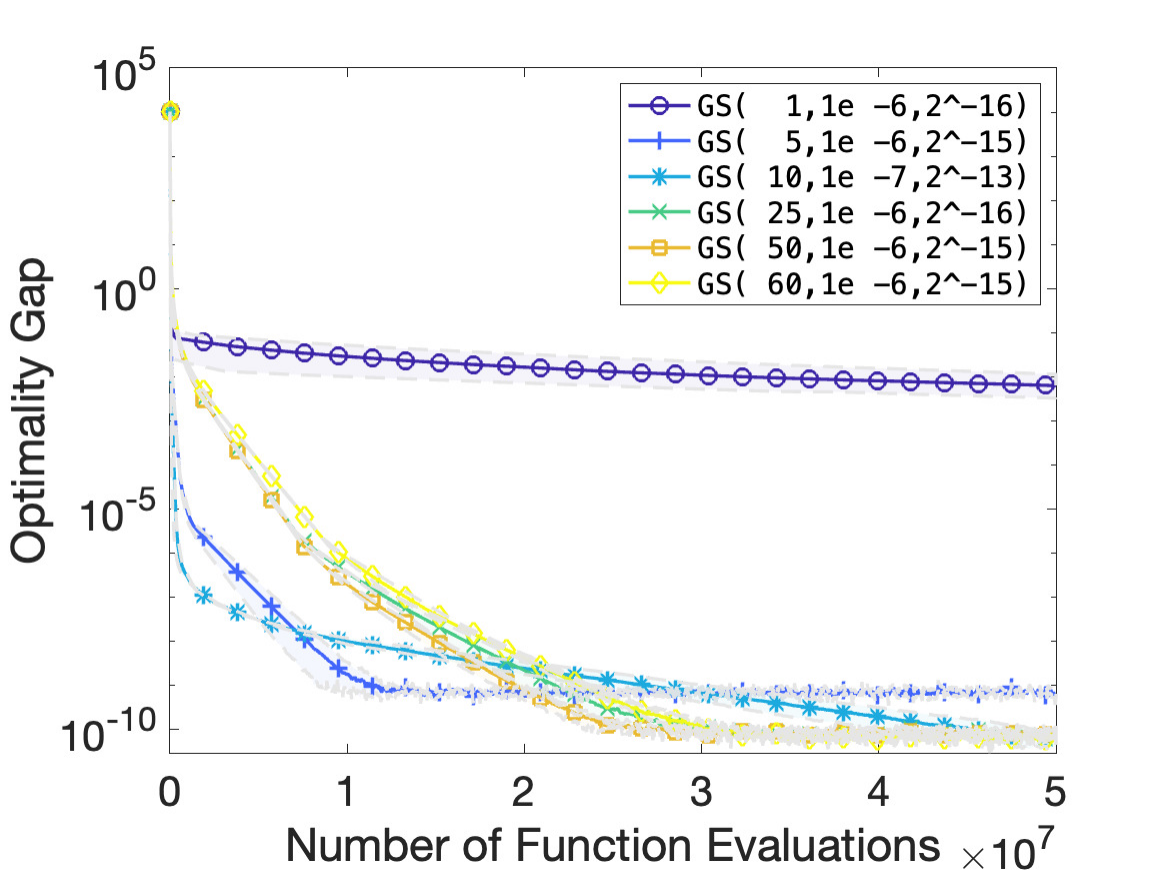}
  \caption{Performance of GS}
  \label{fig:19rel5numdirsensGSFFD}
\end{subfigure}%
\begin{subfigure}{0.33\textwidth}
  \centering
  \includegraphics[width=1.1\textwidth]{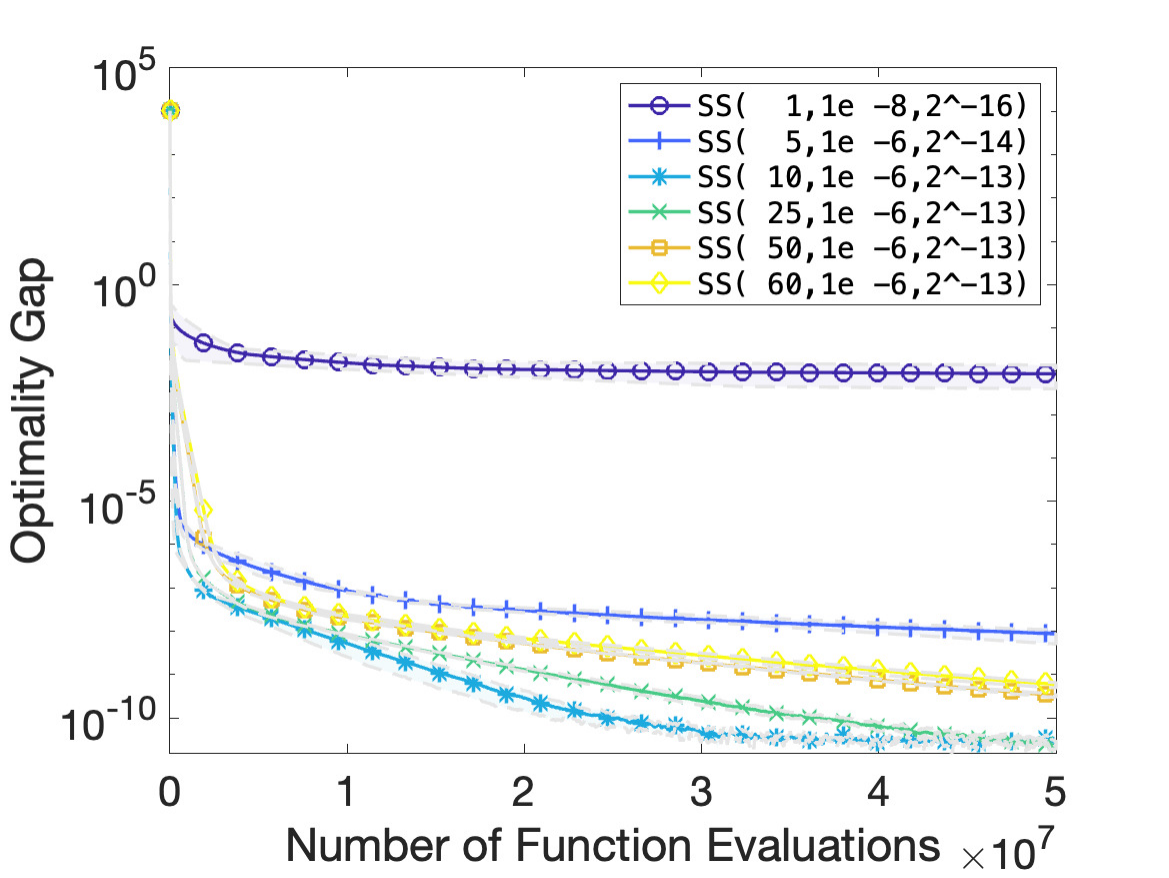}
  \caption{Performance of SS}
  \label{fig:19rel5numdirsensSSFFD}
\end{subfigure}%
\begin{subfigure}{0.33\textwidth}
  \centering
  \includegraphics[width=1.1\textwidth]{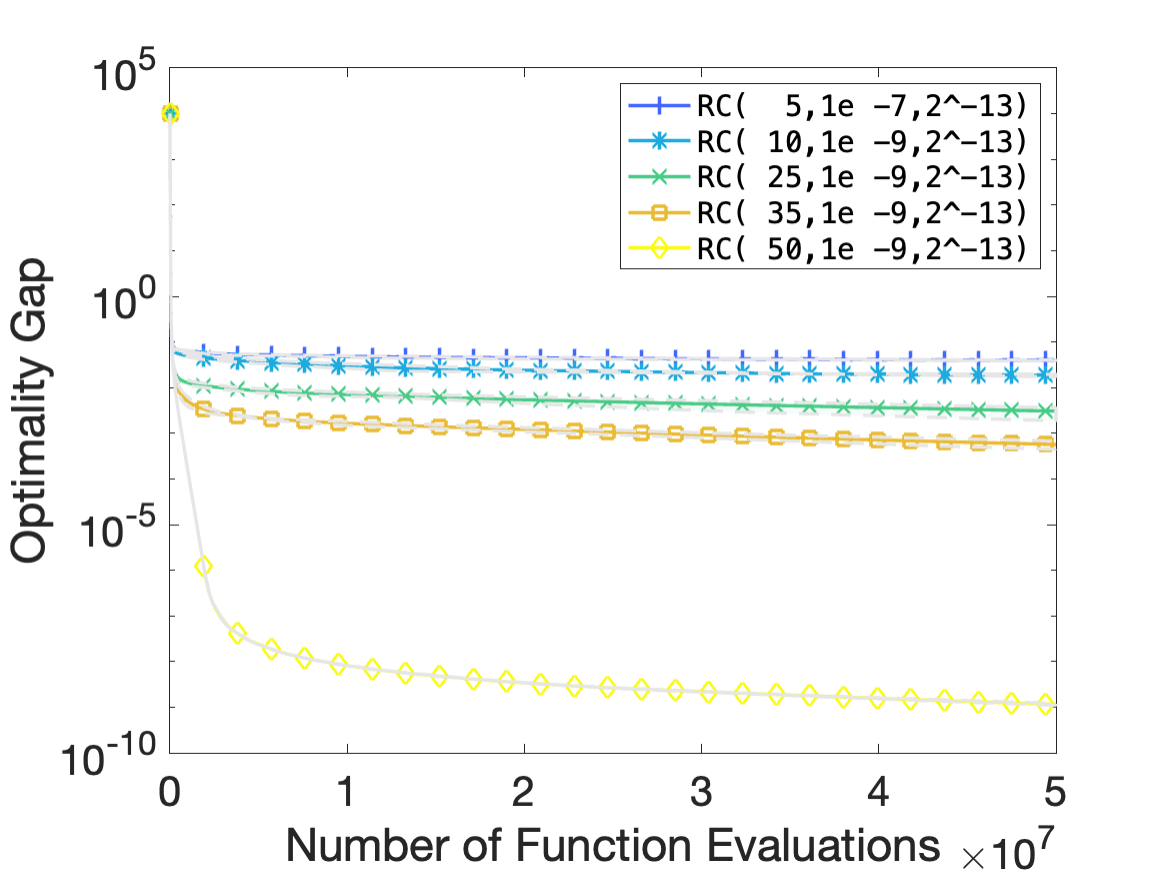}
  \caption{Performance of RC}
  \label{fig:19rel5numdirsensRCFFD}
\end{subfigure}
\caption{The effect of number of directions on the performance of different randomized gradient estimation methods on the Bdqrtic function with relative error and $ \sigma = 10^{-5} $. All other hyperparameters are tuned to achieve the best performance.}
\label{fig:19rel5numdirsens}
\end{figure}

\newpage
\subsection{Bdqrtic Function with Absolute Error, $\sigma = 10^{-5}$}

\begin{figure}[H]
\centering
\begin{subfigure}{0.33\textwidth}
  \centering
  \includegraphics[width=1.1\textwidth]{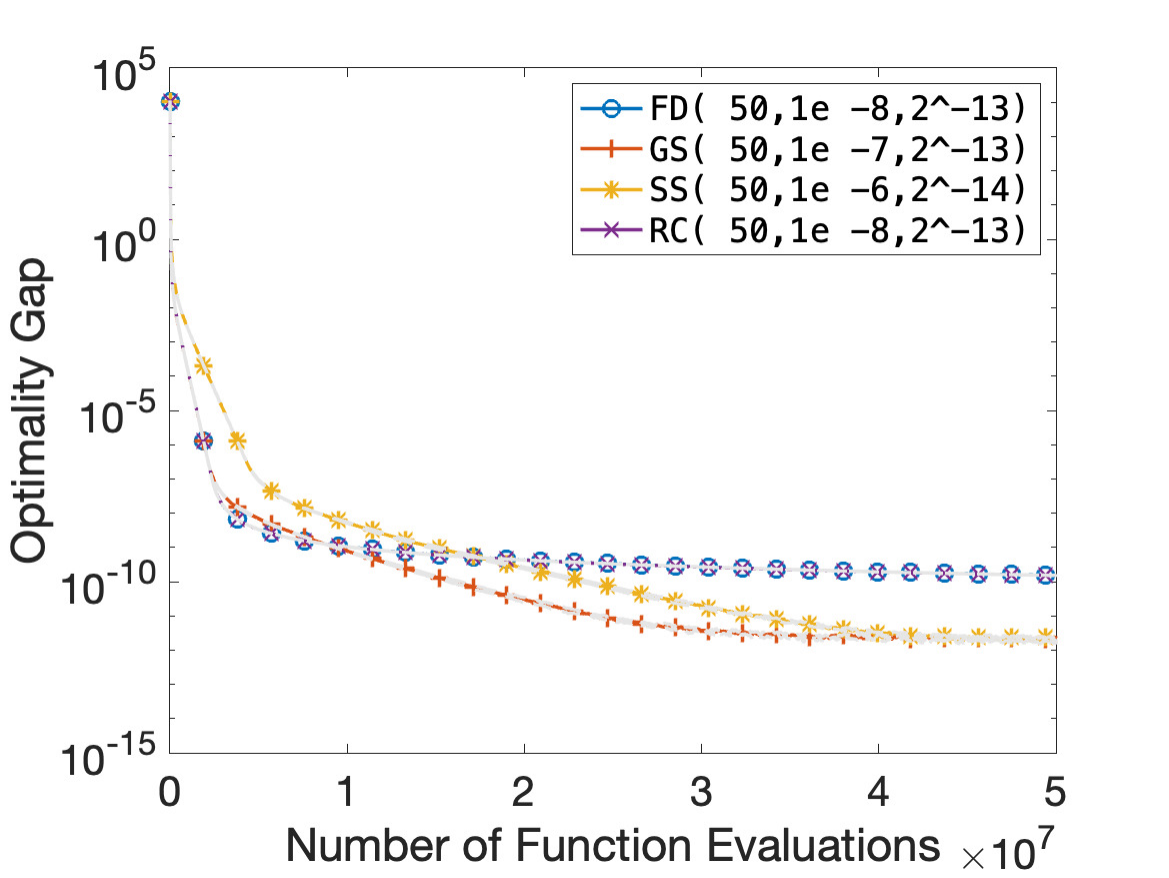}
  \caption{Optimality Gap}
  \label{fig:19abs5bestvsbestoptgap}
\end{subfigure}%
\begin{subfigure}{0.33\textwidth}
  \centering
  \includegraphics[width=1.1\textwidth]{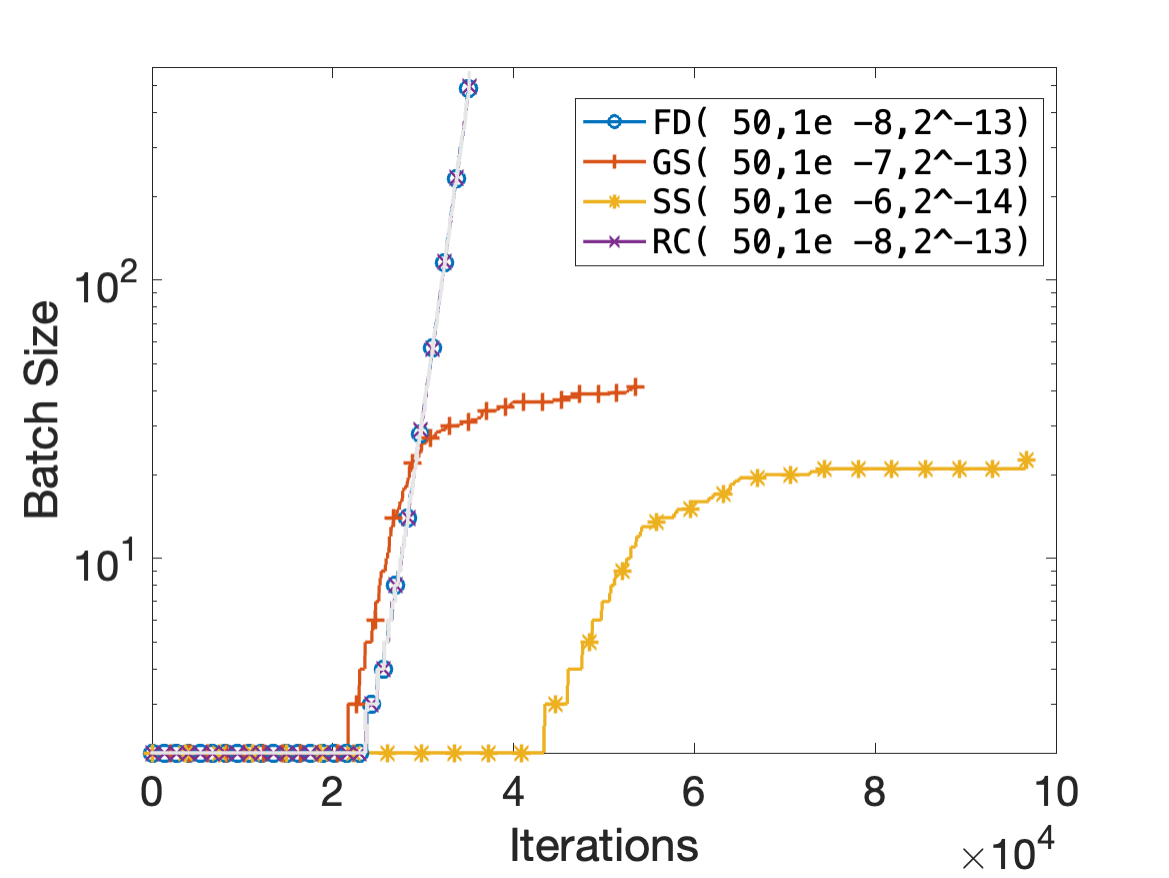}
  \caption{Batch Size}
  \label{fig:19abs5bestvsbestbatch}
\end{subfigure}
\begin{subfigure}{0.33\textwidth}
  \centering
  \includegraphics[width=1.1\textwidth]{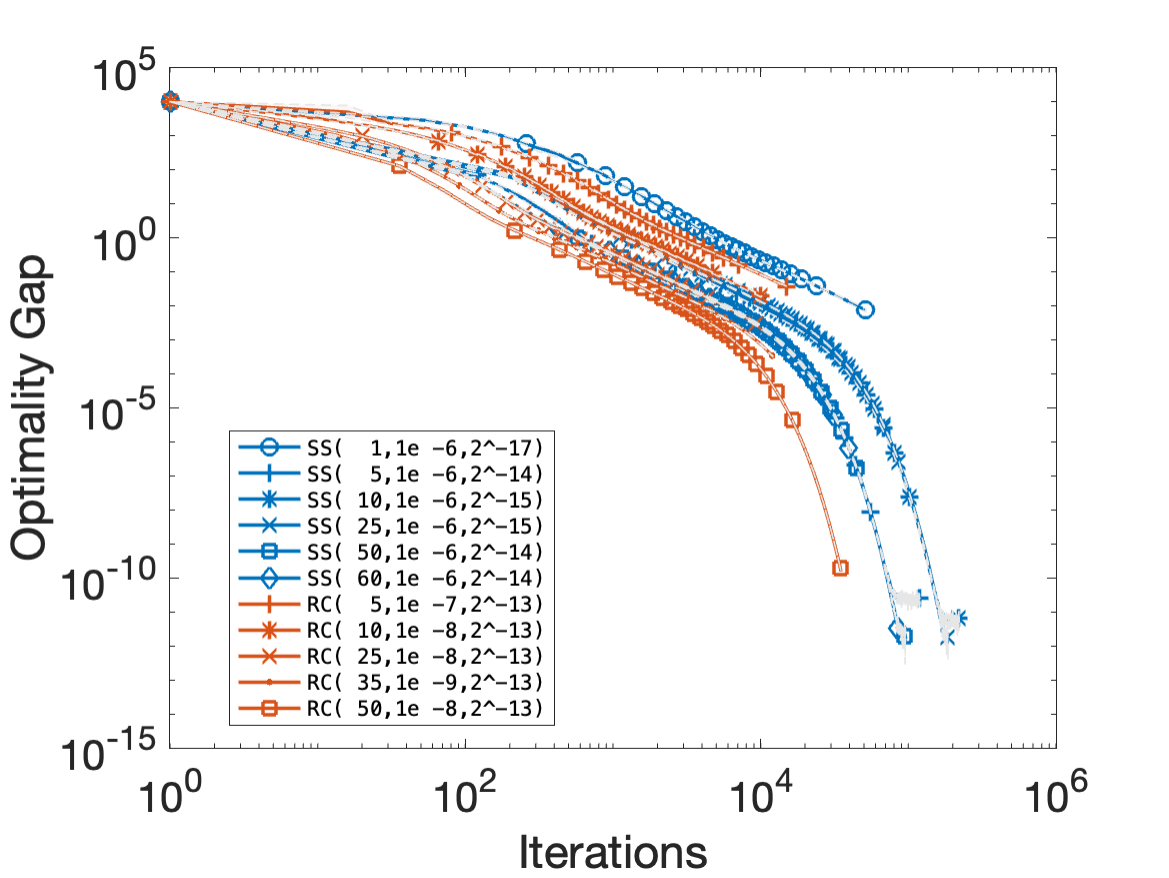}
  \caption{Comparison of SS and RC}
  \label{fig:19abs5coordvsspherical}
\end{subfigure}
\caption{Performance of different gradient estimation methods using the tuned hyperparameters on the Bdqrtic function with absolute error and $ \sigma = 10^{-5} $.}
\label{fig:19abs5bestvsbest}
\end{figure}

\begin{figure}[H]
\centering
\begin{subfigure}{0.33\textwidth}
  \centering
  \includegraphics[width=1.1\textwidth]{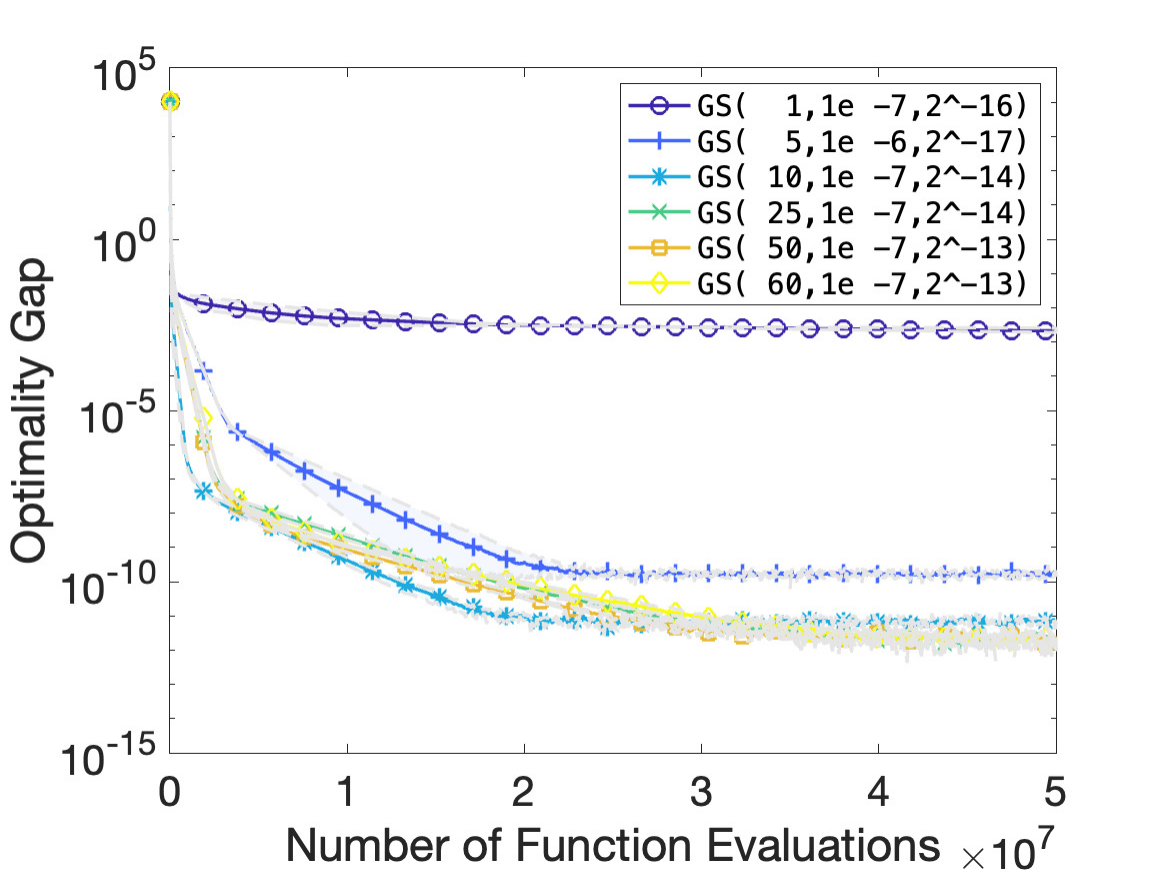}
  \caption{Performance of GS}
  \label{fig:19abs5numdirsensGSFFD}
\end{subfigure}%
\begin{subfigure}{0.33\textwidth}
  \centering
  \includegraphics[width=1.1\textwidth]{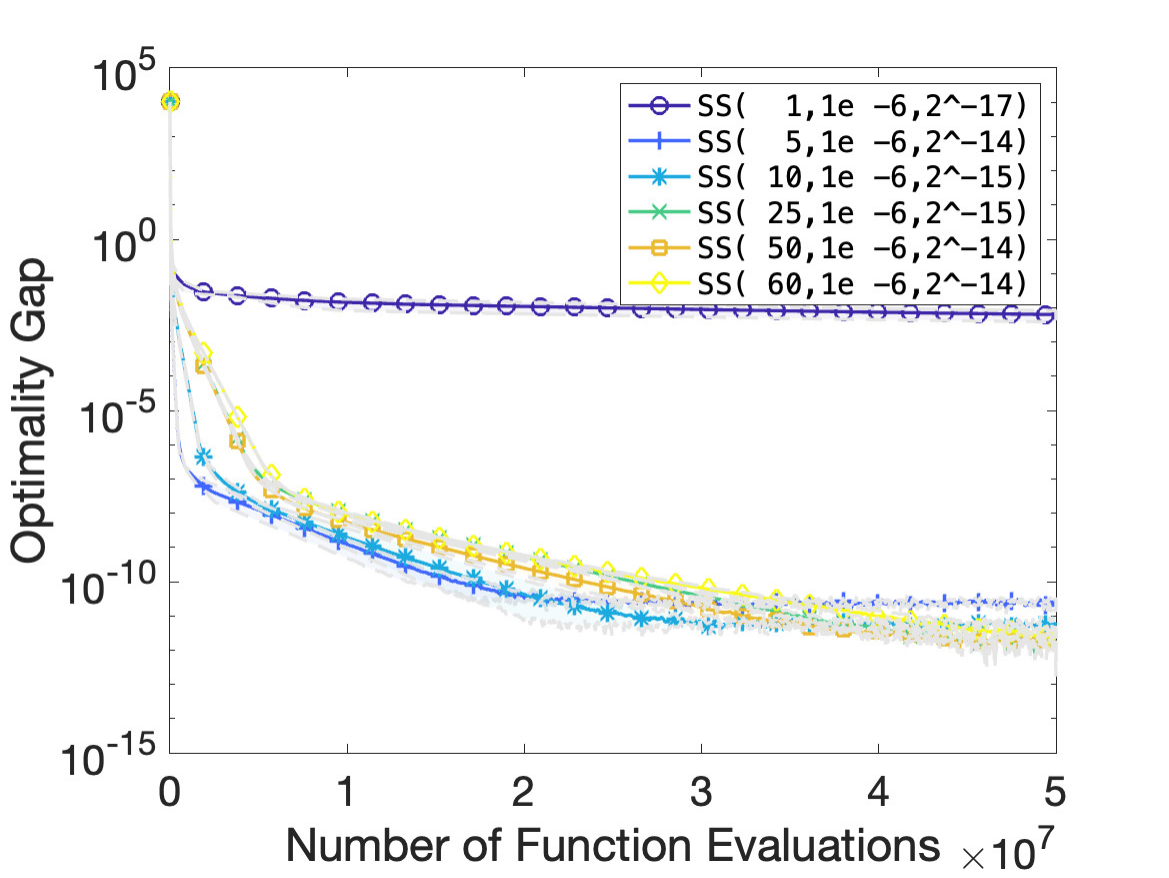}
  \caption{Performance of SS}
  \label{fig:19abs5numdirsensSSFFD}
\end{subfigure}%
\begin{subfigure}{0.33\textwidth}
  \centering
  \includegraphics[width=1.1\textwidth]{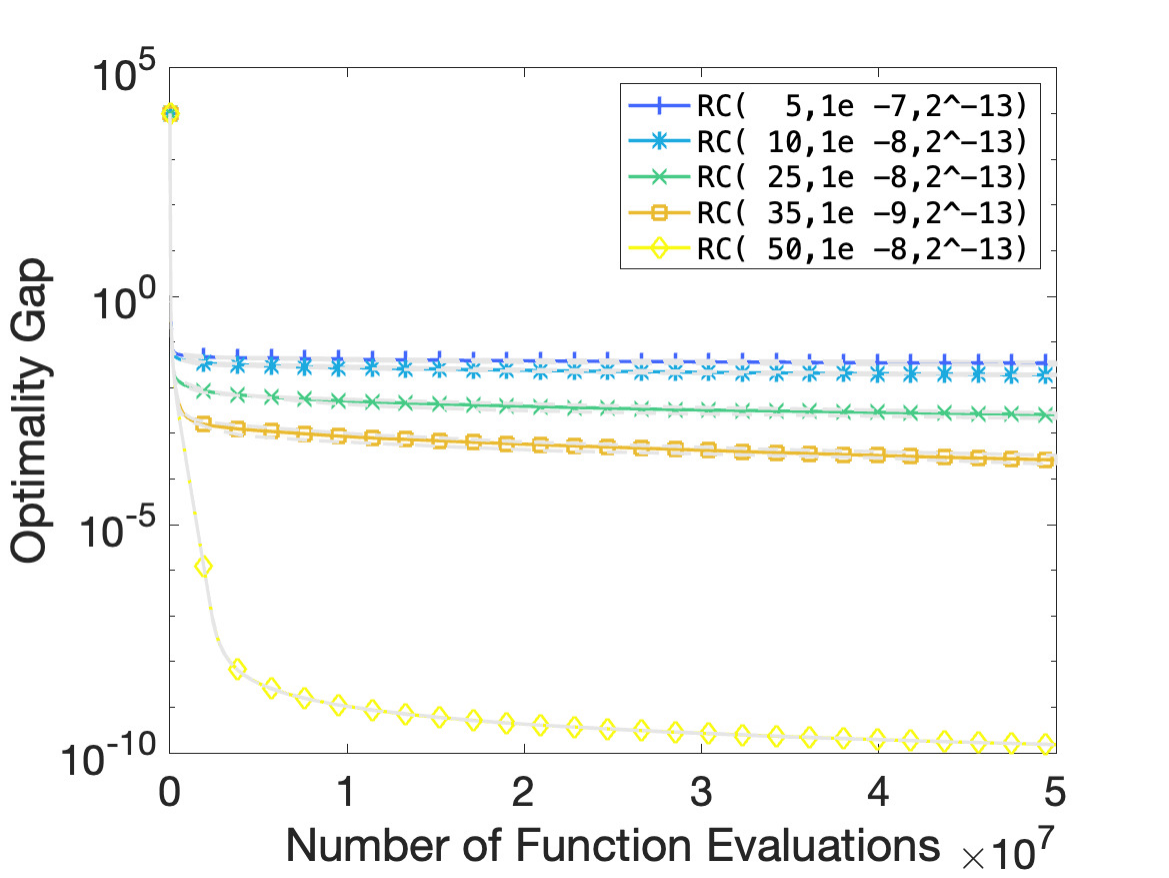}
  \caption{Performance of RC}
  \label{fig:19abs5numdirsensRCFFD}
\end{subfigure}
\caption{The effect of number of directions on the performance of different randomized gradient estimation methods on the Bdqrtic function with absolute error and $ \sigma = 10^{-5} $. All other hyperparameters are tuned to achieve the best performance.}
\label{fig:19abs5numdirsens}
\end{figure}

\newpage
\subsection{Cube Function with Relative Error, $\sigma = 10^{-3}$}

\begin{figure}[H]
\centering
\begin{subfigure}{0.33\textwidth}
  \centering
  \includegraphics[width=1.1\textwidth]{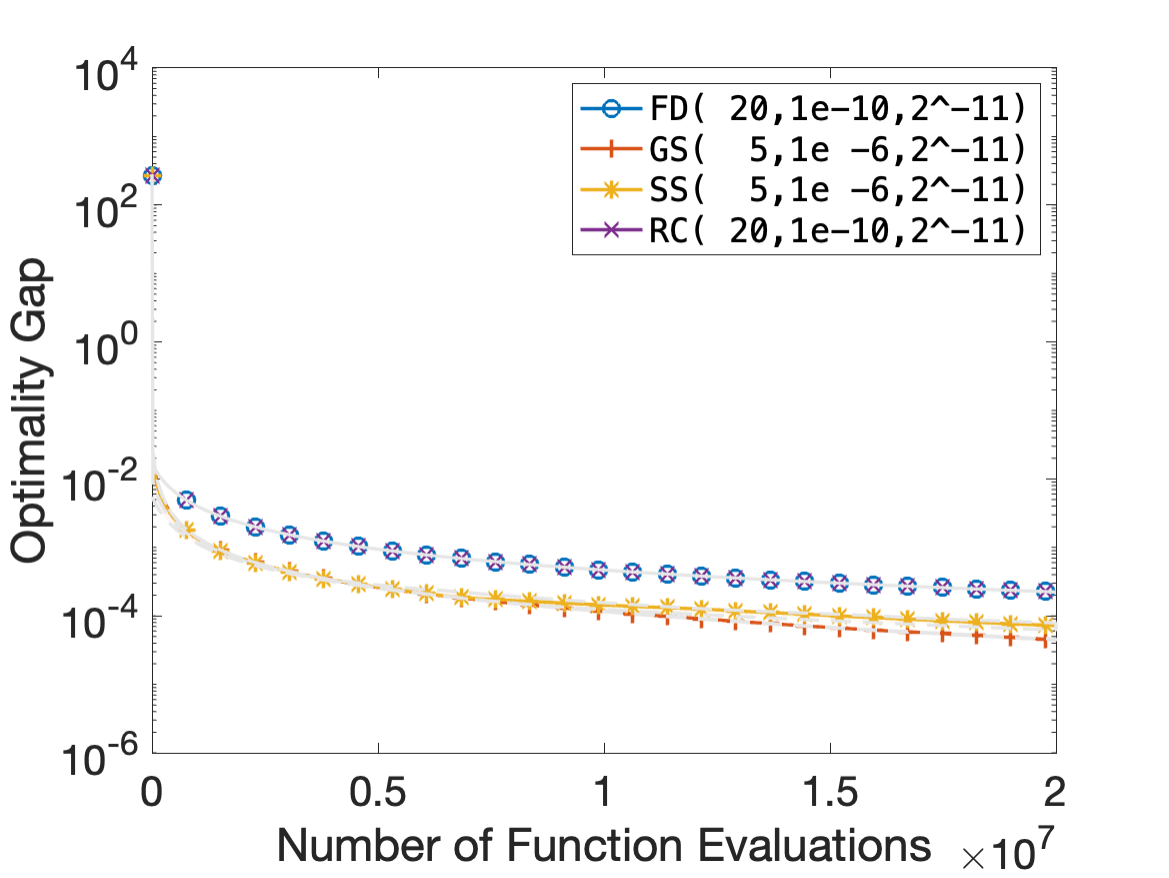}
  \caption{Optimality Gap}
  \label{fig:20rel3bestvsbestoptgap}
\end{subfigure}%
\begin{subfigure}{0.33\textwidth}
  \centering
  \includegraphics[width=1.1\textwidth]{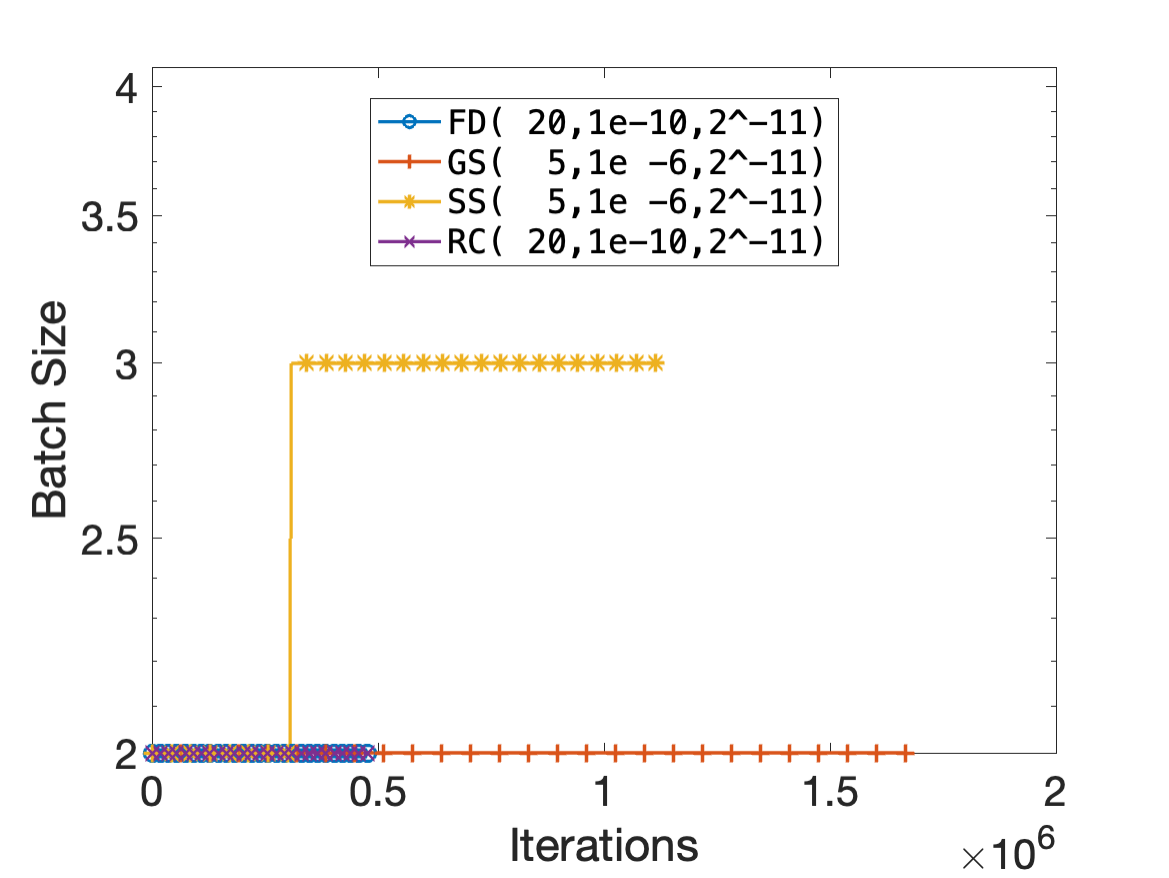}
  \caption{Batch Size}
  \label{fig:20rel3bestvsbestbatch}
\end{subfigure}
\begin{subfigure}{0.33\textwidth}
  \centering
  \includegraphics[width=1.1\textwidth]{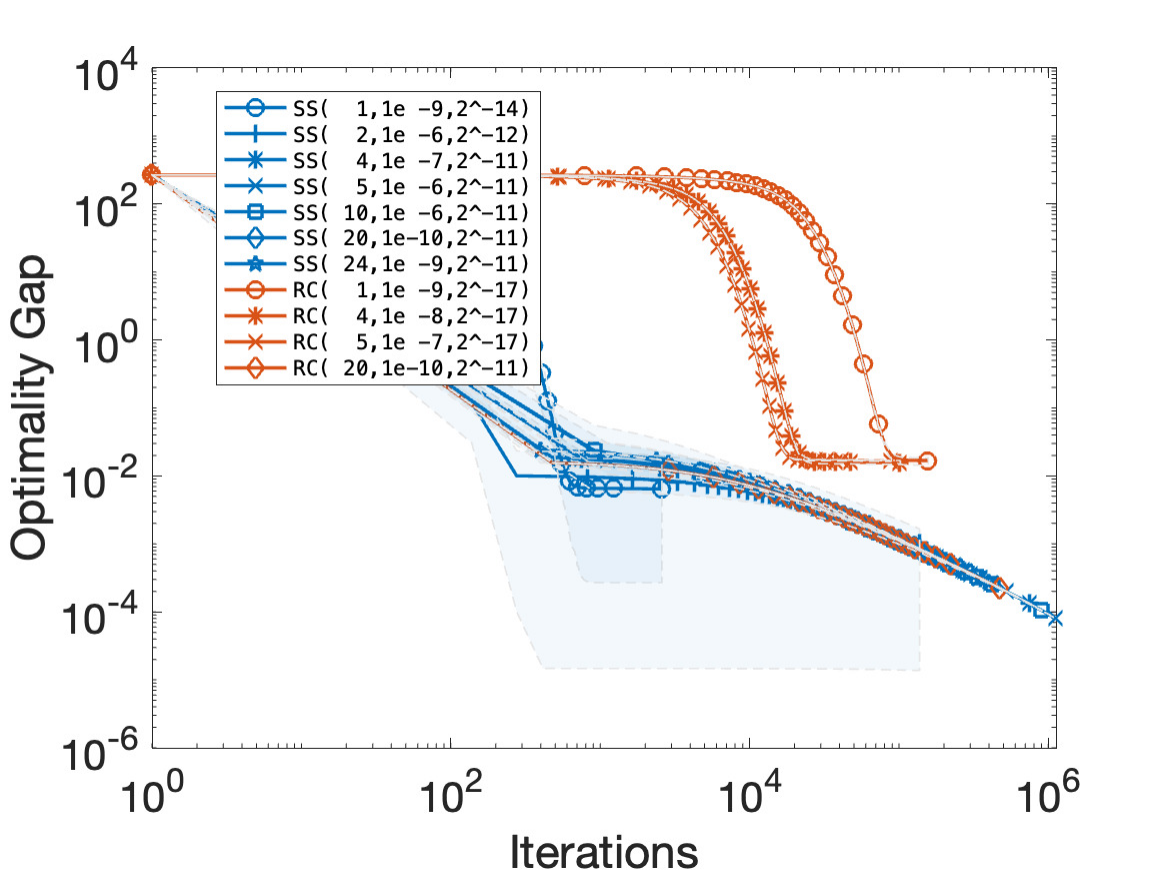}
  \caption{Comparison of SS and RC}
  \label{fig:20rel3coordvsspherical}
\end{subfigure}
\caption{Performance of different gradient estimation methods using the tuned hyperparameters on the Cube function with relative error and $ \sigma = 10^{-3} $.}
\label{fig:20rel3bestvsbest}
\end{figure}

\begin{figure}[H]
\centering
\begin{subfigure}{0.33\textwidth}
  \centering
  \includegraphics[width=1.1\textwidth]{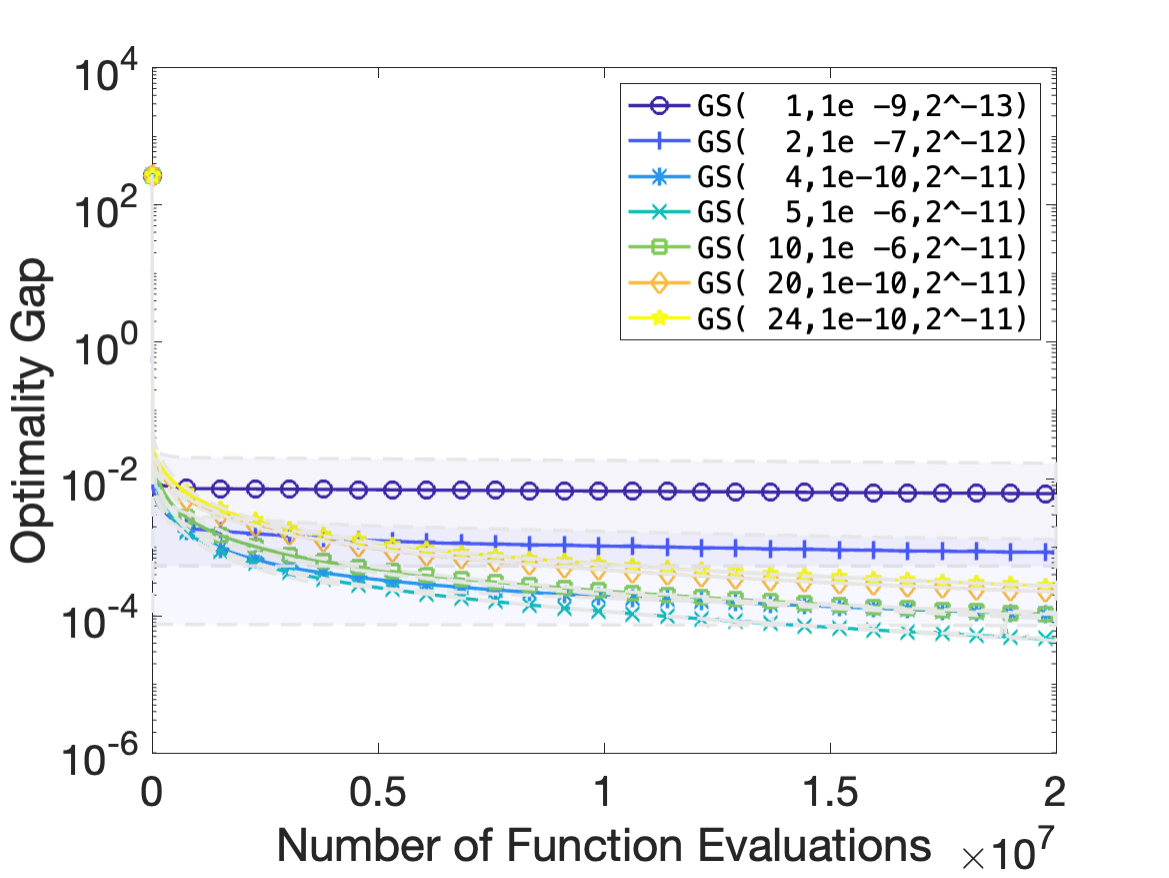}
  \caption{Performance of GS}
  \label{fig:20rel3numdirsensGSFFD}
\end{subfigure}%
\begin{subfigure}{0.33\textwidth}
  \centering
  \includegraphics[width=1.1\textwidth]{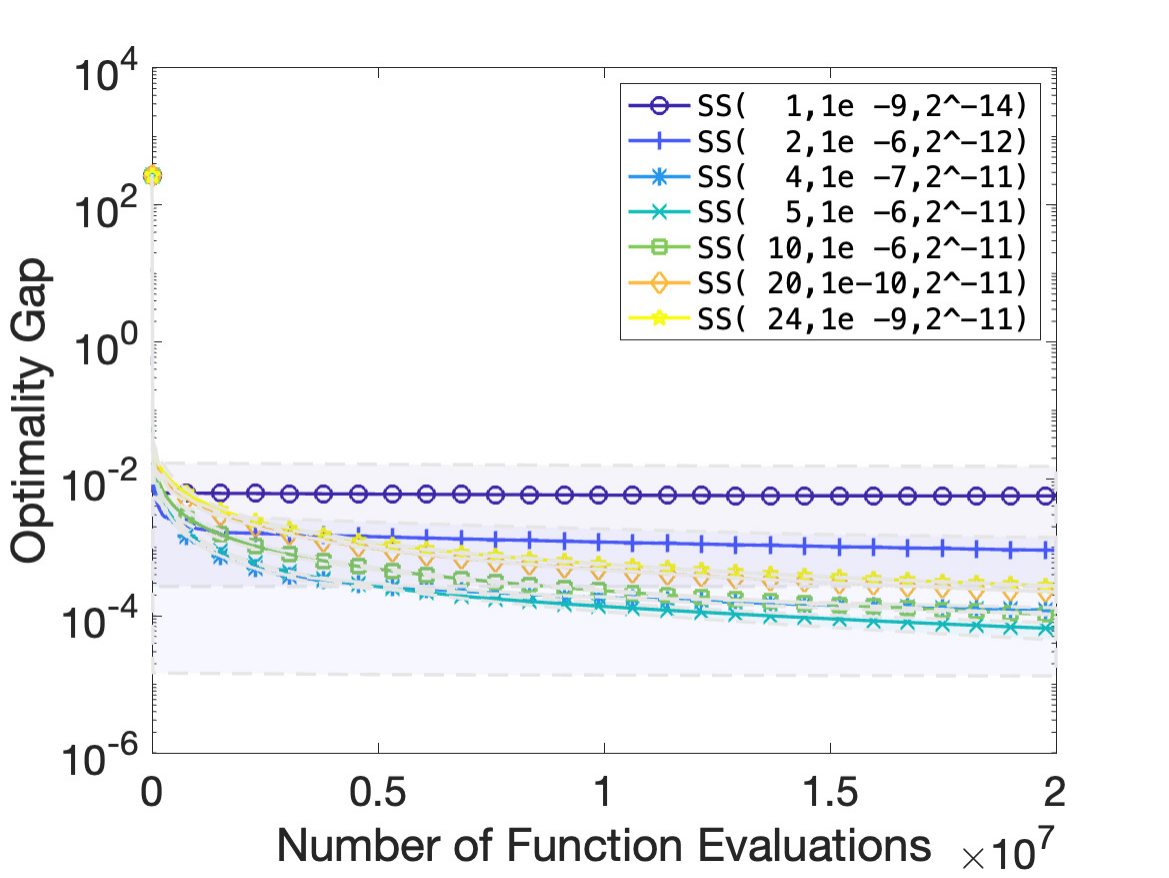}
  \caption{Performance of SS}
  \label{fig:20rel3numdirsensSSFFD}
\end{subfigure}%
\begin{subfigure}{0.33\textwidth}
  \centering
  \includegraphics[width=1.1\textwidth]{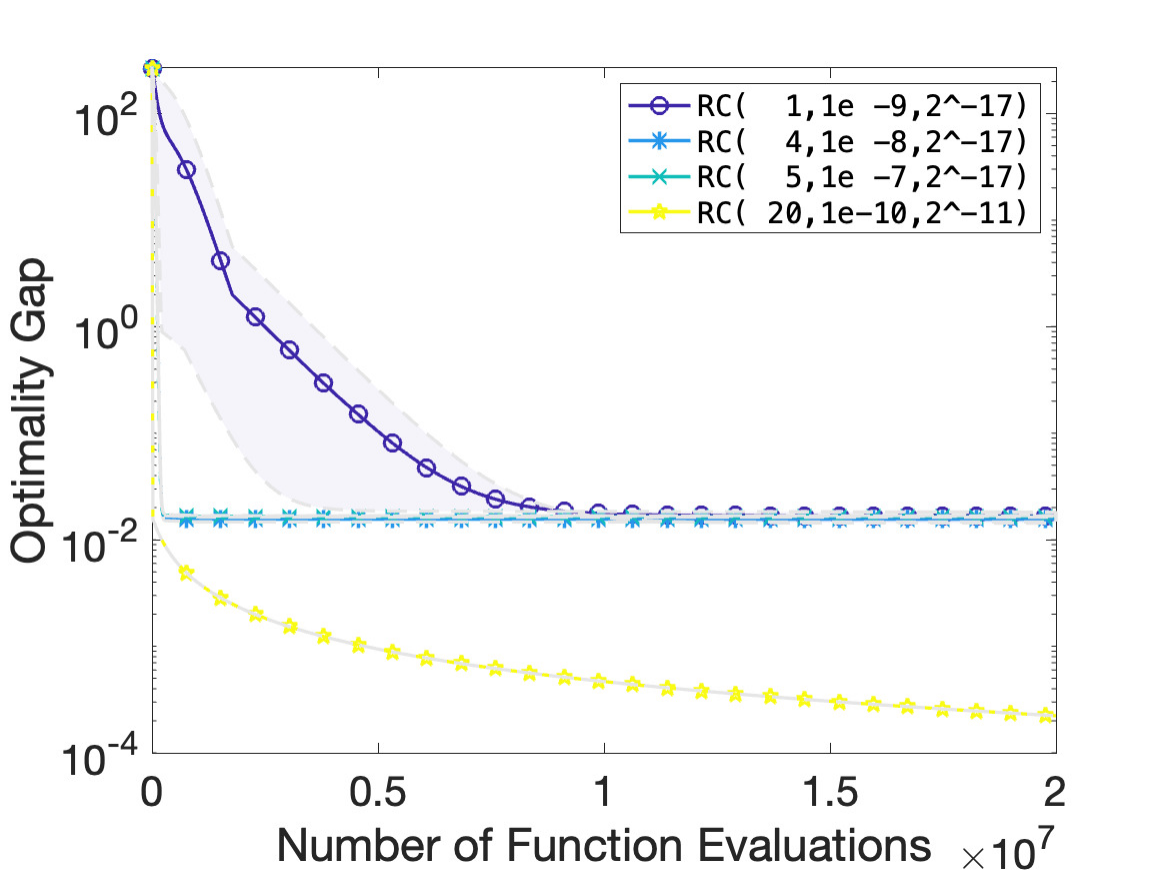}
  \caption{Performance of RC}
  \label{fig:20rel3numdirsensRCFFD}
\end{subfigure}
\caption{The effect of number of directions on the performance of different randomized gradient estimation methods on the Cube function with relative error and $ \sigma = 10^{-3} $. All other hyperparameters are tuned to achieve the best performance.}
\label{fig:20rel3numdirsens}
\end{figure}

\newpage
\subsection{Cube Function with Absolute Error, $\sigma = 10^{-3}$}

\begin{figure}[H]
\centering
\begin{subfigure}{0.33\textwidth}
  \centering
  \includegraphics[width=1.1\textwidth]{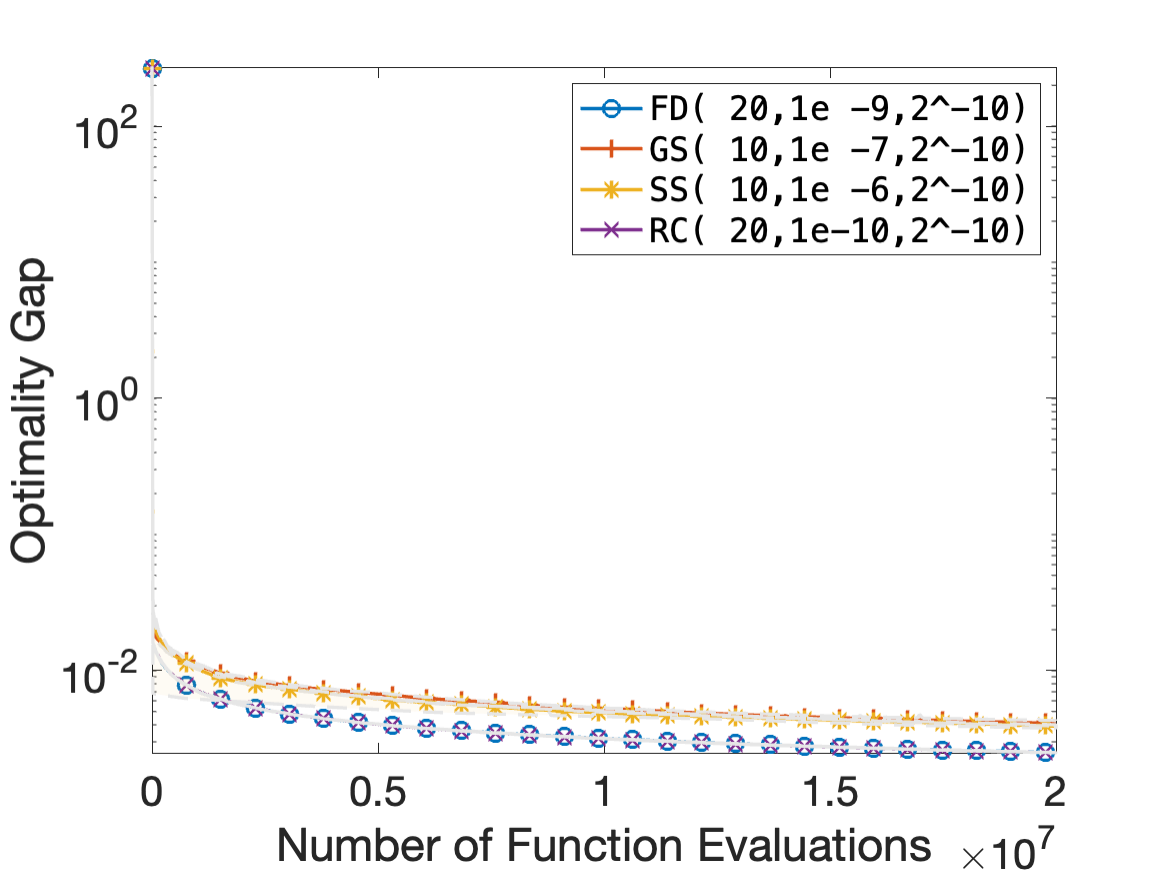}
  \caption{Optimality Gap}
  \label{fig:20abs3bestvsbestoptgap}
\end{subfigure}%
\begin{subfigure}{0.33\textwidth}
  \centering
  \includegraphics[width=1.1\textwidth]{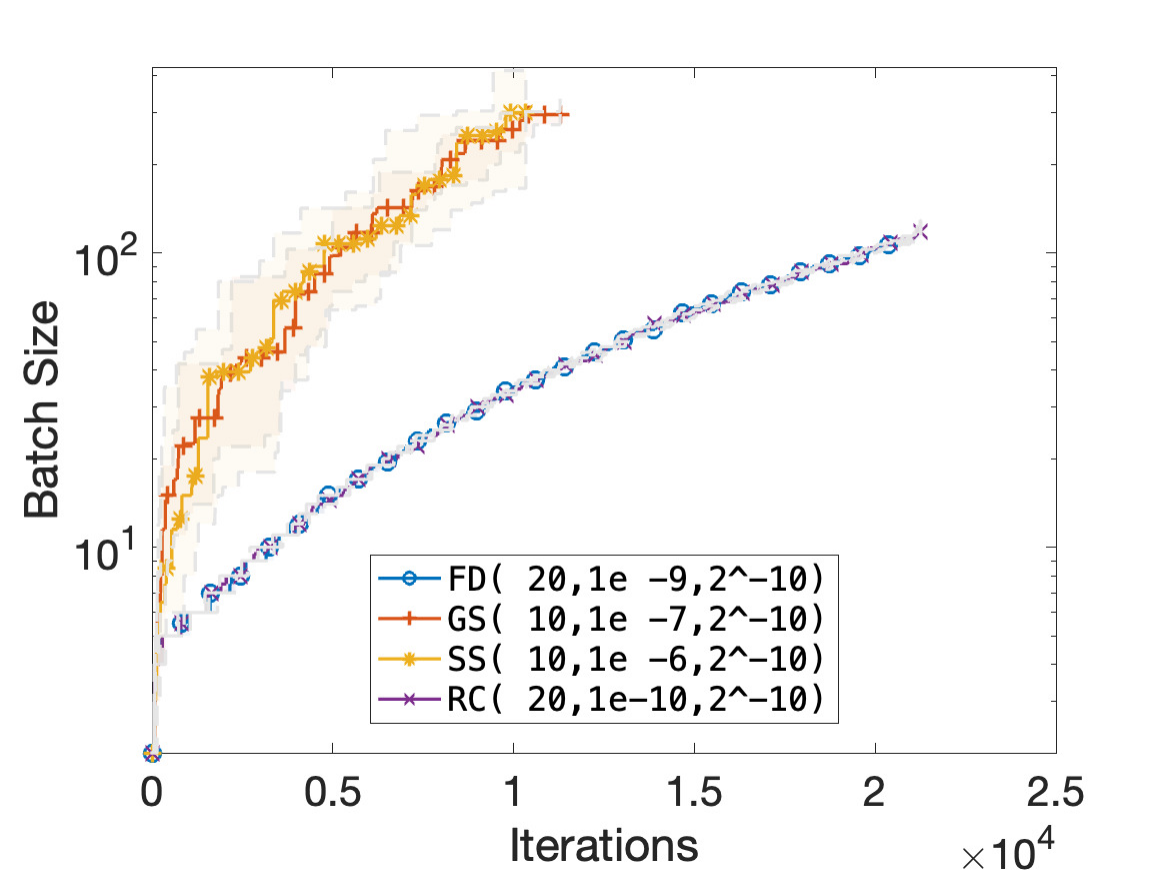}
  \caption{Batch Size}
  \label{fig:20abs3bestvsbestbatch}
\end{subfigure}
\begin{subfigure}{0.33\textwidth}
  \centering
  \includegraphics[width=1.1\textwidth]{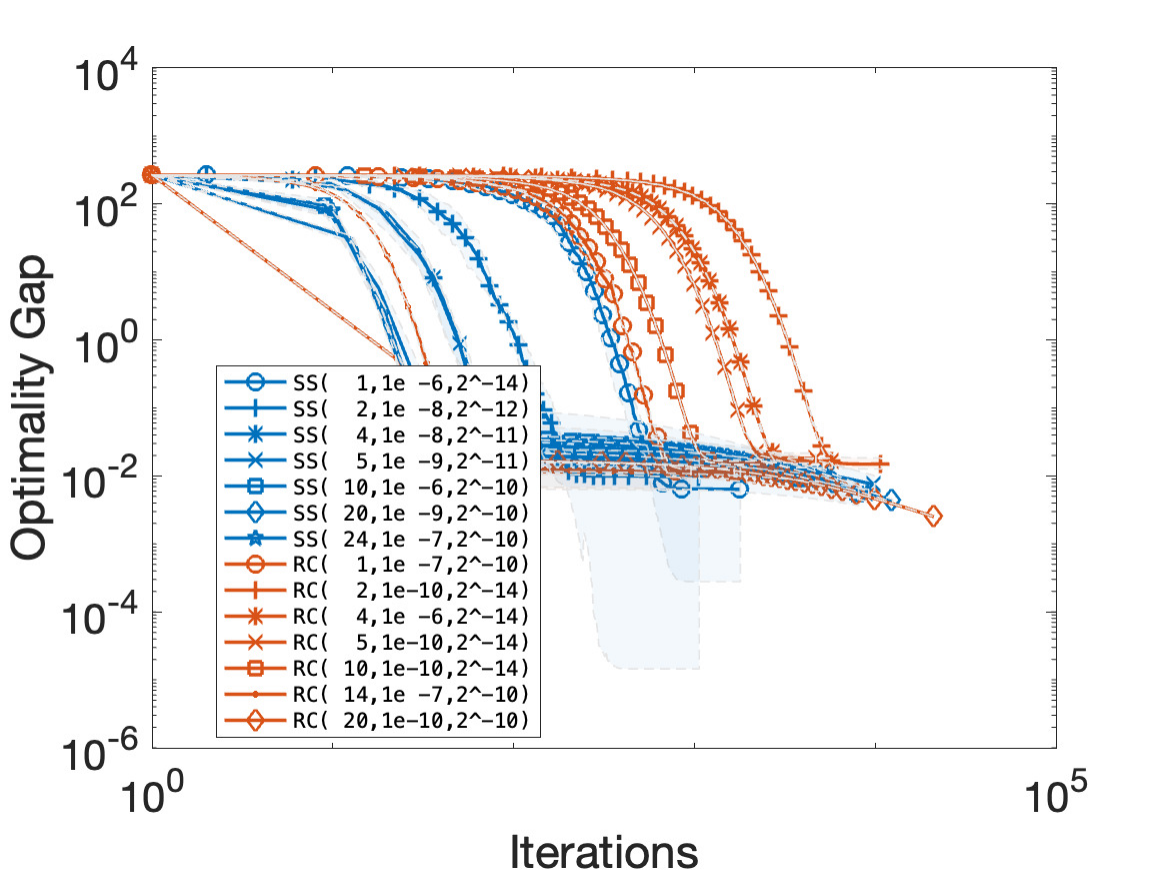}
  \caption{Comparison of SS and RC}
  \label{fig:20abs3coordvsspherical}
\end{subfigure}
\caption{Performance of different gradient estimation methods using the tuned hyperparameters on the Cube function with absolute error and $ \sigma = 10^{-3} $.}
\label{fig:20abs3bestvsbest}
\end{figure}

\begin{figure}[H]
\centering
\begin{subfigure}{0.33\textwidth}
  \centering
  \includegraphics[width=1.1\textwidth]{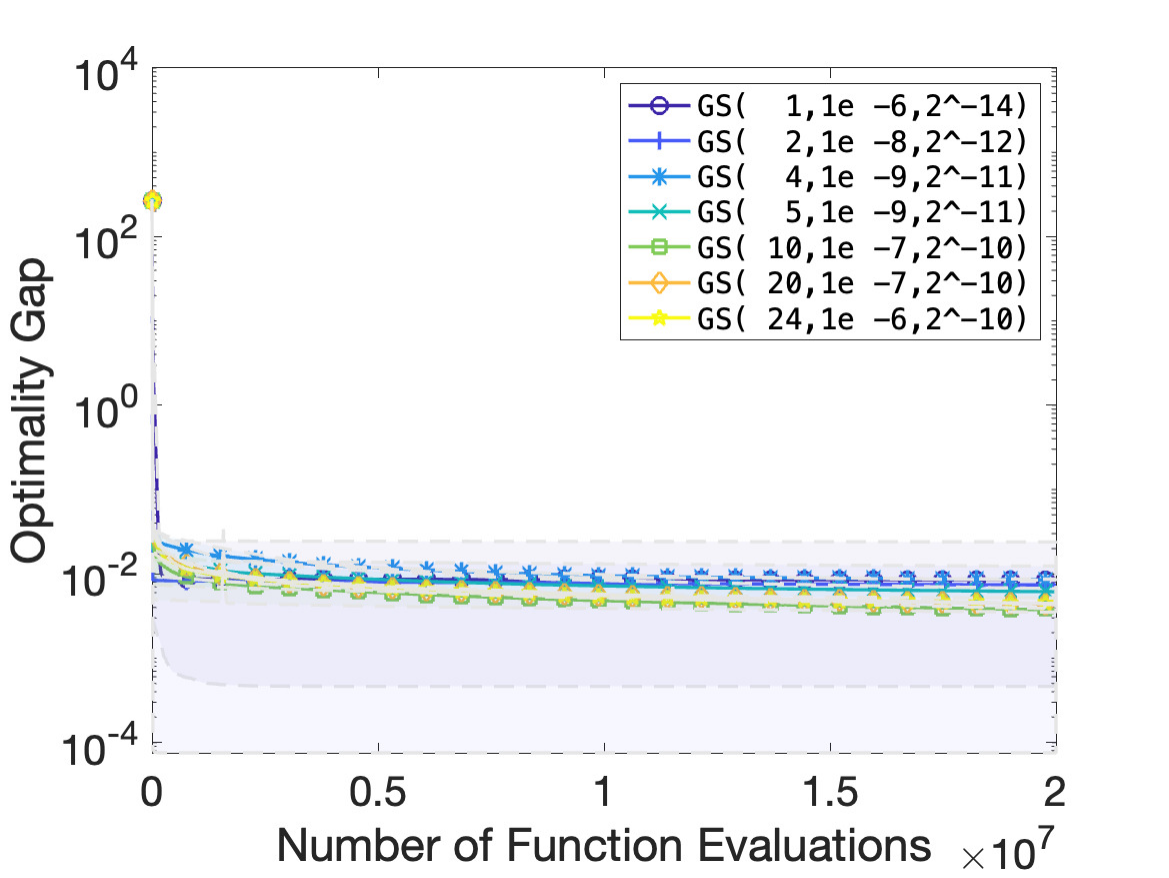}
  \caption{Performance of GS}
  \label{fig:20abs3numdirsensGSFFD}
\end{subfigure}%
\begin{subfigure}{0.33\textwidth}
  \centering
  \includegraphics[width=1.1\textwidth]{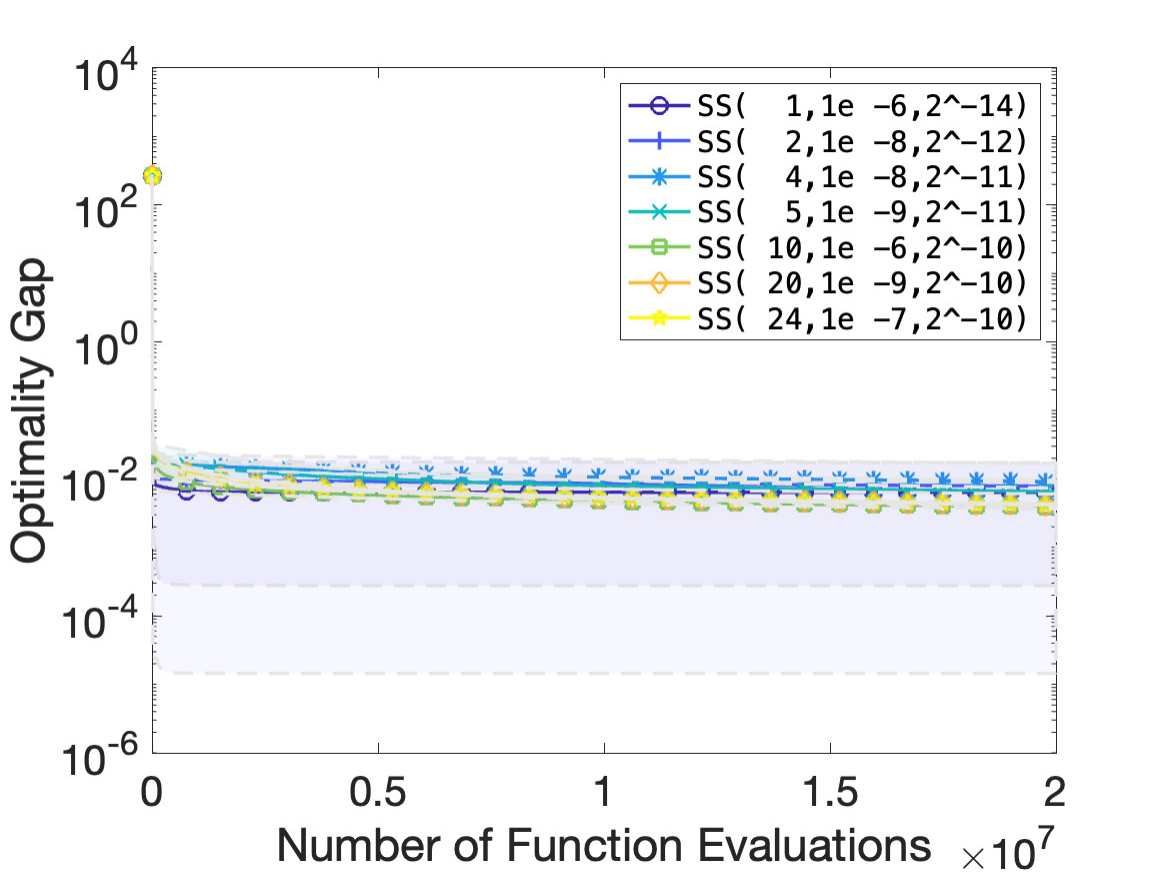}
  \caption{Performance of SS}
  \label{fig:20abs3numdirsensSSFFD}
\end{subfigure}%
\begin{subfigure}{0.33\textwidth}
  \centering
  \includegraphics[width=1.1\textwidth]{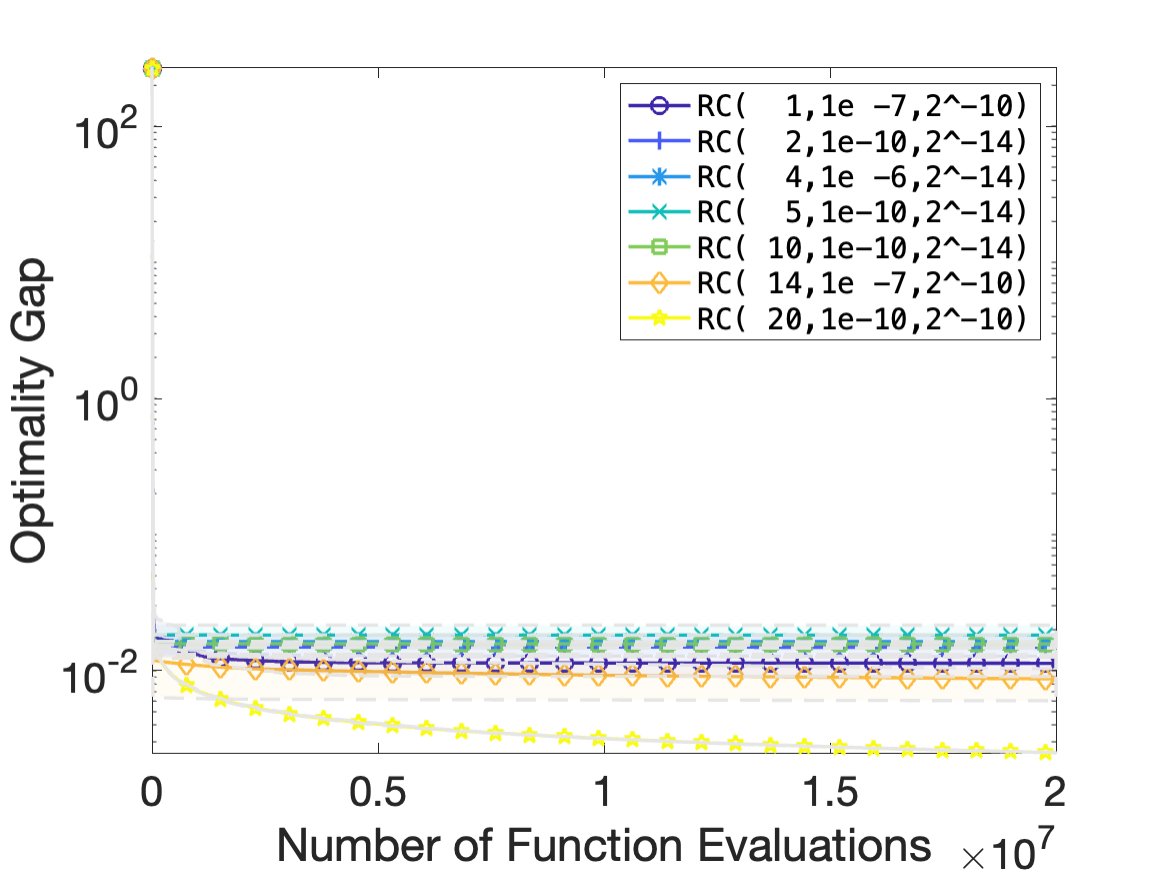}
  \caption{Performance of RC}
  \label{fig:20abs3numdirsensRCFFD}
\end{subfigure}
\caption{The effect of number of directions on the performance of different randomized gradient estimation methods on the Cube function with absolute error and $ \sigma = 10^{-3} $. All other hyperparameters are tuned to achieve the best performance.}
\label{fig:20abs3numdirsens}
\end{figure}

\newpage
\subsection{Cube Function with Relative Error, $\sigma = 10^{-5}$}

\begin{figure}[H]
\centering
\begin{subfigure}{0.33\textwidth}
  \centering
  \includegraphics[width=1.1\textwidth]{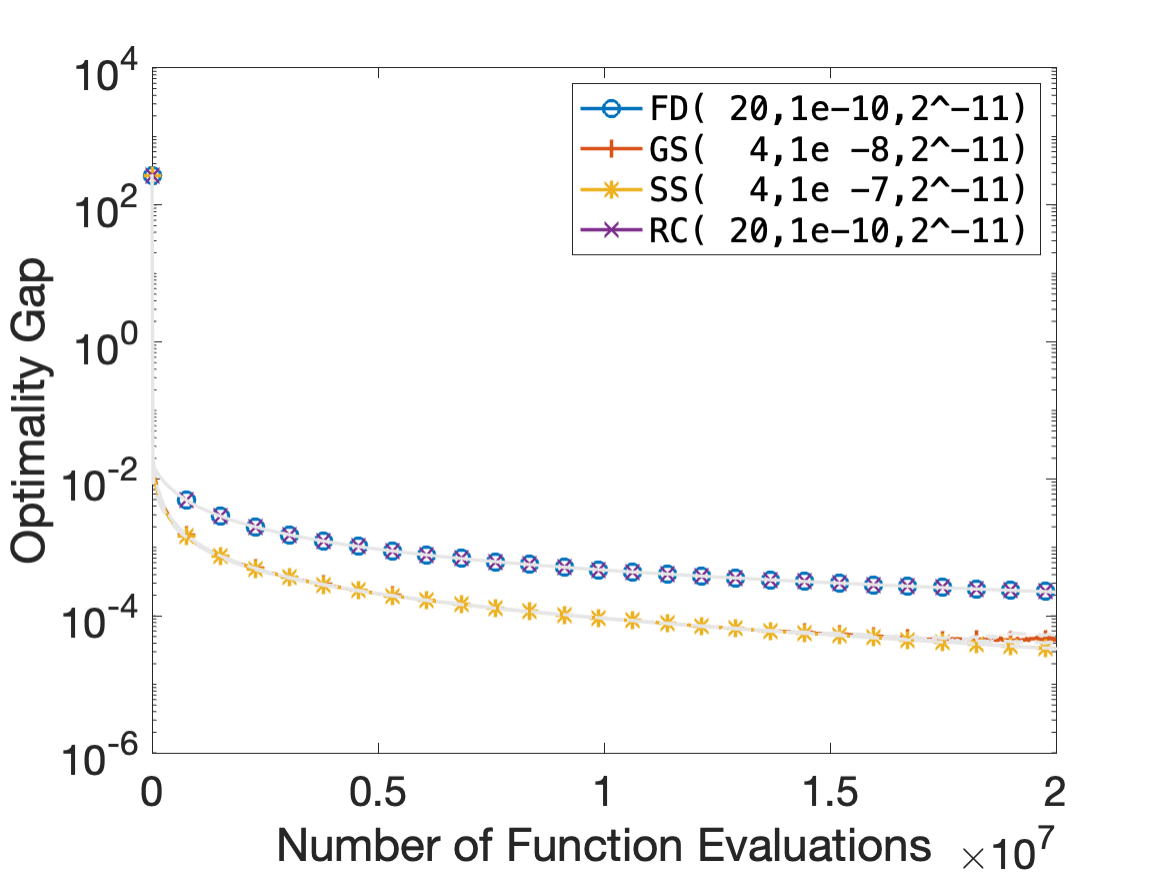}
  \caption{Optimality Gap}
  \label{fig:20rel5bestvsbestoptgap}
\end{subfigure}%
\begin{subfigure}{0.33\textwidth}
  \centering
  \includegraphics[width=1.1\textwidth]{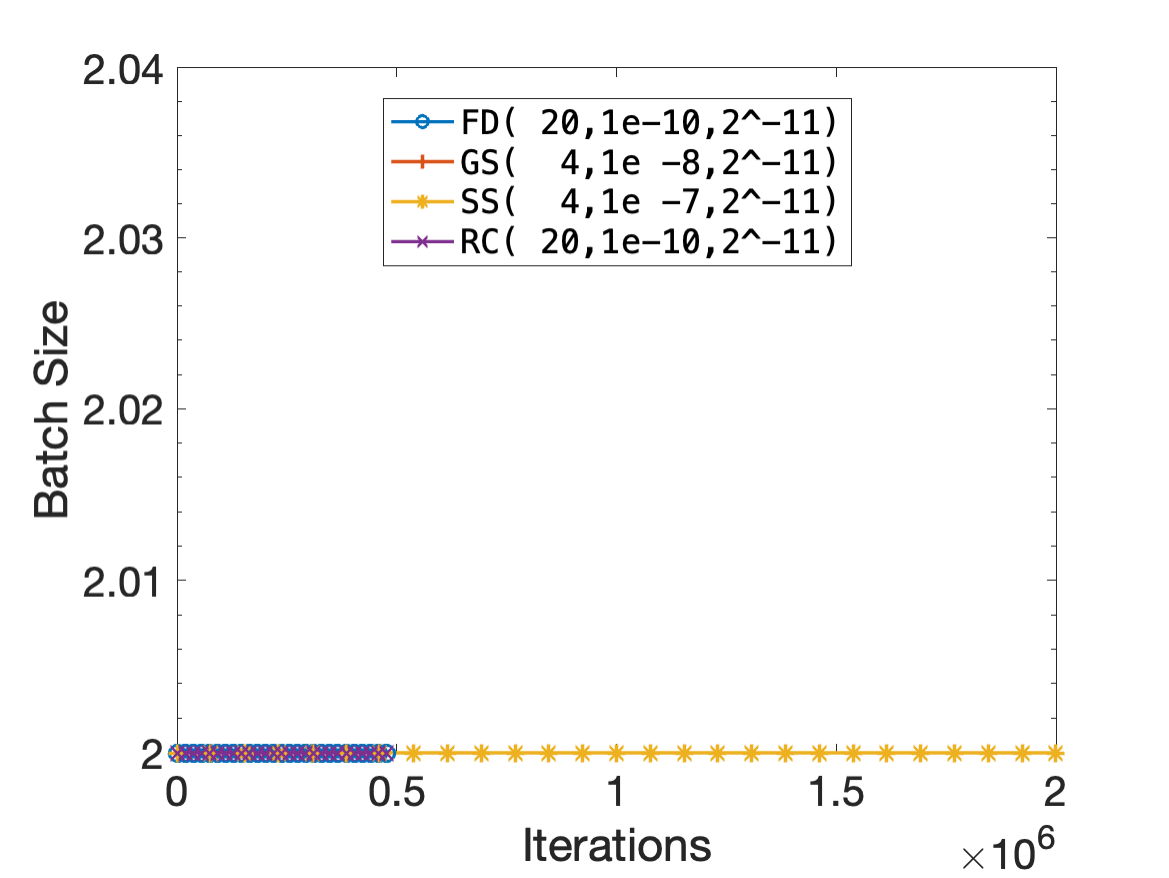}
  \caption{Batch Size}
  \label{fig:20rel5bestvsbestbatch}
\end{subfigure}
\begin{subfigure}{0.33\textwidth}
  \centering
  \includegraphics[width=1.1\textwidth]{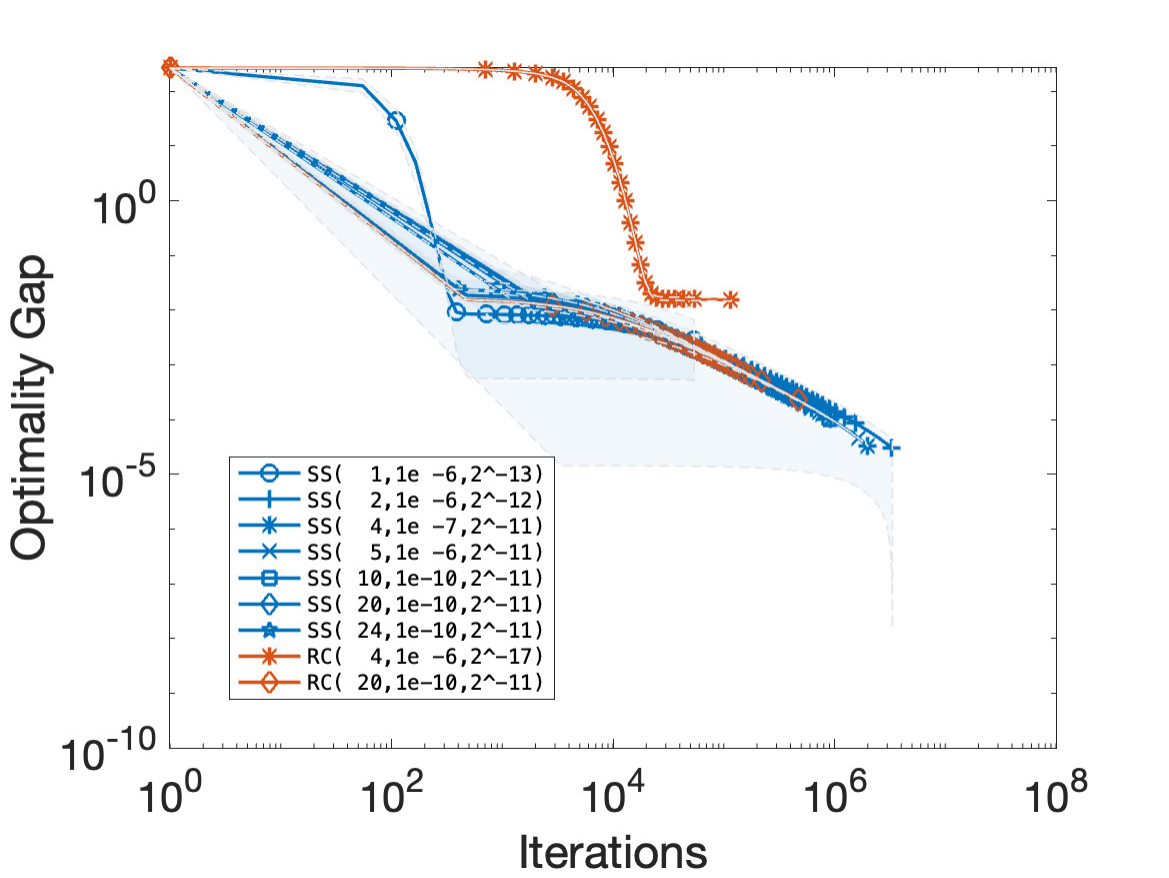}
  \caption{Comparison of SS and RC}
  \label{fig:20rel5coordvsspherical}
\end{subfigure}
\caption{Performance of different gradient estimation methods using the tuned hyperparameters on the Cube function with relative error and $ \sigma = 10^{-5} $.}
\label{fig:20rel5bestvsbest}
\end{figure}

\begin{figure}[H]
\centering
\begin{subfigure}{0.33\textwidth}
  \centering
  \includegraphics[width=1.1\textwidth]{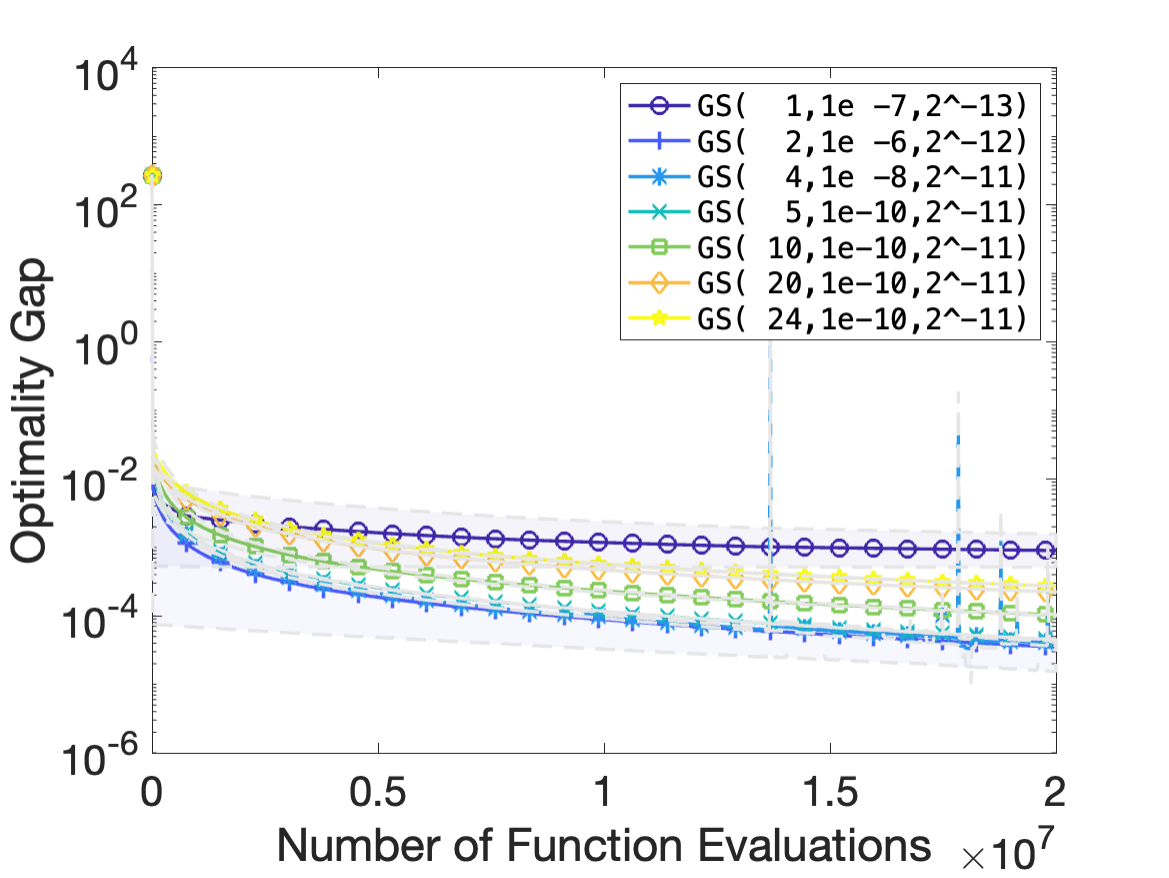}
  \caption{Performance of GS}
  \label{fig:20rel5numdirsensGSFFD}
\end{subfigure}%
\begin{subfigure}{0.33\textwidth}
  \centering
  \includegraphics[width=1.1\textwidth]{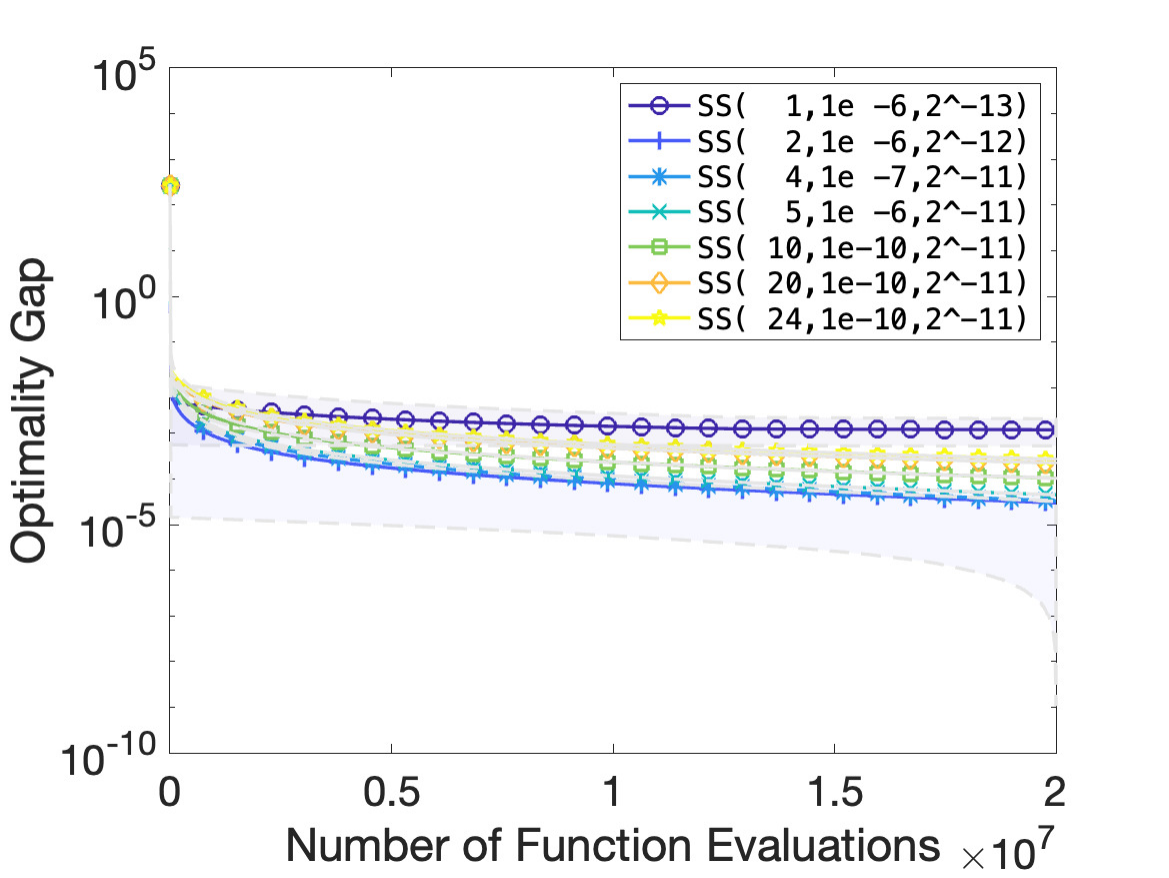}
  \caption{Performance of SS}
  \label{fig:20rel5numdirsensSSFFD}
\end{subfigure}%
\begin{subfigure}{0.33\textwidth}
  \centering
  \includegraphics[width=1.1\textwidth]{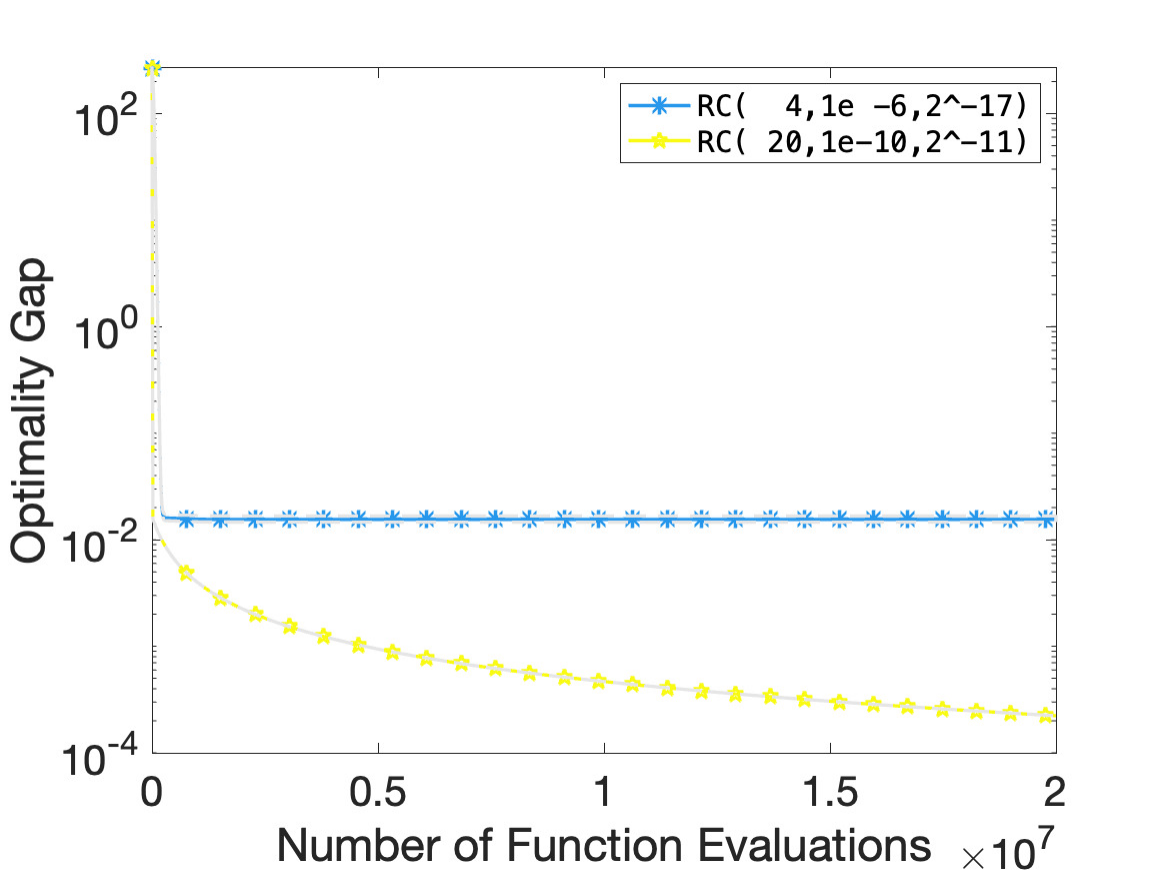}
  \caption{Performance of RC}
  \label{fig:20rel5numdirsensRCFFD}
\end{subfigure}
\caption{The effect of number of directions on the performance of different randomized gradient estimation methods on the Cube function with relative error and $ \sigma = 10^{-5} $. All other hyperparameters are tuned to achieve the best performance.}
\label{fig:20rel5numdirsens}
\end{figure}

\newpage
\subsection{Cube Function with Absolute Error, $\sigma = 10^{-5}$}

\begin{figure}[H]
\centering
\begin{subfigure}{0.33\textwidth}
  \centering
  \includegraphics[width=1.1\textwidth]{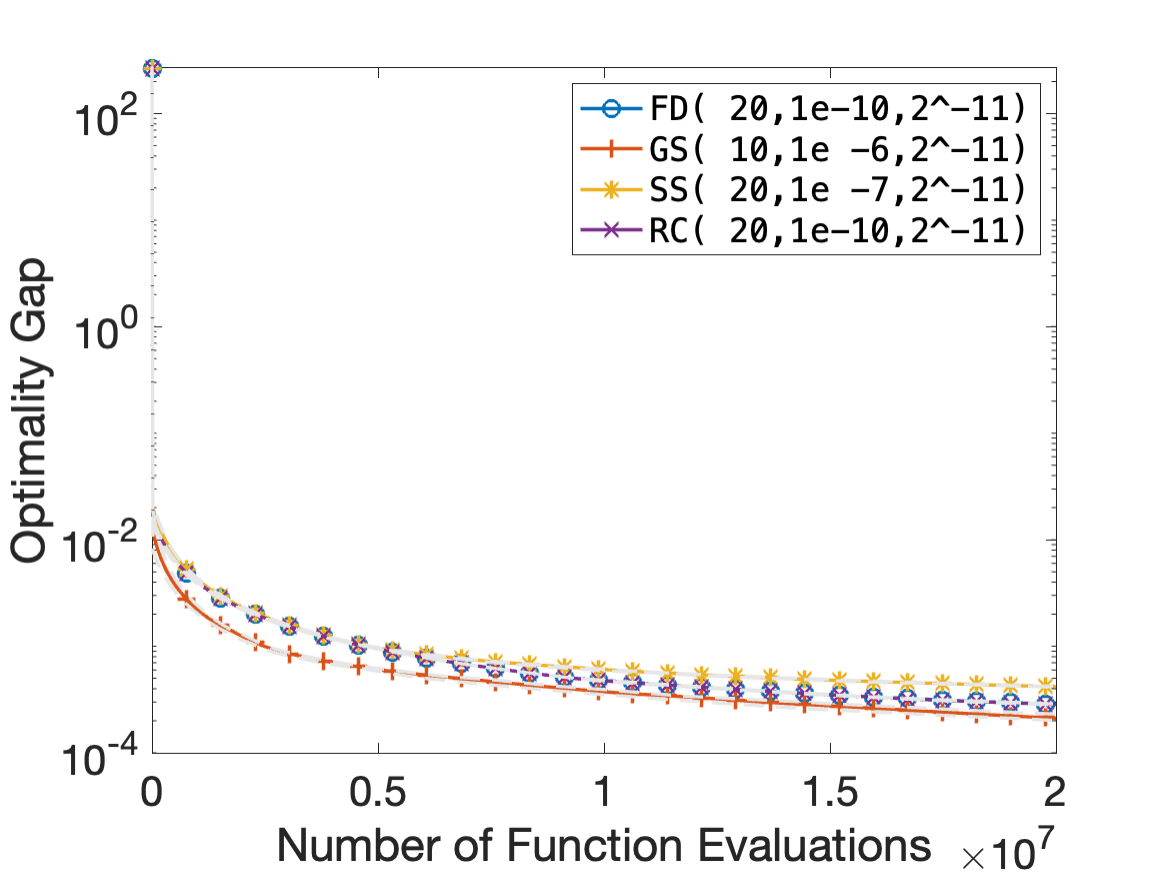}
  \caption{Optimality Gap}
  \label{fig:20abs5bestvsbestoptgap}
\end{subfigure}%
\begin{subfigure}{0.33\textwidth}
  \centering
  \includegraphics[width=1.1\textwidth]{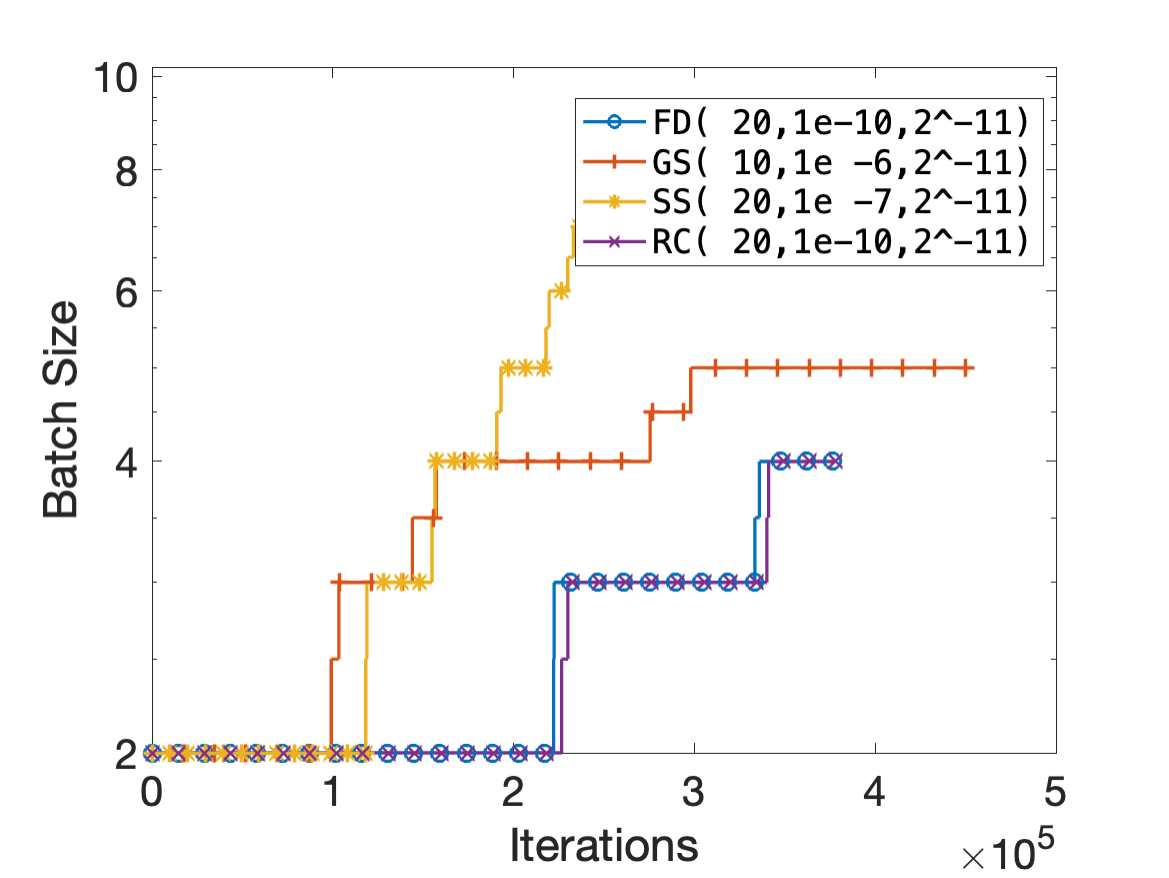}
  \caption{Batch Size}
  \label{fig:20abs5bestvsbestbatch}
\end{subfigure}
\begin{subfigure}{0.33\textwidth}
  \centering
  \includegraphics[width=1.1\textwidth]{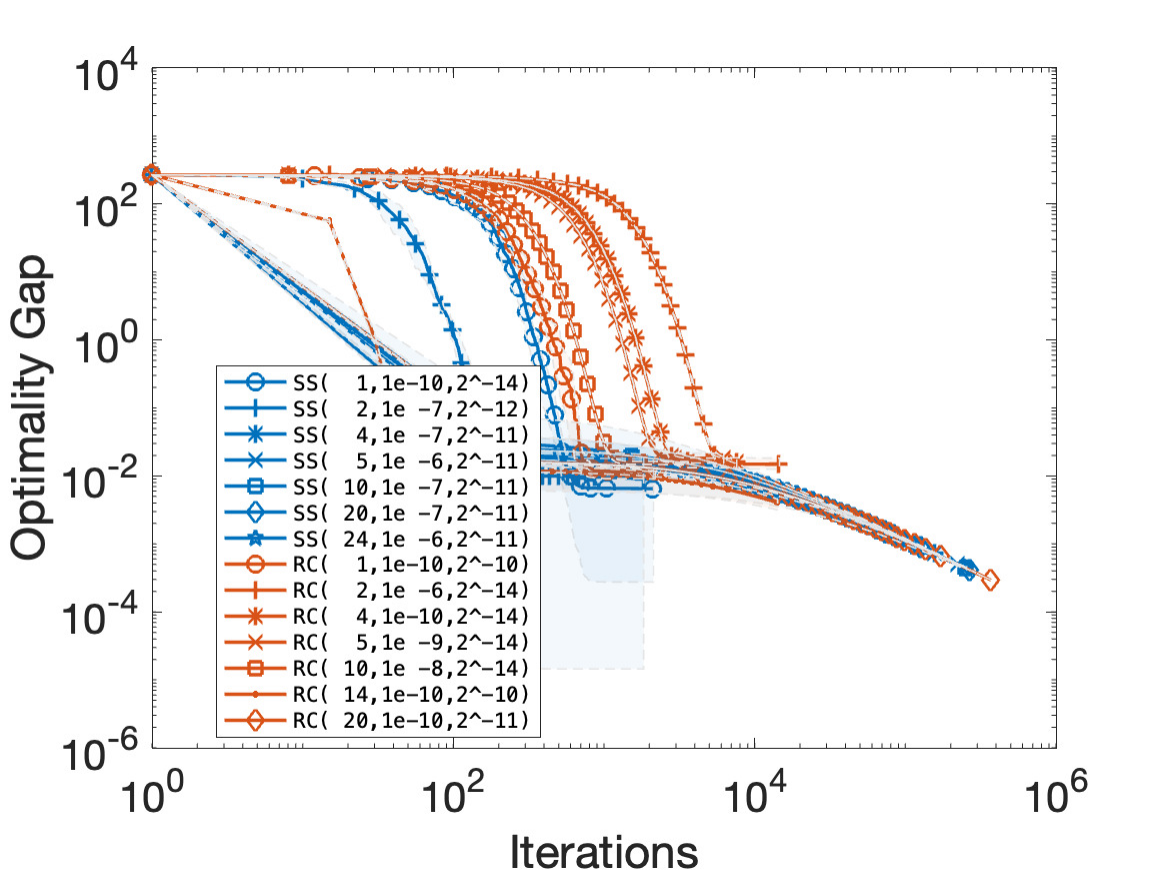}
  \caption{Comparison of SS and RC}
  \label{fig:20abs5coordvsspherical}
\end{subfigure}
\caption{Performance of different gradient estimation methods using the tuned hyperparameters on the Cube function with absolute error and $ \sigma = 10^{-5} $.}
\label{fig:20abs5bestvsbest}
\end{figure}

\begin{figure}[H]
\centering
\begin{subfigure}{0.33\textwidth}
  \centering
  \includegraphics[width=1.1\textwidth]{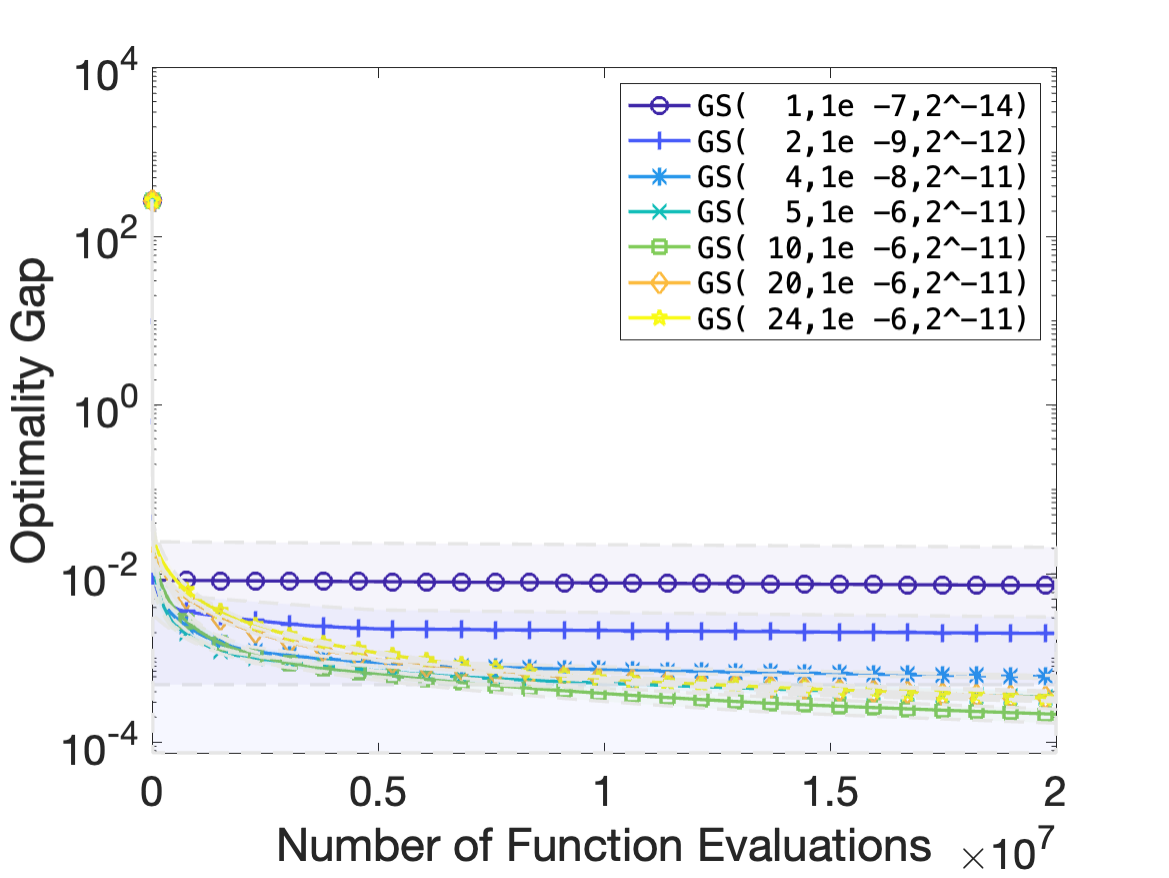}
  \caption{Performance of GS}
  \label{fig:20abs5numdirsensGSFFD}
\end{subfigure}%
\begin{subfigure}{0.33\textwidth}
  \centering
  \includegraphics[width=1.1\textwidth]{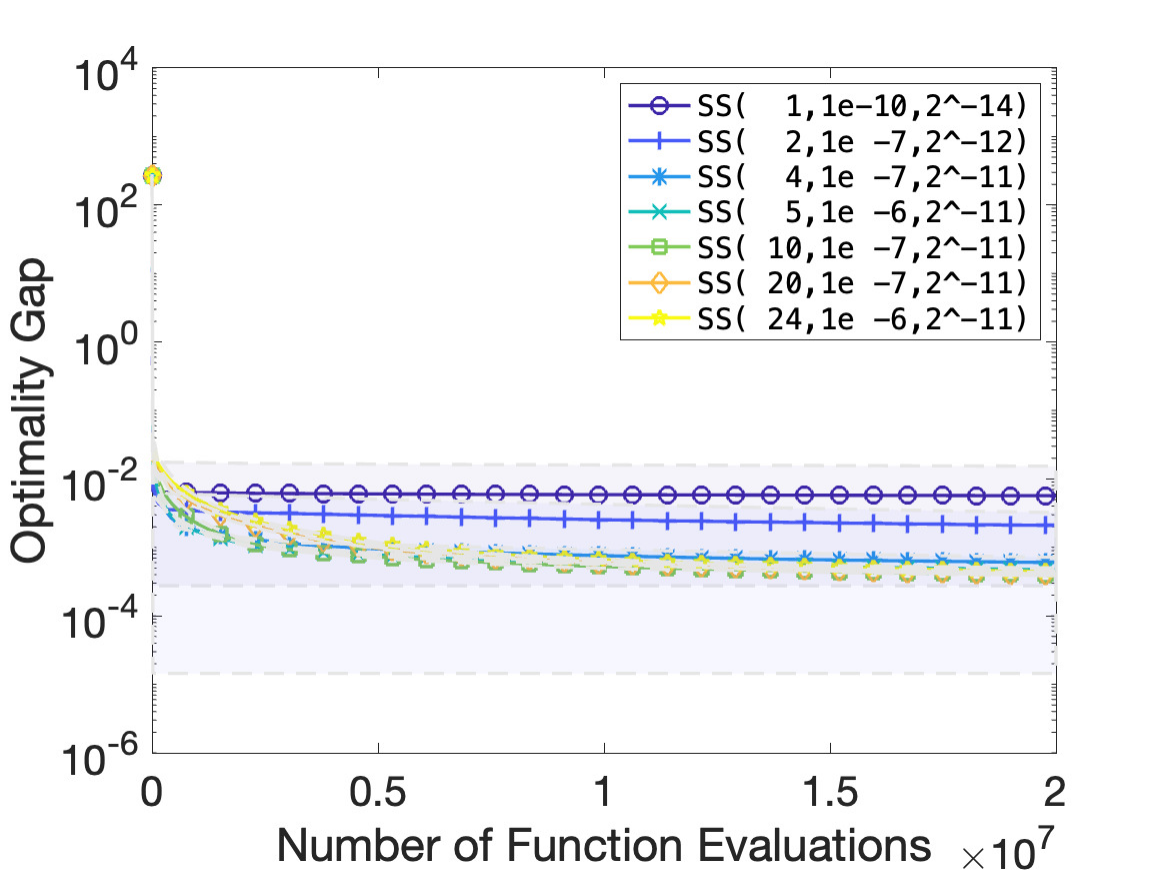}
  \caption{Performance of SS}
  \label{fig:20abs5numdirsensSSFFD}
\end{subfigure}%
\begin{subfigure}{0.33\textwidth}
  \centering
  \includegraphics[width=1.1\textwidth]{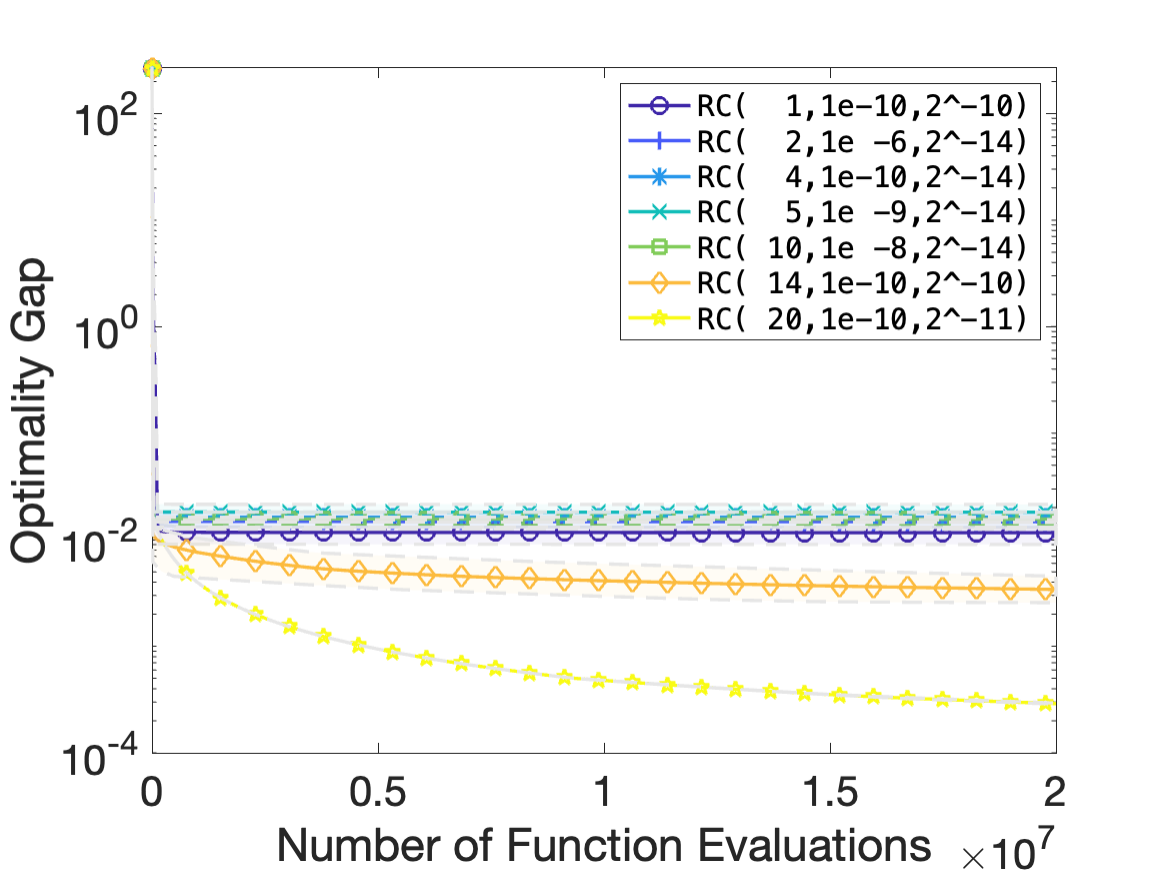}
  \caption{Performance of RC}
  \label{fig:20abs5numdirsensRCFFD}
\end{subfigure}
\caption{The effect of number of directions on the performance of different randomized gradient estimation methods on the Cube function with absolute error and $ \sigma = 10^{-5} $. All other hyperparameters are tuned to achieve the best performance.}
\label{fig:20abs5numdirsens}
\end{figure}

\end{document}